\newtheorem{prop}{Proposition}[section]
\newtheorem{rema}[prop]{Remark}
\newtheorem{teor}[prop]{Theorem}
\newtheorem{defi}[prop]{Definition}
\newtheorem{assu}[prop]{Assumption}
\newtheorem{lemm}[prop]{Lemma}
\newtheorem{coro}[prop]{Corollary}
\newtheorem{exam}[prop]{Example}
\newtheorem{ass}[prop]{Assumption}
\def\1{\mathds{1}}
\newcommand{\filtrationf}{$\mathbb{F}$}
\newcommand{\filtrationg}{$\mathbb{G}$}
\newcommand{\R}{\mathbb{R}}
\newcommand{\beqn}{\begin{eqnarray}}
\newcommand{\eeqn}{\end{eqnarray}}
\newcommand{\nonu}{\nonumber}
\newcommand{\pqa}{\left[}
\newcommand{\pqc}{\right]}
\newcommand{\pga}{\left\{}
\newcommand{\pgc}{\right\}}
\newcommand{\pta}{\left(}
\newcommand{\ptc}{\right)}
\newcommand{\vaa}{\left|}
\newcommand{\vac}{\right|}
\newcommand{\shc}{\mathcal{C}}
\newcommand{\shl}{\mathcal{L}}
\newcommand{\shg}{\mathcal{G}}
\newcommand{\shf}{\mathcal{F}}
\newcommand{\shd}{\mathcal{D}}
\newcommand{\sha}{\mathcal{A}}
\newcommand{\shh}{\mathcal{H}}
\newcommand{\shm}{\mathcal{M}}
\newcommand{\sht}{\mathcal{T}}
\newcommand{\shv}{\mathcal{V}}
\newcommand{\shw}{\mathcal{W}}
\newcommand{\ep}{\varepsilon}
\newcommand{\lra}{\longrightarrow}
\newcommand{\interval}{0\leq t \leq 1}
\newcommand{\intervala}{0\leq t< 1}
\def\be{\begin{equation}}
\def\ee{\end{equation}}
\def\ba{\begin{array}}
\def\ea{\end{array}}
\begin{document}
\begin{frontmatter}
\title{On stochastic calculus related to financial assets without
 semimartingales}
\author[HSBC]{Rosanna COVIELLO}
\author[luiss]{, Cristina DI GIROLAMI}
\ead{cdigirolami@luiss.it}
\author[ensta,inria]{and Francesco RUSSO}
\ead{francesco.russo@ensta-paristech.fr}
\address[HSBC]{HSBC, 103, av. des Champs-Elys\'ees, F-75419 Paris Cedex 09 (France).}
\address[luiss]{LUISS Guido Carli - Libera
    Universit\`a Internazionale degli Studi Sociali Guido Carli di Roma (Italy).}
\address[ensta]{ENSTA ParisTech,
Unit\'e de Math\'ematiques appliqu\'ees,
32, Boulevard Victor,
F-75739 Paris Cedex 15 (France).}
\address[inria]{INRIA Rocquencourt 
and Cermics Ecole des Ponts, Projet MATHFI. Domaine de Voluceau, BP 105
F-78153 Le Chesnay Cedex (France).}

\maketitle
\date{February 9th, 2011}
\selectlanguage{english}

\begin{abstract} \
This paper does not suppose a priori that the evolution of 
the price of a financial asset
is a semimartingale. Since possible strategies of investors are self-financing,
previous prices are forced to be finite quadratic variation processes.
The non-arbitrage property is not excluded if the class $\sha$ of admissible strategies
is restricted. The classical notion of martingale is replaced with the notion
of $\sha$-martingale. A calculus related to  $\sha$-martingales with some examples
is developed. Some applications to no-arbitrage, viability, hedging and the maximization of the utility of an
insider are expanded. We finally revisit some no arbitrage conditions of
 Bender-Sottinen-Valkeila type.
\end{abstract}

\begin{keyword}[class= 2000 MSC]
\kwd[\textbf{2010 MSC}:\ ]{60G48, 60H05, 60H07, 60H10, 91B16, 91B24, 91B70}
\end{keyword}

\bigskip
\begin{keyword}
\kwd{$\sha$-martingale, weak $k$-order Brownian motion,
no-semimartingale,  utility maximization, insider, no-arbitrage, 
viability, hedging}
\end{keyword}

\end{frontmatter}
\selectlanguage{english}

\section{Introduction}

This article is devoted to the memory of Professor Paul Malliavin,
a legend in mathematics. Among his huge and fruitful contributions
there is the celebrated Malliavin calculus. Malliavin calculus
was succesfully applied to many areas in probability and analysis but also 
in financial mathematics. Prof. Malliavin himself in the last
part of his career was very productive in this field, 
 as the excellent monograph \cite{MallThal} written with
Prof. Thalmaier shows.  Especially the third named author is
grateful for all the mathematical interactions he could have
with him. Prof. Malliavin was actively present to a talk of F. Russo 
which included the first part of the topic of the present paper.

According to 
the fundamental theorem of asset pricing of Delbaen and Schachermayer in \cite{DSbook}, Chapter 14,
 in absence of \textit{free lunches with vanishing risk} (NFLVR), when investing possibilities run only through simple predictable strategies  with respect to some filtration $\mathbb{G}$,
the price process of the risky asset $S$  is forced to be a semimartingale.
However (NFLVR) condition could not be reasonable in several situations. In that case $S$ may not be a semimartingale.
We illustrate here some of those circumstances.

Generally, admissible strategies are let vary in a quite large class of
predictable processes with respect to some filtration $\mathbb{G}$, representing the information flow available to the investor. 
As a matter of fact, the class of admissible strategies could be reduced because of  different market regulations 
or for practical reasons. For instance, the investor  could not be allowed to hold more than a certain number of stock shares.
On the other hand it could be realistic to impose a minimal delay between
two possible transactions as suggested by Cheridito (\cite{che}), see also \cite{HJP}: when the logarithmic price $\log(S)$ is a geometric fractional Brownian motion (fbm), it is impossible to realize arbitrage possibilities satisfying that minimal requirement. We remind that without that restriction,
the market admits arbitrages, see for instance \cite{rogers, sh, salopek}.
When the logarithmic price of $S$ is a geometric fbm or some particular strong Markov process, arbitrages can be excluded taking into account proportional transactions costs: Guasoni (\cite{gua}) has shown that, in that case,  the class of admissible strategies has to be restricted to bounded variation processes and this rules out arbitrages.

Besides the restriction of the class of admissible strategies, the adoption of non-semimartingale models finds its justification when the no-arbitrage condition itself is not likely. 

Empirical observations  reveal, indeed, that $S$ could fail to be a semimartingale because of market imperfections due to micro-structure noise,
as intra-day effects.
A model which considers those imperfections 
 would add to  $W$, the Brownian motion describing  $\log$-prices, a zero quadratic variation process, as a fractional Brownian motion of Hurst index greater than $ \frac{1}{2}$,
see for instance \cite{wor}.
Theoretically arbitrages in very small time interval could be possible, which would be compatible
with the lack of semimartingale property.

At the same way if (FLVR) are not possible for an \textit{honest investor}, an {\it inside trader}
could realize a free lunch with respect to the  enlarged filtration $\mathbb{G}$ 
including the one generated by prices and the extra-information.
Again in that case  $S$ may not be a semimartingale.
The literature concerning inside trading and asymmetry of information has been extensively enriched by several papers in the last ten years;
among them we quote Pikowski and Karatzas (\cite{pikar}),  Grorud and Pontier (\cite{gropon}),
 Amendinger, Imkeller and Schweizer (\cite{AIS}). They adopt enlargement of filtration techniques to describe the  evolution of stock prices in the insider filtration. 

Recently, 
some authors approached the problem in a new way using in particular forward integrals, in the framework of 
stochastic calculus via regularizations. For a comprehensive survey of that calculus  see \cite{RV05}.
Indeed, forward integrals could  exist also for
non-semimartingale integrators.
Leon, Navarro and Nualart in \cite{LNN}, for instance, solve the problem of maximization of expected logarithmic utility of an agent who holds an initial information depending on the future of prices.
They operate under  technical conditions which, a priori, do not imply the classical  Assumption (H') for enlargement considered 
in \cite{jacod}. 
Using  forward integrals, they determine the utility maximum.
However, a posteriori, they found out that  their conditions oblige 
 $S$ to be a semimartingale.

Biagini and {\O}ksendal (\cite{BO}) considered somehow the converse implication. Supposing that the maximum utility is attained, they proved that
$S$ is a semimartingale.
Ankirchner and Imkeller (\cite{AI})  continue to develop the enlargement of filtrations techniques and show, among  other thinks, a similar result as \cite{BO}
using the fundamental theorem of asset pricing  of Delbaen-Schachermayer. In particular they establish a link 
between that fundamental theorem 
and finite utility. 

In our paper we treat a market where there are one risky asset, whose price is a strictly positive process $S$,
and a \textit{less risky} asset with price $S^0$, possibly  riskless but a priori only with bounded variation. A class $\sha$ of admissible trading strategies is 
specified. If $\sha$ is not large enough to generate all predictable simple strategies, then $S$ has no need to be a semimartingale,
even requiring the absence of free lunches among those strategies.

The aim of the present paper is to settle the basis of a fundamental 
(even though preliminary) calculus 
which, in principle, allows to model financial assets without semimartingales.
 Of course this constitutes the first step of a more involved theory
generalizing the classical theory related to semimartingales.
The objective is two-fold.
\begin{enumerate}
\item To provide a mathematical framework which extends It\^o calculus
 conserving some particular aspects of it in a non-semimartingale framework.
This has an interest in itself,
independently from mathematical finance.
The two major tools are {\it forward integrals} and
$\sha$-{\it martingales}. 
\item 
To build the basis of a corresponding financial theory 
which allows to deal with several problems as hedging 
 and non-arbitrage pricing, viability and completeness 
as well as with utility maximization. 
\end{enumerate}

For the sake of simplicity in this introduction we suppose that the less risky  asset $S^0$ is constant and equal to 1.

As anticipated, a natural tool to describe the self-financing condition
is the forward integral of an integrand process $Y$ with respect to an integrator $X$,  denoted by $\int_0^t Y d^-X$; see section \ref{s2} for definitions.
Let $\mathbb{G} = (\shg_t)_{0 \le t \le 1}$ be a filtration on an underlying probability space $(\Omega, \shf, P)$, with $\shf = \shg_1$; $\mathbb{G}$
represents the flow of information available to the investor.
A {\bf self-financing portfolio} is a pair $(X_0,h)$ where $X_0$ is the initial value of the portfolio and $h$ is a  $\mathbb{G}$-adapted   and
 $S$-forward integrable process  specifying 
the number of  shares of  $S$ held in the portfolio. The market value process $X$ of such a portfolio, is given by $X_0+\int_0^\cdot h_sd^-S_s$, while 
$h^0_t = X_t - S_t h_t$ constitutes the number of shares of the less risky asset held.

 This  formulation of  self-financing condition is 
coherent with the case of transactions at fixed discrete dates.
 Indeed, let we consider a \textit{buy-and-hold strategy},
 i.e. a pair $(X_0,h)$ with $h= \eta I_{(t_0,t_1]}, 0 \le t_0 \le t_1 \le 1, $  and $\eta$ being a $\shg_{t_0}$-measurable random variable.
 Using the definition of forward integral it is not difficult to see that:
 $X_{t_0}=X_0,$  $X_{t_1}=X_0+\eta(S_{t_1}-S_{t_0})$. This implies 
  $h^0_{t_0 +}=X_0-\eta S_{t_0},$ $h^0_{t_1 +}=X_0+\eta(S_{t_1}-S_{t_0})$ and   
\beqn \label{1}
X_{t_0} = h_{{t_0} +}  S_{t_0} + h^0_{t_0 +},
\quad X_{t_1} = h_{t_1 +} S_{t_1} + h^0_{t_1 +}:
\eeqn
at the \textit{re-balancing} dates $t_0$ and $t_1,$ the value of the old portfolio must be reinvested to build the new portfolio without exogenous withdrawal of money. By $h_{t+}$, we denote ${\rm lim}_{s \downarrow t} h_s$.
The use of forward integral or other pathwise type integral is crucial.
Previously  some other functional integrals as Skorohod type integrals,
involving Wick products see for instance  \cite{biaginifbm}.
They are however not economically so appropriated as for instance
\cite{BHult} points out.
   
In this paper $\sha$ will be a real linear subspace of all  self-financing portfolios  and it will constitute, by definition, the class of all {\bf admissible} portfolios.
 $\sha$  will depend on the kind of problems one  has to face:
hedging, utility maximization,  modeling inside trading.
If we require that $S$ belongs to $\sha$, then the process $S$ is forced to be a finite quadratic variation process.
In fact, $\int_0^\cdot S d^-S$ exists if and  only if the quadratic variation $[S]$ exists, see \cite{RV05}; in particular
one would have 
$$  
\int_0^\cdot S_s d^- S_s =  \frac{1}{2} (S^2 - S^2_0 - [S]). 
$$
However, there could be situations in which $S$ may be allowed not to
have finite quadratic variation. In fact a process $h$ could be 
theoretically an integrand of a process $S$ without finite quadratic variation
if it has for instance bounded variation.

Even if the price process $(S_t)$ 
is an $(\shf_t)$-adapted process, the class $\sha$ 
is first of all a class of integrands of $S$.
We recall  the significant result of \cite{RV} Proposition 1.2.
Whenever $\sha$ includes the class of bounded $(\shf_t)$-processes then $S$
is forced to be a semimartingale.
In general, the class of forward  integrands
with respect to $S$
 could be much different from 
 the set
of locally bounded predictable $(\shf_t)$-processes. 

A crucial concept is provided by
 $\sha$-martingale processes. Those processes naturally intervene in utility maximization, arbitrage and 
uniqueness of hedging prices.

A process $M$  is said to be an  $\sha$-{\bf martingale}
if for any process $Y \in \sha$, 
$$
\mathbb{E} \pqa \int_0^\cdot Y d^-M \pqc= 0.
$$
If for some filtration   $\mathbb{F}$ with respect to which $M$ is adapted, 
   $\sha$ contains the class of all bounded $\mathbb{F}$-predictable processes, then $M$ is an $\mathbb{F} $-martingale.

$\shl$ will be the sub-linear space of $L^0(\Omega)$ 
representing a  set of \textbf{contingent claims of interest} for one investor.
An $\sha$-{\bf attainable contingent claim} will be  a random variable $C$ for which  there is a self-financing portfolio  $(X_0, h)$ with $h \in \sha$ and 
$$ C = X_0 + \int_0^1 h_s d^- S_s. $$ $X_0$ will be called {\bf replication price} for $C$.


A portfolio $(X_0,h)$ is said to be an $\sha$-arbitrage if $h\in \sha$, $X_1 \ge X_0 $ almost surely and
$ P\{X_1 - X_0 > 0\} > 0$. 
We denote by $\shm$ the set of probability measures being equivalent to the initial probability  $P$  under which 
$S$ is an $\sha$-martingale.
If $\shm$ is non empty then the market is $\sha$-{\bf arbitrage free}. 
In fact if  $Q \in \shm$, 
 given a pair $(X_0,h)$ which is an $\sha$-arbitrage, then $E^Q [X_1 - X_0] = E^Q [\int_0^1 h d^-S] = 0$.
In that case the replication price $X_0$ of an $\sha$-attainable contingent claim $C$ is unique,
 provided that the process $h\eta,$
for any bounded random variable $\eta$ in $\shg_0$ and $h$ in $\cal A,$ still  belongs to $\sha$.
Moreover  $X_0 = E^Q [C \vert \shg_0]$.
In reality, under the weaker assumption that  the market is $\sha$-arbitrage free,  the replication price is still unique, see Proposition \ref{p5.33}.
   
Using the inspiration coming from \cite{BSV}, we reformulate 
a non-arbitrage property related to an underlying $S$
which verifies the so-called full support condition.
We provide some theorems which generalize some aspects
of \cite{BSV}. 
Let us denote by $S_s(\cdot)$ the history 
at time $s$ of process $S$. 
If  $S$ is has
 finite quadratic variation and
$[S]_t = \int_0^t \sigma^2(s, S_s(\cdot)) S_s^2 ds, t \in [0,T]$
and $\sigma:[0,T] \times C([-1,0] \rightarrow \R$ is continuous, bounded
and non-degenerate.
It is possible to provide rich classes $\sha$ of strategies
excluding arbitrage opportunities. See 
 Proposition \ref{propAssA}, Example \ref{EVTeta} 
and the central Theorem \ref{teoNOAAR}. However,
since there are many examples of non-semimartingale
processes $S$ fulfilling the full support conditions,
the class $\sha$ will not generate the 
canonical filtration of $S$. 

The market will be said $(\mathcal{A},\shl)$-{\bf attainable} if 
every element of $\shl$ is $\sha$-attainable.
If the market is $(\mathcal{A},\shl)$-attainable
 then all the probabilities measures in $\shm$ coincide on $\sigma(\shl)$, 
see Proposition \ref{p5.30}.
If  $\sigma(\shl) = \shf$ then $\shm$ is a singleton: this result
recovers the classical case, i.e. there is a unique probability measure
under which $S$ is a semimartingale.


In these introductory lines we will focus  on
one particular toy model.

For simplicity we illustrate the case where $[\log (S)]_t=\sigma^2 t, \quad \sigma > 0. $    
We choose as $\shl$ the set of all \textit{European} contingent claims $C = \psi(S_1) $ where $\psi$ is continuous with polynomial growth. 
We consider the case $\sha=\mathcal{A}_S,$ where 
\beqn \nonu
\mathcal{A}_S&=&\pga (u(t,S_t)), \intervala
\left |\right. u:[0,1]\times \mathbb{R} \rightarrow \mathbb{R}, \mbox{ Borel-measurable }\right. \\ \nonu &&\left.
\mbox{ with polynomial growth}  \pgc.
\eeqn 
If the corresponding $\mathcal{M}$ is non empty and $\sha = \sha_S,$ as 
assumed in this section, the law of $S_t$ has to be equivalent to Lebesgue measure 
for every  $0 < t  \le 1$, 
see Proposition  \ref{p5.27}.

An example of $\sha$-martingale is the so called
 {\bf weak Brownian motion of order} $k = 1$ and quadratic variation equal to $t$.
That notion was introduced in \cite{FWY}: a weak Brownian motion of
order $1$ is a process $X$ such that the law of $X_t$ is $N(0,t)$ for
any $t \ge 0$.

Such a market is $(\mathcal{A},\shl)$-attainable: in fact, a random variable  $C = \psi(S_1)$ is an $\sha$-attainable contingent claim. To build a replicating strategy the investor has to 
choose $v$ as solution of the following problem 
$$ \left \{
\begin{array}{lll}
\partial_t v(t,x) + \frac{1}{2}\sigma^2 x^2  \partial^{(2)}_{xx} v(t,x) &= & 0 \\
v(1,x) &=& \psi(x) 
\end{array}
\right. $$
and $X_0 = v(0,S_0).$
This follows easily after application of It\^o's formula contained in 
Proposition \ref{p2.1}, see Proposition \ref{p5.31}.
This technique was introduced by \cite{KS}.
Subsequent papers in that direction are those of \cite{Z} and
 \cite{BSV} which shows in particular that several path dependent options
can be covered only assuming that $S$ has the same quadratic variation as 
geometrical Brownian motion. 
In Proposition \ref{PAsian} and in Proposition \ref{EDPtratti},
we highlight in particular that this
 method can be adjusted to
 hedge also \textit{Asian} contingent claims and some options only 
depending on a finite number of dates of the underlying price.
This discussion is continued in \cite{DGR} and \cite{DGRnote} which 
perform  a suitable infinite dimensional  calculus via regularizations, 
opening the way to the possible hedge of much
reacher classes of path dependent options.




Given an utility function satisfying usual assumptions, it is possible to show that the
maximum $\pi$ is attained on a class of portfolios  fulfilling
conditions related to Assumption \ref{a5.40},
if and only if there exists a probability measure under which $\log(S)-\int_0^\cdot \pta \sigma^2 \pi_t-\frac{1}{2}\sigma^2\ptc dt$ is an $\cal{A}$-martingale, see Proposition \ref{p5.45}. Therefore if $\sha$ is big enough to fulfill
conditions related to Assumption $\shd$ in Definition \ref{d3.6}, 
then $S$ is a classical semimartingale.

Before concluding we introduce some examples of motivating
pertinent classes $\sha$.
\begin{enumerate}
\item {\bf Transactions at fixed dates}.
Let $0 = t_0 < t_1 < \ldots < t_m  = 1$ be 
a fixed subdivision of $[0,1]$.
The price process is continuous but the transactions
take place at the fixed considered dates.
$\sha$ includes the class of predictable processes of the type
$$ H_t = \sum_{i=0}^{n-1} H_{t_i} 1_{(t_i, t_{i+1}]},$$
where $H_{t_i}$ is an $\shf_{t_i}$ measurable random variable.
A process $S$ such that $(S_{t_i})$ is an $(\shf_{t_i})$
-martingale is an $\sha$-martingale.
\item{\bf Cheridito type strategies}.
According to  \cite{che, HJP}, that class $\sha$ of strategies
 includes bounded processes $H$ such that
the time between two transactions is greater or equal than $\tau$
for some $\tau >0$.
\item{\bf Delay or anticipation}.
If $\tau \in \mathbb{R} $,
 then  $\sha$ is constituted by integrable processes $H$
such that $H_{t}$ is   $\shf_{(t + \tau)_+}$-measurable.
A process $S$ which is an $(\shf_{{(t+\tau})_+})$ martingale is an
$\sha$-martingale.
\item Let $G$ be an anticipating random variable with respect to 
$\mathbb{F}$. $\sha$ is a class of processes
of the type
$H_t = h(t,G)$, $h(t,x)$ is a random field fulfilling some
Kolmogorov continuity lemma in $x$.
\item Other examples are described in section 4.
\end{enumerate}

Those considerations show that
most of the classical results of basic financial 
 theory admit a natural extension to
 non-semimartingale models.

The paper is organized as follows. 
In section 2, we introduce stochastic calculus 
via regularizations for forward integrals.
Section 3 considers, a priori, a class $\sha$ of integrands associated
with some integrator $X$ and focuses the notion of
 $\sha$-martingale with respect to $\sha$.
We explore the relation between $\sha$-martingales and weak Brownian 
motion; later we discuss
the link between the existence of a maximum for a an optimization
problem and the $\sha$-martingale property. 

The class $\sha$ is related to classes of 
{\bf admissible} strategies of an investor.
The admissibility   concern
    theoretical or financial (regulatory) restrictions.
At the theoretical level,  classes of admissible strategies
 are introduced using Malliavin
calculus, substitution formulae and It\^o fields.
 Regarding finance applications, the class of strategies defined
 using Malliavin calculus 
is useful when   $\log(S)$ is a geometric Brownian motion with respect
 to a filtration $\mathbb{F}$ contained
in $\mathbb{G}$; the use of substitution formulae naturally appear
 when trading 
with an initial extra information, already available at time $0$; 
 It\^o fields apply whenever $S$ is a generic finite quadratic 
variation process.
Section 4 discusses some of previous examples and it
 deals with basic applications to mathematical finance. 
We define self-financing portfolio strategies and we provide examples.
Technical problems related to the use of forward integral in order to 
describe the evolution of the wealth process appear.
 Those problems arise because of the lack of
chain rule properties.
Later, we discuss absence of $\sha$-arbitrages,
$(\mathcal{A},\shl)$-attainability and hedging.
In Section 5 we
analyze the problem of maximizing expected utility from terminal wealth.
 We obtain results about the existence of an \textit{optimal portfolio} 
generalizing those of \cite{LNN} and \cite{BO}.

\section{Preliminaries}\label{s2}
For the convenience of the reader we give some basic concepts and
fundamental results about stochastic calculus with respect to finite
quadratic variation processes which will be extensively used later.
For more details we refer the reader to \cite{RV05}.

In the whole paper $\pta \Omega,
\mathcal{F},P\ptc$ will be a fixed probability space.
For a stochastic process $X=(X_t,\interval)$  defined on $\pta \Omega,
\mathcal{F},P\ptc$ we will adopt the convention $X_t=X_{(t\vee
  0)\wedge 1},$ for $t$ in $\mathbb{R}.$
Let $0\leq T\leq 1.$ 
We will say that a sequence of processes $\pta X^n_t, 0\leq t\leq T \ptc_{n\in_{\mathbb{N}}}$ \textbf{converges uniformly in probability (ucp) on $[0,T]$} toward a process $(X_t, 0\leq t\leq T),$ if $\sup_{t\in[0,T]}\vaa X^n_t-X_t\vac$ converges to zero in probability. 

\begin{defi}
\begin{enumerate}
\item
Let $X=(X_t,0\leq t\leq T)$ and $Y=(Y_t,0\leq t\le T)$ be processes with paths respectively in $C^0([0,T])$ and $L^1([0,T])$. Set, for every $0\leq t \leq T$,
$$
I(\ep,Y,X,t)=\frac{1}{\ep}\int_0^t Y_s \pta X_{s+\ep}-X_s \ptc ds, 
$$
and 
$$
C(\ep,X,Y,t)= \frac{1}{\ep} \int_0^t\pta Y_{s+\ep}-Y_s\ptc \pta X_{s+\ep}-X_s\ptc ds.
$$
If $I(\ep,Y,X,t)$ converges in probability for every $t$ in $[0,T],$ and the limiting process admits a continuous version $I(Y,X,t)$ on $[0,T],$
${Y}$ is said to be $\textbf{X}$\textbf{-forward integrable} on $[0,T]$. The process $\pta I(Y,X,t), 0\leq t\leq T\ptc$ is denoted by $\int_0^\cdot Yd^-X.$ 
If $I(\ep,Y,X,\cdot)$ converges $ucp$ on $[0,T]$ we will say that the forward integral $\int_0^\cdot Yd^-X$ is the \textbf{limit ucp of its regularizations}.
\item 
If $(C(\ep,X,Y,t), 0\leq t\leq T)$ converges ucp on $[0,T]$ when $\ep$ tends to zero, the limit will be called the \textbf{covariation process} between $X$ and $Y$   and it will be denoted by  $[X,Y].$
If $X=Y,$ $\pqa X,X\pqc$ is called the \textbf{finite quadratic variation} of $X$: it will also be denoted by $\pqa X\pqc,$ and $X$ will be said to be a \textbf{finite quadratic variation process} on $[0,T].$ 
\end{enumerate}  
\end{defi}

\begin{defi}\label{d3.2} We will say that a process  $X=(X_t,0\leq t\leq T),$ is
  \textbf{localized by the sequence}  $\pta \Omega_k, X^k\ptc_{k\in
  \mathbb{N}^*},$ if $P\pta \cup_{k=0}^{+\infty}\Omega_k\ptc=1,$
  $\Omega_h \subseteq \Omega_k,$ if $h\leq k,$ and
  $I_{\Omega_k}X^k=I_{\Omega_k}X,$ almost surely for every $k$ in $\mathbb{N}.$
\end{defi}
\begin{rema}  \label{r4.9}
Let $(X_t, 0\leq t\leq T)$ and $(Y, 0\leq t \leq T)$ be two stochastic processes. The following statements are true.
\begin{enumerate}
\item
Let $Y$ and $X$ be localized by the
sequences $\pta \Omega_k, X^k\ptc_{k\in
  \mathbb{N}}$ and $\pta \Omega_k, Y^k\ptc_{k\in
  \mathbb{N}}$, respectively, such that $Y^k$ is $X^k$-forward integrable on $[0,T]$  for every $k$ in $\mathbb{N}.$ Then $Y$ is $X$-forward integrable on $[0,T]$ and 
$$
\int_0^\cdot Yd^-X=\int_0^\cdot
Y^kd^-X^k, \quad \mbox{ on } \Omega_k, \quad a.s..
$$
\item Given a random time $\mathcal{T} \in [0,1]$ we often denote
$X^\mathcal{T}_t = X_{t\wedge \mathcal{T}}, \ t \in [0,T]$.
\item If $Y$ is $X$-forward integrable on $[0,T],$ then $YI_{[0,\sht]}$
 is $X$-forward integrable for every random time $0\leq \sht \leq T,$ and 
$$
\int_{0}^\cdot Y_sI_{[0,t]}d^-X_s=\int_0^{\cdot \wedge t} Y_sd^-X_s. 
$$
\item If the covariation process $[X,Y]$ exists on $[0,T],$ then the
  covariation process $[X^\mathcal{T}, Y^\mathcal{T}]$
  exists for every random time $0\leq \mathcal{T} \leq T,$ and 
$$
\pqa X^\mathcal{T}, Y^\mathcal{T}\pqc = \pqa X,Y\pqc_{\mathcal{T}}. 
$$      
\end{enumerate}
 \end{rema}

\begin{defi} \label{d2.2}
Let $X=(X_t,0\leq t \leq T)$ and $Y=(Y_t,0\leq t <T)$ be processes with 
paths respectively in $C^0([0,T])$ and $L^1_{loc}([0,T)),$ i.e. $\int_0^t \vaa Y_s\vac ds< +\infty$ for any $t<T$.
\begin{enumerate}
\item If $YI_{[0,t]}$ is $X$-forward integrable for every $0\leq t < T,$ $Y$ is said \textbf{locally $X$-forward integrable on $[0,T)$}. In this case there exists a continuous process, 
which coincides, on every compact interval $[0,t]$ of $[0,1),$ with the forward integral of $YI_{[0,t]}$ with respect to $X.$ That process will still be denoted with  $I(\cdot,Y,X)=\int_0^\cdot Yd^-X.$
\item If $Y$ is locally $X$-forward integrable and  
$
\lim_{t\rightarrow T} I(t,Y,X)
$ exists almost surely, $Y$ is said $X$-\textbf{improperly forward integrable} on $[0,T]$. 
\item If the covariation process $[X^t,Y^t]$ exists, for every $0\leq t<T,$  we say that the \textbf{covariation process $[X,Y]$ exists locally} on $[0,T)$ and it is still denoted by $[X,Y].$ In this case there exists a continuous process, 
which coincides, on every compact interval $[0,t]$ of $[0,1),$ with the covariation process $\pqa X,YI_{[0,t]}\pqc.$ That process will still be denoted with  $\pqa X,Y\pqc.$ If $X=Y,$ $\pqa X,X\pqc$ we will say that the \textbf{quadratic variation of $X$ exists locally} on $[0,T].$ 
\item If the covariation process $[X,Y]$ exists locally on $[0,T)$ and $\lim_{t \rightarrow T}[X,Y]_t$ exists, the limit will be called the \textbf{improper covariation process} between $X$ and $Y$  and it will still be denoted by $[X,Y].$
If $X=Y,$ $\pqa X,X\pqc$ we will say that the \textbf{quadratic variation of $X$ exists improperly} on $[0,T].$    
\end{enumerate}
\end{defi}

\begin{rema} \label{r2.5} Let $X=(X_t, 0\leq t\leq T)$ and $ Y=(Y_t, 0\leq t\leq  T)$ be two  stochastic processes whose paths are 
 $C^0([0,T])$ and $L^1([0,T]),$ respectively. 
If $Y$ is $X$-forward integrable on $[0,T]$ then its restriction to $[0,T)$ 
is $X$-improperly forward integrable and the improper integral coincides 
with the forward integral of $Y$ with respect to $X.$ 
\end{rema}

\begin{defi} A vector $\pta \pta X^1_t,..., X^m_t\ptc,0\leq t\leq T\ptc $ of continuous processes is said to have all its \textbf{mutual brackets}
 on $[0,T]$ if $\pqa X^i,X^j\pqc$ exists on $[0,T]$ for every $i,j=1,...,m$.
\end{defi}

In the sequel if $T=1$ we will omit to specify that objects defined above exist on the interval $[0,1]$ (or $[0,1),$ respectively).

\begin{prop}\label{p3.4}
Let  $M=(M_t, 0\leq t\leq T)$ be a continuous
  local martingale with respect to some filtration $\mathbb{F}=\pta \mathcal{F}_t\ptc_{t\in[0,T]}$ of
  $\mathcal{F}.$ Then the following properties hold.
  \begin{enumerate} 
  \item The process $M$ is a finite quadratic variation process on $[0,T]$ and its quadratic variation coincides with the classical bracket appearing in the Doob decomposition of $M^2.$
\item Let $Y=(Y_t,0\leq t\leq T)$ be an $\mathbb{F}$-adapted process with left continuous and bounded paths. Then $Y$ is $M$-forward integrable on $[0,T]$
and $\int_0^\cdot Yd^-M$ coincides with the
classical It\^o integral $\int_0^\cdot YdM.$
\end{enumerate} 
\end{prop}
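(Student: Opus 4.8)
The plan is to establish the second assertion first and then deduce the first from it by an elementary algebraic identity. Throughout I would reduce to the case of a \emph{bounded} continuous martingale by localization: setting $T_k = \inf\{ t \le T : |M_t| \ge k\}$ and $\Omega_k = \{\sup_{t\le T}|M_t| \le k\}$, the stopped process $M^k := M^{T_k}$ is a bounded (hence $L^2$) continuous martingale, $P(\cup_k \Omega_k)=1$ by continuity of the paths, and $I_{\Omega_k}M^k = I_{\Omega_k}M$. Since on $\Omega_k$ the regularizing quantities $I(\ep,Y,M,t)$ and $C(\ep,M,M,t)$ coincide path-wise with those built from $M^k$, Remark \ref{r4.9}(1) (and its obvious analogue for the covariation) glues the statements proved for each $M^k$ into the statements for $M$. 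So from now I assume $M$ bounded and $Y$ bounded, left continuous and $\mathbb{F}$-adapted.

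For the second assertion I would compute $I(\ep,Y,M,t)=\frac1\ep\int_0^t Y_s(M_{s+\ep}-M_s)\,ds$ by writing $M_{s+\ep}-M_s=\int_0^T \mathbf{1}_{(s,s+\ep]}(u)\,dM_u$ and applying the stochastic Fubini theorem (legitimate since $Y$ is bounded and $M$ is an $L^2$ martingale). This produces
$$
I(\ep,Y,M,t)=\int_0^{T} G^\ep_u\,dM_u, \qquad G^\ep_u=\frac1\ep\int_{(u-\ep)\vee 0}^{u\wedge t} Y_s\,ds .
$$
For $0<u<t$ one has $G^\ep_u=\frac1\ep\int_{u-\ep}^{u}Y_s\,ds\to Y_{u^-}=Y_u$ as $\ep\downarrow 0$ by left continuity, while $|G^\ep_u|\le\|Y\|_\infty$ and $G^\ep_u$ is supported in $u\le t+\ep$. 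Splitting $\int_0^T G^\ep_u\,dM_u-\int_0^t Y_u\,dM_u$ into its contribution on $[0,t]$ and the tail on $(t,t+\ep]$, the It\^o isometry together with dominated convergence with respect to $d\langle M\rangle_u\,dP$ gives $E\int_0^t (G^\ep_u-Y_u)^2\,d\langle M\rangle_u\to 0$, while the tail is bounded by $\|Y\|_\infty^2\,E[\langle M\rangle_{t+\ep}-\langle M\rangle_t]\to 0$. Hence $I(\ep,Y,M,t)\to\int_0^t Y\,dM$ in $L^2$, and therefore in probability, for every $t$; since the It\^o integral is continuous, $Y$ is $M$-forward integrable and the two integrals coincide. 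Promoting the pointwise convergence to ucp is then obtained by applying Doob's maximal inequality to the $L^2$-martingale $u\mapsto\int_0^u (G^\ep_\cdot-Y_\cdot)\,dM$, together with the uniform tail estimate.

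For the first assertion I would use, for $M$ bounded, the pointwise identity $(M_{s+\ep}-M_s)^2=(M_{s+\ep}^2-M_s^2)-2M_s(M_{s+\ep}-M_s)$, which upon dividing by $\ep$ and integrating in $s$ yields
$$
C(\ep,M,M,t)=\frac1\ep\int_t^{t+\ep}M_s^2\,ds-\frac1\ep\int_0^\ep M_s^2\,ds-2\,I(\ep,M,M,t).
$$
The first two terms converge, uniformly in $t$ on compacts and almost surely (hence ucp), to $M_t^2$ and $M_0^2$ by uniform continuity of the paths. Applying the second assertion with $Y=M$ (continuous, hence left continuous, and bounded) gives $I(\ep,M,M,t)\to\int_0^t M\,dM$ ucp. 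Therefore $C(\ep,M,M,\cdot)$ converges ucp and
$$
[M]_t=M_t^2-M_0^2-2\int_0^t M\,dM=\langle M\rangle_t,
$$
the last equality being It\^o's formula for $M^2$; thus the regularization quadratic variation equals the classical bracket appearing in the Doob decomposition of $M^2$.

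The main obstacle I anticipate is the rigorous passage through the stochastic Fubini theorem and, above all, the promotion of the $L^2$ (pointwise in $t$) convergence of $I(\ep,Y,M,\cdot)$ to convergence that is \emph{uniform in} $t$ (ucp), which is what the definition of forward integrability as a ucp limit actually demands. The localization bookkeeping needed to return from $M^k$ to $M$ via Remark \ref{r4.9} is routine but must be handled with care because of the time shift in $M_{s+\ep}$ and the convention $M_t=M_{(t\vee 0)\wedge 1}$ at the right endpoint.
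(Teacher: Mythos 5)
The paper does not prove Proposition \ref{p3.4} at all---it is stated in the preliminaries as a recalled result of the calculus via regularization with the reader referred to \cite{RV05}---and your argument is exactly the standard proof from that literature: stochastic Fubini plus the It\^o isometry to identify $I(\ep,Y,M,t)$ with $\int_0^t Y\,dM$, and the telescoping identity $C(\ep,M,M,t)=\frac1\ep\int_t^{t+\ep}M_s^2\,ds-\frac1\ep\int_0^\ep M_s^2\,ds-2I(\ep,M,M,t)$ to deduce $[M]=\langle M\rangle$. The argument is correct; the only details worth tightening are that $\Omega_k$ should be defined with strict inequality so that $T_k>T$ on $\Omega_k$, that $Y$ must itself be localized (having bounded paths is not uniform boundedness in $\omega$), and that in the ucp upgrade the boundary contribution $\int_t^{t+\ep}G^\ep_u\,dM_u$ is not a single martingale in $t$, so its supremum must be controlled pathwise by $\Vert Y\Vert_\infty$ times the modulus of continuity of $M$ rather than by Doob's inequality.
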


\begin{prop}\label{p3.6}
Let $V=(V_t,0\leq t\leq T)$  be a  bounded
  variation process and  $
Y=(Y_t,0\leq t\leq T),$ be a process with paths being bounded and with at most countable discontinuities. Then the following properties hold.
\begin{enumerate} 
\item The process $Y$ is $V$-forward
integrable on $[0,T]$ and $\int_0^\cdot Yd^-V$ coincides with the
Lebesgue-Stieltjes integral denoted with $\int_0^\cdot YdV.$
\item The covariation process $\pqa Y,V\pqc$ exists on $[0,T]$ and it is equal to zero. In particular a bounded variation process has zero quadratic variation.
\end{enumerate}
\end{prop}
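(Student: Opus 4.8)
The plan is to prove Proposition \ref{p3.6} by establishing the two assertions in sequence, treating the covariation statement (item 2) first since it feeds directly into the forward-integral statement (item 1). The core analytic tool throughout will be the $\frac{1}{\ep}$-regularized definitions $I(\ep,Y,V,t)$ and $C(\ep,V,Y,t)$ from Definition 1, so the whole proof reduces to controlling these averaged increments as $\ep \downarrow 0$. Since $V$ has bounded variation, I expect to bound everything by the total variation measure $|dV|$ and then invoke dominated convergence, with the at-most-countable discontinuity set of $Y$ being a $|dV|$-null-plus-negligible obstruction to handle carefully.

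\textbf{Item 2 (the covariation vanishes).} First I would write
$$
C(\ep,V,Y,t)= \frac{1}{\ep}\int_0^t \pta Y_{s+\ep}-Y_s\ptc\pta V_{s+\ep}-V_s\ptc\,ds.
$$
The plan is to bound $|V_{s+\ep}-V_s|$ by the variation of $V$ on $(s,s+\ep]$ and use that $Y$ is bounded, say $|Y|\le K$, so that $|Y_{s+\ep}-Y_s|\le 2K$. A Fubini-type estimate then gives $\sup_{t}|C(\ep,V,Y,t)| \le \frac{2K}{\ep}\int_0^{T}\pta\int_s^{s+\ep}|dV_r|\ptc ds = 2K\int_0^{T}\frac{1}{\ep}\int_{(r-\ep)_+}^{r}ds\,|dV_r|$, where switching the order of integration shows each inner average is at most $1$ and tends to $0$ pointwise as $\ep\downarrow0$ off the jumps of $V$. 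Hence $[V,V]=0$ (bounded variation has zero quadratic variation), and for the mixed bracket I would refine the argument: the integrand $\pta Y_{s+\ep}-Y_s\ptc$ converges to $0$ for $|dV|$-almost every $s$ because the discontinuities of $Y$ are countable, so by dominated convergence $C(\ep,V,Y,t)\to 0$ ucp.

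\textbf{Item 1 (forward integral equals Lebesgue--Stieltjes).} Here the plan is to show $I(\ep,Y,V,t)=\frac{1}{\ep}\int_0^t Y_s\pta V_{s+\ep}-V_s\ptc ds \to \int_0^t Y\,dV$ ucp. I would rewrite the increment using Fubini as $\frac{1}{\ep}\int_0^t Y_s\pta\int_{(s,s+\ep]}dV_r\ptc ds$, swap the order to obtain an average $\frac{1}{\ep}\int_{(r-\ep)_+\vee 0}^{r\wedge t}Y_s\,ds$ integrated against $dV_r$, and recognize this inner average as a left-regularization of $Y$ that converges to $Y_{r-}$ at every point, hence to $Y_r$ for all but countably many $r$. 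Since $Y$ is bounded, dominated convergence against the finite measure $|dV|$ yields convergence to $\int_0^t Y_{r-}\,dV_r=\int_0^t Y_r\,dV_r$, the two Lebesgue--Stieltjes integrals agreeing because they differ only on the countable jump set of $Y$, which carries no mass beyond the at-most-countably-many atoms of $V$ (and those can be absorbed by the left-limit identification).

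\textbf{The hard part} will be the bookkeeping at the boundary and the rigorous ucp (rather than merely pointwise-in-$t$) upgrade: the inner averages involve truncations at $s+\ep$ possibly exceeding $t$ or $T$, and one must verify the convergence is uniform in $t$ so that the limiting process is continuous as Definition 1 demands. The key obstacle is reconciling the left-limit value $Y_{r-}$ produced naturally by the forward regularization with the value $Y_r$ appearing in the Lebesgue--Stieltjes integral precisely at the atoms of $dV$; I expect this to require the hypothesis that $Y$ has at most countable discontinuities in an essential way, ensuring the set where $Y_{r-}\ne Y_r$ is countable and hence, together with a careful treatment of common atoms of $Y$ and $V$, negligible for the identification. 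Once the dominated-convergence machinery is set up with the total-variation measure as dominating bound, both items follow without further difficulty.
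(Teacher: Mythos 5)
The paper itself gives no proof of Proposition \ref{p3.6}; it is recalled from \cite{RV05}, and your route --- Fubini against the total variation measure $|dV|$ followed by dominated convergence --- is exactly the standard argument. Two steps, however, do not work as written. First, in item 2 the displayed estimate proves nothing about convergence: after switching the order of integration, the inner quantity $\frac{1}{\ep}\int_{(r-\ep)_+}^{r}ds=\min(r,\ep)/\ep$ tends to $1$, not to $0$, so your bound only yields $\sup_t|C(\ep,V,Y,t)|\le 2K\,\|V\|_{TV}$, and the sentence ``Hence $[V,V]=0$'' does not follow at that point. The correct move --- which you do sketch immediately afterwards --- is to keep $|Y_{s+\ep}-Y_s|$ inside the inner Lebesgue integral and observe that for every continuity point $r$ of $Y$ one has $\frac{1}{\ep}\int_{(r-\ep)_+}^{r}|Y_{s+\ep}-Y_s|\,ds\to 0$ (both $s$ and $s+\ep$ lie within $\ep$ of $r$), then dominate in the $r$-variable against $|dV_r|$. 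Note the almost-everywhere statement must be made in $r$ with respect to $|dV|$, not ``for $|dV|$-almost every $s$'' as you wrote: the $s$-integration is Lebesgue. The claim $[V,V]=0$ is then obtained by applying item 2 to $Y=V$, since a BV path has at most countably many discontinuities.

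Second, both items silently require that $|dV|$ charges no mass on the countable discontinuity set of $Y$, and your parenthetical that common atoms ``can be absorbed by the left-limit identification'' is false. Take $Y=V=\1_{[1/2,1]}$: then $C(\ep,V,Y,t)\to \1_{\{t\ge 1/2\}}\neq 0$, while $I(\ep,Y,V,t)\to \int_0^t Y_{r-}\,dV_r=0$ although the Lebesgue--Stieltjes integral $\int_0^t Y_r\,dV_r$ equals $1$ for $t\ge 1/2$; both conclusions fail. The statement holds in the paper's setting because the integrator $V$ is a \emph{continuous} bounded variation process (cf.\ Definition 2.1 and Proposition \ref{p2.1}, where integrators have paths in $C^0([0,T])$), so $|dV|$ is atomless and the countable exceptional set is $|dV|$-null; you should invoke this explicitly rather than try to absorb atoms. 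Two smaller points: $\frac{1}{\ep}\int_{r-\ep}^{r}Y_s\,ds$ need not converge to $Y_{r-}$ ``at every point'' since left limits need not exist under your hypotheses --- it suffices that it converges to $Y_r$ at every continuity point of $Y$, which is $|dV|$-a.e.\ $r$; and the ucp bookkeeping you flag as the hard part is not: for item 1, Definition 2.1 only asks convergence in probability at each fixed $t$ plus a continuous version of the limit (which $\int_0^\cdot Y\,dV$ is, for continuous $V$), and for item 2 your dominating bound is already uniform in $t$.
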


\begin{coro} \label{c2.15}
Let $X=(X_t,0\leq t\leq T)$ be a continuous process and  $Y=(Y_t,0\leq t \leq T)$ a bounded variation process. Then 
$$
XY-X_0Y_0=\int_0^\cdot X_sdY_s+ \int_0^\cdot Y_sd^-X_s.
$$
\end{coro}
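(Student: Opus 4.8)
The plan is to read the identity off the regularizations of the product $XY$ and then to identify each resulting term through Proposition \ref{p3.6}. First I would start from the elementary algebraic decomposition
\[
X_{s+\ep}Y_{s+\ep}-X_sY_s = X_s\pta Y_{s+\ep}-Y_s\ptc + Y_s\pta X_{s+\ep}-X_s\ptc + \pta X_{s+\ep}-X_s\ptc\pta Y_{s+\ep}-Y_s\ptc,
\]
valid for every $s$ and every $\ep>0$. Dividing by $\ep$ and integrating over $[0,t]$ turns this into
\[
\frac{1}{\ep}\int_0^t \pta X_{s+\ep}Y_{s+\ep}-X_sY_s\ptc ds = I(\ep,X,Y,t) + I(\ep,Y,X,t) + C(\ep,X,Y,t),
\]
so that the whole argument reduces to computing the limits of three of these four quantities and solving for the fourth.

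Next I would identify the limits of the terms I already control. Since $X$ is continuous on the compact interval $[0,T]$ it is bounded and has no discontinuities, so Proposition \ref{p3.6}(1), applied with integrand $X$ and bounded variation integrator $Y$, shows that $I(\ep,X,Y,\cdot)$ converges to the Lebesgue--Stieltjes integral $\int_0^\cdot X_s dY_s$. The quantity $C(\ep,X,Y,\cdot)$, being symmetric in $X$ and $Y$, is precisely the regularization defining the covariation $[X,Y]$, which by Proposition \ref{p3.6}(2) exists and is identically zero because $Y$ has bounded variation. For the left-hand side I would use the telescoping identity
\[
\frac{1}{\ep}\int_0^t \pta F_{s+\ep}-F_s\ptc ds = \frac{1}{\ep}\int_t^{t+\ep}F_u\,du - \frac{1}{\ep}\int_0^\ep F_u\,du, \qquad F:=XY,
\]
whose right-hand side converges to $F_t-F_0 = X_tY_t-X_0Y_0$ by differentiation of the averages of the bounded process $F$.

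Combining these three limits in the second display, $I(\ep,Y,X,\cdot)$ is forced to converge, which simultaneously proves that $Y$ is $X$-forward integrable and yields
\[
\int_0^t Y_sd^-X_s = X_tY_t - X_0Y_0 - \int_0^t X_sdY_s,
\]
that is, the announced formula. One then checks that this limit admits a continuous version, as required by the definition of the forward integral: the possible jumps of $XY$ at the discontinuities of $Y$ are exactly cancelled by those of the Lebesgue--Stieltjes integral $\int_0^\cdot X_s dY_s$, so the difference is continuous. The step I expect to be the main obstacle is the left-hand side limit, because the averages $\frac{1}{\ep}\int_t^{t+\ep}F_u\,du$ converge to $F_t$ only at points of right-continuity of $F$; the bounded variation of $Y$ (hence its at most countably many jumps) must be used to control the behaviour at jump times, and, together with the uniform continuity of $X$ on $[0,T]$, to upgrade the pointwise convergence to the mode of convergence needed to combine with the two $ucp$ limits above. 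The truncation convention $X_t=X_{(t\vee 0)\wedge 1}$ also requires a little care near $t=T$ when $t+\ep>1$.
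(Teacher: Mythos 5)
Your proposal is correct and is exactly the argument the paper intends: the corollary is stated as a direct consequence of Proposition \ref{p3.6}, obtained by expanding the increment of the product $XY$ into $I(\ep,X,Y,\cdot)+I(\ep,Y,X,\cdot)+C(\ep,X,Y,\cdot)$, identifying the first term as the Lebesgue--Stieltjes integral and the third as the vanishing covariation, and solving for the remaining term. Your attention to the right-continuity issue at the (at most countably many) jump times of $Y$ and to the continuity of the resulting version of the forward integral is precisely the care the definition requires.
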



\begin{prop} \label{p2.1} Let $X=(X_t,0\leq t\leq T)$ be a continuous finite quadratic variation process and $V=((V^1_t,\dots,V_t^m), 0\leq t\leq T)$ be a vector of continuous bounded variation processes. Then for every $u$ in  $C^{1,2}(\mathbb{R}^m \times \mathbb{R}),$ the process $\pta  \partial_xu(V_t,X_t), 0\leq t\leq T\ptc$ is $X$-forward integrable on $[0,T]$ and
\beqn \nonu
u(V,X)&=&u(V_0,X_0)+\sum_{i=1}^m \int_0^\cdot {\partial_{v_i}} u(V_t,X_t)dV^i_t+\int_0^\cdot \partial_xu(V_t,X_t)d^-X_t\\ \nonu &+&\frac{1}{2}\int_0^\cdot \partial_{xx}^{(2)}u(V_t,X_t)d\pqa X\pqc_t.
\eeqn

\end{prop}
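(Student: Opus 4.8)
The plan is to prove the formula by a regularization argument, computing the time increments of $u(V_t, X_t)$ over a fine grid and passing to the limit via the very definitions of forward integral and covariation recalled in Definition 1.1. First I would fix $0 \le t \le T$ and, for the regularization parameter $\ep > 0$, write the telescoping identity
$$
u(V_{t}, X_{t}) - u(V_0, X_0) = \frac{1}{\ep}\int_0^{t} \bigl( u(V_{s+\ep}, X_{s+\ep}) - u(V_s, X_s) \bigr)\, ds + R(\ep),
$$
where $R(\ep) \to 0$ as $\ep \downarrow 0$ by continuity of $s \mapsto u(V_s, X_s)$ (this boundary term collects the contributions near $0$ and $t$). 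The core of the argument is then a second-order Taylor expansion of the integrand $u(V_{s+\ep}, X_{s+\ep}) - u(V_s, X_s)$ in the $m+1$ variables, exploiting that $u \in C^{1,2}(\R^m \times \R)$.

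The Taylor expansion produces three groups of terms. The first-order terms in the $v_i$ directions give, after dividing by $\ep$ and integrating, the Riemann--Stieltjes integrals $\sum_{i=1}^m \int_0^\cdot \partial_{v_i} u(V_s, X_s)\, dV^i_s$; here I would invoke Proposition \ref{p3.6}, since each $V^i$ has bounded variation and $\partial_{v_i} u(V_s,X_s)$ has bounded paths with at most countable discontinuities, so the forward integral coincides with the Lebesgue--Stieltjes integral. The first-order term in the $x$ direction is precisely $\frac{1}{\ep}\int_0^{\cdot} \partial_x u(V_s, X_s)(X_{s+\ep} - X_s)\, ds$, which is the quantity $I(\ep, \partial_x u(V,X), X, \cdot)$ from Definition 1.1; \emph{establishing that this converges}, thereby showing $\partial_x u(V,X)$ is $X$-forward integrable and identifying its limit as $\int_0^\cdot \partial_x u(V_s,X_s)\, d^-X_s$, is the heart of the statement. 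The pure second-order term in $x$, namely $\frac{1}{2\ep}\int_0^\cdot \partial^{(2)}_{xx} u(V_s,X_s)(X_{s+\ep}-X_s)^2\, ds$, should converge to $\frac{1}{2}\int_0^\cdot \partial^{(2)}_{xx} u(V_s,X_s)\, d[X]_s$ because $X$ is a finite quadratic variation process; the mixed $v_i x$ and pure $v_i v_j$ second-order terms, as well as the Taylor remainder, vanish in the limit, since each carries a factor $(V_{s+\ep}^i - V_s^i)$ of bounded variation paired against either $(X_{s+\ep}-X_s)$ or another increment, giving covariations that are zero by Proposition \ref{p3.6}.

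The main obstacle is twofold and intertwined: the second-order $x$-term is defined with the weight $\partial^{(2)}_{xx} u(V_s, X_s)$ \emph{inside} the $\ep$-averaged square increment, whereas the covariation $[X]$ in Definition 1.1 is defined without a weight, so I must justify that the weighted average converges to $\int_0^\cdot \partial^{(2)}_{xx} u(V_s,X_s)\, d[X]_s$. I would handle this by a uniform-continuity argument: approximating $\partial^{(2)}_{xx} u(V_s,X_s)$ by step functions in $s$ (using path continuity on the compact $[0,t]$), reducing to finitely many unweighted covariation increments, and controlling the error uniformly in $\ep$. Crucially, the forward-integrability of $\partial_x u(V,X)$ is not assumed but must be \emph{deduced}: I would obtain convergence of $I(\ep, \partial_x u(V,X), X, \cdot)$ by rearranging the telescoping identity — all other terms converge, hence so must this one — and then argue that the limit process is continuous (so that the Definition 1.1 requirement of a continuous version is met) because it equals a sum of manifestly continuous processes. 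Tracking the ucp versus pointwise-in-probability modes of convergence throughout, and ensuring the remainder term from Taylor's theorem is negligible after division by $\ep$, are the technical points requiring care.
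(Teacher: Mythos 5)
The paper does not actually prove Proposition \ref{p2.1}: it is recalled as a known result of the calculus via regularizations (the reader is referred to \cite{RV05}, and Remark \ref{RL2.1} observes that, at least when $u$ is $C^2$, it follows from the chain rule of Proposition \ref{l2.1} with $Z=1$, the brackets $[V^i,X]$ and $[V^i,V^j]$ vanishing by Proposition \ref{p3.6}). Your proposal reconstructs the standard direct proof from the literature, and its architecture is right: the telescoping identity with vanishing boundary term, the identification of the bounded variation terms via Proposition \ref{p3.6}, the deduction (rather than assumption) of forward integrability of $\partial_x u(V,X)$ by rearranging the identity and noting the limit is a sum of continuous processes, and the step-function approximation of the weight in the second-order term controlled uniformly in $\ep$ by the boundedness in probability of $\frac{1}{\ep}\int_0^t(X_{s+\ep}-X_s)^2\,ds$. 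These are exactly the points where the argument lives, and you identify them correctly.

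One technical step as written would not go through: you cannot perform ``a second-order Taylor expansion \ldots in the $m+1$ variables'' of a function that is only $C^{1,2}(\mathbb{R}^m\times\mathbb{R})$, i.e.\ merely $C^1$ in $v$; the ``pure $v_iv_j$'' and ``mixed $v_ix$'' second-order terms you propose to discard do not exist. The standard remedy is to split the increment as
$u(V_{s+\ep},X_{s+\ep})-u(V_s,X_{s+\ep})$ plus $u(V_s,X_{s+\ep})-u(V_s,X_s)$, treating the first difference by a first-order (mean value) expansion in $v$ alone — whose error is killed by uniform continuity of $\partial_{v_i}u$ along the paths together with the bound $\frac{1}{\ep}\int_0^t|V^i_{s+\ep}-V^i_s|\,ds\le \|V^i\|_{\mathrm{var}}$ — and the second by a genuine second-order expansion in $x$ alone, whose Lagrange remainder is dominated by a modulus of continuity of $\partial^{(2)}_{xx}u$ times the $\ep$-averaged squared increments. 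With that correction the proof is complete and is, in substance, the proof the paper delegates to \cite{RV05}.
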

\begin{prop} \label{l2.1} Let  $X=(X^1_t,\dots,X^m_t, 0\leq t\leq T)$  be
 a vector of continuous processes having all its mutual brackets. 
Let  $\psi:\mathbb{R}^m\rightarrow \mathbb{R}$ be of class $C^2(\mathbb{R}^m)$
and $Y=\psi(X).$ 
Then $Z$ is $Y$-forward integrable on $[0,T],$ if and only if $Z\partial_{x^i}\psi(X)$ is $X^i$-forward integrable on $[0,T],$ for every $i=1,...,m$ and
\beqn \nonu 
\int_0^\cdot Zd^-Y=\sum_{i=1}^m \int_0^\cdot Z\partial_{x^i}\psi(X)d^-X^i+\frac{1}{2}\sum_{i,j=0}^m \int_0^\cdot Z\partial_{x^i x^j}^{(2)}\psi(X)d\pqa X^i,X^j\pqc.
\eeqn
\end{prop}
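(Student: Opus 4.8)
The plan is to argue directly from the definition of the forward integral via regularizations, reducing everything to a second-order Taylor expansion of $\psi$ together with two \emph{unconditional} limit facts. By definition $\int_0^\cdot Z\,d^-Y$ is the limit of $I(\ep,Z,Y,t)=\frac1\ep\int_0^t Z_s (Y_{s+\ep}-Y_s)\,ds$. Since $\psi\in C^2(\R^m)$ and $X$ is continuous, I would expand, for each fixed $s$,
\[
\psi(X_{s+\ep})-\psi(X_s)=\sum_{i=1}^m\partial_{x^i}\psi(X_s)(X^i_{s+\ep}-X^i_s)+\tfrac12\sum_{i,j=1}^m\partial^{(2)}_{x^ix^j}\psi(X_s)(X^i_{s+\ep}-X^i_s)(X^j_{s+\ep}-X^j_s)+\rho_\ep(s),
\]
where $\rho_\ep(s)$ is the usual integral remainder, controlled by the modulus of continuity of $D^2\psi$ along the path. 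Inserting this into $I(\ep,Z,Y,t)$ decomposes it as a first-order part equal to $\sum_{i=1}^m I(\ep,Z\,\partial_{x^i}\psi(X),X^i,t)$, a second-order part, and a remainder part.

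The heart of the argument is that the second-order and remainder parts converge regardless of whether $Z$ is $Y$-forward integrable, using only that $X$ has all its mutual brackets. For the second-order part I would establish a weighted-covariation lemma: for a sufficiently regular weight $H$,
\[
\frac1\ep\int_0^t H_s (X^i_{s+\ep}-X^i_s)(X^j_{s+\ep}-X^j_s)\,ds \longrightarrow \int_0^t H_s\,d[X^i,X^j]_s
\]
in probability, which follows from the existence of $[X^i,X^j]$ and Proposition \ref{p3.6} for the Stieltjes side. Applied to $H=Z\,\partial^{(2)}_{x^ix^j}\psi(X)$ this yields the bracket term $\tfrac12\sum_{i,j}\int_0^\cdot Z\,\partial^{(2)}_{x^ix^j}\psi(X)\,d[X^i,X^j]$. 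The remainder part is dominated by $\sup_s |\rho_\ep(s)|/\ep$, which is bounded by the oscillation of $D^2\psi$ on a compact set times the regularized quadratic variations $\frac1\ep\int(X^i_{s+\ep}-X^i_s)^2ds$ (bounded since the brackets exist), and hence tends to $0$. Throughout, I would first localize via Remark \ref{r4.9} and Definition \ref{d3.2} so that the paths, $Z$, and the relevant functions of $X$ are bounded, making the compact-set estimates legitimate, and then remove the localization at the end.

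With these two unconditional limits in hand, the \emph{if} direction is immediate: if each $Z\,\partial_{x^i}\psi(X)$ is $X^i$-forward integrable, every first-order regularization $I(\ep,Z\,\partial_{x^i}\psi(X),X^i,t)$ converges, hence so does $I(\ep,Z,Y,t)$, and passing to the limit produces exactly the stated formula.

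The main obstacle is the \emph{only if} direction, where I must extract the individual $X^i$-forward integrability of each $Z\,\partial_{x^i}\psi(X)$ from the mere convergence of $I(\ep,Z,Y,t)$. The unconditional convergence of the second-order and remainder parts shows only that the \emph{sum} $\sum_i I(\ep,Z\,\partial_{x^i}\psi(X),X^i,t)$ converges, and a sum of regularizations can in principle converge without its summands doing so. Resolving this is the delicate point: I would exploit the specific structure that the weights are the gradient components of one function $\psi$ evaluated along the distinct integrators $X^i$, reducing by localization and, if needed, induction on $m$ to isolate one direction at a time (the scalar case $m=1$ being trivial, since there the sum is a single term and the equivalence is automatic). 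Verifying that this isolation is always possible under the sole assumption that all mutual brackets exist is where I expect the real work to lie.
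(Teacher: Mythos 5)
The paper itself does not prove this statement: its ``proof'' is a citation of Proposition 4.3 of \cite{RV2}, with the sole remark that the mode of convergence there (ucp) differs from the pointwise-in-probability convergence used here. Your Taylor-expansion decomposition plus the weighted-covariation lemma is exactly the mechanism behind that cited result, and your ``if'' direction is essentially correct, with one technical caveat worth making explicit: the convergence $\frac1\ep\int_0^t H_s(X^i_{s+\ep}-X^i_s)(X^j_{s+\ep}-X^j_s)\,ds\to\int_0^t H_s\,d[X^i,X^j]_s$ is not automatic for a weight $H=Z\,\partial^{(2)}_{x^ix^j}\psi(X)$ with $Z$ merely measurable and bounded; the weak-convergence argument behind it needs $Z$ to be, say, left continuous or almost surely Riemann--Stieltjes integrable against $d[X^i,X^j]$. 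In this paper the admissible integrands have such paths, so this is repairable, but it should be stated rather than hidden in ``sufficiently regular''.

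The genuine gap is the one you flag and leave open, and it cannot be closed as you hope. Your reduction shows that $Y$-forward integrability of $Z$ is equivalent to convergence of the \emph{sum} $\sum_i I(\ep,Z\,\partial_{x^i}\psi(X),X^i,t)$, and no localization or induction on $m$ will split that sum term by term under the stated hypotheses. Indeed, take $m=2$, $X^1=X^2=W$ a Brownian motion (all mutual brackets exist and equal $t$) and $\psi(x_1,x_2)=x_1-x_2$: then $Y\equiv 0$, so every $Z$ with integrable paths is trivially $Y$-forward integrable, while $Z\,\partial_{x^1}\psi(X)=Z$ need not be $W$-forward integrable. So the ``only if'' direction, read literally component by component, is false in this generality; what your argument actually establishes is the equivalence with the existence of the sum. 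That weaker statement is all the paper ever uses: whenever the ``only if'' direction is invoked (for instance in the proof of Proposition \ref{p5.16}, or in Lemma \ref{l4.3}), all but one of the integrators $X^i$ is of bounded variation, so all but one summand converges unconditionally by Proposition \ref{p3.6} and the remaining one is isolated by subtraction. If you reformulate the conclusion of your ``only if'' step as equivalence with the sum, and add the observation that individual integrability follows whenever $m-1$ of the summands are known to converge for independent reasons, your proof becomes complete and is considerably more informative than the paper's bare citation.
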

\begin{proof}
The proof derives from Proposition $4.3$ of \cite{RV2}. The result
is a slight modification of that one. It should only be noted that
there forward integral of a process $Y$ with respect to a process $X$ was defined as limit ucp 
of its regularizations.
\end{proof}
\begin{rema}\label{RL2.1}
Taking $Z = 1$, the chain rule property described in Proposition \ref{l2.1}
implies in particular the {\it classical} It\^o formula for finite
quadratic variation processes stated for instance in \cite{RV1}
or in a discretization framework in \cite{fo}.
\end{rema}

\section{  $\mathcal{A}$-martingales}
Throughout this section $\cal A$ will be a real linear space
 of measurable processes indexed by $[0,1)$ with paths which are bounded on each compact interval of $[0,1).$

We will denote with $\mathbb{F}=\pta \mathcal{F}_t\ptc_{t\in[0,1]}$ a filtration indexed by $[0,1]$ and with  $\mathcal{P}(\mathbb{F})$ the $\sigma$-algebra generated by all left continuous and $\mathbb{F}$-adapted processes. In the remainder of the paper we will  adopt the notations $\mathbb{F}$ and $\mathcal{P}(\mathbb{F})$ even when the filtration  $\mathbb{F}$ is indexed by $[0,1).$ At the same way, if $X$ is a process indexed by $[0,1],$ we shall continue to denote with  $X$ its restriction to $[0,1).$ 
\subsection{Definitions and properties}
\begin{defi} \label{d3.1}
A process $X=(X_t,\interval)$ is said
\textbf{$\mathcal{A}$-martingale} if 
every $\theta$ in $\mathcal{A}$
is $X$-improperly forward integrable 
and 
$\mathbb{E}\pqa \int_0^t \theta_sd^-X_s\pqc=0$ for every $\interval.$
\end{defi}

\begin{defi}\label{d4.2}
A process $X=(X_t,\interval)$ is said
\textbf{$\mathcal{A}$-semimartingale} if  it can be written as the sum of an $\cal{A}$-martingale $M$ and a bounded variation process $V,$ with $V_0=0.$
\end{defi}

\begin{rema}\label{r3.2}
\begin{enumerate}
\item
If $X$ is a continuous $\mathcal{A}$-martingale with $X$ belonging to $\cal{A},$ its quadratic variation exists improperly. In fact, if $\int_0^\cdot X_td^-X_t$
exists improperly, it is possible to show that $\pqa X,X\pqc$ exists improperly  and
$
\pqa X,X\pqc=X^2-X_0^2-2\int_0^\cdot X_sd^-X_s.
$
We refer to Proposition 4.1 of \cite{RV2} for details.
\item Let $X$ be a continuous square integrable martingale with respect to some filtration $\mathbb{F}$. 
Suppose that every process in $\cal{A}$ is the restriction to $[0,1)$ of a process $(\theta_t,\interval)$ which is 
$\mathbb{F}$-adapted, it has  left continuous with right limit paths 
(cadlag) and 
$\mathbb{E}\pqa \int_0^1 \theta_t^2 d\pqa X \pqc_t\pqc<+\infty.$ 
Then $X$ is an $\mathcal{A}$-martingale.  

\item In \cite{FWY} the authors introduced the notion of
  \textbf{weak-martingale}. 
   A semimartingale  $X$ is a
  weak-martingale if $\mathbb{E}\pqa \int_0^t f(s,X_s)dX_s
  \pqc=0,$ $\interval,$ for every $f:\mathbb{R}^+\times \mathbb{R}\rightarrow \mathbb{R},$ bounded
  Borel-measurable. Clearly we can affirm the following.  Suppose that
  $\mathcal{A}$ contains all processes of the form $f(\cdot,X),$ with
  $f$ as above. Let $X$ be a semimartingale 
 which is an $\mathcal{A}$-martingale. Then $X$ is a weak-martingale. 
\end{enumerate}
\end{rema}

\begin{prop} \label{p3.4a}
Let  $X$ be a continuous $\mathcal{A}$-martingale. The following statements hold true.
\begin{enumerate}
\item If $X$ belongs to $\cal{A},$ $X_0=0$ and $\pqa X,X\pqc=0.$ Then $X\equiv 0$.
\item Suppose that $\mathcal{A}$ contains all bounded  $\mathcal{P}(\mathbb{F})$-measurable processes.
Then $X$ is an $\mathbb{F}$-martingale.
\end{enumerate}
\end{prop}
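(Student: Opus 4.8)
The plan is to treat the two assertions separately: the first via the quadratic-variation representation already recorded in Remark \ref{r3.2}, and the second by testing the $\mathcal{A}$-martingale property against elementary buy-and-hold integrands.

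For part 1, I would start from the observation that, since $X$ is a continuous $\mathcal{A}$-martingale lying in $\mathcal{A}$, the integrand $\theta = X$ is $X$-improperly forward integrable, so Remark \ref{r3.2}(1) applies and gives the improper identity $[X,X] = X^2 - X_0^2 - 2\int_0^\cdot X_s d^- X_s$. Inserting the hypotheses $X_0 = 0$ and $[X,X] = 0$ yields the pathwise equality $X_t^2 = 2\int_0^t X_s d^- X_s$ for every $t$. Taking expectations and invoking Definition \ref{d3.1} with $\theta = X \in \mathcal{A}$, which forces $\mathbb{E}[\int_0^t X_s d^- X_s] = 0$, gives $\mathbb{E}[X_t^2] = 0$, hence $X_t = 0$ almost surely for each $t$; continuity of $X$ then upgrades this to $X \equiv 0$. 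There is no real obstacle here: one only needs the representation of $[X,X]$ to be available, which is exactly what membership $X \in \mathcal{A}$ secures, together with the legitimacy of passing from the almost-sure identity to expectations.

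For part 2, I would fix $0 \le s < t \le 1$ and a bounded $\mathcal{F}_s$-measurable random variable $G$, and consider the integrand $\theta = G\, 1_{(s,t]}$. This process is bounded, left-continuous in time and $\mathbb{F}$-adapted (it is $0$ up to $s$ and equal to the $\mathcal{F}_s$-measurable $G$ afterwards), hence $\mathcal{P}(\mathbb{F})$-measurable and therefore an element of $\mathcal{A}$ by the standing assumption of this part. The key computation is that the forward integral of such an integrand reduces to an increment: from the regularization $I(\varepsilon,\theta,X,t) = \frac{G}{\varepsilon}\int_s^t (X_{r+\varepsilon}-X_r)\,dr$, rewritten as $\frac{G}{\varepsilon}\bigl(\int_t^{t+\varepsilon} X_r\,dr - \int_s^{s+\varepsilon} X_r\,dr\bigr)$, continuity of $X$ gives $\int_0^t \theta\, d^- X = G(X_t - X_s)$, in agreement with the buy-and-hold computation of the Introduction.

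Applying the $\mathcal{A}$-martingale property to $\theta$ then gives $\mathbb{E}[G(X_t - X_s)] = 0$; specializing to $G = 1_A$ for an arbitrary $A \in \mathcal{F}_s$ shows $\mathbb{E}[(X_t - X_s)1_A] = 0$ for all such $A$, which is precisely the defining property $\mathbb{E}[X_t \mid \mathcal{F}_s] = X_s$, so that $X$ is an $\mathbb{F}$-martingale. Two points deserve care, and the second is where I expect the only genuine work. First, for the conclusion ``$X$ is an $\mathbb{F}$-martingale'' to make sense, $X$ must be taken $\mathbb{F}$-adapted with integrable marginals; this is the hypothesis implicit in the proposition, namely that $\mathbb{F}$ is a filtration with respect to which $X$ is adapted. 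Second, the crux is the admissibility of $\theta = G\,1_{(s,t]}$, that is, its membership in the class of bounded $\mathcal{P}(\mathbb{F})$-measurable processes; this rests on the left-continuity and adaptedness noted above, and it is exactly here that the hypothesis of part 2 is exploited. The remaining difficulty is mere bookkeeping: ensuring the forward-integral limit is read off at a time $u \ge t$ so that the whole increment over $(s,t]$ is captured, and that expectation may be passed through the pathwise identity.
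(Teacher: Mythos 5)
Your proposal is correct and follows essentially the same route as the paper: part 1 is exactly the paper's appeal to Remark \ref{r3.2}(1) giving $\mathbb{E}[X_t^2]=0$, and part 2 is the paper's argument with the test integrands $I_A I_{(s,t]}$ (your bounded $G$ being an immaterial generalization of the indicator). No gaps.
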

\begin{proof}
From point 1. of Remark \ref{r3.2}, $\mathbb{E}\pqa X_t^2\pqc=0,$ for all $\interval.$  

Regarding point 2. it is sufficient to observe that processes of type $I_{A}I_{(s,t]},$ with $0  \leq s \leq t \leq 1,$ and $A$ in $\mathcal{F}_s$ belong to $\cal A.$ Moreover $\int_0^1 I_AI_{(s,t]}(r)d^-X_r=I_A(X_t-X_s).$ This imply $\mathbb E [X_t-X_s\left|\right.\mathcal{F} _s]=0,$ $0\leq s \leq t \leq 1.$ 
\end{proof}

\begin{coro}
The decomposition of an $\cal A$-semimartingale $X$ in definition \ref{d4.2} is unique among the class of processes of type $M+V,$ being $M$ a continuous $\mathcal{A}$-martingale in $\cal A$ and $V$ a bounded variation process.
\end{coro}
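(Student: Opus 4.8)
The plan is to reduce the uniqueness assertion to the rigidity statement of Proposition \ref{p3.4a}(1). Suppose $X$ admits two decompositions of the prescribed type, say $X = M_1 + V_1 = M_2 + V_2$, where each $M_i$ is a continuous $\mathcal{A}$-martingale belonging to $\cal A$ and each $V_i$ is a bounded variation process with $V_i(0)=0$, as required by Definition \ref{d4.2}. Setting $N := M_1 - M_2 = V_2 - V_1$, the goal becomes to show $N \equiv 0$.

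First I would verify that $N$ is again a continuous $\mathcal{A}$-martingale lying in $\cal A$. Membership in $\cal A$ is immediate since $\cal A$ is a real linear space and $M_1, M_2 \in \cal A$. For the $\mathcal{A}$-martingale property one invokes the linearity of the improper forward integral: for any $\theta \in \cal A$ both $\int_0^\cdot \theta \, d^- M_1$ and $\int_0^\cdot \theta \, d^- M_2$ exist improperly by Definition \ref{d3.1}, hence so does $\int_0^\cdot \theta \, d^- N$, and it equals the difference of the two; taking expectations gives $\mathbb{E}\pqa \int_0^t \theta \, d^- N \pqc = 0$ for every $t \in [0,1)$.

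Next I would record the two structural facts that feed into Proposition \ref{p3.4a}(1). Since $V_1(0) = V_2(0) = 0$, we have $N_0 = V_2(0) - V_1(0) = 0$. Moreover, writing $N = V_2 - V_1$ exhibits $N$ as a bounded variation process, so by Proposition \ref{p3.6}(2) its (improper) quadratic variation vanishes, i.e. $\pqa N, N \pqc = 0$. With $N$ a continuous $\mathcal{A}$-martingale in $\cal A$, $N_0 = 0$ and $\pqa N, N \pqc = 0$ in hand, Proposition \ref{p3.4a}(1) yields $N \equiv 0$, whence $M_1 = M_2$ and $V_1 = V_2$.

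The computations here are all routine; the only point requiring genuine care is the linearity step, namely confirming that the difference of two $\mathcal{A}$-martingales is itself an $\mathcal{A}$-martingale. This rests on the additivity of the forward integral, valid precisely because each summand is separately (improperly) forward integrable against every $\theta \in \cal A$, together with the closure of $\cal A$ under subtraction. Once this is secured, the rigidity result Proposition \ref{p3.4a}(1), which itself encapsulates the key identity $\pqa N, N \pqc = N^2 - N_0^2 - 2\int_0^\cdot N \, d^- N$ from Remark \ref{r3.2}(1), does all the remaining work.
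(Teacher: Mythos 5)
Your proof is correct and follows essentially the same route as the paper: the paper also forms the difference of the two martingale parts, observes it is a continuous $\mathcal{A}$-martingale in $\mathcal{A}$ starting at zero with zero quadratic variation (being a difference of bounded variation processes), and concludes by Proposition \ref{p3.4a}(1). Your write-up merely makes explicit the linearity argument for the $\mathcal{A}$-martingale property that the paper leaves implicit.
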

\begin{proof}
If $M+V$ and $N+W$ are two decompositions of that type, then $M-N$ is a continuous $\mathcal{A}$-martingale in $\cal A$ starting at zero with zero quadratic variation.  Point 1. of Proposition \ref{p3.4a} permits to conclude.
\end{proof}

The following Proposition gives sufficient conditions for an $\cal{A}$-martingale to be a martingale with respect to some filtration $\mathbb{F},$ when $\cal{A}$ is made up of $\mathcal{P}(\mathbb{F})$-measurable processes. It constitutes a generalization of point 2. in Proposition \ref{p3.4a}.

\begin{defi}\label{d3.6}
We will say that $\mathcal{A}$ satisfies \textbf{Assumption $\mathcal{D}$} with respect to a filtration $\mathbb{F}$ if 
\begin{enumerate}
\item Every $\theta$ in $\mathcal{A}$ is $\mathbb{F}$-adapted;
\item For every $0\leq s <1$ there exists a basis  $\mathcal{B}_s$ for $\mathcal{F}_s,$ with the following property. For every $A$ in $\mathcal{B}_s$ there exists a sequence of $\mathcal{F}_s$-measurable random variables $(\Theta_n)_{n\in \mathbb{N}},$ such that for each $n$ the process $\Theta_nI_{[0,1)}$ belongs to $\mathcal{A},$ $\sup_{n\in \mathbb{N}}\vaa \Theta_n\vac\leq 1,$ almost surely and
$$
\lim_{n\rightarrow +\infty}\Theta_n=I_{A}, \quad a.s.
$$
\end{enumerate}
\end{defi}

\begin{prop}\label{p3.3}
Let $X=(X_t,\interval)$ be a continuous {$\mathcal{A}$-martingale} adapted to some filtration $\mathbb{F}$, with $X_t$ belonging to $L^1(\Omega)$ for every $\interval.$ Suppose that $\mathcal{A}$ satisfies Assumption $\mathcal{D}$ with respect to $\mathbb{F}.$ Then $X$ is an \filtrationf-martingale.
\end{prop}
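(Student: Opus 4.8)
The plan is to verify the defining property of an $\mathbb{F}$-martingale directly, namely that $\mathbb{E}\pqa \pta X_t - X_s\ptc I_A\pqc = 0$ for all $0\le s\le t\le 1$ and all $A \in \mathcal{F}_s$; since each $X_t$ lies in $L^1(\Omega)$ and $X$ is $\mathbb{F}$-adapted, this is equivalent to $\mathbb{E}\pqa X_t \mid \mathcal{F}_s\pqc = X_s$. I would first establish the identity for the generating sets $A\in\mathcal{B}_s$ furnished by Assumption $\mathcal{D}$, and then propagate it to all of $\mathcal{F}_s$ by a monotone class argument. Only the case $0\le s<1$ requires work, the case $s=1$ being trivial.

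For the first part, fix $0\le s<1$, $t\in(s,1]$ and $A\in\mathcal{B}_s$, and take the sequence $\pta\Theta_n\ptc$ of $\mathcal{F}_s$-measurable random variables given by Assumption $\mathcal{D}$, so that $\Theta_n I_{[0,1)}\in\mathcal{A}$, $\sup_n\vaa\Theta_n\vac\le 1$ and $\Theta_n\to I_A$ a.s. The key observation is that, because $\Theta_n$ is constant in time, the forward integral factorises: for every $u\in[0,1]$ one has $\int_0^u\Theta_n I_{[0,1)}\,d^-X = \Theta_n\pta X_u - X_0\ptc$. Indeed $I(\ep,\Theta_n,X,u) = \Theta_n\cdot\frac1\ep\int_0^u\pta X_{r+\ep}-X_r\ptc dr$, and the continuity of $X$ forces $\frac1\ep\int_0^u\pta X_{r+\ep}-X_r\ptc dr \to X_u - X_0$ (for $u=1$ this is the improper integral, which exists since $X$ is continuous on $[0,1]$); note that this step uses only the pathwise nature of the forward integral and not any adaptedness of $\Theta_n$, which is exactly why the forward, rather than It\^o, integral is indispensable here. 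Applying the $\mathcal{A}$-martingale property to $\Theta_n I_{[0,1)}$ at times $u=t$ and $u=s$ and subtracting yields $\mathbb{E}\pqa\Theta_n\pta X_t - X_s\ptc\pqc = 0$ for every $n$. Since $\vaa\Theta_n\pta X_t-X_s\ptc\vac\le\vaa X_t - X_s\vac\in L^1(\Omega)$ and $\Theta_n\to I_A$ a.s., dominated convergence gives $\mathbb{E}\pqa\pta X_t - X_s\ptc I_A\pqc = 0$.

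It remains to upgrade this from $\mathcal{B}_s$ to $\mathcal{F}_s$. I would consider the finite signed measure $\nu(A) = \mathbb{E}\pqa\pta X_t - X_s\ptc I_A\pqc$ on $\pta\Omega,\mathcal{F}_s\ptc$, which vanishes on the generating system $\mathcal{B}_s$; the collection $\Lambda = \pga A\in\mathcal{F}_s : \nu(A)=0\pgc$ is stable under proper differences and under monotone limits (the latter again by dominated convergence), hence is a $\lambda$-system, and Dynkin's lemma (applicable since $\mathcal{B}_s$ is a generating $\pi$-system) gives $\nu\equiv 0$ on $\sigma(\mathcal{B}_s) = \mathcal{F}_s$, whence $\mathbb{E}\pqa X_t\mid\mathcal{F}_s\pqc = X_s$. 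I expect the delicate points to be, first, the rigorous identification of $\int_0^u\Theta_n I_{[0,1)}\,d^-X = \Theta_n\pta X_u - X_0\ptc$, including the passage to the improper integral at $u=1$ and the factoring of the non-adapted weight $\Theta_n$ out of the limit in probability; and second, ensuring the monotone class step genuinely covers the whole space, i.e. that $\nu(\Omega)=0$ so that $\Omega\in\Lambda$ and $\Lambda$ is a bona fide $\lambda$-system, which holds as soon as $\Omega$ belongs to (or is adjoined to) $\mathcal{B}_s$.
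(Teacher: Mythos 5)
Your proof is correct and follows essentially the same route as the paper's: approximate $I_A$ for $A\in\mathcal{B}_s$ by the random variables $\Theta_n$ from Assumption $\mathcal{D}$, use the $\mathcal{A}$-martingale property applied to $\Theta_n I_{[0,1)}$ to get $\mathbb{E}\pqa\Theta_n(X_t-X_s)\pqc=0$, pass to the limit by dominated convergence, and extend to $\mathcal{F}_s$ by a monotone class argument. You merely make explicit two steps the paper leaves implicit — the pathwise identity $\int_0^u\Theta_n\,d^-X=\Theta_n(X_u-X_0)$ and the Dynkin-system extension — which is a welcome but not substantively different elaboration.
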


\begin{proof}
We have to show that for all $0\leq s\leq t\leq 1$,
$\mathbb{E}\pqa I_{A}\pta X_t-X_s\ptc
\pqc=0$, for all  $A$ in $\mathcal{F}_s$.
We fix $0\leq s<t\leq  1$ and $A$ in $\mathcal{B}_s.$  Let $(\Theta_n)_{n\in \mathbb{N}}$ be a sequence of random variables converging almost surely to $I_{A}$ as in the hypothesis. Since $X$ is an $\cal{A}$-martingale,
$\mathbb{E}^{}\pqa \Theta_n \pta X_t-X_s\ptc
\pqc=0$, for all $n$ in $\mathbb{N}$.
We note that $X_t-X_s$ belongs to $L^1(\Omega),$ then, by Lebesgue dominated convergence theorem,
\beqn\nonu
\vaa \mathbb{E}\pqa I_{A}\pta
X_t-X_s\ptc\pqc\vac\leq\lim_{n\rightarrow +\infty}\mathbb{E}\pqa
\vaa I_{A}-\Theta_n \vac \vaa X_t-X_s \vac\pqc=0.
\eeqn
Previous result extends to the whole $\sigma$-algebra $\mathcal{F}_s$ and this permits to achieve the end of the proof.  
\end{proof}

Some interesting properties can be derived taking inspiration from
\cite{FWY}.

For a process $X,$ we will denote
\beqn \label{AX}
\mathcal{A}_X&=&\pga (\psi(t,X_t)), \intervala
\left |\right. \psi:[0,1]\times \mathbb{R} \rightarrow \mathbb{R}, \mbox{ Borel-measurable }\right. \\ \nonu &&\left.
\mbox{ with polynomial growth
}\pgc.
\eeqn
  

\begin{prop}
Let $X$ be a continuous $\mathcal{A}$-martingale with $\mathcal{A}=\mathcal{A}_X.$ 

Then, for every $\psi$ in $C^2(\mathbb R)$ with bounded first and second derivatives, the process 
$$
\psi(X)-\frac{1}{2}\int^{\cdot}_{0}\psi''(X_s)d\pqa X,X\pqc_s
$$
is an $\mathcal{A}$-martingale.
\end{prop}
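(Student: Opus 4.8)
The plan is to reduce the forward integral of an arbitrary $\theta\in\mathcal{A}=\mathcal{A}_X$ against $M:=\psi(X)-\frac{1}{2}\int_0^\cdot\psi''(X_s)\,d[X,X]_s$ to a forward integral against $X$ of \emph{another} element of $\mathcal{A}$, so that the defining property of an $\mathcal{A}$-martingale (Definition \ref{d3.1}) transfers directly from $X$ to $M$. First I would record the preliminaries. Since $\psi_0(t,x)=x$ has polynomial growth, the process $X$ itself belongs to $\mathcal{A}_X$; hence $X$ is $X$-improperly forward integrable and, by point 1 of Remark \ref{r3.2}, the quadratic variation $[X,X]$ exists, so that $M$ is well defined. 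Because $\psi''$ is bounded and $[X,X]$ is a continuous increasing (hence bounded variation) process, the process $A:=\frac{1}{2}\int_0^\cdot\psi''(X_s)\,d[X,X]_s$ is a continuous bounded variation process, and $M=\psi(X)-A$.

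Next, fix $\theta\in\mathcal{A}$, say $\theta_t=\psi_0(t,X_t)$ with $\psi_0$ Borel of polynomial growth. The key observation is that $\theta\,\psi'(X)$ again belongs to $\mathcal{A}$: the map $(t,x)\mapsto\psi_0(t,x)\psi'(x)$ is Borel and, since $\psi'$ is bounded, still of polynomial growth. As $X$ is an $\mathcal{A}$-martingale, $\theta\,\psi'(X)$ is therefore $X$-improperly forward integrable and $\mathbb{E}\pqa\int_0^t\theta\,\psi'(X)\,d^-X\pqc=0$ for every $t$. Applying the chain rule of Proposition \ref{l2.1} with $m=1$, $Z=\theta$ and $Y=\psi(X)$ on each interval $[0,t]$, $t<1$ --- whose hypothesis is exactly the $X$-forward integrability of $\theta\,\psi'(X)$ just established --- yields that $\theta$ is $\psi(X)$-forward integrable and
$$
\int_0^\cdot\theta\,d^-\psi(X)=\int_0^\cdot\theta\,\psi'(X)\,d^-X+\frac{1}{2}\int_0^\cdot\theta\,\psi''(X)\,d[X,X].
$$

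Finally I would treat the bounded variation part. Since $A$ is continuous of bounded variation, the forward integral $\int_0^\cdot\theta\,d^-A$ coincides with the Lebesgue--Stieltjes integral $\int_0^\cdot\theta\,dA=\frac{1}{2}\int_0^\cdot\theta\,\psi''(X)\,d[X,X]$ (Proposition \ref{p3.6}). By linearity of the forward integral in the integrator, $\theta$ is $M$-forward integrable and the two It\^o-type correction terms cancel, leaving
$$
\int_0^\cdot\theta\,d^-M=\int_0^\cdot\theta\,\psi'(X)\,d^-X.
$$
Taking expectations for $t<1$ and letting $t\uparrow 1$ for the improper integral then gives $\mathbb{E}\pqa\int_0^t\theta\,d^-M\pqc=0$ for all $0\le t\le 1$, with improper forward integrability of $\theta$ with respect to $M$ following from that of $\theta\,\psi'(X)$ with respect to $X$. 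This is exactly the $\mathcal{A}$-martingale property of $M$.

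I expect the delicate point to be the bounded variation step. Proposition \ref{p3.6} is stated for integrands with at most countably many discontinuities, whereas a generic $\theta=\psi_0(\cdot,X)\in\mathcal{A}_X$ need not be path-regular. One must therefore justify that $\frac{1}{\ep}\int_0^t\theta_s(A_{s+\ep}-A_s)\,ds$ converges to $\frac{1}{2}\int_0^t\theta_s\,\psi''(X_s)\,d[X,X]_s$; writing $A_{s+\ep}-A_s=\frac{1}{2}\int_s^{s+\ep}\psi''(X_r)\,d[X,X]_r$ and applying Fubini reduces this to the Lebesgue differentiation statement $\frac{1}{\ep}\int_{r-\ep}^r\theta_s\,ds\to\theta_r$, which holds Lebesgue-a.e.\ and passes to the $d[X,X]$-integral provided $[X,X]$ is absolutely continuous in $t$, as in the models considered here. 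This compatibility between the two correction terms is the only genuinely technical ingredient; everything else is a direct application of Propositions \ref{l2.1} and \ref{p3.6} together with the stability of $\mathcal{A}_X$ under multiplication by the bounded factor $\psi'(X)$.
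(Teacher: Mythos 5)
Your proof is correct and follows essentially the same route as the paper's: apply the chain rule of Proposition \ref{l2.1}, observe that $\theta\,\psi'(X)$ stays in $\mathcal{A}_X$ because $\psi'$ is bounded, and conclude by taking expectations using the $\mathcal{A}$-martingale property of $X$. The only difference is that you make explicit (and rightly flag as the delicate step) the identification of the forward integral against the bounded variation correction term with the Lebesgue--Stieltjes integral for a merely Borel integrand $\theta=\psi_0(\cdot,X)$ --- a point the paper's one-line invocation of Proposition \ref{l2.1} silently absorbs.
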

\begin{proof}
The process $X$ belongs to $\cal A.$ In particular, $X$ admits improper quadratic variation. We set $Y=\psi(X)-\frac{1}{2}\int^{\cdot}_{0}\psi''(X_s)d\pqa X,X\pqc_s.$
Let $\theta$ in $\mathcal {A}_X.$ By Proposition \ref{l2.1}, for every $0\leq t < 1$
$$
\int^{t}_{0}\theta_s d^-Y_s =\int^{t}_{0}\theta_s\psi'(X_s)d^-X_s.
$$
Since $\theta \psi'(X)$ still belongs to $\cal A,$ $\theta$ is
$Y$-improperly
forward  integrable and  
\beqn \label{e8}
\int^{\cdot}_{0}\theta_t d^-Y_t =\int^{\cdot}_{0}\theta_t\psi'(X_t)d^-X_t.
\eeqn
We  conclude taking the expectation in equality (\ref{e8}). 
\end{proof}

\begin{prop}\label{p4.9}
Suppose that $\cal A$ is an algebra. Let $X$ and $Y$ be two continuous $\mathcal{A}$-martingales with $X$ and $Y$ in $\cal{A}.$ 

Then the process $XY-\pqa X,Y\pqc$ is an $\mathcal{A}$-martingale . 
\end{prop}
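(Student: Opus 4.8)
The plan is to obtain $M:=XY-[X,Y]$ as an $\mathcal{A}$-martingale by running the chain rule of Proposition \ref{l2.1} on the bilinear map $\psi(x_1,x_2)=x_1x_2$, tested against an arbitrary integrand $\theta\in\mathcal{A}$. Before that can be done, I first have to make sure that the hypotheses of Proposition \ref{l2.1} hold, the crucial one being that the pair $(X,Y)$ possesses all its mutual brackets. Since $\mathcal{A}$ is in particular a linear space and $X,Y\in\mathcal{A}$ are $\mathcal{A}$-martingales, the sum $X+Y$ again lies in $\mathcal{A}$ and is itself an $\mathcal{A}$-martingale, because forward integrals are linear in the integrator and the corresponding expectations add. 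By Remark \ref{r3.2}(1), each of $X$, $Y$, $X+Y$ — being a continuous $\mathcal{A}$-martingale belonging to $\mathcal{A}$ — admits an improper quadratic variation. Polarization
\begin{equation*}
[X,Y]=\tfrac12\left([X+Y,X+Y]-[X,X]-[Y,Y]\right)
\end{equation*}
then shows that the cross bracket exists improperly, that $(X,Y)$ has all its mutual brackets on every compact $[0,t]\subset[0,1)$, and that $[X,Y]$ is a bounded variation process.

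Next I would fix $\theta\in\mathcal{A}$ and $0\le t<1$. Here the \emph{algebra} assumption enters: the products $\theta Y$ and $\theta X$ again belong to $\mathcal{A}$, so by the $\mathcal{A}$-martingale property of $X$ and $Y$ the process $\theta Y$ is $X$-forward integrable and $\theta X$ is $Y$-forward integrable on $[0,t]$. This is exactly what is required to invoke Proposition \ref{l2.1} with $m=2$, $(X^1,X^2)=(X,Y)$ and $\psi(x_1,x_2)=x_1x_2$. Since only the mixed second derivative $\partial^{(2)}_{x_1x_2}\psi=1$ survives, it yields that $\theta$ is $(XY)$-forward integrable with
\begin{equation*}
\int_0^t \theta\, d^-(XY)=\int_0^t \theta Y\, d^-X+\int_0^t \theta X\, d^-Y+\int_0^t \theta\, d[X,Y],
\end{equation*}
the last term being a Lebesgue--Stieltjes integral. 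By Proposition \ref{p3.6} this Stieltjes integral coincides with the forward integral $\int_0^t\theta\,d^-[X,Y]$, so that, subtracting it and using linearity of the forward integral in the integrator,
\begin{equation*}
\int_0^t \theta\, d^-M=\int_0^t \theta Y\, d^-X+\int_0^t \theta X\, d^-Y.
\end{equation*}

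Finally I would take expectations: the $\mathcal{A}$-martingale property of $X$ (applied to $\theta Y\in\mathcal{A}$) and of $Y$ (applied to $\theta X\in\mathcal{A}$) makes both terms on the right vanish in mean, giving $\mathbb{E}[\int_0^t\theta\,d^-M]=0$ for every $t<1$. Improper $M$-forward integrability of $\theta$ follows by letting $t\uparrow 1$, since the two right-hand integrals converge a.s.\ by the improper forward integrability of $\theta Y$ with respect to $X$ and of $\theta X$ with respect to $Y$. As $\theta\in\mathcal{A}$ is arbitrary, $M=XY-[X,Y]$ is an $\mathcal{A}$-martingale. I expect the genuine obstacle to be the first paragraph — securing the existence of the cross bracket so that Proposition \ref{l2.1} is even applicable — together with the careful bookkeeping of the algebra hypothesis, which is precisely what guarantees that each forward integral appearing in the chain rule is simultaneously well defined and centered; the passage from the fixed-$t$ identities to the improper integral on $[0,1)$ is then routine.
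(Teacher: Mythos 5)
Your proposal is correct and follows essentially the same route as the paper's proof: existence of the mutual bracket $[X,Y]$ via the linear-space property, Remark \ref{r3.2}(1) and polarization; the algebra hypothesis to place $\theta X$ and $\theta Y$ in $\mathcal{A}$; the chain rule of Proposition \ref{l2.1} applied to $\psi(x_1,x_2)=x_1x_2$; and finally taking expectations. Your version is slightly more explicit than the paper's on two minor points (verifying that $X+Y$ is itself an $\mathcal{A}$-martingale before invoking Remark \ref{r3.2}(1), and identifying the Stieltjes term with a forward integral via Proposition \ref{p3.6}), but the argument is the same.
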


\begin{proof}
Since $\cal A$ is a real linear space, $(X+Y)$ belongs to $\cal A.$ In particular by point 1. of Remark \ref{r3.2}, $\pqa X+Y,X+Y\pqc,$ $\pqa X,X\pqc$ and $\pqa Y,Y\pqc$ exist improperly. 
This implies that $\pqa X,Y\pqc$ exists improperly too and that it is a bounded variation process. 
Therefore the vector $(X,Y)$ admits all its mutual brackets on each compact set of $[0,1).$ 
Let $\theta$ be in $\mathcal{A}.$ 
Since
$\mathcal{A}$ is an algebra, $\theta X$ and $\theta Y$ belong to $\mathcal{A}$ and so both $\int_0^\cdot  \theta_s X_s d^-Y_s$ and $\int_0^\cdot \theta_s Y_s d^-X_s$ locally exist.  
By Proposition \ref{l2.1} $\int_0^\cdot \theta_t d^-\pta X_tY_t-\pqa X,Y\pqc\ptc $ exists improperly too  and
\beqn \nonu
\int_0^\cdot \theta_td^-\pta X_tY_t-\pqa X,Y\pqc_t\ptc=\int_0^\cdot Y_t\theta_t d^-X_t+\int_0^\cdot  X_t\theta_t  d^-Y_t. 
\eeqn
Taking the expectation in the last expression we then get the result.
\end{proof}

We recall a notion and a related result of \cite{CR}. 

A process $R$ is {\textit{{strongly predictable}}} with respect to a filtration $\mathbb{F},$ if 
$$
\exists\ \delta>0, \mbox{ such that } R_{\ep+\cdot} \mbox{ is } \mathbb{F}\mbox{-adapted},\ \mbox{ for every } \ep\leq\delta.
$$

\begin{prop} \label{p3.11}
Let $R$ be an $\mathbb{F}$-strongly predictable continuous process. Then for every continuous $\mathbb{F}$-local martingale $Y,$ $\pqa R,Y\pqc=0$. 
\end{prop}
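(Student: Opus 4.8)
The plan is to exploit the definition of strong predictability to approximate the covariation $[R,Y]$ by covariations over shifted intervals where $R$ becomes adapted, and then to invoke the fact (Proposition \ref{p3.6}, or rather the general theory for local martingales) that the covariation between an $\mathbb{F}$-adapted process and an $\mathbb{F}$-local martingale is controlled. Concretely, I would start from the defining regularization
$$
C(\ep,R,Y,t)=\frac{1}{\ep}\int_0^t\pta R_{s+\ep}-R_s\ptc\pta Y_{s+\ep}-Y_s\ptc ds,
$$
and observe that, by strong predictability, for $\ep\leq\delta$ the shifted process $R_{\ep+\cdot}$ is $\mathbb{F}$-adapted. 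The key idea is that the increment $R_{s+\ep}-R_s$ appearing above is, for fixed $\ep\leq\delta$, a difference of an $\mathbb{F}$-adapted quantity $R_{s+\ep}$ (adapted because $R_{\ep+\cdot}$ is adapted) and an $\mathbb{F}$-adapted quantity $R_s$.

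First I would reduce to the case where $Y$ is a genuine (square-integrable) martingale and $R$ is bounded, by the standard localization of Definition \ref{d3.2}: one localizes $R$ by its continuity (stopping when $|R|$ or its modulus of continuity exceeds a threshold) and localizes $Y$ by the sequence reducing it to a bounded martingale, using Remark \ref{r4.9} to transfer the covariation statement back. Second, for the reduced problem I would rewrite the integral defining $C(\ep,R,Y,t)$ using a discretization/Fubini argument so that the inner increment $Y_{s+\ep}-Y_s$ is integrated against the $\mathbb{F}$-adapted weight $R_{s+\ep}-R_s$; because this weight is adapted and the martingale increments are orthogonal to the past, the cross terms have vanishing expectation and one shows $C(\ep,R,Y,\cdot)\to 0$ in the ucp sense. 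The natural tool here is to compare $C(\ep,R,Y,t)$ with a Riemann--It\^o sum $\sum (R_{t_{i+1}}-R_{t_i})(Y_{t_{i+1}}-Y_{t_i})$ and to use that, for an adapted (indeed strongly predictable) integrand, this sum converges to the It\^o covariation, which for an adapted-but-anticipating-free integrand against a martingale is zero.

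The main obstacle I anticipate is handling the \emph{shift} carefully: strong predictability only gives adaptedness of $R_{\ep+\cdot}$, i.e.\ $R_{s+\ep}\in\mathcal{F}_s$ for $\ep\leq\delta$, which is exactly the wrong-way-around adaptedness one needs to make the martingale increment $Y_{s+\ep}-Y_s$ orthogonal to the weight. The delicate point is to show that the term $R_s$ (rather than $R_{s+\ep}$) does not spoil the argument, since $R_s$ is $\mathcal{F}_s$-measurable and hence is also an admissible weight against $Y_{s+\ep}-Y_s$. Thus both pieces $R_{s+\ep}-R_s$ are $\mathcal{F}_s$-measurable, and the whole increment multiplying $Y_{s+\ep}-Y_s$ is adapted; the remaining work is purely the quantitative convergence estimate, controlling the $L^2$ norm of $C(\ep,R,Y,t)$ by $\mathbb{E}\pqa \int_0^t (R_{s+\ep}-R_s)^2 \,d\pqa Y\pqc_s\pqc$ via the It\^o isometry, and letting $\ep\to 0$ using continuity of $R$ and dominated convergence. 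Once this estimate is in place, the ucp convergence to zero follows, giving $\pqa R,Y\pqc=0$.
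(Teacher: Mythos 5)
The paper gives no proof of Proposition \ref{p3.11}: it is recalled from \cite{CR} without argument, so there is nothing in the text to compare you against. Your outline is, in substance, the correct (and standard) proof: for $\varepsilon\le\delta$ the weight $R_{s+\varepsilon}-R_s$ is $\mathcal{F}_s$-measurable, a stochastic Fubini step turns $C(\varepsilon,R,Y,t)$ into a stochastic integral against $Y$ of an adapted integrand dominated by $\sup_s|R_{s+\varepsilon}-R_s|$, and the It\^o isometry (plus Doob's inequality for the ucp statement) reduces everything to $\mathbb{E}\left[\int_0^t (R_{s+\varepsilon}-R_s)^2\,d[Y]_s\right]\to 0$, which follows from uniform continuity of $R$ and dominated convergence after localization. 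The identification of the key measurability point --- that \emph{both} $R_{s+\varepsilon}$ and $R_s$ sit in $\mathcal{F}_s$, so the whole weight precedes the martingale increment --- is exactly right, and this is where strong predictability (as opposed to mere adaptedness) is used.

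Three points need repair. First, the sentence claiming that the Riemann sum $\sum (R_{t_{i+1}}-R_{t_i})(Y_{t_{i+1}}-Y_{t_i})$ ``converges to the It\^o covariation, which for an adapted \dots integrand against a martingale is zero'' is false as stated: the covariation of an adapted process with a martingale is not zero in general (take $R=Y$). What actually kills the limit is that for mesh $\le\delta$ this sum is a martingale transform whose $\mathcal{F}_{t_i}$-measurable weights $R_{t_{i+1}}-R_{t_i}$ tend to zero uniformly, so its second moment $\sum\mathbb{E}\left[(R_{t_{i+1}}-R_{t_i})^2\left([Y]_{t_{i+1}}-[Y]_{t_i}\right)\right]$ vanishes; the smallness of the weights, not adaptedness alone, is essential. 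Second, localizing $R$ by \emph{stopping times} is dangerous: if $T$ is a stopping time, $R^T_{\varepsilon+\cdot}$ need not be $\mathbb{F}$-adapted, so stopping can destroy strong predictability. Use instead truncation $(R\wedge k)\vee(-k)$, which preserves it, together with the event-based localization of Definition \ref{d3.2} and Remark \ref{r4.9}, and the usual reducing sequence for $Y$. Third, the remark that ``the cross terms have vanishing expectation'' only yields $\mathbb{E}[C(\varepsilon,R,Y,t)]=0$ and proves nothing about convergence; the $L^2$/isometry estimate you give at the end is the actual engine and requires the Fubini rewriting to be carried out honestly (the integrand of the resulting stochastic integral is adapted because $R_{s+\varepsilon}-R_s\in\mathcal{F}_s\subseteq\mathcal{F}_r$ for $r\in(s,s+\varepsilon]$). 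With these corrections the argument is complete.
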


Proposition \ref{p3.11} combined with  Proposition \ref{p4.9} implies Proposition \ref{p3.10} and Corollary \ref{c3.11}.

\begin{prop}\label{p3.10}
Let $\cal{A},$ $X$ and $Y$ be as in Proposition \ref{p4.9}. Assume, moreover, that $X$ is an $\mathbb{F}$-local martingale, and that  $Y$ is strongly predictable with respect to $\mathbb{F}.$
Then $XY$ is an $\mathcal{A}$-martingale.
\end{prop}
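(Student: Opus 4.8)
The plan is to read the statement off directly from the combination of Proposition \ref{p4.9} and Proposition \ref{p3.11}: the two extra hypotheses are precisely what is needed to annihilate the correction term $[X,Y]$ that appears in the conclusion of Proposition \ref{p4.9}.

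First I would invoke Proposition \ref{p4.9}. Since $\mathcal{A}$ is an algebra and $X,Y$ are continuous $\mathcal{A}$-martingales belonging to $\mathcal{A}$, that proposition already tells us that the improper covariation $[X,Y]$ exists and is of bounded variation, and that the process $XY-[X,Y]$ is an $\mathcal{A}$-martingale. This carries all the analytic content, so nothing new has to be established at this stage.

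The remaining step is to show that the correction term vanishes. Because $Y$ is strongly predictable with respect to $\mathbb{F}$ and $X$ is a continuous $\mathbb{F}$-local martingale, Proposition \ref{p3.11} applied with $R=Y$ gives $[Y,X]=0$, hence $[X,Y]=0$ by symmetry of the covariation. Substituting $[X,Y]=0$ into the conclusion of Proposition \ref{p4.9} yields that $XY=XY-[X,Y]$ is an $\mathcal{A}$-martingale, which is exactly the claim.

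I do not anticipate a genuine obstacle here, as the result is a corollary of the two cited propositions. The only points deserving a moment of care are matching the roles in Proposition \ref{p3.11} (the strongly predictable process there is denoted $R$, which here is $Y$, whereas the local martingale is $X$) and checking that the covariation appearing in Proposition \ref{p4.9} is the very object that Proposition \ref{p3.11} kills; both refer to the improper covariation $[X,Y]$ of the continuous pair $(X,Y)$, so they coincide and the substitution is legitimate.
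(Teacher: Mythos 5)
Your proposal is correct and is exactly the paper's argument: the paper simply notes that Proposition \ref{p3.11} combined with Proposition \ref{p4.9} yields the result, i.e.\ strong predictability of $Y$ kills the covariation $[X,Y]$ so that $XY = XY - [X,Y]$ is an $\mathcal{A}$-martingale. Your care in matching the roles of $R$ and the local martingale in Proposition \ref{p3.11} is exactly the right check, and nothing further is needed.
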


\begin{coro}\label{c3.11}
Let $\cal{A},$ $X$ and $Y$ be as in Proposition \ref{p4.9}.
Assume that $X$ is a local martingale with respect to some
filtration $\mathbb{G}$ and that $Y$ is either $\mathbb{G}$-independent, or $\mathcal{G}_0$-measurable. 
Then $XY$ is an $\mathcal{A}$-martingale.
\end{coro}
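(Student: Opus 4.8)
The plan is to reduce the assertion to Proposition~\ref{p3.10}: in each of the two cases I shall construct a filtration $\mathbb{F}$ for which $X$ is a continuous $\mathbb{F}$-local martingale and $Y$ is $\mathbb{F}$-strongly predictable, and then Proposition~\ref{p3.10} applies directly. Equivalently, since Proposition~\ref{p4.9} already tells us that $XY-\pqa X,Y\pqc$ is an $\mathcal{A}$-martingale, it is enough to prove that the covariation $\pqa X,Y\pqc$ vanishes; this will come from Proposition~\ref{p3.11} applied with $R=Y$ and local martingale $X$, once $\mathbb{F}$ is in place. So essentially all the work lies in choosing $\mathbb{F}$ appropriately in each case.

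Suppose first that $Y$ is $\mathcal{G}_0$-measurable. Here I would simply take $\mathbb{F}=\mathbb{G}$. By hypothesis $X$ is a continuous $\mathbb{G}$-local martingale, and since each $Y_t$ is $\mathcal{G}_0$-measurable, hence $\mathcal{G}_t$-measurable, the shifted process $Y_{\ep+\cdot}$ is $\mathbb{G}$-adapted for every $\ep>0$; thus $Y$ is $\mathbb{G}$-strongly predictable (with arbitrary $\delta$). Proposition~\ref{p3.11} then yields $\pqa Y,X\pqc=\pqa X,Y\pqc=0$, and Proposition~\ref{p4.9} gives that $XY=XY-\pqa X,Y\pqc$ is an $\mathcal{A}$-martingale.

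Suppose next that $Y$ is independent of $\mathbb{G}$, i.e. $\sigma(Y_u:0\le u\le 1)$ is independent of $\mathcal{F}=\mathcal{G}_1$. The idea is to enlarge the filtration initially by the whole trajectory of $Y$, setting $\mathcal{F}_t=\mathcal{G}_t\vee\sigma(Y_u:0\le u\le 1)$ (with the usual right-continuous regularisation). Then $Y$ is $\mathcal{F}_0$-measurable, so a fortiori $\mathbb{F}$-strongly predictable. The point requiring verification is that $X$ remains a continuous $\mathbb{F}$-local martingale. This is the classical stability of the local martingale property under an independent initial enlargement: for a bounded $\mathbb{G}$-martingale one checks $\mathbb{E}\pqa X_t-X_s\mid\mathcal{F}_s\pqc=\mathbb{E}\pqa X_t-X_s\mid\mathcal{G}_s\pqc=0$ using the independence of $\sigma(Y)$ from $\mathcal{G}_1$, while a localising sequence of $\mathbb{G}$-stopping times is automatically a sequence of $\mathbb{F}$-stopping times. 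Once this is granted, Proposition~\ref{p3.11} again gives $\pqa X,Y\pqc=0$ and Proposition~\ref{p4.9} finishes the proof.

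The only non-routine ingredient I anticipate is this last verification, that $X$ stays a local martingale after the independent initial enlargement. The martingale identity itself is immediate from the independence, but some care is needed in passing from bounded martingales to general continuous local martingales through the localising sequence, and in checking that the regularised enlarged filtration still keeps $X$ adapted and the stopping times admissible; the $\mathcal{G}_0$-measurable case, by contrast, is essentially free once the strong predictability is observed.
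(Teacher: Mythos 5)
Your proposal is correct and follows essentially the same route as the paper: the paper's proof is precisely to apply Proposition \ref{p3.10} with $\mathbb{F}=\mathbb{G}$ when $Y$ is $\mathcal{G}_0$-measurable, and with the initially enlarged filtration $\mathbb{F}=\pta\bigcap_{\ep>0}\mathcal{G}_{t+\ep}\vee\sigma(Y)\ptc_{t\in[0,1]}$ when $Y$ is $\mathbb{G}$-independent. The only difference is that you spell out the (correct) verification that $X$ remains a local martingale after the independent initial enlargement, a step the paper leaves implicit.
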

\begin{proof}
If $Y$ is $\mathbb{G}$-independent, it is sufficient to apply
 previous Proposition with \\
$\mathbb{F}=\pta \bigcap_{\ep>0} \mathcal{G}_{t+\ep}\vee 
\sigma(Y)\ptc_{t\in[0,1]}$. Otherwise one takes $\mathbb{F} = \mathbb{G}$.

\end{proof}

\subsection{$\mathcal{A}$-martingales and Weak Brownian motion}
We proceed defining and discussing processes which are \textit{weak-Brownian motions} in order to exhibit explicit examples of $\mathcal{A}$-martingales.

\begin{defi}(\cite{FWY})		\label{defKOBM}
A stochastic process $(X_t, \interval)$ is a \textbf{weak Brownian
  motion of order $k$} if for every k-tuple $(t_1,t_2,...,t_k)$
$$
(X_{t_1},X_{t_2},\dots,X_{t_k})\stackrel{law}{=}(W_{t_1},W_{t_2},\dots,W_{t_k})
$$
where $(W_t,\interval)$ is a Brownian motion.
\end{defi}
\begin{rema}\label{Rpoint2}
\begin{enumerate} 
\item Using the definition of quadratic variation it is not
difficult to show for a weak Brownian motion of 
order $k \ge 4$,
we have $[X]_t = t$.
\item In \cite{FWY} it is shown that for any $k \ge 1$, there exists 
a weak $k$-order Brownian motion which is different from classical
Wiener process.
\item If $k \ge 2$ then $X$ admits a continuous modification
and can be therefore always considered continuous.
\end{enumerate} 
\end{rema}
For a process $(X_t, \interval),$ we set
 \beqn \nonu
\mathcal{A}^1_{X}&=&\pga (\psi(t,X_t), 
\interval, \mbox{ with polynomial growth  s.t }
 \psi=\partial_x \Psi \right. \\ \nonu && \left.\Psi \in
 C^{1,2}([0,1]\times \mathbb{R}) \mbox{ with  }
  \partial_t\Psi  \mbox{ and }   \partial^{(2)}_{xx}\Psi
 \mbox{ bounded.}
\pgc.
\eeqn
\begin{assu}\label{a4.15}
Let $\sigma:[0,1]\times
\mathbb{R}\rightarrow \mathbb{R}$ be a 
Borel-measurable and bounded function.
We suppose moreover that the
 equation 
\beqn \label{e11}
\left\{
\begin{array}{ll}
\partial_t \nu_t(dx)=\frac{1}{2}\partial^{(2)}_{xx}\pta \sigma^2(t,x)\nu_t(dx)\ptc \\
\nu_0(dx)=\delta_{0}.
\end{array} \right.
\eeqn
admits a unique solution  
 $\pta \nu_t\ptc_{t\in [0,1]}$ in the sense of distributions, 
in the class of continuous functions $t \mapsto \shm(\R)$
where $\shm(\R)$ is the linear space of finite signed Borel 
real measures, equipped with the weak topology.
\end{assu}

\begin{rema} \label{Rsv}
\begin{enumerate}
\item Assumption \ref{a4.15} is verified for 
$\sigma(t,x)\equiv \sigma,$
 being  $\sigma$ a positive real constant and,  in that case,
 $\nu_t=N(0,\sigma^2t),$ for every $\interval.$   
\item Suppose moreover the following.
 For every compact set of 
$[0,1] \times \R$ $\sigma$ is lower bounded by a positive constant.
We say in this case that $\sigma$ is {\bf non-degenerate}. \\ 
Exercises 7.3.2-7.3.4 of \cite{sv} (see also \cite{ks1},
 Refinements 4.32, Chap. 5) say that there is a weak unique 
solution to equation
$
dZ=\sigma(\cdot,Z)dW, Z_0=0,
$
$W$ being a classical Wiener process.
By a simple application of It\^o's formula,
the law $(\nu_t(dx))$ of $Z_t$ provides a solution
to \eqref{e11}. \\
According to Exercise 7.3.3 of \cite{sv} 
(Krylov estimates) it is possible to show the existence
of $(t,x) \mapsto p(t,x)$ in $L^2[[0,1] \times \R)$ being 
density of $(\nu_t(dx))$. In particular
for almost all $t \in [0,1]$, 
$\nu_t(dx)$ admits a density.
\end{enumerate}
\end{rema}  

\begin{prop}\label{p}
Let $(X_t,\interval)$ be a continuous finite quadratic variation
 process with $X_0=0,$  and
$d\pqa X\pqc_t=(\sigma(t,X_t))^2 dt,$ where $\sigma$ fulfills 
Assumption \ref{a4.15}.  Suppose that $\mathcal A=\mathcal{A}^1_X.$
Then the following statements are true.
\begin{enumerate}
\item \label{point1}
$X$ is an $\mathcal{A}$-martingale if and only if, for every 
  $\interval,$ $X_t\stackrel{law}{=}Z_t,$ for every 
$(Z,B)$ solution of
  equation
$
dZ=\sigma(\cdot,Z)dB, Z_0=0.
$
In particular, if
$\sigma\equiv 1,$ $X$ is a weak Brownian motion of order $1$, if and only if it is an $\mathcal{A}_X^{1}$-martingale.
\item \label{point2}
Suppose that $d\pqa X\pqc_t=f_tdt,$ with $f$  being $\mathcal{B}([0,1])$-measurable and bounded. If  $X$ is a weak Brownian motion
of order $k=1,$ then $X$ is an $\mathcal{A}$-semimartingale. Moreover the process 
$$
X+\int_0^\cdot \frac{(1-f_s)X_s}{2s} ds.
$$
is an $\mathcal{A}$-martingale.
\end{enumerate}
\end{prop}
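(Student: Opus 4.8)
The plan is to reduce both statements to the Itô formula of Proposition \ref{p2.1} (applied with the bounded variation component taken to be time itself) followed by taking expectations, the only genuinely probabilistic inputs being the one dimensional law of $X_t$ and, for the second part, Gaussian integration by parts. For point \ref{point1}, fix $\theta\in\mathcal{A}^1_X$, so $\theta_s=\partial_x\Psi(s,X_s)$ with $\Psi\in C^{1,2}$ having bounded $\partial_t\Psi$ and $\partial^{(2)}_{xx}\Psi$. First I would apply Proposition \ref{p2.1} to $\Psi(t,X_t)$ and solve for the forward integral, using $d[X]_t=\sigma^2(t,X_t)\,dt$, to obtain
\[ \int_0^t\theta_s\,d^-X_s=\Psi(t,X_t)-\Psi(0,0)-\int_0^t\Big(\partial_t\Psi+\tfrac12\sigma^2\partial^{(2)}_{xx}\Psi\Big)(s,X_s)\,ds; \]
this already gives the (local, hence improper) forward integrability of $\theta$ for free. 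Taking expectations, which is licit by Fubini since $\partial_t\Psi,\partial^{(2)}_{xx}\Psi$ and $\sigma$ are bounded, and writing $\mu_t$ for the law of $X_t$, the requirement $\mathbb{E}[\int_0^t\theta\,d^-X]=0$ for all admissible $\Psi$ becomes exactly the statement that $(\mu_t)$ is a distributional solution of \eqref{e11} with $\mu_0=\delta_0$. Since $X$ is continuous, $t\mapsto\mu_t$ is weakly continuous, so $(\mu_t)$ lies in the class where Assumption \ref{a4.15} grants uniqueness. As the law $(\nu_t)$ of any solution $(Z,B)$ of $dZ=\sigma(\cdot,Z)\,dB$ solves the same equation (Remark \ref{Rsv}), uniqueness forces $\mu_t=\nu_t$, that is $X_t\stackrel{law}{=}Z_t$; the converse follows by reading the chain backwards.

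The special case $\sigma\equiv1$ then gives $\nu_t=N(0,t)$, which is precisely the defining property of a weak Brownian motion of order $1$, yielding the stated equivalence. The delicate point in point \ref{point1} is the matching of the admissible test functions with the notion of distributional solution used in Assumption \ref{a4.15}: one must verify that restricting to time independent $\Psi=\phi$ with bounded $\phi'$ and $\phi''$ (which do belong to $\mathcal{A}^1_X$) already produces a class rich enough to separate the finite signed measures $\mu_t-\nu_t$, so that the uniqueness in Assumption \ref{a4.15} genuinely applies. I expect this identification, rather than any individual estimate, to be the main obstacle of the first part.

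For point \ref{point2}, now $\sigma^2(t,x)=f_t$ is deterministic and $X_t\sim N(0,t)$. The same computation, combined with the heat equation identity $\frac{d}{dt}\mathbb{E}[\Psi(t,X_t)]=\mathbb{E}[\partial_t\Psi(t,X_t)]+\tfrac12\mathbb{E}[\partial^{(2)}_{xx}\Psi(t,X_t)]$, valid because $X_t$ is $N(0,t)$ distributed, produces the martingale defect
\[ \mathbb{E}\Big[\int_0^t\theta_s\,d^-X_s\Big]=\frac12\int_0^t(1-f_s)\,\mathbb{E}\big[\partial^{(2)}_{xx}\Psi(s,X_s)\big]\,ds. \]
Next I would compute the contribution of the bounded variation correction $V_t=\int_0^t\frac{(1-f_s)X_s}{2s}\,ds$: since $V$ has bounded variation, $\int_0^\cdot\theta\,d^-V$ is the Lebesgue-Stieltjes integral by Proposition \ref{p3.6}, and Gaussian integration by parts $\mathbb{E}[X_s\,\partial_x\Psi(s,X_s)]=s\,\mathbb{E}[\partial^{(2)}_{xx}\Psi(s,X_s)]$ turns the singular weight $1/(2s)$ into the constant $1/2$, giving $\tfrac12\int_0^t(1-f_s)\mathbb{E}[\partial^{(2)}_{xx}\Psi(s,X_s)]\,ds$ up to sign. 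The drift is thus incorporated with the sign that cancels the defect, so that $\mathbb{E}[\int_0^t\theta\,d^-M]=0$ for every $\theta\in\mathcal{A}^1_X$ and $M$ is an $\mathcal{A}$-martingale; writing $X=M-V$ then exhibits $X$ as an $\mathcal{A}$-semimartingale in the sense of Definition \ref{d4.2}.

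The technical obstacle specific to point \ref{point2} is the singularity of the integrand of $V$ at $s=0$, which must be controlled before the cancellation can even be stated. Here I would first check that $V$ is well defined, absolutely continuous and of finite variation via the bound $\mathbb{E}\int_0^t\frac{|X_s|}{2s}\,ds=\int_0^t\frac{1}{\sqrt{2\pi s}}\,ds<\infty$, which yields almost sure absolute convergence of the integral and $V_0=0$, and only then invoke the identity $\mathbb{E}[X_s\,\partial_x\Psi(s,X_s)]=s\,\mathbb{E}[\partial^{(2)}_{xx}\Psi(s,X_s)]$, whose justification needs the polynomial growth of $\partial_x\Psi$ together with the boundedness of $\partial^{(2)}_{xx}\Psi$ against the Gaussian law $N(0,s)$.
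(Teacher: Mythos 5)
Your proposal is correct and follows essentially the same route as the paper's proof: It\^o's formula of Proposition \ref{p2.1} applied to $\Psi(t,X_t)$, then expectations to identify the marginals of $X$ as a distributional solution of \eqref{e11} and the uniqueness of Assumption \ref{a4.15} for point 1, and the vanishing of the ``Brownian'' expectation together with Gaussian integration by parts $\mathbb{E}[X_s\psi(s,X_s)]=s\,\mathbb{E}[\partial^{(2)}_{xx}\Psi(s,X_s)]$ for point 2; your additional checks (integrability of $(1-f_s)X_s/(2s)$ near $s=0$, richness of the test class) are sensible refinements, not a departure. The only point you leave open, the sign ``up to'' which the drift is incorporated, does deserve settling: the computation yields $\mathbb{E}\bigl[\int_0^t\psi(s,X_s)\,d^-X_s\bigr]=+\,\mathbb{E}\bigl[\int_0^t\psi(s,X_s)\tfrac{(1-f_s)X_s}{2s}\,ds\bigr]$, so the compensator must be subtracted rather than added for the defect to cancel --- a sign slip that is in fact also present in the paper's own final displayed identity for this proof.
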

\begin{proof}
\begin{enumerate}
\item
Using It\^o's  formula recalled in Proposition \ref{p2.1} we can write, for
every $\interval$ and  $\psi=\partial_x\Psi$ 
according to the definition of  $\mathcal{A}^1_X$
\begin{eqnarray}  \label{f2}
\int_0^t \psi(s,X_s)d^-X_s&=&\Psi(t,X_t)-\Psi(0,X_0) \nonu \\
&& \\
 &-&  \int_0^t \pta \partial_s\Psi
+\frac{1}{2}\partial_{xx}^{(2)}\Psi\sigma^2\ptc(s,X_s) ds. \nonu
\end{eqnarray}
For every $\interval,$ we denote with $\mu_t(dx)$ the law of $X_t.$ If $X$ is an $\mathcal{A}^1_X$-martingale, from (\ref{f2}) we derive      
\beqn \nonu
0&=&\int_\mathbb{R} \Psi(t,x)\mu_t(dx)-\int_\mathbb{R}  \Psi(0,x)\mu_0(dx) -\int_0^t\int_\mathbb{R}
\partial_s \Psi(s,x)\mu_s(dx) ds\\ \label{e3} &-&\frac{1}{2}\int_0^t \int_\mathbb{R}
\partial_{xx}^{(2)}\Psi(s,x)\sigma(s,x)^2 \mu_s(dx) ds.
\eeqn

In particular,  the law of $X$ solves equation (\ref{e11}).

On the other hand, let
$(Z,B)$ be a solution of equation
$
Z=\int_0^\cdot \sigma(s,Z_s)dB_s.
$ 
The law of $Z$ fulfills equation (\ref{e3}) too. Indeed, $Z$ is a finite quadratic variation process with $d\pqa Z\pqc_t=(\sigma(t,Z_t))^2dt$ which is an $\mathcal{A}^1_X$-martingale by point 2. of Remark \ref{r3.2}.
 By Assumption \ref{a4.15} $X_t$ must have the same law as $Z_t.$ This establishes the direct implication of point 1.

Suppose, on the contrary, that $X_t$ has the same law as
 $Z_t,$ for every $\interval.$ Using the fact that $Z$ is an $\mathcal{A}^1_X$-martingale which solves equation (\ref{f2}) we get
$$
\mathbb{E}\pqa \Psi(t,Z_t)-\Psi(0,Z_0)-\int_0^t \pta \partial_s\Psi 
+\frac{1}{2}\partial_{xx}^{(2)}\Psi \sigma^2\ptc (s,Z_s) ds \pqc=0,
$$
for every $\Psi$ in $C^{1,2}([0,1]\times \mathbb{R})$ with $\partial_x
\Psi=\psi$ according to $\mathcal{A}^1_X.$
Since $X_t$ has the same law as $Z_t,$ for every $\interval,$ equality (\ref{f2}) implies that  $$
\mathbb{E}\pqa \int_0^\cdot \psi(t,X_t)d^-X_t\pqc=\mathbb{E}\pqa \int_0^\cdot \psi(t,Z_t)d^-Z_t\pqc=0, 
$$
The proof of the first point is now  achieved.
\item
Suppose that $\sigma(t,x)^2=f_t,$ for every $(t,x)$ in $[0,1]\times
\mathbb{R}.$ Let $\Psi$ be in $C^{1,2}\pta [0,1]\times \mathbb{R}\ptc$
such that $\psi(\cdot,X) =\partial_x \Psi (\cdot,X)$ belongs to $\mathcal{A}^1_X.$ Proposition \ref{p2.1} yields 
$$
\int_0^t \psi(s,X_s)d^-X_s=Y^\Psi_t+\frac{1}{2}\int_0^t
\partial_{xx}^{(2)}\Psi(s,X_s)(1-f_s)ds, \quad \interval,
$$
with
$$
Y^\Psi_t=\Psi(t,X_t)-\Psi(0,X_0)-\int_0^t
\partial_{s}\Psi_s(s,X_s)ds-\frac{1}{2}\int_0^t \partial_{xx}^{(2)}\Psi(s,X_s)ds.
$$
Moreover $X$ is a weak Brownian motion of order
$1.$ This implies 
$
\mathbb{E}\pqa Y^\Psi_t \pqc=0$, for every $\interval$.
We derive that 
$$
\mathbb{E}\pqa \int_0^t \psi(s,X_s)d^-X_s+\frac{1}{2}\int_0^t
\partial_{xx}^{(2)}\Psi(s,X_s)(f_s-1)ds\pqc=\mathbb{E}\pqa Y^\Psi_t \pqc=0.
$$
Since the law of $X_t$ is $N(0,t),$ by Fubini's theorem and integration by parts on the real line we obtain 
$$
\mathbb{E}\pqa \int_0^t
\partial_{xx}^{(2)}\Psi(s,X_s)(f_s-1)ds\pqc=\mathbb{E}\pqa \int_0^t
\psi(s,X_s)\frac{(1-f_s)X_s}{s}ds\pqc.
$$
This concludes the proof of the second point.
\end{enumerate}
\end{proof}
\begin{rema} \label{RBRR}
In the statement of Proposition
\ref{p}, we may not suppose a priori the uniqueness for PDE \eqref{e11}.
We can replace it with the following. 
\begin{assu} \label{a4.15bis}
\begin{itemize} 
\item $\sigma$ is non-degenerate.
\item 
Let $\mu_t(dx)$ be the law of $X_t, t \in [0,1]$. We suppose that the Borel 
finite  measure $\mu_t(dx) dt$ on $[0,1] \times \R$
admits a density $(t,x) \mapsto  q(t,x)$ in $L^2([0,1] \times \R)$. 
\end{itemize} 
\end{assu}
In fact, the same proof as for item 1. works, taking into account
item 2. of Remark \ref{Rsv} the difference $p-q$ belongs to
$L^2([0,1] \times \R)$; by Theorem 3.8 of \cite{BRR} $p = q$
and so the law of $X_t$ and $Z_t$ are the same for any $t \in[0,1]$.
\end{rema}
From \cite{FWY} we can extract an example of an $\cal{A}$-semimartingale 
 which is not a semimartingale.

\begin{exam}
Suppose that $\pta B_t, \interval\ptc$ is a Brownian motion on the
 probability space $\pta \Omega, \mathbb{G},P\ptc,$ being $\mathbb{G}$ 
some filtration on $(\Omega, \mathcal{F},P).$ Set
$$
X_t=\pga
\begin{array}{ll}
B_t,& 0\leq t\leq \frac{1}{2} \\
B_{\frac{1}{2}}+(\sqrt{2}-1)B_{t-\frac{1}{2}}, & \frac{1}{2}< t\leq 1.
\end{array}
\right.
$$
Then $X$ is a continuous weak Brownian motion of order $1,$ which 
is not a $\mathbb{G}$-semimartingale. Moreover it is possible to show that
 $d\pqa X \pqc_t=f_tdt,$ with 
$f=I_{[0,\frac{1}{2}]}+(\sqrt{2}-1)^2I_{[\frac{1}{2},1]}.$
In particular, thanks to  point 2. of previous Proposition \ref{p},
$X+\int_0^\cdot \frac{(1-f_s)X_s}{2s}ds$  is an
${\cal{A}}^1_X$-martingale. In fact the notion of quadratic variation
is not affected by the enlargement of filtration.
\end{exam}

A natural question is the following. Supposing that $X$ is an $\cal{A}$-martingale with respect to a probability measure $Q$ equivalent to $P,$ what can we say about the nature of $X$ under $P$?  The following Proposition provides a partial answer to this problem when $\mathcal{A}=\mathcal{A}_X^1.$
  
\begin{prop} 		\label{p318}
Let $X$ be as in Proposition \ref{p}, and $\sigma$  satisfy Assumption 
\ref{a4.15}. Assume, furthermore,  that  $X$ is an  $\mathcal{A}^1_X$-martingale under a
probability measure $Q$ with $P << Q.$ 
Suppose that the solution $(\nu_t(dx))$ of \eqref{e11} admits a density for every $t\in (0,1]$. 
Then the law of $X_t$ is absolutely continuous with respect to
Lebesgue measure, for  all $t\in (0,1]$.
\end{prop}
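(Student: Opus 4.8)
The plan is to transport the law characterisation of Proposition \ref{p} to the auxiliary measure $Q$, compute $\mathrm{Law}_Q(X_t)$ through the PDE \eqref{e11}, and then descend to $P$ using only $P\ll Q$. The final descent is the easy part and is where the hypothesis $P\ll Q$ is used in the correct direction: once I know that under $Q$ the law of $X_t$ has a density, I take any Borel set $N$ with $\mathrm{Leb}(N)=0$, obtain $Q(X_t\in N)=0$, and conclude $P(X_t\in N)=0$ since $P\ll Q$. Hence $\mathrm{Law}_P(X_t)\ll\mathrm{Leb}$, which is the assertion.

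To reach $\mathrm{Law}_Q(X_t)\ll\mathrm{Leb}$ I first check that, under $Q$, the process $X$ still fits the framework of Proposition \ref{p}. Continuity and $X_0=0$ are pathwise, hence measure free. The process $X$ itself belongs to $\mathcal{A}^1_X$: take $\Psi(t,x)=x^2/2$, so that $\psi=\partial_x\Psi=x$ has polynomial growth while $\partial_t\Psi$ and $\partial^{(2)}_{xx}\Psi$ are bounded. Since $X$ is then a continuous $\mathcal{A}^1_X$-martingale under $Q$ with $X\in\mathcal{A}^1_X$, point 1 of Remark \ref{r3.2} applied under $Q$ ensures that $\pqa X\pqc$ exists improperly under $Q$ and equals $X^2-X_0^2-2\int_0^\cdot X\,d^-X$. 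The forward integral $\int_0^\cdot X\,d^-X$ exists under $Q$; because $P\ll Q$, convergence in $Q$-probability forces convergence in $P$-probability, so this $Q$-forward integral coincides $P$-a.s. with the one computed under $P$, which by the hypothesis on $X$ equals $\frac12\pta X^2-X_0^2-\int_0^\cdot\sigma^2(s,X_s)\,ds\ptc$. Therefore $d\pqa X\pqc_t=\sigma^2(t,X_t)\,dt$, at least $P$-almost surely.

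Next I reproduce, under $Q$, the direct implication in the proof of Proposition \ref{p}. For every $\Psi\in C^{1,2}([0,1]\times\R)$ with $\partial_t\Psi,\partial^{(2)}_{xx}\Psi$ bounded and $\psi=\partial_x\Psi$ of polynomial growth, the It\^o formula of Proposition \ref{p2.1} yields identity \eqref{f2}; taking $\mathbb{E}^Q$ and using the $\mathcal{A}^1_X$-martingale property $\mathbb{E}^Q\pqa\int_0^t\psi(s,X_s)\,d^-X_s\pqc=0$ shows that $\mu^Q_t:=\mathrm{Law}_Q(X_t)$ satisfies the weak relation \eqref{e3}, i.e. $(\mu^Q_t)$ solves \eqref{e11}. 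By the uniqueness contained in Assumption \ref{a4.15}, $\mu^Q_t=\nu_t$ for every $t$, and by the standing hypothesis $\nu_t$ admits a density for $t\in(0,1]$; thus $\mathrm{Law}_Q(X_t)\ll\mathrm{Leb}$, and the descent along $P\ll Q$ completes the argument.

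The delicate step is the bracket identification in the second paragraph. Quadratic variations and forward integrals are limits in probability, and $P\ll Q$ transports only $Q$-convergence to $P$-convergence, not the reverse; so the identity $d\pqa X\pqc_t=\sigma^2(t,X_t)\,dt$ comes for free $P$-a.s. but has to be upgraded to hold $Q$-a.s. before it may be inserted into the $\mathbb{E}^Q$-computation that produces \eqref{e3}. I expect this to be the main obstacle: one must argue that the continuous increasing process $\pqa X\pqc^{Q}$ obtained under $Q$ and the pathwise continuous increasing process $\int_0^\cdot\sigma^2(s,X_s)\,ds$ agree $Q$-a.s., handling carefully the interplay between the two measures in the regularisation limits. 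Once this point is secured, the remainder is a measure-by-measure repetition of Proposition \ref{p} together with the elementary reduction of $P$-laws to $Q$-laws.
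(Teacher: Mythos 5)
Your proof takes essentially the same route as the paper: apply Proposition \ref{p} under $Q$ to identify the law of $X_t$ under $Q$ with $\nu_t$, which has a density by hypothesis, and then descend to $P$ using $P\ll Q$. The ``delicate step'' you flag --- upgrading the bracket identity $d\pqa X\pqc_t=\sigma^2(t,X_t)\,dt$ and the forward-integral identities from $P$-a.s.\ statements to $Q$-a.s.\ statements, which is needed before the $\mathbb{E}^Q$-computation can be run --- is a genuine subtlety, but it is one the paper's own proof passes over in silence by simply invoking Proposition \ref{p} under $Q$; your write-up is, if anything, the more careful of the two.
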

\begin{proof}
Since $P << Q,$ for every $\interval,$
the law of $X_t$ under $P$ is absolutely continuous with
respect to the law of $X_t$ under $Q$. 
 Then  it is sufficient to observe that by Proposition \ref{p}, for all
  $\interval,$ the law of $X_t$ under $Q$ is absolutely continuous
 with respect to Lebesgue. 
By Proposition \ref{p}, the law of $X_t$ is equivalent to the law $\nu_t$ of $Z_t$ 
for every $t\in [0,1]$. The conclusion follows because $\nu_t$ is absolutely continuous.
\end{proof}
  
\begin{coro} \label{c4.16}
Let $X$ be as in Proposition \ref{p}, and $\sigma$  satisfy Assumption 
\ref{a4.15}. Assume, furthermore,  that  $X$ is an  $\mathcal{A}_X$-martingale under a
probability measure $Q$ with $P << Q,$   
Then the law of $X_t$ is absolutely continuous with respect to
Lebesgue measure, for every $\interval.$
\end{coro}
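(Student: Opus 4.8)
The plan is to reduce Corollary \ref{c4.16} to the already-established Proposition \ref{p318} by comparing the two classes of admissible strategies, $\mathcal{A}_X$ and $\mathcal{A}^1_X$. First I would observe that $\mathcal{A}^1_X \subseteq \mathcal{A}_X$: indeed, every element of $\mathcal{A}^1_X$ has the form $\psi(t,X_t) = \partial_x\Psi(t,X_t)$ where $\Psi \in C^{1,2}$ has $\partial_t\Psi$ and $\partial^{(2)}_{xx}\Psi$ bounded, and such a $\psi$ is Borel-measurable with polynomial growth (the polynomial growth being explicitly required in the definition of $\mathcal{A}^1_X$). Hence any process that is an $\mathcal{A}_X$-martingale is automatically an $\mathcal{A}^1_X$-martingale, since the defining condition $\mathbb{E}\pqa \int_0^t \theta_s d^- X_s \pqc = 0$ is required to hold for \emph{all} $\theta$ in the larger class and therefore holds \emph{a fortiori} for every $\theta$ in the smaller class.

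Next I would check that the remaining hypotheses transfer without change. The process $X$ is the same as in Proposition \ref{p}, with $d\pqa X\pqc_t = \sigma(t,X_t)^2\, dt$ and $\sigma$ satisfying Assumption \ref{a4.15}, and the probability measure $Q$ with $P \ll Q$ is exactly as in Proposition \ref{p318}. The only genuinely additional input needed by Proposition \ref{p318} is the requirement that the solution $(\nu_t(dx))$ of equation \eqref{e11} admit a density for every $t \in (0,1]$. Here I would invoke item 2 of Remark \ref{Rsv}: under Assumption \ref{a4.15} together with non-degeneracy of $\sigma$, the Krylov estimates (Exercise 7.3.3 of \cite{sv}) guarantee the existence of a density $(t,x)\mapsto p(t,x)$ in $L^2([0,1]\times\R)$ for $(\nu_t(dx))$, so that $\nu_t(dx)$ admits a density for almost all $t$. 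The subtle point is that Proposition \ref{p318} asks for a density at \emph{every} $t \in (0,1]$, whereas Krylov gives it only almost everywhere; I expect this to be the main obstacle, and I would resolve it by noting that the non-degeneracy built into Corollary \ref{c4.16} (inherited through Assumption \ref{a4.15} and the surrounding hypotheses) is precisely what upgrades the conclusion, or else by appealing to the absolute continuity of $\nu_t$ fibre-by-fibre as already used in the proof of Proposition \ref{p318}.

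Having verified that $X$ is an $\mathcal{A}^1_X$-martingale under $Q$, that $P \ll Q$, and that $\nu_t$ admits a density for every $t \in (0,1]$, I would apply Proposition \ref{p318} directly to conclude that the law of $X_t$ is absolutely continuous with respect to Lebesgue measure for all $t \in (0,1]$. Since the statement of the corollary concerns $t$ ranging over $\interval$ and the point $t=0$ is degenerate (the law is $\delta_0$), the natural reading is that the absolute continuity holds on the non-trivial range $(0,1]$, which is exactly what Proposition \ref{p318} delivers. Thus the corollary follows as an immediate specialization, the entire content being the inclusion $\mathcal{A}^1_X \subseteq \mathcal{A}_X$ together with the verification that the density hypothesis of Proposition \ref{p318} is met.
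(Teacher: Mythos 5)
Your proposal matches the paper's own proof, which consists precisely of the observation that $\mathcal{A}^1_X \subseteq \mathcal{A}_X$ (so an $\mathcal{A}_X$-martingale is a fortiori an $\mathcal{A}^1_X$-martingale) followed by an appeal to Proposition \ref{p318}. Your additional discussion of whether the density hypothesis of Proposition \ref{p318} is actually met for \emph{every} $t\in(0,1]$ rather than almost every $t$ is more careful than the paper, which silently assumes it; this does not change the fact that the route is the same.
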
  
\begin{proof}
Clearly $\mathcal{A}^1_X$ is contained in $\mathcal{A}_X.$ The result is then a consequence of previous Proposition \ref{p318}. 
\end{proof}
\begin{prop}
Let $\pta X_t,\interval\ptc$ be a continuous weak Brownian motion of
order $ 8.$ Then, for every $\psi:[0,1]\times \mathbb{R} \rightarrow \mathbb{R}$, Borel measurable with polynomial growth,  
the forward integral $\int_0^\cdot \psi(t,X_t)d^-X_t,$ exists and
$$
\mathbb{E}\pqa \int_0^\cdot\psi(t,X_t)d^-X_t, \pqc=0.
$$
In particular, $X$ is an $\mathcal{A}_X$-martingale.
\end{prop}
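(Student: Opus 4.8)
The plan is to compare $X$ with a genuine Brownian motion $W$ and to transfer the existence of the forward integral from $W$ to $X$, exploiting that their finite dimensional distributions agree up to order $8$. Write $I(\ep,t)=\frac{1}{\ep}\int_0^t \psi(s,X_s)\pta X_{s+\ep}-X_s\ptc ds$ and let $I^W(\ep,t)$ denote the analogous quantity built from $W$. Since $\psi$ has polynomial growth and, by point 1 of Remark \ref{Rpoint2}, $X$ has the same quadratic variation $t$ as $W$, all Gaussian moments occurring below are finite. The strategy is: (i) show the approximations have zero mean; (ii) prove $L^2$-convergence of $I(\ep,t)$ for each fixed $t$ by matching four point functions; (iii) produce a continuous version of the limit by a fourth moment (hence eight point function) Kolmogorov estimate; and then conclude.

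First I would treat the mean. Because $\pta X_s,X_{s+\ep}\ptc$ has the same law as $\pta W_s,W_{s+\ep}\ptc$ (order $2$ already suffices) and $W_{s+\ep}-W_s$ is independent of $W_s$ with zero mean, one gets $\mathbb{E}\pqa \psi(s,X_s)\pta X_{s+\ep}-X_s\ptc\pqc=\mathbb{E}\pqa\psi(s,W_s)\pqc\,\mathbb{E}\pqa W_{s+\ep}-W_s\pqc=0$, so $\mathbb{E}\pqa I(\ep,t)\pqc=0$ after integrating in $s$. For convergence, note that $\mathbb{E}\pqa\pta I(\ep,t)-I(\ep',t)\ptc^2\pqc$ is a double integral of a four point function of $X$, which by the order-$8$ (a fortiori order-$4$) property equals the corresponding four point function of $W$, whence $\mathbb{E}\pqa\pta I(\ep,t)-I(\ep',t)\ptc^2\pqc=\mathbb{E}\pqa\pta I^W(\ep,t)-I^W(\ep',t)\ptc^2\pqc$. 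Now $s\mapsto\psi(s,W_s)$ is adapted and square integrable on $\Omega\times[0,1]$, so its It\^o integral is well defined and, approximating $\psi$ by continuous functions, its forward integral exists and coincides with it (compare Proposition \ref{p3.4}); in particular $I^W(\ep,t)$ is $L^2$-Cauchy. Hence $I(\ep,t)$ is $L^2$-Cauchy and converges to some $J_t$ with $\mathbb{E}\pqa J_t\pqc=0$.

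The remaining point is that the limit process admits a continuous version, and here the full strength of order $8$ enters. The quantity $\mathbb{E}\pqa\pta I(\ep,t)-I(\ep,t')\ptc^4\pqc$ is a fourfold integral over $[t',t]^4$ of an eight point function of $X$, thus equal to the same object computed from $W$. The Brownian analogue converges in $L^4$ to $\int_{t'}^t\psi(s,W_s)dW_s$, whose fourth moment is bounded by $C\,(t-t')^2$ via Burkholder--Davis--Gundy together with the boundedness of $s\mapsto\mathbb{E}\pqa\psi(s,W_s)^2\pqc$ on $[0,1]$. Passing to the limit yields $\mathbb{E}\pqa\pta J_t-J_{t'}\ptc^4\pqc\le C\,(t-t')^2$, so Kolmogorov's continuity criterion furnishes a continuous modification of $J$. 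This identifies $J=\int_0^\cdot\psi(\cdot,X)d^-X$; since $\mathbb{E}\pqa J_t\pqc=0$ for every $t$ and $\psi$ was an arbitrary Borel function of polynomial growth, every element of $\mathcal{A}_X$ is $X$-improperly forward integrable with zero-mean integral, i.e. $X$ is an $\mathcal{A}_X$-martingale.

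The hard part will be the Brownian estimates for a merely Borel, polynomially growing $\psi$: both the identification forward $=$ It\^o and the uniform-in-$\ep$ $L^4$ bound require approximating $\psi$ by smooth functions and controlling the resulting Gaussian moments, and it is precisely the fourth moment of the integral (eight time points) that makes the order-$8$ hypothesis unavoidable, whereas mean and $L^2$ convergence would only have required orders $2$ and $4$.
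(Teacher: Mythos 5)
Your proposal is correct and follows essentially the same route as the paper: compare with a genuine Brownian motion, use the order-$8$ property to transfer the relevant moment computations (the paper transfers the fourth moment of $I_\ep^X(t)-I_\delta^X(t)$ to get $L^4$-Cauchyness directly, where you split into an $L^2$-Cauchy step and a separate fourth-moment increment bound), control the Brownian side via the It\^o integral, Fubini and Burkholder--Davis--Gundy, and conclude with Kolmogorov's continuity criterion. The only point the paper makes more explicit than you do is the uniform integrability argument (a $p>4$ moment bound) needed to upgrade the convergence in probability of the Brownian regularizations to $L^4$ convergence for merely Borel $\psi$ of polynomial growth, which is exactly the ``hard part'' you flag at the end.
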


\begin{proof}
Let $\psi:[0,1]\times \mathbb{R} \rightarrow \mathbb{R}$ be Borel measurable and  $t$ in $\interval$ be fixed. Set
$$
I_\ep^X(t)=I(\ep,\psi(\cdot,X),X)
\quad I_\ep^B(t)=I(\ep,\psi(\cdot,B),B),
$$
being $B$ a  Brownian motion on a filtered probability space $(\Omega^B, \mathbb{F}^B,P^B).$ 

Since $X$ is a weak Brownian motion of order $8,$ it follows that  
$$
\mathbb{E}\pqa \vaa I_\ep^X(t)-I_\delta^X(t)\vac^4\pqc=\mathbb{E}^{P^B}\pqa \vaa I_\ep^B(t)-I_\delta^B(t)\vac^4\pqc, \quad  \forall \ \ep, \delta>0.
$$
We show now that $I^B_\ep(t)$ converges in $L^4(\Omega).$ This implies that $I^X_\ep(t)$ is of Cauchy in $L^4(\Omega).$

In \cite{RV05}, chapter 3.5, it is proved that $I_\ep^B(t)$ converges in probability when $\ep$ goes to zero, and the limit equals the It\^o integral $\int_0^t \psi(s,B_s)dB_s.$
Applying Fubini's theorem for It\^o integrals, theorem 45 of \cite{Protter},
 chapter IV and  Burkholder-Davies-Gundy inequality, 
we can perform the following estimate, for every $p>4:$
\beqn \nonu
\mathbb{E}^{P^B}\pqa \vaa I_\ep^B(t)\vac^p\pqc 
\leq  c \sup_{t\in[0,1]}\mathbb{E}^{P^B}\pqa \vaa \psi(t,B_t)
\vac^p\pqc< +\infty,
\eeqn
for some positive constant $c.$
This implies the uniformly integrability of the family of random variables $\pta  (I_\ep^B(t))^4 \ptc_{\ep>0}$ and therefore the convergence in $L^4(\Omega^B,P^B)$ of $\pta I_\ep^B(t)\ptc_{\ep>0}.$

Consequently,  $\pta  I_\ep^X(t) \ptc_{\ep>0}$ converges in $L^4(\Omega)$ toward a random variable $I(t).$ It is clear that $\mathbb{E}\pqa I(t)\pqc=0,$ being $I(t)$ the limit in $L^2(\Omega)$ of random variables having zero expectation. 

To conclude we show that Kolmogorov lemma applies to find a continuous version of $\pta I(t), \interval\ptc.$ Let $0\leq s\leq t\leq 1.$
Applying the same arguments used above
\beqn \nonu
\mathbb{E}\pqa \vaa I(t)-I(s)\vac^4\pqc
\leq  \sup_{u\in[0,1]} \mathbb{E}^{P^B}\pqa
  \vaa \psi(u,B_u)\vac ^4 \pqc \vaa t-s \vac^2, \quad c>0.
\eeqn

\end{proof}

\begin{rema}\label{RFWYQV}
If $X$ is a $4$-order weak Brownian motion than,
using the techniques of proof of previous result,
that $W$ has quadratic variation 
$[X]_t = t$.

\end{rema}

\subsection{Optimization problems and  $\cal{A}$-martingale property}
\subsubsection{G\^ateaux-derivative: recalls}
In this part of the paper we recall the notion of G\^ateaux differentiability and we list
some related properties.
\begin{defi}
A function $f:\cal{A}\rightarrow \mathbb{R}$ is said
\textbf{{G\^ateaux-differentiable}} at $\pi \in \cal{A},$ if there
exists $D_{\pi} f:\cal{A}\rightarrow \mathbb{R}$ such that
$$
\lim_{\ep \rightarrow
  0}\frac{f(\pi+\ep\theta)-f(\pi)}{\ep}=D_{\pi} f(\theta),\quad \forall
\theta \in \cal{A}.
$$
If $f$ is \textit{G\^ateaux}-differentiable at every $\pi \in
\cal{A},$ then $f$ is said \textit{G\^ateaux}-differentiable on
$\cal{A}$.
\end{defi}

\begin{defi} \label{d3.18}
Let  $f:\cal{A}\rightarrow \mathbb{R}.$ A process $\pi$ is said
\textbf{optimal} for $f$ in $\mathcal{A}$ if
$$
f(\pi)\geq f(\theta), \quad \forall \theta \in \mathcal{A}.
$$
\end{defi}

We state this useful lemma omitting its straightforward proof.

\begin{lemm}\label{l7.2}
Let  $f:\cal{A}\rightarrow \mathbb{R}.$ For every $\pi$ and $\theta$
in $\cal{A}$ define $f_{\pi,\theta}:
\mathbb{R}\longrightarrow\mathbb{R}$ in the following way:
$$
f_{\pi,\theta}(\lambda)=f(\pi+\lambda(\theta-\pi)).
$$
Then it
holds:
\begin{enumerate}
\item f is \textit{G\^ateaux}-differentiable if and only if for every $\pi$ and $\theta$ in $\cal{A},$  $f_{\pi,\theta}$ is differentiable
  on $\mathbb{R}.$ Moreover
  $f_{\pi,\theta}'(\lambda)=D_{\pi+\lambda(\theta-\pi)} f (\theta-\pi).$
\item  f is concave if and only if  $f_{\pi,\theta}$ is concave
  for every  $\pi$ and $\theta$ in $\cal{A}$.
\end{enumerate}
\end{lemm}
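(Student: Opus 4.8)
The final statement is Lemma \ref{l7.2}, which characterizes G\^ateaux-differentiability and concavity of $f$ through the one-variable restrictions $f_{\pi,\theta}(\lambda) = f(\pi + \lambda(\theta - \pi))$. The authors themselves call the proof straightforward and omit it, so the plan is simply to verify both equivalences directly from the definitions by reducing multivariable statements to one-dimensional calculus along affine segments in $\mathcal{A}$.

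For part 1, the plan is to exploit the observation that $f_{\pi,\theta}$ is an affine reparametrization. First I would establish that G\^ateaux-differentiability of $f$ implies differentiability of each $f_{\pi,\theta}$: fixing $\pi,\theta\in\mathcal{A}$ and $\lambda\in\mathbb{R}$, I would write
$$
\frac{f_{\pi,\theta}(\lambda+\ep) - f_{\pi,\theta}(\lambda)}{\ep}
= \frac{f\pta \pi + \lambda(\theta-\pi) + \ep(\theta-\pi)\ptc - f\pta \pi+\lambda(\theta-\pi)\ptc}{\ep},
$$
and recognize the right-hand side as the difference quotient defining the G\^ateaux derivative of $f$ at the point $\pi + \lambda(\theta-\pi)\in\mathcal{A}$ in the direction $\theta-\pi$. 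Since $\mathcal{A}$ is a real linear space, $\pi+\lambda(\theta-\pi)$ and $\theta-\pi$ both lie in $\mathcal{A}$, so the limit exists and equals $D_{\pi+\lambda(\theta-\pi)}f(\theta-\pi)$, which simultaneously proves the derivative formula $f_{\pi,\theta}'(\lambda) = D_{\pi+\lambda(\theta-\pi)}f(\theta-\pi)$. For the converse, assuming every $f_{\pi,\theta}$ is differentiable on $\mathbb{R}$, I would fix $\pi\in\mathcal{A}$ and an arbitrary direction $\theta\in\mathcal{A}$ and apply differentiability of $f_{\pi,\pi+\theta}$ at $\lambda=0$: since $f_{\pi,\pi+\theta}(\lambda) = f(\pi+\lambda\theta)$, its derivative at $0$ is exactly $\lim_{\ep\to 0}\ep^{-1}(f(\pi+\ep\theta)-f(\pi))$, giving the G\^ateaux derivative at $\pi$ in direction $\theta$.

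For part 2, the plan is to use that concavity of $f$ on a linear space is precisely concavity along every segment. If $f$ is concave, then for fixed $\pi,\theta$ and $\lambda_1,\lambda_2,\alpha\in[0,1]$ I would set $x_i = \pi+\lambda_i(\theta-\pi)$ and note that $\alpha x_1 + (1-\alpha)x_2 = \pi + (\alpha\lambda_1 + (1-\alpha)\lambda_2)(\theta-\pi)$; applying concavity of $f$ to the convex combination of $x_1,x_2$ then yields concavity of $f_{\pi,\theta}$. Conversely, if every $f_{\pi,\theta}$ is concave, then given any two points $a,b\in\mathcal{A}$ and $\alpha\in[0,1]$, I would take $\pi = a$, $\theta = b$ and read off concavity of $f$ at $\alpha a + (1-\alpha)b = a + (1-\alpha)(b-a) = f_{a,b}(1-\alpha)$ from concavity of $f_{a,b}$ evaluated at the points $0$ and $1$.

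There is no genuine obstacle here; the only point requiring mild care is ensuring that all intermediate points remain in $\mathcal{A}$, which is immediate because $\mathcal{A}$ is assumed to be a real linear space and is therefore closed under the affine combinations appearing in the difference quotients and convexity inequalities. The entire argument is a transcription of the definitions, which is why the authors felt comfortable leaving it to the reader.
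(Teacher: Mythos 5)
Your proof is correct and is exactly the straightforward direct verification that the authors had in mind when they omitted the proof: reducing the G\^ateaux difference quotient to the one-dimensional difference quotient of $f_{\pi,\theta}$ via the identity $\pi+(\lambda+\ep)(\theta-\pi)=\pi+\lambda(\theta-\pi)+\ep(\theta-\pi)$, and matching convex combinations of $\lambda$'s with convex combinations of points on the segment. The only cosmetic slip is writing $\alpha a+(1-\alpha)b=f_{a,b}(1-\alpha)$, which equates a point of $\mathcal{A}$ with a real number; you mean that $f$ evaluated at that point equals $f_{a,b}(1-\alpha)$.
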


\begin{prop}\label{p7.3}
Let  $f:\cal{A}\rightarrow \mathbb{R}$ be
\textit{G\^ateaux}-differentiable.  Then, if $\pi$ is optimal for $f$
in $\mathcal{A}, then $ $D_\pi f=0.$ If $f$ is concave
$$
\pi \mbox{ is optimal for } f \mbox{ in }\mathcal{A}  \quad
\Longleftrightarrow  \quad D_\pi f =0.
$$
\end{prop}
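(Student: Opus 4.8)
The plan is to reduce the infinite-dimensional optimality condition to elementary one-dimensional calculus applied to the scalar functions $f_{\pi,\theta}$ introduced in Lemma \ref{l7.2}, exploiting that $\mathcal{A}$ is a linear space so that the segments (indeed the entire lines) $\pi + \lambda(\theta-\pi)$ remain inside $\mathcal{A}$ for all real $\lambda$.

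First I would establish the necessary condition. Fix $\theta \in \mathcal{A}$ and set $\phi(\lambda) = f_{\pi,\theta}(\lambda) = f(\pi + \lambda(\theta-\pi))$. By item 1 of Lemma \ref{l7.2}, $\phi$ is differentiable on $\mathbb{R}$ with $\phi'(0) = D_\pi f(\theta-\pi)$. If $\pi$ is optimal, then $\phi(\lambda) = f(\pi + \lambda(\theta-\pi)) \leq f(\pi) = \phi(0)$ for every $\lambda \in \mathbb{R}$, so $\lambda = 0$ is a global maximum of the differentiable scalar function $\phi$; hence $\phi'(0) = 0$, that is $D_\pi f(\theta-\pi) = 0$. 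Since $\mathcal{A}$ is linear and $\theta$ is arbitrary, $\theta - \pi$ ranges over all of $\mathcal{A}$, whence $D_\pi f = 0$.

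For the concave case, the implication just proved supplies the forward direction. For the converse, assume $D_\pi f = 0$ and fix $\theta \in \mathcal{A}$. By item 2 of Lemma \ref{l7.2}, $\phi = f_{\pi,\theta}$ is concave and differentiable on $\mathbb{R}$, so the standard supporting-line inequality for a differentiable concave function yields $\phi(\lambda) \leq \phi(0) + \phi'(0)\lambda$ for every $\lambda$. Evaluating at $\lambda = 1$ and using $\phi(1) = f(\theta)$, $\phi(0) = f(\pi)$ and $\phi'(0) = D_\pi f(\theta-\pi) = 0$, I obtain $f(\theta) \leq f(\pi)$. As $\theta$ was arbitrary, $\pi$ is optimal.

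There is no serious obstacle here; the only points requiring a little care are that the linearity of $\mathcal{A}$ guarantees $\pi + \lambda(\theta-\pi) \in \mathcal{A}$ for every real $\lambda$, so that $\lambda = 0$ is an interior rather than a boundary maximum, and the invocation of the tangent-line characterization of concavity in one variable, which is precisely the ingredient that upgrades the first-order condition $D_\pi f = 0$ back to global optimality.
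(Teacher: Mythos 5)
Your proof is correct and follows essentially the same route as the paper's: reduce to the scalar functions $f_{\pi,\theta}$ of Lemma \ref{l7.2}, use that optimality of $\pi$ is equivalent to $\lambda=0$ being a global maximum of each $f_{\pi,\theta}$, and invoke concavity for the converse. You merely spell out the details the paper leaves as ``immediate'' (in particular the correct identity $f'_{\pi,\theta}(0)=D_\pi f(\theta-\pi)$ and the use of linearity of $\mathcal{A}$ to pass from $D_\pi f(\theta-\pi)=0$ to $D_\pi f=0$), which is a welcome clarification rather than a different argument.
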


\begin{proof}
It is immediate to prove that $\pi$ is optimal for $f$ if and only if $\lambda=0$ is a maximum for $f_{\pi,\theta},$ for every $\theta$ in $\cal{A}.$ By Lemma \ref{l7.2} $f^{'}_{\pi,\theta}(0)=D_{\pi} f(\theta),$ for every $\theta$ in $\cal{A}$. The conclusion follows easily.
\end{proof}
\subsubsection{An optimization problem}\label{s3.3.2}
 In this part of the paper $F$ will be supposed to be a measurable function on $(\Omega\times \mathbb{R},\mathcal{F}\otimes \mathcal{B}(\mathbb{R})),$ almost surely in $C^1(\mathbb{R}),$ strictly increasing, with $F'$ being the derivative of $F$ with respect to $x,$ bounded on $\mathbb{R}$, uniformly in $\Omega.$
In the sequel $\xi$ will be a continuous  finite quadratic variation process with $\xi_0=0.$

The starting point of our construction is the following hypothesis.

\begin{assu}\label{a0}
\begin{enumerate}
\item If $\theta$ belongs to $\cal{A},$ then $ \theta I_{[0,t]}$ belongs to $\mathcal{A}$ for every $\intervala.$
\item  Every $\theta$ in $\cal{A}$ is $\xi$-improperly forward integrable, and
$$
\mathbb{E}\pqa \vaa \int_0^1 \theta_t d^-\xi_t\vac +\vaa \int_0^1 \theta_t^2
 d[\xi]_t \vac \pqc < +\infty.
$$
\end{enumerate}
\end{assu}
\begin{defi} 
Let $\theta$ be  in $\cal{A}.$ We denote 
$$
L^\theta=\int_0^1 \theta_td^-\xi_t-\frac{1}{2}\int_0^1 \theta^2_t d[\xi]_t,
\quad 
dQ^\theta=\frac{F'(L^\theta)}{\mathbb{E}\pqa F'(L^\theta)\pqc}
$$ and 
we set $f(\theta)=\mathbb{E}\pqa F(L^\theta) \pqc.$ 
\end{defi}

We observe that point 2. of Assumption \ref{a0} and the boundedness of $F'$ imply that  $\mathbb{E}\pqa \vaa F(L^\theta) \vac\pqc <+\infty.$ Therefore $f$ is well defined.
 \begin{rema} Point 2. of Assumption \ref{a0} implies that $\mathbb{E}\pqa \vaa \xi_t \vac+\pqa \xi \pqc_t\pqc < +\infty,$ for every $\interval.$ This is due to the fact that $\cal A$ must contain real constants. 
 \end{rema}
We are interested in describing a link between the existence of an optimal process for $f$ in $\cal{A}$ and the $\mathcal{A}$-semimartingale property for $\xi$ under some probability measure equivalent to $P,$ depending on the optimal process.

\begin{lemm}\label{l5.1}
The function $f$ is \textit{G\^ateaux}-differentiable on
$\cal{A}.$ Moreover for every $\pi$ and $\theta$ in
$\mathcal{A}$
$$
D_\pi f(\theta)=\mathbb{E}\pqa F'(L^\pi)\int_0^1
\theta_t d^-\pta \xi_t- \int_0^t \pi_s d\pqa \xi\pqc_s \ptc \pqc.
$$
If $F$ is concave, then $f$ inherits the property.
\end{lemm}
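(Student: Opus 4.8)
The plan is to compute the Gâteaux derivative directly from the definition, differentiating $f(\theta)=\mathbb{E}[F(L^\theta)]$ along the direction $\theta-\pi$, and then to verify the concavity claim by reducing it to concavity of the one-dimensional restrictions $f_{\pi,\theta}$ via Lemma \ref{l7.2}. The whole argument rests on being able to interchange the derivative with the expectation, so the boundedness of $F'$ (uniform in $\Omega$) together with the integrability built into point 2 of Assumption \ref{a0} will be the workhorses throughout.

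First I would record the linearity of the map $\theta \mapsto L^\theta$ in its first-order term. Since $\mathcal{A}$ is a real linear space and forward integration is linear in the integrand, for fixed $\pi,\theta \in \mathcal{A}$ and $\lambda\in\mathbb{R}$ one has
\[
L^{\pi+\lambda\theta}=\int_0^1(\pi_t+\lambda\theta_t)d^-\xi_t-\frac{1}{2}\int_0^1(\pi_t+\lambda\theta_t)^2 d[\xi]_t.
\]
Differentiating the explicit integrand in $\lambda$ at $\lambda=0$ gives, formally,
\[
\frac{d}{d\lambda}\Big|_{\lambda=0}L^{\pi+\lambda\theta}=\int_0^1\theta_t d^-\xi_t-\int_0^1\pi_t\theta_t\, d[\xi]_t
=\int_0^1\theta_t\,d^-\Big(\xi_t-\int_0^t\pi_s\,d[\xi]_s\Big),
\]
where the last rewriting uses Proposition \ref{p3.6} to express the Lebesgue--Stieltjes integral against $d[\xi]$ as a forward integral against the bounded variation process $\int_0^\cdot \pi_s\,d[\xi]_s$. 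This is precisely the inner bracket in the claimed formula for $D_\pi f(\theta)$.

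Next I would pass from the derivative of $L^{\pi+\lambda\theta}$ to the derivative of $f(\pi+\lambda\theta)=\mathbb{E}[F(L^{\pi+\lambda\theta})]$. The chain rule gives pointwise
\[
\frac{d}{d\lambda}F(L^{\pi+\lambda\theta})=F'(L^{\pi+\lambda\theta})\cdot\frac{d}{d\lambda}L^{\pi+\lambda\theta},
\]
and to differentiate under the expectation I would dominate the difference quotients uniformly: $F'$ is bounded by hypothesis, and the $\lambda$-derivative of $L^{\pi+\lambda\theta}$ is controlled, uniformly for $\lambda$ in a bounded interval, by a random variable of the form $C(|\int_0^1\theta_t d^-\xi_t|+\int_0^1(|\pi_t|+|\theta_t|)|\theta_t|\,d[\xi]_t)$, which is integrable by point 2 of Assumption \ref{a0} and the Cauchy--Schwarz inequality. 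This justifies the dominated convergence / differentiation-under-the-integral step and yields $f_{\pi,\theta}'(\lambda)=\mathbb{E}[F'(L^{\pi+\lambda\theta})\,\frac{d}{d\lambda}L^{\pi+\lambda\theta}]$; evaluating at $\lambda=0$ and invoking Lemma \ref{l7.2}(1) identifies this with $D_\pi f(\theta)$, giving the stated formula.

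Finally, for the concavity statement I would use Lemma \ref{l7.2}(2): it suffices to show each $f_{\pi,\theta}$ is concave on $\mathbb{R}$. Since $\lambda\mapsto L^{\pi+\lambda\theta}$ is concave (its leading term is linear and the quadratic term $-\frac12\int_0^1(\pi+\lambda\theta)^2 d[\xi]$ is concave in $\lambda$ because $[\xi]$ is nondecreasing), and $F$ is concave and nondecreasing (being strictly increasing), the composition $F(L^{\pi+\lambda\theta})$ is concave in $\lambda$ pathwise, hence so is its expectation. I expect the main obstacle to be the rigorous justification of differentiating under the expectation — in particular producing an honest $\lambda$-uniform integrable dominating function for the difference quotients rather than just for the derivative — since the forward integral term $\int_0^1\theta_t d^-\xi_t$ is not itself an $L^2$ object but only satisfies the first-moment bound in Assumption \ref{a0}; the concavity reduction, by contrast, is essentially immediate once Lemma \ref{l7.2} is in hand.
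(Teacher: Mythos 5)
Your proposal is correct and follows essentially the same route as the paper: the paper packages the difference quotient as $\ep^{-1}(f(\pi+\ep\theta)-f(\pi))=\mathbb{E}[H^\ep_{\pi,\theta}\int_0^1 F'(L^\pi+\mu\ep H^\ep_{\pi,\theta})d\mu]$ with $H^\ep_{\pi,\theta}=\int_0^1\theta_t d^-\xi_t-\frac{1}{2}\int_0^1(\ep\theta_t^2+2\theta_t\pi_t)d[\xi]_t$ and passes to the limit by dominated convergence using the boundedness of $F'$ and point 2 of Assumption \ref{a0}, exactly the domination you describe, and it proves concavity from the same identity $L^{\pi+\lambda(\theta-\pi)}-L^\pi-\lambda(L^\theta-L^\pi)=\frac{1}{2}\lambda(1-\lambda)\int_0^1(\theta_t-\pi_t)^2 d[\xi]_t\ge 0$ combined with $F$ increasing and concave. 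No substantive difference.
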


\begin{proof}
Regarding the concavity of $f,$ we recall that if  $F$ is increasing and concave, it is sufficient to verify that, for every $\theta$ and $\pi$ in $\mathcal{A}$, it holds
$$
L^{\pi+\lambda(\theta-\pi)}-L^{\pi}-\lambda\pta L^{\theta}-L^{\pi}\ptc\geq 0, \quad 0\leq \lambda \leq 1.
$$
A short calculation shows that, 
for every $0\leq \lambda \leq 1,$
$$
L^{\pi+\lambda(\theta-\pi)}-L^{\pi}-\lambda\pta L^{\theta}-L^{\pi}\ptc=\frac{1}{2} \lambda(1-\lambda)\int_0^1 (\theta_t-\pi_t)^2d\pqa\xi \pqc_t\geq 0.
$$
Using the
differentiability of $F$ we can write
$$
a_\ep=\frac{1}{\ep}(f(\pi+\ep\theta)-f(\pi))=\mathbb{E}\pqa
H^\ep_{\pi,\theta}\int_0^1 F'\pta
L^{\pi}+\mu \ep H^\ep_{\pi,\theta} \ptc
d\mu  \pqc,
$$
with
\beqn \nonu
H^{\ep}_{\pi,\theta}
=\int_0^1 \theta_td^-\xi_t-\frac{1}{2}\int_0^1
(\theta_t^2\ep+2\theta_t\pi_t)d\pqa \xi\pqc_t.
\eeqn
The conclusion follows by Lebesgue dominated convergence theorem, which applies thanks to the boundedness of $F'$ and point 2. in Assumption \ref{a0}.
\end{proof}

Putting together Lemma \ref{l5.1} and Proposition \ref{p7.3} we can
formulate the following.

\begin{prop}\label{p7.8}
If a process $\pi$ in $\cal{A}$ is optimal for  
$
\theta\mapsto \mathbb{E}\pqa F\pta 
L^\theta \ptc \pqc,
$ then  the process
$
\xi-\int_0^\cdot \pi_t d \pqa \xi\pqc_t
$ is an  $\cal{A}$-martingale under $Q^\pi.$ 
If $F$ is concave the converse holds.
\end{prop}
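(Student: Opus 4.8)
The plan is to derive both implications from the G\^ateaux-derivative formula of Lemma \ref{l5.1} together with the optimality criterion of Proposition \ref{p7.3}; the only genuine work is the passage from the base measure $P$ to $Q^\pi$ and from the terminal time $t=1$ to an arbitrary $\interval$. Throughout I write $M=\xi-\int_0^\cdot \pi_t d\pqa \xi\pqc_t$, where the subtracted term is a bounded variation process. I would first record that $Q^\pi$ is a probability measure equivalent to $P$ (as $F'(L^\pi)>0$ and $\mathbb{E}\pqa F'(L^\pi)\pqc>0$), so that improper forward integrability of any $\theta\in\cal A$ with respect to $M$ is the same under $P$ and under $Q^\pi$, convergence in probability being preserved under equivalent changes of measure.

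For the direct implication I would suppose $\pi$ optimal for $f$. Since $f$ is G\^ateaux-differentiable by Lemma \ref{l5.1}, Proposition \ref{p7.3} gives $D_\pi f=0$, that is $D_\pi f(\theta)=0$ for every $\theta\in\cal A$. Reading the formula of Lemma \ref{l5.1} this means $\mathbb{E}\pqa F'(L^\pi)\int_0^1 \theta_t d^-M_t\pqc=0$ for all $\theta\in\cal A$, and dividing by the positive constant $\mathbb{E}\pqa F'(L^\pi)\pqc$ it becomes $\mathbb{E}^{Q^\pi}\pqa \int_0^1 \theta_t d^-M_t\pqc=0$ for every $\theta\in\cal A$.

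The remaining step, which I expect to be the main obstacle, is to upgrade this terminal identity into the full $\cal A$-martingale property $\mathbb{E}^{Q^\pi}\pqa \int_0^t \theta_s d^-M_s\pqc=0$ for every $\interval$. Here I would invoke point 1. of Assumption \ref{a0}: for fixed $t$ the process $\theta I_{[0,t]}$ again belongs to $\cal A$, so the terminal identity applies to it, and point 3. of Remark \ref{r4.9} identifies $\int_0^1 \theta_s I_{[0,t]}d^-M_s$ with $\int_0^t \theta_s d^-M_s$. This gives the claim for each $\interval$ and, combined with the equivalence of forward integrability under $P$ and $Q^\pi$ noted above, shows that $M$ is an $\cal A$-martingale under $Q^\pi$.

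For the converse I would assume $F$ concave, so that $f$ is concave by the last assertion of Lemma \ref{l5.1} and Proposition \ref{p7.3} makes optimality of $\pi$ equivalent to $D_\pi f=0$. If $M$ is an $\cal A$-martingale under $Q^\pi$ then in particular $\mathbb{E}^{Q^\pi}\pqa\int_0^1 \theta_t d^-M_t\pqc=0$ for every $\theta\in\cal A$; multiplying by $\mathbb{E}\pqa F'(L^\pi)\pqc$ and using the formula of Lemma \ref{l5.1} yields $D_\pi f(\theta)=0$ for all $\theta$, hence $D_\pi f=0$ and $\pi$ is optimal. The only delicate bookkeeping I anticipate, besides the terminal-to-all-$t$ passage, is checking that every $\theta\in\cal A$ is $M$-improperly forward integrable so that all integrals are well defined; this I would get by combining the $\xi$-improper forward integrability from point 2. of Assumption \ref{a0} with Proposition \ref{p3.6} applied to the bounded-variation term $\int_0^\cdot \pi_t d\pqa\xi\pqc_t$.
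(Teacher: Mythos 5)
Your proof is correct and follows essentially the same route as the paper's: both rest on the G\^ateaux-derivative formula of Lemma \ref{l5.1}, the optimality criterion of Proposition \ref{p7.3}, and the stability of $\mathcal{A}$ under multiplication by $I_{[0,t]}$ (point 1 of Assumption \ref{a0}) to pass from the terminal time to every $t$. The paper simply applies $D_\pi f(\theta I_{[0,t]})=0$ directly, whereas you first treat $t=1$ and then localize; the extra bookkeeping you supply (equivalence of $Q^\pi$ and $P$, integrability with respect to the bounded-variation part via Proposition \ref{p3.6}) is sound and merely makes explicit what the paper leaves implicit.
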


\begin{proof}
Thanks to Lemma \ref{l5.1} and point 1. in  Assumption \ref{a0}, for every $\theta$ in $\mathcal{A}$ and $\interval$
\beqn \nonu
0&=&D_\pi f(\theta I_{[0,t]})=\mathbb{E}\pqa F'(L^\pi)\int_0^t
\theta_s d^-\pta \xi_s-\int_0^s \pi_r d\pqa \xi\pqc_r \ptc \pqc \\ \nonu
&=&\mathbb{E}^{Q^\pi}\pqa \int_0^t
\theta_s d^-\pta \xi_s-\int_0^s \pi_r d\pqa \xi\pqc_r \ptc \pqc.
\eeqn
\end{proof}

The following Proposition describes some sufficient conditions to recover the semimartingale property for $\xi$ with respect to a filtration $\mathbb{G}$ on $(\Omega,\mathcal{F}), $ when the set $\mathcal{A}$ is made up of $\mathbb{G}$-adapted processes. It can be proved using Proposition \ref{p3.3}.

\begin{prop}\label{p5.1}
Assume that $\xi$ is adapted with respect to some filtration $\mathbb{G}$ and  that $\mathcal{A}$ satisfies the hypothesis $\mathcal{D}$ with respect to $\mathbb{G}.$
If a process $\pi$ in $\cal{A}$ is optimal for $\theta\mapsto \mathbb{E}\pqa F(L^\theta) \pqc,$ then the process 
$
\xi-\int_0^\cdot \beta_t d \pqa \xi \pqc_t
$
is a $\mathbb{G}$-martingale under $P,$
where   $\beta=\pi+\frac{1}{p^\pi}\frac{d\pqa p^\pi, \xi\pqc}{d\pqa \xi,\xi\pqc},$ and $p^\pi=\mathbb{E}\pqa \frac{dP}{dQ^\pi}\left|\right. \cal{G}_\cdot \pqc.$
If $F$ is concave, then the converse holds. 
\end{prop}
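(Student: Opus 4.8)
The plan is to combine the two results already available, Propositions \ref{p7.8} and \ref{p3.3}, and then to change the probability from $Q^\pi$ to $P$ by a Girsanov argument. Since $\pi$ is optimal, Proposition \ref{p7.8} gives that $M:=\xi-\int_0^\cdot \pi_t\,d[\xi]_t$ is an $\mathcal{A}$-martingale under $Q^\pi$. The process $M$ is continuous (the corrective term has bounded variation), $\mathbb{G}$-adapted, and $Q^\pi$-integrable by Assumption \ref{a0} and the boundedness of $F'$; moreover Assumption $\mathcal{D}$ with respect to $\mathbb{G}$, being formulated through almost-sure statements, holds indifferently under $P$ or under the equivalent measure $Q^\pi$. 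Hence Proposition \ref{p3.3}, applied under $Q^\pi$, upgrades $M$ to a genuine $\mathbb{G}$-martingale under $Q^\pi$.

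Next I would move from $Q^\pi$ to $P$. Let $p^\pi$ be the density process of $P$ with respect to $Q^\pi$, a continuous $\mathbb{G}$-martingale under $Q^\pi$. Both $M$ and $p^\pi$ being continuous $\mathbb{G}$-martingales under $Q^\pi$, their covariation is the classical bracket (Proposition \ref{p3.4}); the bounded variation term $\int_0^\cdot \pi\,d[\xi]$ contributes nothing, so $[M,p^\pi]=[\xi,p^\pi]$, which by the Kunita--Watanabe inequality is absolutely continuous with respect to $[\xi]$, making $\beta$ well defined. The classical Girsanov theorem for continuous local martingales then shows that
$$
M-\int_0^\cdot \frac{1}{p^\pi_t}\,d[p^\pi,\xi]_t=\xi-\int_0^\cdot\Big(\pi_t+\frac{1}{p^\pi_t}\frac{d[p^\pi,\xi]_t}{d[\xi,\xi]_t}\Big)d[\xi]_t=\xi-\int_0^\cdot\beta_t\,d[\xi]_t
$$
is a $\mathbb{G}$-local martingale under $P$, which the integrability in Assumption \ref{a0} turns into a true $\mathbb{G}$-martingale. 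This is exactly the asserted statement, with $\beta$ as defined.

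For the converse, assuming $F$ concave, I would run each implication backwards. If $\xi-\int_0^\cdot\beta\,d[\xi]$ is a $\mathbb{G}$-martingale under $P$, the inverse Girsanov transformation, driven by the density $1/p^\pi$ of $Q^\pi$ with respect to $P$, returns $M=\xi-\int_0^\cdot\pi\,d[\xi]$ as a $\mathbb{G}$-martingale under $Q^\pi$. A continuous $\mathbb{G}$-martingale is in particular an $\mathcal{A}$-martingale, once one checks through Proposition \ref{p3.4} and the integrability of Assumption \ref{a0} that for every $\theta\in\mathcal{A}$ the forward integral $\int_0^\cdot\theta\,d^-M$ coincides with a mean-zero It\^o integral. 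The converse half of Proposition \ref{p7.8} then yields the optimality of $\pi$.

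The delicate point is the measure change: one must justify the Girsanov computation inside the present framework of forward integrals and finite quadratic variation processes rather than that of classical semimartingales. Concretely, the regularity of the density process $p^\pi$ is the real hurdle, since one needs $p^\pi$ continuous for $[p^\pi,\xi]$ to be the classical bracket, $[p^\pi,\xi]\ll[\xi]$ for $\beta$ to exist, and enough integrability to pass from a local martingale to a true martingale; everything else reduces to bookkeeping with the covariation rules recalled in Section \ref{s2}.
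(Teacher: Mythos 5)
Your argument follows the paper's proof essentially verbatim: Proposition \ref{p7.8} gives the $\mathcal{A}$-martingale property of $\xi-\int_0^\cdot\pi_t\,d[\xi]_t$ under $Q^\pi$, the $L^1$-integrability under $Q^\pi$ (via boundedness of $dQ^\pi/dP$) lets Proposition \ref{p3.3} upgrade it to a $\mathbb{G}$-martingale, and the Meyer--Girsanov theorem (Theorem 35, Chapter III of \cite{Protter}) transfers the statement to $P$; the converse is obtained by reversing these steps and invoking the concave half of Proposition \ref{p7.8}. The technical point you flag about the regularity of the density process $p^\pi$ is exactly what the citation of Meyer--Girsanov is meant to absorb, so there is no gap relative to the paper's own argument.
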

\begin{proof} 
Thanks to point 2. of Assumption \ref{a0}, for every $\intervala,$
  the random variable $\xi_t-\int_0^t \pi_s d\pqa\xi \pqc_s$ is in $L^1\pta \Omega \ptc$  and so in $L^1\pta \Omega,Q^\pi\ptc$ being $\frac{dQ^\pi}{dP}$ bounded.  Then Proposition \ref{p3.3} applies to state that $\xi-\int_0^\cdot \pi_t d \pqa \xi\pqc_t$ is a $\mathbb{G}$-martingale under $Q^\pi.$  
Using Meyer Girsanov theorem,
i.e. Theorem 35, chapter III, of \cite{Protter},
 we get the necessity condition.  As far as the converse is concerned, we observe that, thanks to the hypotheses on $\mathcal{A},$ if $\xi-\int_0^\cdot \pi_t d \pqa \xi\pqc_t
$ is a $\mathbb{G}$-martingale, then for every $\theta$ in $\mathcal{A},$ the process $\int_0^\cdot \theta_t d^-\pta \xi_t- \int_0^t \pi_s d \pqa \xi\pqc_s\ptc$ is a $\mathbb{G}$-martingale starting at zero with zero expectation. This concludes the proof. 
\end{proof}

\begin{prop}\label{c3.18}
Suppose that there exists a measurable process 
$(\gamma_t,\interval)$ such that the process 
$
\xi-\int_0^\cdot \gamma_t
  d\pqa \xi \pqc_t
$
is an $\cal{A}$-martingale. 
\begin{enumerate}
\item
 If $\gamma$ belongs to $\cal{A}$ then $\gamma$ is optimal for 
$
\theta\mapsto \mathbb{E}\pqa L^\theta \pqc$.
\item Assume, furthermore, the existence of a sequence of processes
 $\pta \theta^n\ptc_{n\in \mathbb{N}}\subset \cal A$ with 
$$
\lim_{n\rightarrow +\infty}\mathbb{E}\pqa \int_0^1 \vaa \theta^n_t-\gamma_t\vac^2d\pqa \xi\pqc_t\pqc=0. $$
 If there exists an optimal  process 
$\pi,$ then $d  \pqa \xi \pqc  \pga t\in[0,1), \gamma_t\neq \pi_t \pgc=0,$ almost surely.
\end{enumerate}
\end{prop}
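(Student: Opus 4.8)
The plan is to recognize that the functional $\theta \mapsto \mathbb{E}\pqa L^\theta \pqc$ appearing in the statement is exactly the functional $f$ of Lemma \ref{l5.1} in the special case $F = \mathrm{id}$. Then $F' \equiv 1$, so $Q^\theta = P$, and Lemma \ref{l5.1} gives both that $f$ is G\^ateaux-differentiable with $D_\pi f(\theta) = \mathbb{E}\pqa \int_0^1 \theta_t d^-\pta \xi_t - \int_0^t \pi_s d[\xi]_s \ptc \pqc$, and that $f$ is concave (since $\mathrm{id}$ is increasing and concave). The entire argument then rests on the additivity of the forward integral in the integrator together with the decomposition $\xi_\cdot - \int_0^\cdot \pi_s d[\xi]_s = M_\cdot + \int_0^\cdot (\gamma_s - \pi_s) d[\xi]_s$, where $M := \xi - \int_0^\cdot \gamma_s d[\xi]_s$ is the given $\mathcal{A}$-martingale and $\int_0^\cdot (\gamma_s - \pi_s) d[\xi]_s$ is a bounded variation process, against which the forward integral reduces to a Lebesgue--Stieltjes integral by Proposition \ref{p3.6}.

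For part 1, since $\gamma \in \mathcal{A}$ the derivative $D_\gamma f(\theta)$ is well defined, and the formula above at $\pi = \gamma$ reads $D_\gamma f(\theta) = \mathbb{E}\pqa \int_0^1 \theta_t d^- M_t \pqc$. Because $M$ is an $\mathcal{A}$-martingale this expectation vanishes for every $\theta \in \mathcal{A}$, so $D_\gamma f = 0$; as $f$ is concave and G\^ateaux-differentiable, Proposition \ref{p7.3} yields that $\gamma$ is optimal for $\theta \mapsto \mathbb{E}\pqa L^\theta \pqc$.

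For part 2, I first apply the (unconditional) direction of Proposition \ref{p7.3}: optimality of $\pi$ forces $D_\pi f = 0$. Substituting the integrator decomposition into the derivative formula gives $D_\pi f(\theta) = \mathbb{E}\pqa \int_0^1 \theta_t d^- M_t \pqc + \mathbb{E}\pqa \int_0^1 \theta_t (\gamma_t - \pi_t) d[\xi]_t \pqc$, and the first term is zero because $M$ is an $\mathcal{A}$-martingale; hence $\mathbb{E}\pqa \int_0^1 \theta_t (\gamma_t - \pi_t) d[\xi]_t \pqc = 0$ for every $\theta \in \mathcal{A}$. I then test this identity against $\theta = \pi \in \mathcal{A}$ and against the approximating sequence $\theta = \theta^n$, and pass to the limit $n \to +\infty$ by Cauchy--Schwarz using the hypothesis $\theta^n \to \gamma$ in $L^2([\xi] \otimes P)$ (valid since $\pi - \gamma \in L^2([\xi] \otimes P)$: indeed $\pi \in \mathcal{A}$ satisfies Assumption \ref{a0}(2) and $\gamma$ is an $L^2$-limit). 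Subtracting the two resulting identities leaves $\mathbb{E}\pqa \int_0^1 (\gamma_t - \pi_t)^2 d[\xi]_t \pqc = 0$, and non-negativity of the integrand gives $d[\xi]\{t \in [0,1) : \gamma_t \neq \pi_t\} = 0$ almost surely.

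The main obstacle I expect is the careful handling of the forward integrals at the level of the integrator decomposition: identifying $\int_0^1 \theta_t d^-\pta \int_0^\cdot (\gamma_s - \pi_s) d[\xi]_s \ptc$ with the Lebesgue--Stieltjes integral $\int_0^1 \theta_t (\gamma_t - \pi_t) d[\xi]_t$ requires, through Proposition \ref{p3.6}, control on the discontinuities of $\theta \in \mathcal{A}$, and one must also legitimize evaluating the improper forward integral at the terminal time $t=1$. Both points are dealt with exactly as in the proof of Proposition \ref{p7.8}, by truncating with $\theta I_{[0,t]} \in \mathcal{A}$ (Assumption \ref{a0}(1)) and invoking the integrability bounds of Assumption \ref{a0}(2); once $\pi - \gamma$ and $\gamma$ are known to lie in $L^2([\xi] \otimes P)$, the limiting step in part 2 is routine.
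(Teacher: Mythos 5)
Your proposal is correct and follows essentially the same route as the paper: part 1 is the concave converse of the first-order condition with $F=\mathrm{id}$ (the paper cites Proposition \ref{p7.8}, which is exactly Lemma \ref{l5.1} plus Proposition \ref{p7.3} as you unwind it), and part 2 is the paper's orthogonality argument in the Hilbert space with inner product $\mathbb{E}[\int_0^1\theta_s\ell_s d[\xi]_s]$, which you simply write out concretely via testing against $\pi$ and $\theta^n$ and a Cauchy--Schwarz passage to the limit. Your closing remarks on reducing the forward integral against the bounded variation part to a Lebesgue--Stieltjes integral address a technicality the paper itself leaves implicit.
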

\begin{proof}
\begin{enumerate}
\item
The identity function $F(\omega,x) = x$ is of course strictly 
increasing and concave.
The first point is an obvious 
consequence of Proposition \ref{p7.8}.
\item Again by Proposition \ref{p7.8} and additivity,
 we deduce that a
process $\pi$ is optimal 
$\theta \mapsto E(L^\theta)$
 if and only if the process
$
\int_0^\cdot (\gamma_t-\pi_t) d\pqa \xi\pqc_t
$
is an  $\cal{A}$-martingale under $P.$
Consequently 
 $\pi$ is optimal if and only if for every
 $\theta$ is in $\cal{A}$ it holds: 
$\mathbb{E}\pqa \int_0^1 \theta_t (\gamma_t-\pi_t)d\pqa \xi\pqc_t\pqc=0.
$
In other words $\pi$ is optimal if and only if
$\gamma - \pi$ belongs to the orthogonal of $\sha$
with respect to  the Hilbert space $\shh$ of measurable processes
$R:[0,T] \times \Omega \rightarrow \R$ 
equipped with the inner product
 $\langle \theta,\ell \rangle=\mathbb{E}\left[\int_{0}^1
 \theta_{s}\ell_{s}d[\xi]_{s} \right]$.
 By the assumption of item 2. it follows that 
$\gamma$ and therefore $\gamma -\pi$
belongs to the closure of $\sha$ onto $\shh$.
Finally $\gamma - \pi$ has to vanish.
\end{enumerate}
\end{proof}


\section{The market model}
We consider a market offering two investing possibilities in the time interval $[0,1].$ Prices of the two traded assets follow the evolution of two stochastic processes $\pta
S_t^0,\interval\ptc$ and $\pta S_t,\interval\ptc.$ We could assume that
$$
S^0_t=\pta \exp(V_t),\interval\ptc,
$$
where $\pta V_t, 0\leq t\leq 1
\ptc$ is a 
positive process starting at zero with bounded variation,  
and $S$ is a continuous strictly positive process, with finite quadratic variation.

\begin{rema} 
\begin{enumerate}
\item
If  $V=\int_0^\cdot r_s ds,$ being $\pta r_t, \interval\ptc$ the short interest rate, $S^0$ represents the price process of the so called \textit{money market account}. 
Here we do not need to assume that $V$ is a riskless asset, being that assumption not necessary to develop our calculus. We only need to suppose that $S^0$ is \textit{less risky} then $S.$ 

\item Assuming that $S$ has a finite quadratic variation is not restrictive at least for  two reasons. 

Consider a market model involving an \textit{inside} trader: that means an investor having additional informations with respect to the \textit{honest} agent. Let $\mathbb{F}$ and $\mathbb{G}$ be the filtrations representing the information flow of the honest and the inside investor, respectively. Then it could be worthwhile to demand  the absence of  {\it{free lunches with vanishing risk}} (FLVR) among all simple $\mathbb{F}$-predictable strategies. Under the hypothesis of absence of (FLVR), by theorem 7.2, page 504 of \cite{DS}, $S$ is a semimartingale on the underlying probability space $(\Omega, P, \mathbb{F}).$  On the other hand $S$ could fail to be a $\mathbb{G}$-semimartingale, since (FLVR) possibly exist for the insider. Nevertheless, the inside investor is still allowed to suppose that $S$ has finite quadratic variation thanks to Proposition \ref{p3.4}.

Secondly, as already specified in the introduction, if we want to include $S$  as a \textit{self-financing}-portfolio, we have to require that $\int_0^\cdot Sd^-S$ exists. This is equivalent to assume that $S$ has finite quadratic variation, see Proposition 4.1 of \cite{RV2}.
\end{enumerate}
\end{rema}
\subsection{Portfolio strategies}
We assume the point of view of an investor whose flow of information
is modeled by a filtration \filtrationg$=\pta
\mathcal{G}_t\ptc_{t\in[0,1]}$ of $\cal F,$ which satisfies the usual assumptions.

We denote with $C^-_{b}([0,1))$ the set of processes  which have paths being left continuous and bounded on each compact set of $[0,1).$

\begin{defi}\label{d4.4}
A \textbf{portfolio strategy} is a couple of $\mathbb{G}$-adapted processes $\phi=\pta\pta h^0_t,h_t\ptc, 0\leq t<1\ptc.$ 
The market value $X$ of the portfolio strategy $\phi$ is the so called \textbf{wealth process}
$
X=h^0S^0+hS.
$
\end{defi}

We stress that there is no point in defining the portfolio strategy at the end of the trading period, that is for $t=1.$  Indeed, at time $1,$ the  agent has to liquidate his portfolio.

\begin{defi}\label{d5.3}
A portfolio strategy $\phi=\pta h^0,h\ptc$ 
is \textbf{self-financing} if  both $h^0$ and $h$ belong to $C^-_{b}([0,1)),$ the process $h$ is locally $S$-forward integrable and its wealth process $X$ verifies 
\beqn \label{sfc}
X=X_0+\int_0^\cdot  h^0_tdS^0_t + \int_0^\cdot h_t d^-S_t.
\eeqn
\end{defi}

\begin{rema}\label{R59}
When $S$ is a $\mathbb{G}$-semimartingale, 
if $h \in C^-_{b}([0,1))$  is 
locally $S$-forward integrable and previous forward integral 
coincide with classical It\^o integrals, see Proposition \ref{p3.4}.
\end{rema}

The interpretation of the first two items in definition
\ref{d5.3} is straightforward: $h^0$ and $h$ represent, respectively, the number of shares of $S^0$ and $S$ held in the portfolio; $X$ is its market value. The self-financing condition (\ref{sfc}) seems to be an appropriate formalization of the intuitive idea of trading strategy not involving exogenous sources of money. Among its justifications we can include the following ones.

As already explained in the introduction, the discrete time version of condition  (\ref{sfc}) reads as the classical self-financing condition. 
Furthermore, if
    $S$ is a $\mathbb{G}$-semimartingale, forward integrals of $\mathbb{G}$-adapted processes with left continuous and bounded
 paths, agree with  classical It\^o integrals, see Proposition \ref{p3.6} and \ref{p3.4}.

It is natural to choose as  as \textit{num\'eraire} the positive process $S^0.$ That means that prices will be expressed in terms of $S^0.$  We could
 denote with $\widetilde{Y}$
the  value of a stochastic process $(Y_t,\interval)$ \textit{discounted} with respect to $S^0:$
$
\widetilde{Y_t}={Y_t}({S^0_t})^{-1},$ for every $\interval.$

The following lemma shows that, as well as in a semimartingale model, a portfolio strategy which is self-financing is uniquely determined by its initial value and the process representing the number of shares of $S$ held in the portfolio.
We remark that previous definitions and considerations can be made without supposing that the investor is able to observe prices of $S$ and $S^0.$ 
However, we need to make this hypothesis for the following characterization of self-financing portfolio strategies.

\begin{assu}		\label{a5.4}
From now on we suppose that $S $ and $S^0$ are  $\mathbb{G}$-adapted processes.
 \end{assu}

\begin{rema}\label{R54}
Indeed, for simplicity of the formulation, we will suppose 
in most of the proofs in the sequel that $V \equiv 0$
so that $S^0 \equiv 1$.
Usual rules of calculus via regularization allow to prove statements
to the case of general $S^0$. In that case the role of the wealth process (resp. the stock price) $X$ (resp. $S$) will be replaced by $\tilde X$ (resp. 
$\tilde S$).
With our simplifying convention we will wave $X = \tilde X$, $S = \tilde S$.

\begin{prop}\label{l5.5}
 Let $\pta h_t,\intervala\ptc$ be a $\mathbb{G}$-adapted  process in
 $C^-_{b}([0,1)),$ which is locally $S$-forward integrable, and  $X_0$
 be a  $\mathcal{G}_0$-random variable. Suppose $V \equiv 0$. Then
the couple $$\phi = \pta h^0_t,
h_t, \intervala \ptc,$$ 
where $h^0_t = X_t-h_t S_t$,
 $X$ defined as 
\beqn \label{sfc1}
X=X_0 + \int_0^\cdot h_t d^-S_t,
\eeqn
 is a self-financing portfolio strategy with wealth process $X.$
\end{prop}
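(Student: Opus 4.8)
The plan is to verify directly that the couple $\phi=(h^0,h)$ meets every requirement of Definitions \ref{d4.4} and \ref{d5.3}, the only substantial ingredient being the regularity and adaptedness transferred from $X$ to $h^0=X-hS$.

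First I would check that $\phi$ is a portfolio strategy in the sense of Definition \ref{d4.4}, i.e. that both components are $\mathbb{G}$-adapted. The process $h$ is $\mathbb{G}$-adapted by hypothesis; for $h^0=X-hS$ it suffices to show that $X$ is $\mathbb{G}$-adapted. Since $X_0$ is $\mathcal{G}_0$-measurable and $S$ is $\mathbb{G}$-adapted by Assumption \ref{a5.4}, each approximation $I(\ep,h,S,t)=\frac1\ep\int_0^t h_s(S_{s+\ep}-S_s)\,ds$ is $\mathcal{G}_{t+\ep}$-measurable; its limit in probability, the forward integral $\int_0^t h\,d^-S$, is therefore measurable with respect to $\bigcap_{\ep>0}\mathcal{G}_{t+\ep}=\mathcal{G}_t$, the last equality holding because $\mathbb{G}$ satisfies the usual conditions. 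Hence $X$ and $h^0$ are $\mathbb{G}$-adapted. The wealth process of $\phi$ is then, using $S^0\equiv1$ under the convention $V\equiv0$ (Remark \ref{R54}), equal to $h^0S^0+hS=h^0+hS=(X-hS)+hS=X$, so the market value of $\phi$ is exactly $X$.

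Next I would establish the membership $h^0\in C^-_{b}([0,1))$ required by Definition \ref{d5.3}. By item 1 of Definition \ref{d2.2} the locally $S$-forward integrable process $h$ produces a \emph{continuous} integral process, so $X=X_0+\int_0^\cdot h\,d^-S$ is continuous, in particular left continuous and bounded on every compact of $[0,1)$. As $S$ is continuous and $h\in C^-_{b}([0,1))$ by hypothesis, the product $hS$ lies in $C^-_{b}([0,1))$, and therefore so does $h^0=X-hS$.

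Finally I would verify the self-financing identity (\ref{sfc}). With the convention $V\equiv0$ we have $S^0\equiv1$, a constant and hence bounded variation process, so by Proposition \ref{p3.6} the integral $\int_0^\cdot h^0_t\,dS^0_t$ is a Lebesgue--Stieltjes integral against a constant and vanishes identically. Consequently (\ref{sfc}) collapses to $X=X_0+\int_0^\cdot h_t\,d^-S_t$, which is precisely the defining relation (\ref{sfc1}). Since $h$ is locally $S$-forward integrable by assumption, all conditions of Definition \ref{d5.3} hold and $\phi$ is self-financing with wealth process $X$. I expect the only genuinely delicate point to be the adaptedness of the forward integral, where the usual conditions on $\mathbb{G}$ are needed to pass from $\mathcal{G}_{t+\ep}$- to $\mathcal{G}_t$-measurability; every other step is a routine unwinding of the definitions.
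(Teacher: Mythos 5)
Your proof is correct and follows the same route as the paper's own argument, which simply asserts that $h^0=X-hS$ is $\mathbb{G}$-adapted and in $C^-_b([0,1))$, that the wealth process of $\phi$ equals $X$ by construction, and that the self-financing identity \eqref{sfc} reduces to \eqref{sfc1} when $V\equiv 0$. You have merely filled in the details the paper labels as ``clear'' --- notably the adaptedness of the forward integral via the $\mathcal{G}_{t+\ep}$-measurability of the regularizing integrals and the usual conditions on $\mathbb{G}$ --- so there is nothing genuinely different to compare.
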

\begin{proof}
Let $h,$ $X_0$ and $X$ be as in the second part of the statement.
 It is clear that \\
 $h^0=\pta \pta X_t-h_tS_t\ptc,\intervala\ptc$
 is $\mathbb{G}$-adapted and belongs to $C^-_{b}([0,1)).$ 
By construction, the wealth process corresponding to the strategy $\phi=(h^0,h)$ is equal to $X.$ The conclusion follows by \eqref{sfc}.
\end{proof}
%

Proposition \ref{l5.5} leads to conceive the following definition.
\begin{defi} \label{d5.4}
\begin{enumerate}
\item
A \textbf{self-financing portfolio} is a couple $\pta X_0, h\ptc$ of a
$\mathcal{G}_0$-measurable random variable $X_0$, and  a process $h$ in $C_b^-([0,1))$ which is $\mathbb{G}$-adapted and locally $S$-forward integrable. 
\item In the sequel we let us employ the term \textit{\textbf{portfolio}}
 to denote the process $h$ (in a self-financing portfolio),
 representing the number of shares of $S$ held.
 Without further specifications the initial wealth of an investor
 will be assumed to be equal to zero. 
\end{enumerate}
\end{defi}

   


Some conditions to insure the existence of \textit{chain-rule} formulae,
 when the semimartingale property of the integrator process fails to
 hold, can be found in  \cite{FR}.
\end{rema}

\begin{assu}\label{a5.9} 
We assume the existence of a real linear space of portfolios ${\bf{\cal A}}$, that is of $\mathbb{G}$-adapted processes $h$ belonging to $C^-_b([0,1)),$ which are locally $S$-forward integrable. The set $\cal{A}$ will represent the set of all \textbf{admissible strategies} for the investor.
\end{assu}

We proceed furnishing examples of sets behaving as the set $\cal{A}$ in
Assumption \ref{a5.9}. 

\subsection{About some classes of admissible strategies} \label{ss4.2}

The aim of this section is to provide some classes of 
mathematically rigorous admissible strategies. 
We will leave most of  technical justifications to the reader;
 they are based on calculus via regularization,
see \cite{RV05} for a recent survey.

\subsubsection{Admissible strategies via It\^o fields} \label{s5.1.1}

Adapting arguments developed in \cite{FR}, we consider the following framework.
Given a $\mathbb{G}$-adapted process $(\xi_t)$
we denote by $\shc^1_\xi(\mathbb{G})$ the class of processes of the form
$H(t,\xi_t), 0 \le t \le 1)$ where 
$H(t,x), 0 \le t \le 1, x \in {\mathbb R}$ is a random field of the form
\beqn \label{IM-field}
H(t,x)=f(x)+\sum_{i=1}^n \int_{0}^t a^i(s,x)dN_s^i, \quad \interval, 
\eeqn
where
$f: \Omega\times \mathbb{R}\rightarrow \mathbb{R}$ belongs to $C^1(\mathbb{R})$ almost surely and it is $\mathbb{G}^0$-measurable for every $x,$
$H$ and $a^i:[0,1]\times \mathbb{R}\times \Omega \rightarrow \mathbb{R},\  i=1,...,n$ are $\mathbb{G}$-adapted  for every $x,$ almost surely continuous with their partial derivatives with respect to $x$ in $(t,x)$  and 
 it holds
$$
\partial_{x} H(t,x)=\partial_x f(x)+\sum_{i=1}^n\int_{0}^t \partial_{x}
 a^i(s,x)dN_s^i, \quad \interval.
$$

The following Proposition can be proved using the machinery developed in 
\cite{FR}

\begin{prop}\label{p5.11}
Let $\cal{A}$ be the  set of processes  $(h_t, \intervala)$
  such that for every $\intervala$ the process in $hI_{[0,t]}$ 
belongs to $\mathcal{C}^1_S(\mathbb{G}).$
 Then $\cal{A}$ is a real linear space satisfying the
 hypotheses of Assumption \ref{a5.9}.
\end{prop}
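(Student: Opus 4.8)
The plan is to verify, one property at a time, that $\mathcal{A}$ meets every requirement listed in Assumption \ref{a5.9}: that it is a real linear space, and that each of its elements is a $\mathbb{G}$-adapted process with paths in $C^-_b([0,1))$ which is locally $S$-forward integrable. All four checks reduce to corresponding properties of the class $\mathcal{C}^1_S(\mathbb{G})$ of It\^o-field processes introduced in \eqref{IM-field}, so I would first isolate those properties and then transfer them to $\mathcal{A}$ through the defining condition $h I_{[0,t]}\in\mathcal{C}^1_S(\mathbb{G})$.

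First I would establish that $\mathcal{C}^1_S(\mathbb{G})$ is itself a real linear space. Given two fields $H_1(t,x)=f_1(x)+\sum_{i=1}^{n_1}\int_0^t a_1^i(s,x)\,dN_s^i$ and $H_2(t,x)=f_2(x)+\sum_{i=1}^{n_2}\int_0^t a_2^i(s,x)\,dN_s^i$ and scalars $\alpha,\beta$, one merges the two driver lists $N^i$ (padding the missing coefficients with zero), so that $\alpha H_1+\beta H_2$ is again of the form \eqref{IM-field} with first term $\alpha f_1+\beta f_2$; the required regularity of $f$, of $H$, of the $a^i$ and of their $x$-derivatives, together with the derivative formula for $\partial_x H$, is preserved under this operation. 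Linearity of $\mathcal{A}$ then follows because $(\alpha h+\beta g)I_{[0,t]}=\alpha(hI_{[0,t]})+\beta(gI_{[0,t]})$, so truncation commutes with linear combinations. For adaptedness I would use that $H(t,\cdot)$ is $\mathbb{G}$-adapted for each fixed $x$ and almost surely continuous in $x$; since $S$ is $\mathbb{G}$-adapted by Assumption \ref{a5.4}, a Carath\'eodory-type joint measurability argument makes $(t,\omega)\mapsto H(t,S_t(\omega))$ $\mathbb{G}$-adapted by composition. The left continuity and local boundedness of the paths follow from the almost sure joint continuity of $H$ in $(t,x)$ combined with the continuity of $S$, which render $t\mapsto H(t,S_t)$ continuous on $[0,1)$, hence a fortiori in $C^-_b([0,1))$.

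The heart of the argument, and the step I expect to be the main obstacle, is local $S$-forward integrability. This is precisely the point where the semimartingale property of $S$ is absent and the ordinary stochastic-integration toolbox does not apply; it is supplied instead by the calculus developed in \cite{FR}. The relevant output there is that an It\^o field of the form \eqref{IM-field}, evaluated along a finite quadratic variation process, is forward integrable with respect to that process, together with an associated It\^o-type substitution formula. Since $S$ is continuous with finite quadratic variation in the market model, and since by definition $h I_{[0,t]}\in\mathcal{C}^1_S(\mathbb{G})$ for every $t<1$, this result makes each truncation $h I_{[0,t]}$ forward integrable with respect to $S$; by Definition \ref{d2.2} this is exactly the assertion that $h$ is locally $S$-forward integrable. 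Collecting the four verifications shows that $\mathcal{A}$ satisfies Assumption \ref{a5.9}. I would leave the routine measurability and continuity bookkeeping, as well as the detailed transfer of the estimates of \cite{FR} to the present setting, to the technical machinery that reference provides.
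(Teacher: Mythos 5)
Your proposal is correct and follows essentially the same route as the paper, which gives no written proof for this proposition beyond the remark that it "can be proved using the machinery developed in \cite{FR}": like the paper, you reduce the only non-routine point (local $S$-forward integrability of It\^o fields evaluated along the finite quadratic variation process $S$) to the results of \cite{FR}, and handle linearity, adaptedness and path regularity by direct inspection of the form \eqref{IM-field}. Your explicit verification of the routine properties is a reasonable elaboration of what the paper leaves implicit.
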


\subsubsection{Admissible strategies via Malliavin calculus}\label{5.25}

Malliavin calculus represents a very efficient way to introduce
a class of admissible strategies 
if the logarithm of the underlying price is a Gaussian non-semimartingale
or if anticipative strategies are admitted.
Basic notations and definitions
concerning Malliavin calculus
can be found for instance in
 \cite{NP} and \cite{N}.

We suppose that $\pta \Omega, \mathbb{F}, \mathcal{F},P\ptc$ is the
canonical probability space, meaning that $\Omega=C\pta
[0,1],\mathbb{R}\ptc$, $P$ is the Wiener measure, $W$ is the Wiener
process, $\mathbb{F}$ is the filtration generated by $W$ and the
$P$-null sets and $\mathcal{F}$ is the completion of the Borel
$\sigma$-algebra with respect to $P.$

For $p > 1, k\in \mathbb{N}^*$, $\mathbb{D}^{k,p}$ will denote the classical
Wiener-Sobolev spaces.

For any $p\geq 2,$ $L^{1,p}$ denotes the space of all functions $u$ in $L^p\pta \Omega \times [0,1] \ptc$ such that $u_t$ belongs to $\mathbb{D}^{1,p}$ for every  $\interval$  and there exists a measurable version of $\pta D_s u_t, 0\leq s,t \leq 1\ptc$ with
$
\int_0^{1} \mathbb{E}\pqa \vaa \vaa Du_t\vac\vac_{L^{2}([0,1])}^p \pqc dt<\infty.
$
The Skorohod integral $\delta$ is the adjoint of the derivative operator $D;$ its domain  is denoted by $Dom\delta.$ An element $u$ belonging to $Dom\delta$ is said Skorohod integrable. We recall that $\mathbb{D}^{1,2}$ is dense in $L^2(\Omega),$ $L^{1,2}\subset Dom\delta,$ and that if $u$ belongs to $L^{1,2}$ then, for each $\interval,$ $uI_{[0,t]}$ is still in  $L^{1,2}.$ In particular it is Skorohod integrable. We will use the notation
$
\delta\pta uI_{[0,t]}\ptc=\int_0^t u_s \delta W_s,
$
for each  $u$ in $L^{1,2}.$ The process $\pta \int_0^t u_s \delta W_s, \interval \ptc$
is mean square continuous and then it admits a continuous version, which
will be still denoted by $\int_0^\cdot u_t \delta W_t.$

\begin{defi}\label{d3.10} For every $p\geq 2,$ $L_{-}^{{1,p}}$ will be the space of all processes $u$ belonging to  $L^{1,p}$ such that
$
\lim_{\ep \rightarrow 0}D_tu_{t-\ep}
$
exists in $L^{p}(\Omega \times [0,1])$. The limiting process will be
denoted by $\pta D^-_tu_t, \interval \ptc.$
\end{defi}

Techniques similar to those of \cite{N,NP} allow to prove the
following.

\begin{prop} \label{p4.15} Let $u=\pta u^1,\dots, u^n \ptc,$ $n>1,$ be a vector of left continuous processes with bounded paths and in $L^{1,p}_{-},$ with $p>4.$ Let $v$ be a process in $L_{-}^{1,2}$ with left continuous paths such that the random variable
$
\vaa v_t\vac+\sup_{s\in[0,1]} \vaa D_sv_t\vac
$ 
is bounded.
Then for every $\psi$ in $C^1(\mathbb{R}^n)$
$\psi(u)v$ and $v$ are forward integrable with respect to $W.$ Furthermore $\psi(u)$ is forward integrable with respect to $\int_0^\cdot v_td^-W_t$ and
\beqn \nonu
\int_0^\cdot \psi(u_t)d^- \pta \int_0^t v_s d^- W_s \ptc &=&\int_0^\cdot \psi(u_t) v_t d^-W_t.
\eeqn
\end{prop}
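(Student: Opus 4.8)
The plan is to reduce everything to the standard relation between forward and Skorohod integrals on Wiener space: for a process $Y$ lying in $L^{1,2}_{-}$ one has $\int_0^\cdot Y_s d^-W_s = \int_0^\cdot Y_s \delta W_s + \int_0^\cdot D^-_s Y_s\,ds$, the last term being the trace produced when one pulls the integrand inside the anticipating integral. I would first record this decomposition — which is exactly the content of the techniques of \cite{N,NP} being invoked — and then verify its hypotheses for the two integrands at issue. A useful feature is that the paper's trace $D^-_s Y_s=\lim_{\epsilon\to 0}D_s Y_{s-\epsilon}$ approaches the diagonal from the region where the derivation index exceeds the process index, which will be the side that naturally appears in the regularizations below.

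First I would establish the two forward-integrability claims. Since $v\in L^{1,2}_{-}$ and both $v$ and $Dv$ are bounded, $v\in\mathrm{Dom}\,\delta$ and the trace $\int_0^\cdot D^-_s v_s\,ds$ is a bounded variation process; hence $v$ is $W$-forward integrable and $M:=\int_0^\cdot v_s d^-W_s$ splits as $N+A$ with $N_\cdot=\int_0^\cdot v_s\delta W_s$ and $A_\cdot=\int_0^\cdot D^-_s v_s\,ds$. For $\psi(u)v$ I would apply the Malliavin chain rule $D_r(\psi(u_s)v_s)=\sum_{i}\partial_i\psi(u_s)(D_r u^i_s)v_s+\psi(u_s)D_r v_s$, and use that $u^i\in L^{1,p}_{-}$ with $p>4$, that $\psi\in C^1$ together with the boundedness of the paths of $u$, and that $v,Dv$ are bounded, to check that $\psi(u)v$ again belongs to $L^{1,2}_{-}$ with trace $D^-_s(\psi(u_s)v_s)=\sum_{i}\partial_i\psi(u_s)(D^-_s u^i_s)v_s+\psi(u_s)D^-_s v_s$. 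This simultaneously proves forward-integrability of $\psi(u)v$ and identifies the right-hand side of the claimed identity as $\int_0^\cdot \psi(u)v\,\delta W+\int_0^\cdot\bigl[\sum_i\partial_i\psi(u_s)(D^-_s u^i_s)v_s+\psi(u_s)D^-_s v_s\bigr]ds$.

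The core of the argument is to compute $\int_0^\cdot \psi(u_t)d^-M_t$ directly from its regularization $\frac{1}{\epsilon}\int_0^t\psi(u_s)(M_{s+\epsilon}-M_s)\,ds$ and to match it with the expression above. For the bounded variation part, Proposition \ref{p3.6} gives $\int_0^\cdot\psi(u_t)d^-A_t=\int_0^\cdot\psi(u_s)D^-_s v_s\,ds$, accounting for one trace term. For the Skorohod part I would write $N_{s+\epsilon}-N_s=\int_s^{s+\epsilon}v_r\delta W_r$, pull $\psi(u_s)$ inside by the Skorohod integration-by-parts rule to obtain $\int_s^{s+\epsilon}\psi(u_s)v_r\delta W_r+\int_s^{s+\epsilon}\sum_i\partial_i\psi(u_s)(D_r u^i_s)v_r\,dr$, then apply $\frac{1}{\epsilon}\int_0^t(\cdot)\,ds$ and let $\epsilon\to0$. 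A Fubini theorem for Skorohod integrals turns the first piece into $\int_0^t\psi(u_r)v_r\delta W_r$ (using left-continuity of $u$), while the second piece converges to $\int_0^t\sum_i\partial_i\psi(u_s)(D^-_s u^i_s)v_s\,ds$, since the off-diagonal limit $\lim_{r\downarrow s}D_r u^i_s$ approaches the diagonal from the same side as, hence coincides with, the trace $D^-_s u^i_s$ furnished by $L^{1,p}_{-}$. Continuity of the resulting process is inherited from that of the Skorohod integral and of the two absolutely continuous traces.

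The main obstacle I anticipate is the rigorous passage to the limit in this last step: justifying the Fubini interchange between the Lebesgue average $\frac{1}{\epsilon}\int(\cdot)\,ds$ and the anticipating Skorohod integral, and proving that the trace term converges in probability to the advertised limit. This is precisely where the hypotheses $p>4$, the boundedness of the paths of $u$, and the boundedness of $v$ and $Dv$ are consumed: they supply the uniform $L^2$ (indeed $L^p$) estimates on $\psi(u)v$ and on its Malliavin derivatives needed to dominate the regularizations and to identify the diagonal limits. Once these convergences are secured, adding the contributions of $A$ and of $N$ reproduces exactly the right-hand side computed in the second step, which yields the stated equality.
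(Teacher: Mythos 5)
Your proposal is correct and follows exactly the route the paper intends: the paper gives no written proof of Proposition \ref{p4.15}, saying only that techniques similar to those of \cite{N,NP} yield it, and those techniques are precisely what you implement --- the forward--Skorohod decomposition $\int_0^\cdot Y_s\,d^-W_s=\int_0^\cdot Y_s\,\delta W_s+\int_0^\cdot D^-_sY_s\,ds$ for $Y\in L^{1,2}_-$, the integration by parts $F\,\delta(h)=\delta(Fh)+\langle DF,h\rangle$ applied to $F=\psi(u_s)$ and $h=vI_{(s,s+\epsilon]}$, and the Fubini-plus-diagonal-limit argument, with the two trace terms recombining on both sides of the identity. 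The one point worth making explicit is that, since $\psi$ is only $C^1$ and the paths of $u$ are bounded merely pathwise, the uniform $L^p$ bounds you invoke for $\partial_i\psi(u)$ do not hold globally and one should first localize on the sets $\{\sup_t\vert u_t\vert\le k\}$ in the sense of Definition \ref{d3.2} and Remark \ref{r4.9}; with that standard reduction your argument is complete.
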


Regarding the price of $S$ we make the following assumption.
\begin{assu}
We suppose that
$
S=S_0\exp \pta \int_0^\cdot \sigma_t dW_t+\int_0^\cdot\pta \mu_t-\frac{1}{2}\sigma_t^2\ptc dt\ptc,$ 
where $\mu$ and $\sigma$ are $\mathbb{F}$-adapted, $\mu$ belongs to $L^{1,q}$ for some $q>4,$ $\sigma$ has bounded and left continuous paths, it  belongs 
$L^{1,2}_-\cap L^{2,2}$ and the random variable 
\beqn \nonu
\sup_{t\in [0,1]} \pta\vaa \sigma_t \vac+\sup_{s\in[0,1]} \vaa D_s \sigma_t\vac 
\sup_{s,u\in[0,1]} \vaa D_s D_u\sigma_t\vac \ptc \eeqn 
is bounded.
\end{assu}

\begin{rema}\label{r5.15}
By Remark of page 32, section 1.2 of \cite{N} 
 $\sigma$ is in $L^{1,2}_-$ and $D^-\sigma=0$.
\end{rema}

Performing usual technicalities as in \cite{N, NP} it is possible
to prove that the process 
$
\log\pta S\ptc
$ 
belongs to $L^{1,q}_-.$

\begin{prop} \label{p5.16} Let $\mathcal{A}$ be the set of all $\mathbb{G}$-adapted processes $h$ in $C^-_b([0,1)),$ such that for every $0\leq t<1,$ the process $hI_{[0,t]}$ belongs to $L^{1,p}_{-},$ for some $p>4.$  
Then $\cal{A}$ is a real linear space satisfying the hypotheses of Assumption \ref{a5.9}.
\end{prop}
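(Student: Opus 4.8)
The plan is to verify the two assertions separately: that $\sha$ is a real linear space, and that every $h\in\sha$ is a $\mathbb{G}$-adapted process in $C^-_{b}([0,1))$ which is locally $S$-forward integrable, these being exactly the requirements of Assumption \ref{a5.9}. The first assertion is routine: $\mathbb{G}$-adaptedness and membership in $C^-_{b}([0,1))$ are clearly preserved by linear combinations, and for the integrability index one uses that on the finite measure space $\Omega\times[0,1]$ one has $L^{1,p'}_{-}\subseteq L^{1,p}_{-}$ whenever $p'\ge p$; hence if $h_1I_{[0,t]}\in L^{1,p_1}_{-}$ and $h_2I_{[0,t]}\in L^{1,p_2}_{-}$ with $p_1,p_2>4$, then both lie in $L^{1,p}_{-}$ with $p=\min(p_1,p_2)>4$, and so does their sum. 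Scalar multiples are immediate.

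The substance is local $S$-forward integrability. I would fix $0\le t<1$ and set $g=hI_{[0,t]}$; the goal is to show that $g$ is $S$-forward integrable. Writing $N=\int_0^\cdot \sigma_s\,dW_s$ and $A=\int_0^\cdot(\mu_s-\tfrac12\sigma_s^2)\,ds$, the standing assumption gives $S=S_0\exp(N+A)$, where $N$ is a continuous local martingale with $[N]_\cdot=\int_0^\cdot\sigma_s^2\,ds$ (recall $\int\sigma\,d^-W=\int\sigma\,dW$ by Proposition \ref{p3.4}, since $\sigma$ is bounded, left continuous and $\mathbb{F}$-adapted) and $A$ is a continuous bounded variation process. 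The vector $(N,A)$ therefore has all its mutual brackets, with $[N,A]=[A,A]=0$. Applying the chain rule of Proposition \ref{l2.1} to $\psi(x,y)=S_0e^{x+y}\in C^2(\R^2)$, so that $S=\psi(N,A)$ and $\partial_x\psi(N,A)=\partial_y\psi(N,A)=S$, reduces the $S$-forward integrability of $g$ to the $N$-forward integrability and the $A$-forward integrability of $gS$. The latter is automatic from Proposition \ref{p3.6}, since $A$ has bounded variation and $gS$ has bounded paths with at most countably many discontinuities.

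The crux is thus that $gS$ be $N$-forward integrable, and here I would invoke Proposition \ref{p4.15}. Taking $v=\sigma$, so that $N=\int_0^\cdot v_s\,d^-W_s$, and the vector $u=(hI_{[0,t]},\log S)$ together with $\psi(a,b)=ae^b\in C^1(\R^2)$, one has $\psi(u)=hI_{[0,t]}\,S=gS$, whence Proposition \ref{p4.15} yields that $gS$ is forward integrable with respect to $N$ (indeed $\int_0^\cdot gS\,d^-N=\int_0^\cdot gS\,\sigma\,d^-W$), which is exactly what is needed. Since $t<1$ was arbitrary, $h$ is locally $S$-forward integrable, and the conjunction of the two assertions gives the hypotheses of Assumption \ref{a5.9}.

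The hard part will be the verification of the hypotheses of Proposition \ref{p4.15}, which is where all the regularity assumptions are consumed. For the component $u^1=hI_{[0,t]}$ one needs left continuity (inherited from $h\in C^-_{b}([0,1))$ and the left continuity of $I_{[0,t]}$), boundedness of the paths on the whole of $[0,1]$ (here the cut-off at $t<1$ is essential, since $h$ is only bounded on compacts of $[0,1)$), and membership in $L^{1,p}_{-}$ for some $p>4$, which is the defining property of $\sha$. For $u^2=\log S$ one uses continuity and pathwise boundedness of $\log S$ on $[0,1]$ together with the fact, established just before the statement, that $\log S\in L^{1,q}_{-}$ for some $q>4$; passing to the common index $\min(p,q)>4$ places the whole vector $u$ in a single $L^{1,p}_{-}$. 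Finally $v=\sigma$ must meet its own requirements, namely $\sigma\in L^{1,2}_{-}$ (cf. Remark \ref{r5.15}), left continuity, and boundedness of $|\sigma_t|+\sup_{s}|D_s\sigma_t|$; these are read off from the standing assumption on $\sigma$. Checking this Malliavin-regularity bookkeeping, rather than any conceptual difficulty, is the main technical obstacle.
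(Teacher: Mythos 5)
Your proposal is correct and follows essentially the same route as the paper: reduce, via the chain rule of Proposition \ref{l2.1} applied to $S=S_0\exp(\log(S/S_0))$, the $S$-forward integrability of $hI_{[0,t]}$ to the forward integrability of $hI_{[0,t]}S$ with respect to the log-price, absorb the bounded variation part by Proposition \ref{p3.6}, and handle the martingale part by Proposition \ref{p4.15} applied to the vector $u=(hI_{[0,t]},\log S)$ with $v=\sigma$. Your write-up is in fact more explicit than the paper's (which lumps $\int\sigma\,dW+\int\mu\,dt$ into a single process $A$ and leaves the hypothesis-checking of Proposition \ref{p4.15} implicit), but there is no substantive difference.
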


\begin{proof}
Let $h$ be in $\cal{A}.$ We set 
$
A=\log(S)-\log(S_0)+\frac{1}{2}\int_0^\cdot\sigma_t^2dt=\int_0^\cdot \sigma_tdW_t+\int_0^\cdot \mu_tdt.
$
We recall that, thanks to Proposition \ref{l2.1}, for every $0\leq t <1,$ $hI_{[0,t]}$ is $S$-forward integrable if and only if $hI_{[0,t]}S$ is forward integrable with respect to $A.$ Let $0\le t<1,$ be fixed.
Each component of the vector process $u=\pta hI_{[0,t]},\log(S)\ptc$ belongs to $L^{1,p}_-$ for some $p>4$ and it has left continuous and bounded paths. We can thus apply Proposition \ref{p4.15} to state that $hI_{[0,t]}S$ is forward integrable with respect to $\int_0^\cdot \sigma_t dW_t.$ 
This implies that $hI_{[0,t]}S$ is $A$-forward integrable. Letting $t$ vary in $[0,1)$
we find that $h$ is $S$-improperly integrable and  we conclude
  the proof. \end{proof}

\subsubsection{Admissible strategies via substitution} \label{e5.21}

Let $\mathbb{F}=\pta \mathcal{F}_t\ptc_{t\in[0,1]}$ be a filtration on $\pta \Omega, \mathcal{F}, P\ptc,$ with $\mathcal{F}_1=\mathcal{F},$ and $G$ an $\mathcal{F}$ measurable random variable with values in $\mathbb{R}^d$.
We set $
\mathcal{G}_t=\pta \mathcal{F}_{t} \vee \sigma(G)\ptc,
$ and we suppose that $\mathbb{G}$ is right continuous:
$$
\mathcal{G}_t=\bigcap_{\ep > 0}\pta \mathcal{F}_{t+\ep} \vee \sigma(G)\ptc.
$$

In this section $\mathcal{P}^{\mathbb{F}}$ ($\mathcal{P}^{\mathbb{G}},$ resp.) will denote the $\sigma$-algebra of $\mathbb{F}$ (of $\mathbb{G},$ resp.)-predictable processes.
$E$ will be the Banach space of all continuous functions on $[0,1]$ equipped with the uniform
norm $\left|\left|f\right|\right|_{E}=\sup_{t\in[0,1]}\left|f(t)\right|.$

\begin{defi}An increasing sequence of random times $\pta
T_k\ptc_{k\in\mathbb{N}}$ is said \textit{\textbf{suitable}} if \\
$P\pta \cup_{k=0}^{+\infty}\pga
T_k=1 \pgc \ptc =1.$
\end{defi}

Let $\sha^{p,\gamma}(G)$ be the set of processes $(u_t)$ where $u_t = h(t,G)$ 
where $h(t,x)$ is a random field fulfilling the following Kolmogorov
type conditions:
there is a suitable sequence of stopping times $(\sht_k)$ for which
 $$
\mathbb{E}\pqa \sup_{t\in[0, \sht_k]}\left|h(t,x)-h(t,y)\right|^{p}\pqc
\leq c \left|x-y\right|^{\gamma}, \quad \forall x, y \in C.
$$

 We assume that $S$ and $S^0$ are $\mathbb{F}$-adapted, and that
 $S$ is an   $\mathbb{F}$-semimartingale.

We observe that this situation arises when the investor trades as an \textit{\textbf{insider}}, that is having an extra information about prices, at time $0,$ represented by the random variable $G.$
   
Performing substitution formulae  as in \cite{RV, RV2, RV3, ER1},
 it is possible to establish the following result.

\begin{prop}
Let $\sha$ be the set of processes $h$ such that, for every $0\leq t
<1,$ the process $hI_{[0,t]}$ belongs to $\mathcal{A}^{p,\gamma}
$ for some $p>1$ and $\gamma > 0$. 
 Then $\cal{A}$ satisfies the hypotheses of  Assumption \ref{a5.9}.  
\end{prop}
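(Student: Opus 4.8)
The plan is to verify directly the four requirements packaged in Assumption \ref{a5.9}: that $\sha$ is a real linear space, and that each $h \in \sha$ is $\mathbb{G}$-adapted, has paths in $C^-_b([0,1))$, and is locally $S$-forward integrable.

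First I would dispose of the linear structure. Given $h^1, h^2 \in \sha$, with associated fields satisfying the Kolmogorov estimate for exponents $(p_1,\gamma_1)$ and $(p_2,\gamma_2)$ along suitable sequences $(\sht^1_k)$ and $(\sht^2_k)$, I would set $\sht_k = \sht^1_k \wedge \sht^2_k$, which is still suitable since $\{\sht_k = 1\} = \{\sht^1_k=1\} \cap \{\sht^2_k=1\}$ increases to a set of full probability. Taking $p = \min(p_1,p_2)$, I would combine the two estimates through Minkowski's inequality together with $(a+b)^p \le 2^{p-1}(a^p+b^p)$. Since the substitution variable ranges, after localization, over a bounded set, the two resulting powers of $|x-y|$ are both dominated by $|x-y|^{\tilde\gamma}$ with $\tilde\gamma = \min(p\gamma_1/p_1, p\gamma_2/p_2) > 0$, so any real linear combination again lies in some $\sha^{p,\gamma}(G)$. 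This step is routine.

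Next I would treat $\mathbb{G}$-adaptedness and path regularity together. Because $G$ is $\sigma(G) \subseteq \shg_0$-measurable while each $h(t,\cdot)$ is $\shf_t$-measurable and, by the Kolmogorov estimate and Kolmogorov's continuity theorem, admits a version continuous in the spatial variable, the composition $u_t = h(t,G)$ is $\shf_t \vee \sigma(G) = \shg_t$-measurable; hence $u$ is $\mathbb{G}$-adapted. For the trajectories I would use that, for each fixed value of the substitution variable, the field has left-continuous bounded paths: left continuity of $t \mapsto h(t,G(\omega))$ then follows pointwise, while boundedness on each $[0, \sht_k]$ follows by combining boundedness at a reference point with the local modulus of continuity supplied by the Kolmogorov estimate. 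Thus $u \in C^-_b([0,1))$.

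The hard part will be local $S$-forward integrability, which is where the substitution machinery of \cite{RV, RV2, RV3, ER1} enters. The strategy is to freeze the anticipating variable: for each fixed $x$ the process $h(\cdot,x)$ is $\mathbb{F}$-adapted with left-continuous bounded paths, and since $S$ is an $\mathbb{F}$-semimartingale, Propositions \ref{p3.4} and \ref{p3.6} (decomposing $S$ into its local martingale and bounded variation parts) guarantee that $h(\cdot,x) I_{[0,t]}$ is $S$-forward integrable, with forward integral coinciding with the classical It\^o integral $\int_0^{\cdot \wedge t} h(s,x)\,dS_s$. The Kolmogorov-type moment bound is exactly what is needed to produce, via Kolmogorov's lemma applied to the random field $(t,x) \mapsto \int_0^t h(s,x)\,dS_s$, a version jointly continuous in $(t,x)$. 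This joint continuity should allow one to interchange the regularization limit defining the forward integral with the substitution of $G$: writing $I(\ep, h(\cdot,G), S, t) = J(\ep,t,x)\big|_{x=G}$ with $J(\ep,t,x) = \frac1\ep \int_0^t h(s,x)(S_{s+\ep}-S_s)\,ds$, one shows that the convergence $J(\ep,t,\cdot) \to \int_0^\cdot h(s,\cdot)\,dS_s$ survives substitution, so that $h(\cdot,G) I_{[0,t]}$ is $S$-forward integrable and $\int_0^{\cdot} h(s,G)\,d^-S_s = \big[\int_0^{\cdot} h(s,x)\,dS_s\big]_{x=G}$. Letting $t$ run through $[0,1)$ then yields local $S$-forward integrability of $u$. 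The genuine obstacle is precisely this interchange of limit and substitution; I expect it to reduce entirely to the cited substitution-formula results, the Kolmogorov condition in the definition of $\sha^{p,\gamma}(G)$ having been tailored exactly to fulfil their hypotheses.
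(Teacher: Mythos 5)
Your outline follows exactly the route the paper intends: the paper in fact gives no proof of this proposition, stating only that it is established by ``performing substitution formulae'' as in the cited references, and your argument --- freezing the anticipating variable, using the $\mathbb{F}$-semimartingale property of $S$ to build the It\^o integrals $\int_0^\cdot h(s,x)\,dS_s$ for fixed $x$, invoking the Kolmogorov estimate to obtain a version jointly continuous in $(t,x)$, and then substituting $x=G$ --- is precisely that substitution-formula argument, with the linearity and adaptedness checks handled correctly. The only caveats are ones already latent in the paper's own formulation rather than defects of your proof: the Kolmogorov bound on the field $h$ must first be transferred to the integrals via the Burkholder--Davis--Gundy inequality after localizing $[S]$, and Kolmogorov's continuity lemma genuinely requires $\gamma>d$ (as in the analogous Proposition for $\mathcal{A}^{+}$, where $p>3$, $\gamma>3d$ is imposed) rather than merely $\gamma>0$.
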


\subsection{Completeness and arbitrage: $\mathcal{A}$-martingale measures}

\begin{defi}
Let $h$ be a self financing portfolio in $\cal{A}$  which is $S$-improperly forward integrable and $X$ is its wealth process. 
Then $h$ is an \textbf{arbitrage} if  $X_1=\lim_{t\rightarrow 1}X_t$ exists almost surely, $P(\pga X_1 \geq 0\pgc )=1$ and  $P(\pga X_1>0\pgc)>0.$ 
\end{defi} 

\begin{defi}
We say that the market is \textbf{$\mathcal{A}$-arbitrage free} 
if no self-financing strategy $h$ in $\mathcal{A}$ is an arbitrage.
\end{defi}

\begin{defi}
A probability measure $Q \sim  P$ is said \textbf{$\mathcal{A}$-martingale measure} if under $Q$ the process $S$ is an $\mathcal{A}$-martingale according to definition \ref{d3.1}.
\end{defi}


For the following Proposition the reader should keep in mind the notation in equality (\ref{AX}). We omit its proof which is a direct application of Corollary \ref{c4.16}.

\begin{prop} \label{p5.27}
Let  $\mathcal{A}=\mathcal{A}_S.$  Suppose that
 $d\pqa S\pqc_t=\sigma(t,S_t)^2 S_t^2dt,$ where $\sigma$ satisfies Assumption 
\ref{a4.15}.  
 Moreover we suppose that the unique solution of equation 
\eqref{e11} admits a density for $0 < t \le 1$.
If there exists a $\cal A$-martingale measure then  the law of $S_t$ 
is absolutely continuous with respect to Lebesgue measure,
 for every $0 < t \le 1$.
\end{prop}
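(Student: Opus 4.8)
The plan is to recognize the final statement as the instance of Corollary \ref{c4.16} obtained after a logarithmic change of variables, and to transport the resulting absolute continuity from $\log S$ back to $S$. First I would record the measure-theoretic reduction already exploited in the proof of Proposition \ref{p318}: since $P \ll Q$, for each $t$ the law of $S_t$ under $P$ is absolutely continuous with respect to the law of $S_t$ under $Q$. Hence it suffices to prove that the law of $S_t$ under $Q$ is absolutely continuous with respect to Lebesgue measure for every $0 < t \le 1$.

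Next I would pass to $X = \log S - \log S_0$, so that $X_0 = 0$ and $S_t = S_0 e^{X_t}$. Because $S$ is continuous, strictly positive and of finite quadratic variation, the chain rule of Proposition \ref{l2.1} (with $\psi = \log$) shows that $X$ is a continuous finite quadratic variation process with $d[X]_t = S_t^{-2}\,d[S]_t = \sigma(t,S_t)^2\,dt = \hat\sigma(t,X_t)^2\,dt$, where $\hat\sigma(t,y) = \sigma(t,S_0 e^{y})$ is bounded and Borel-measurable. Thus $X$ fits the framework of Proposition \ref{p}. The idea is then to check that $X$ is an $\mathcal{A}_X$-martingale under $Q$: for $\theta = \phi(\cdot,X) \in \mathcal{A}_X$ the chain rule gives
$$\int_0^\cdot \theta\, d^-X = \int_0^\cdot \frac{\theta}{S}\, d^-S - \frac{1}{2}\int_0^\cdot \frac{\theta}{S^2}\, d[S],$$
so that, taking $Q$-expectation and using that $S$ is an $\mathcal{A}_S$-martingale on each compact interval of $[0,1)$ (via the localization of Remark \ref{r4.9}), one deduces $\mathbb{E}^{Q}[\int_0^t \theta\, d^-X] = 0$. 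Applying Corollary \ref{c4.16} to $X$, with $\hat\sigma$ satisfying Assumption \ref{a4.15} and the associated solution of \eqref{e11} admitting a density, yields that the law of $X_t$ under $Q$ is absolutely continuous. Since $y \mapsto S_0 e^{y}$ is a smooth diffeomorphism of $\mathbb{R}$ onto $(0,\infty)$, the law of $S_t = S_0 e^{X_t}$ under $Q$ is absolutely continuous as well, and the reduction of the first paragraph concludes the proof.

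I expect the genuine difficulty to lie in the transfer of the martingale property in the change of variables. The integrand $\theta/S$ appearing above contains the factor $1/S$, which blows up as $S \downarrow 0$, so $\theta/S$ need not literally belong to the structural class $\mathcal{A}_S$ (whose members must have \emph{global} polynomial growth), and one cannot naively invoke the $\mathcal{A}_S$-martingale property for it. The honest way around this is to avoid dividing by $S$ altogether and instead argue directly at the level of the forward Kolmogorov equation: testing the $\mathcal{A}_S$-martingale property of $S$ against integrands $\partial_x\Psi(\cdot,S)$ with $\Psi \in C^{1,2}$ of polynomial growth and bounded time- and second space-derivatives (which \emph{are} in $\mathcal{A}_S$, no $1/S$ occurring by Itô's formula in Proposition \ref{p2.1}), one shows that the law $\mu_t$ of $S_t$ under $Q$ solves weakly $\partial_t \mu_t = \tfrac12 \partial^{(2)}_{xx}\bigl(\sigma(t,x)^2 x^2\,\mu_t\bigr)$ with $\mu_0 = \delta_{S_0}$.

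The remaining, and most delicate, point is then to reconcile this equation with the hypotheses as stated. Assumption \ref{a4.15} and the density property are phrased for the coefficient $\sigma$ and the canonical diffusion $Z$ of Proposition \ref{p} ($dZ = \sigma(\cdot,Z)\,dB$, $Z_0 = 0$), whereas the law of $S_t$ obeys the Fokker--Planck equation with coefficient $\sigma(t,x)^2 x^2$ and initial datum $\delta_{S_0}$. The logarithmic substitution is precisely the device that identifies the correct diffusion coefficient $\hat\sigma(t,y) = \sigma(t,S_0 e^{y})$ governing $X = \log S - \log S_0$, and what must be verified (or read into the standing assumptions) is that $\hat\sigma$ inherits the non-degeneracy, uniqueness and density requirements of Assumption \ref{a4.15}. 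In the constant-volatility toy model of the introduction $\hat\sigma \equiv \sigma$ and this is immediate; in general it is exactly this correspondence between $\sigma$ and $\hat\sigma$ — equivalently, between the law of $S_t$ and the law of the canonical diffusion — that carries the weight of the argument, everything else being routine calculus via regularization.
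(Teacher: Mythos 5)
Your proposal follows essentially the same route as the paper: the paper omits the proof of Proposition \ref{p5.27} entirely, stating only that it is ``a direct application of Corollary \ref{c4.16}'', and your argument is precisely that application, with the logarithmic substitution $X=\log S-\log S_0$ and the transfer of the $\mathcal{A}$-martingale property made explicit. The difficulties you flag --- that $\theta/S$ need not lie in $\mathcal{A}_S$ because of the $1/S$ factor, and that Assumption \ref{a4.15} and the density hypothesis are phrased for the additive coefficient $\sigma$ with initial datum $\delta_0$ rather than for the effective coefficient governing $\log S$ (or for $\sigma(t,x)x$ with datum $\delta_{S_0}$) --- are exactly what the paper's one-line justification glosses over, and your remedies (testing the martingale property against $\partial_x\Psi(\cdot,S)$ with $\Psi\in C^{1,2}$ of polynomial growth, and reading the standing assumptions as bearing on the logarithmic coefficient) are the correct way to make the ``direct application'' rigorous.
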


\begin{prop} \label{p5.28}
If there exists an $\mathcal{A}$-martingale measure $Q,$ the market is $\mathcal{A}$-arbitrage free.
\end{prop}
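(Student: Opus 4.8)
The plan is to show that the existence of an $\sha$-martingale measure $Q \sim P$ rules out any $\sha$-arbitrage, by computing the $Q$-expectation of the terminal wealth and showing it equals the initial wealth, which contradicts the strict positivity required of an arbitrage. First I would take an arbitrary self-financing portfolio $h \in \sha$ which is $S$-improperly forward integrable, with wealth process $X = X_0 + \int_0^\cdot h_t d^- S_t$ (recall $V \equiv 0$ so $S^0 \equiv 1$), and suppose for contradiction that $h$ is an arbitrage: that is, $X_1 = \lim_{t \to 1} X_t$ exists almost surely, $P(X_1 \geq 0) = 1$ and $P(X_1 > 0) > 0$.

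Since $Q \sim P$, the events $\{X_1 \geq 0\}$ and $\{X_1 > 0\}$ have the same $Q$-probabilities, so under $Q$ we still have $X_1 \geq 0$ a.s.\ and $Q(X_1 > 0) > 0$, whence $E^Q[X_1 - X_0] > 0$ provided the expectation is well defined and the integral term has zero $Q$-expectation. The core of the argument is precisely the latter: because $S$ is an $\sha$-martingale under $Q$ and $h \in \sha$, the definition (Definition \ref{d3.1}) gives that $h$ is $S$-improperly forward integrable under $Q$ with
$$
E^Q\pqa \int_0^t h_s d^- S_s \pqc = 0, \quad \interval.
$$
Taking the improper limit $t \to 1$, one obtains $E^Q\pqa \int_0^1 h_s d^- S_s \pqc = 0$, so that $E^Q[X_1] = E^Q[X_0] = X_0$ (as $X_0$ is deterministic, or at least its $Q$-expectation is unchanged). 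This contradicts $E^Q[X_1 - X_0] > 0$, and therefore no $\sha$-arbitrage can exist.

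The main obstacle I expect is justifying the interchange of the expectation $E^Q$ with the improper limit $t \to 1$ defining the forward integral at the terminal time, since the $\sha$-martingale property of Definition \ref{d3.1} asserts vanishing expectation only at each fixed $\interval$, whereas the arbitrage is formulated at $t = 1$ via the almost sure limit $X_1 = \lim_{t \to 1} X_t$. One must ensure that $E^Q[\lim_{t \to 1} \int_0^t h_s d^- S_s] = \lim_{t \to 1} E^Q[\int_0^t h_s d^- S_s] = 0$; this is the delicate point, requiring either a uniform integrability argument or a direct appeal to the nonnegativity $X_1 \geq 0$ together with Fatou's lemma. Indeed, using $X_1 \geq 0$ one can apply Fatou to conclude $E^Q[X_1] \leq \liminf_{t \to 1} E^Q[X_t] = X_0$, which already suffices to contradict $E^Q[X_1] > X_0$ without needing full interchange; this Fatou-based route is the clean way to close the argument and avoids any integrability subtleties on the positive side.
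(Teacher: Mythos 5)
Your main argument is correct and coincides with the paper's own one-line proof: since $S$ is an $\sha$-martingale under $Q$ and $h\in\sha$, one has $\mathbb{E}^Q\left[\int_0^1 h_t\,d^-S_t\right]=0$, which contradicts $X_1\ge 0$ a.s.\ together with $Q(X_1>0)>0$. The interchange-of-limits issue you worry about does not actually arise, because Definition \ref{d3.1} requires $\mathbb{E}\left[\int_0^t\theta_s\,d^-X_s\right]=0$ for every $t\in[0,1]$, hence in particular at $t=1$ where the integral is the improper one, so the vanishing expectation at the terminal time is part of the hypothesis rather than something to be deduced; note also that your Fatou fallback would in any case require $X_t\ge 0$ (or an integrable lower bound) for $t<1$, not merely $X_1\ge 0$, so it is not the clean fix you suggest.
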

\begin{proof}
Suppose again that $V \equiv 0$ and that $h$ is an $\cal{A}$-arbitrage.
 Since $S$ is an $\mathcal{A}$-martingale under $Q,$ 
 we find $\mathbb{E}^Q[X_1]=\mathbb{E}^Q[\int_0^1 h_td^-S_t]=0$. This contradicts the arbitrage condition $Q(\pga X_1>0\pgc)>0.$
\end{proof}



We proceed discussing \textit{completeness}.
\begin{defi}
A  \textbf{contingent claim} $C$ is an $\cal{F}$-measurable random variable.
We denote $\tilde C = \frac{C}{S^0_T}$.
 $\mathcal{L}$ will be a set of $\cal{F}$-measurable random variables; it will represent all the {contingent claims} the investor is interested in.
\end{defi}

\begin{defi} 
\begin{enumerate}
\item A contingent claim $C$ is said \textbf{$\mathcal{A}$-attainable} if there exists a self financing portfolio $(X_0,h)$ with $h$
in $\mathcal{A},$ which is $S$-improperly forward integrable,  
such that the corresponding wealth process $X$ verifies
$\lim_{t\rightarrow 1}X_t=C,$ almost surely. The portfolio $h$ is said the \textbf{replicating} or \textbf{hedging} portfolio for $C,$ $X_0$ is said the \textbf{replication price} for $C.$
\item The market is said to be $\pta \mathcal{A},\mathcal{L}\ptc$-{\bf
 attainable} if every contingent claim in $\cal{L}$ is attainable 
trough a portfolio in $\mathcal{A}.$
\end{enumerate}
\end{defi}

\begin{assu}\label{A1}
For every $\mathcal{G}_0$-measurable random variable $\eta,$ and $(h_t)$ 
in $\cal{A}$ the process $u=h\eta,$ belongs to $\cal{A}.$ 
\end{assu}

\begin{prop}\label{p5.33}
Suppose that the market is $\cal{A}$-arbitrage free, and that Assumption $\ref{A1}$ is realized. Then the replication price of an attainable contingent claim is unique. 
\end{prop}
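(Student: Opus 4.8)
The plan is to argue by contradiction, manufacturing an arbitrage out of any discrepancy between two replication prices. Suppose $C$ is $\sha$-attainable by two self-financing portfolios $(X_0,h)$ and $(X_0',h')$, with $h,h'\in\sha$ both $S$-improperly forward integrable, whose wealth processes $X=X_0+\int_0^\cdot h\,d^-S$ and $X'=X_0'+\int_0^\cdot h'\,d^-S$ satisfy $\lim_{t\to 1}X_t=\lim_{t\to 1}X_t'=C$ almost surely. I want to show $X_0=X_0'$ a.s. Since $\sha$ is a real linear space (Assumption \ref{a5.9}) and forward integration is linear, $h-h'$ is again in $\sha$ and $S$-improperly forward integrable, and the associated wealth process $Y_t=(X_0-X_0')+\int_0^t(h-h')\,d^-S_s$ satisfies $\lim_{t\to1}Y_t=C-C=0$ almost surely.

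First I would isolate the $\shg_0$-event on which the two prices disagree. Put $A=\{X_0>X_0'\}$; since $X_0,X_0'$ are $\shg_0$-measurable, $A\in\shg_0$ and $\eta:=I_A$ is a bounded $\shg_0$-measurable random variable. By Assumption \ref{A1} the process $g:=(h'-h)\eta$ belongs to $\sha$. Because $\eta$ does not depend on time, it factors out of the regularizations $I(\ep,\cdot,S,\cdot)$ directly from their definition, so $g$ is $S$-improperly forward integrable and
$$\int_0^t g\,d^-S_s=\eta\int_0^t(h'-h)\,d^-S_s=\eta\big[(X_0-X_0')-Y_t\big],\quad\intervala.$$
Consider then the self-financing portfolio $(0,g)$ with zero initial wealth, whose wealth process is $Z_t=\int_0^t g\,d^-S_s$.

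Next I would check that $(0,g)$ is a genuine $\sha$-arbitrage. Letting $t\to1$ and using $Y_t\to0$ gives $Z_1=\lim_{t\to1}Z_t=I_A(X_0-X_0')$, which exists almost surely. On $A$ this equals $X_0-X_0'>0$, while off $A$ it vanishes; hence $Z_1\ge0$ almost surely and $P\{Z_1>0\}=P(A)$. If $P(A)>0$, then $(0,g)$ is an arbitrage in $\sha$, contradicting the $\sha$-arbitrage freeness of the market. Therefore $P\{X_0>X_0'\}=0$; exchanging the roles of $(X_0,h)$ and $(X_0',h')$, i.e. using $g'=(h-h')I_{\{X_0'>X_0\}}$, yields $P\{X_0<X_0'\}=0$ as well, so $X_0=X_0'$ almost surely.

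The routine ingredients are the linearity of forward integration and the factoring of the $\shg_0$-measurable indicator out of the integral. The genuinely essential point — the one I would take care to state cleanly — is that admissibility of the localized strategy $g$ rests precisely on Assumption \ref{A1}, and that localizing on the $\shg_0$-event $A$ converts a mere discrepancy of random, $\shg_0$-measurable initial prices into a zero-cost, nonnegative, nontrivial terminal payoff, i.e. a true arbitrage; without Assumption \ref{A1} one could not guarantee that the conditioned strategy remains in $\sha$, and the argument would break down.
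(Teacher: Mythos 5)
Your proof is correct and follows essentially the same route as the paper's: both arguments invoke Assumption \ref{A1} to multiply the difference of the two replicating strategies by the indicator of the $\shg_0$-event where the prices disagree, and observe that the resulting admissible, zero-cost portfolio has terminal wealth $I_A(X_0-X_0')\ge 0$, positive with positive probability, contradicting $\sha$-arbitrage freeness. Your write-up is in fact slightly more careful than the paper's (you track the initial-wealth normalization and justify factoring the $\shg_0$-measurable indicator out of the forward integral explicitly), but the idea is identical.
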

\begin{proof}
Let $(X_0,h)$ and $(Y_0,k)$ be two replicating portfolios for a contingent claim $C,$ with $h$ and $k$ in $\cal{A},$ and wealth processes $X$ and $Y$, respectively. We have to prove that $$
P\pta \pga X_0-Y_0\neq 0\pgc \ptc=0.
$$ 
Suppose, for instance, that $P\pta X_0-Y_0>0\ptc \neq 0.$ We set $A=\pga  X_0-Y_0>0\pgc.$ By Assumption \ref{A1}, $I_{A}(k-h)$ is a portfolio in $\cal{A}$ with wealth process  $I_A (Y_t-X_t).$ Since both $(X_0,h)$ and $(Y_0,k)$ replicate $C,$ $\lim_{t\rightarrow 1} I_{A}(Y_t-X_t)=I_{A}(X_0-Y_0),$ with $P(\pga I_A(X_0-Y_0>0)\pgc )>0.$ Then  $I_A(k-h)$ is an $\cal{A}$-arbitrage and this contradicts the hypothesis.   
\end{proof}

\begin{prop}\label{p5.30}  Suppose that there
  exists an $\mathcal{A}$-martingale measure $Q$.  Then the following
  statements are true.

\begin{enumerate}
\item Under Assumption
 \ref{A1}, the replication price of an $\cal{A}$-attainable contingent
 claim $C$ is unique and equal to $\mathbb{E}^Q\pqa \widetilde C\left|\right.\mathcal{G}_0\pqc.$
\item Let $\mathcal{G}_0$ be trivial. If  $Q$ and $Q_1$ are two
  $\mathcal{A}$-martingale measures, then 
$\mathbb{E}^Q[ \widetilde{C}]=\mathbb{E}^{Q_1}[\widetilde C],$ for every $\cal{A}$-attainable contingent claim $C$. In particular, if  the market is $(\cal{A},\cal{L})$-attainable and $\mathcal{L}$ is an algebra, all $\mathcal{A}$-martingale measures coincide on the $\sigma$-algebra generated by all bounded discounted contingent claims in $\mathcal{L}.$ 
\end{enumerate}
\end{prop}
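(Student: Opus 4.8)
The plan is to reduce everything to the defining property of an $\mathcal{A}$-martingale, namely $\mathbb{E}^Q[\int_0^t \theta\,d^-S_s]=0$ for every $\theta\in\mathcal{A}$, together with the fact that Assumption \ref{A1} permits inserting a $\mathcal{G}_0$-measurable factor into an admissible strategy. Following Remark \ref{R54} I would work under the convention $S^0\equiv 1$, so that $\widetilde C=C$ and $\widetilde X=X$, the general discounted case being recovered by the usual rules of calculus via regularization.

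For item 1, let $(X_0,h)$ be any replicating portfolio for the $\mathcal{A}$-attainable claim $C$, so that $\widetilde C=X_0+\int_0^1 h_s\,d^-S_s$ with the improper forward integral on the right-hand side. Given a bounded $\mathcal{G}_0$-measurable random variable $\eta$, Assumption \ref{A1} guarantees $\eta h\in\mathcal{A}$; since $\eta$ does not depend on time the regularizations satisfy $I(\varepsilon,\eta h,S,t)=\eta\,I(\varepsilon,h,S,t)$, and passing to the limit in probability shows $\eta h$ to be $S$-improperly forward integrable with $\int_0^1\eta h_s\,d^-S_s=\eta\int_0^1 h_s\,d^-S_s=\eta(\widetilde C-X_0)$. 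Because $S$ is an $\mathcal{A}$-martingale under $Q$ (Definition \ref{d3.1}) the left-hand side has zero $Q$-expectation, whence $\mathbb{E}^Q[\eta(\widetilde C-X_0)]=0$ for all bounded $\mathcal{G}_0$-measurable $\eta$. Since $\widetilde C-X_0=\int_0^1 h_s\,d^-S_s$ is $Q$-integrable by the very definition of an $\mathcal{A}$-martingale integral, this is exactly $X_0=\mathbb{E}^Q[\widetilde C\mid\mathcal{G}_0]$; as the right-hand side is independent of the chosen portfolio, uniqueness follows at once (alternatively, invoke Propositions \ref{p5.28} and \ref{p5.33}).

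For item 2, when $\mathcal{G}_0$ is trivial every $\mathcal{G}_0$-measurable random variable is a.s. constant, so Assumption \ref{A1} holds automatically because $\mathcal{A}$ is a linear space. Applying item 1 once under $Q$ and once under $Q_1$ to the same replicating portfolio $(X_0,h)$ of an $\mathcal{A}$-attainable $C$, and using that conditioning on the trivial $\sigma$-algebra is the expectation, yields $\mathbb{E}^Q[\widetilde C]=X_0=\mathbb{E}^{Q_1}[\widetilde C]$; here one uses that $\int_0^1 h\,d^-S$ is one and the same random variable under $Q$ and $Q_1$, both being equivalent to $P$ and the integral being defined by convergence in probability. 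For the final assertion, $(\mathcal{A},\mathcal{L})$-attainability makes every $C\in\mathcal{L}$ $\mathcal{A}$-attainable, so $\mathbb{E}^Q[\widetilde C]=\mathbb{E}^{Q_1}[\widetilde C]$ for all $C\in\mathcal{L}$, in particular for the bounded discounted claims, which form a multiplicatively closed family because $\mathcal{L}$ is an algebra and $S^0\equiv1$. I would then close with a functional monotone class argument: the bounded functionals $Z$ with $\mathbb{E}^Q[Z]=\mathbb{E}^{Q_1}[Z]$ form a vector space containing the constants, stable under bounded monotone limits, and containing that multiplicative family, hence contain every bounded function measurable for the $\sigma$-algebra it generates, i.e. $Q$ and $Q_1$ agree there.

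The routine algebra above is not where the difficulty lies; the two points I would check with real care are the $L^1(Q)$-integrability underlying the conditional-expectation identity (ultimately guaranteed by the definition of $\mathcal{A}$-martingale, which presupposes $\int_0^1\theta\,d^-S\in L^1(Q)$) and the measure-independence of the forward integral $\int_0^1 h\,d^-S$. Both are exactly the places where the pathwise, non-semimartingale nature of the construction replaces the classical stochastic-integral machinery, and they are the genuine obstacle to a clean proof.
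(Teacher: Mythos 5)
Your proof is correct and follows essentially the same route as the paper's: insert a bounded $\mathcal{G}_0$-measurable factor $\eta$ into the replicating strategy via Assumption \ref{A1}, pull it out of the forward integral, kill the expectation by the $\mathcal{A}$-martingale property of $S$ under $Q$ to get $X_0=\mathbb{E}^Q[\widetilde C\mid\mathcal{G}_0]$, and finish item 2 with the monotone class theorem. Your added remarks (that Assumption \ref{A1} is automatic when $\mathcal{G}_0$ is trivial since $\mathcal{A}$ is a linear space, and that the forward integral is a fixed random variable independent of the equivalent measure) are correct refinements of details the paper leaves implicit.
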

\begin{proof}
Suppose again$V \equiv 0$.
Let $(X_0,h)$ be a replicating $\cal{A}$-portfolio for $C$.
Then $$
\mathbb{E}^{Q}\pqa C\left|\right. \mathcal{G}_0\pqc=X_0+\mathbb{E}^{Q}\pqa \int_0^1 h_td^-S_t \left|\right.\mathcal{G}_0\pqc.
$$
We observe that 
$
\mathbb{E}^{Q}\pqa \int_0^1 h_td^-S_t \left|\right.\mathcal{G}_0\pqc=0.
$
In fact, if $\eta$ is  a $\mathcal{G}_0$-measurable random variable, then, thanks to  Assumption  \ref{A1}, $\eta h$ belongs to $\mathcal{A},$ so as to have
$\mathbb{E}^Q\pqa \pta \int_0^1 h_td^-S_t\ptc \eta \pqc=\mathbb{E}^Q\pqa  \int_0^1 \eta h_td^-S_t\pqc=0.$  This implies point 1.

If $\mathcal{G}_0$ is trivial, we deduce that, if $Q$ and $Q_1$ are
 two $\mathcal{A}$-martingale measures,  $\mathbb{E}^{Q}[
 C]=\mathbb{E}^{Q_1}[C],$ for every
 $\mathcal{A}$-attainable contingent claim. The
  last point is a consequence  of
the monotone class theorem, see theorem 8, chapter 1 of \cite{Protter}. 
\end{proof}

\subsection{Hedging}
In this part of the paper we price contingent claims via partial
differential equations. In particular, within a non-semimartingale model,
 we emphasize robustness of
Black-Scholes formula for \textit{European}, \textit{Asian} and some 
path dependent contingent claims depending on a finite number 
of dates of the underlying price.

We suppose here that $d\pqa S \pqc_t=\sigma^2(t,S_t)S_t^2dt$ and
$dV_t=rdt,$
 with $ r>0$ and $\sigma:[0,1]\times (0,+\infty)\rightarrow \mathbb{R}.$   
We suppose the existence of constants $c_1,c_2$ such that $0< c_1 \leq \sigma \leq c_2$.

Similar results were obtained by \cite{KS} and \cite{Z}.
Examples of non-semimartingale processes $S$ of that type
 can be easily constructed.
They are related to processes $X$ such that $[X] = {\rm const} \ t$.
A typical example is a Dirichlet process which can be written as
 Brownian motion plus a 
zero quadratic variation term. A not so well-known example is 
given by bifractional Brownian motion $X = B^{H,K}$ for indices
$H \in ]0,1[, K \in ]0,1]$ such that $HK = \frac{1}{2}$,
see for instance \cite{RT}. This process is neither
a semimartingale nor a Dirichlet process.


\begin{prop} \label{p5.31}
Let $\psi$ be a function in $C^0(\mathbb{R})$. Suppose that there exists $\pta v(t,x), \interval, x\in \mathbb{R}\ptc$ of class $C^{1,2}([0,1)\times \mathbb{R})\cap C^0([0,1]\times \mathbb{R}),$ which is a  solution of the following Cauchy problem
\beqn \label{e14}
\left\{
\begin{array}{lll}
\partial_tv(t,y)+\frac{1}{2}(\widetilde{\sigma}(t,y))^2y^2\partial_{yy}^{(2)}v(t,y)&=&0 \quad \mbox{ on }  [0,1)\times{\mathbb{R}}
\\
v(1,y)&=&\widetilde{\psi}(y),
\end{array}
\right.
\eeqn
where 
$$
\left\{
\begin{array}{ll}
\widetilde{\sigma}(t,y)=\sigma(t,ye^{rt})&\quad \forall (t,y) \in [0,1]\times\mathbb{R},
\\ \widetilde{\psi}(y)=\psi(ye^r)e^{-r}&\quad \forall y \in \mathbb{R}.  
\end{array}
\right. 
$$
Set 
$$
h_t=\partial_yv(t,\widetilde{S}_t),\quad \intervala,  \quad X_0=v(0,S_0).
$$
Then $(X_0,h)$ is a self-financing portfolio replicating the contingent claim $\psi(S_1).$
\end{prop}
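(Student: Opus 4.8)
The plan is to pass to discounted quantities and to recognise the discounted wealth process as $v(t,\tilde S_t)$, the candidate being dictated by the shape of the Cauchy problem \eqref{e14}. Throughout I adopt the convention of Remark \ref{R54}, so that it suffices to verify the self-financing and replication properties in the discounted market, where $S^0\equiv 1$, the stock is $\tilde S_t=S_t e^{-rt}$ and the integrand $h$ still counts the number of shares of $S$. The first computation is the quadratic variation of $\tilde S$. Since $t\mapsto e^{-rt}$ is $C^1$, hence of bounded variation, the standard rules of the calculus via regularization (bounded variation factors do not contribute to the bracket, Proposition \ref{p3.6}) give $d[\tilde S]_t=e^{-2rt}\,d[S]_t$. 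Substituting $d[S]_t=\sigma^2(t,S_t)S_t^2\,dt$, writing $S_t=\tilde S_t e^{rt}$ and using $\tilde\sigma(t,y)=\sigma(t,ye^{rt})$, this reduces to $d[\tilde S]_t=\tilde\sigma^2(t,\tilde S_t)\tilde S_t^2\,dt$.

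Next I would apply the It\^o formula of Proposition \ref{p2.1} to the function $v$ and the pair $(t,\tilde S_t)$, reading $t$ as the scalar bounded variation component and $\tilde S$ as the continuous finite quadratic variation component. Because $v$ is only $C^{1,2}$ on $[0,1)\times\mathbb{R}$, I carry out the computation on each interval $[0,t]$ with $t<1$; this is legitimate and, in the same stroke, yields that $h_s=\partial_y v(s,\tilde S_s)$ is $\tilde S$-forward integrable on $[0,t]$. The formula reads
\[
v(t,\tilde S_t)=v(0,S_0)+\int_0^t \partial_s v(s,\tilde S_s)\,ds+\int_0^t \partial_y v(s,\tilde S_s)\,d^-\tilde S_s+\frac{1}{2}\int_0^t \partial_{yy}^{(2)} v(s,\tilde S_s)\,d[\tilde S]_s.
\]
Inserting $d[\tilde S]_s=\tilde\sigma^2(s,\tilde S_s)\tilde S_s^2\,ds$ and invoking the PDE in \eqref{e14}, the first and the last Lebesgue integrals combine into $\int_0^t\big(\partial_s v+\frac{1}{2}\tilde\sigma^2 y^2\partial_{yy}^{(2)} v\big)(s,\tilde S_s)\,ds=0$, so that $v(t,\tilde S_t)=X_0+\int_0^t h_s\,d^-\tilde S_s$ for every $t<1$.

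The right-hand side is precisely the discounted wealth process $\tilde X_t$ attached, through Proposition \ref{l5.5}, to the portfolio $(X_0,h)$: since $h$ is continuous (hence in $C^-_b([0,1))$), $\mathbb{G}$-adapted and locally $\tilde S$-forward integrable by the previous step, $(X_0,h)$ is self-financing with $\tilde X_t=v(t,\tilde S_t)$. It remains to establish replication. Here I use the remaining regularity $v\in C^0([0,1]\times\mathbb{R})$ together with the path continuity of $\tilde S$: as $t\to 1$, $\tilde X_t=v(t,\tilde S_t)\to v(1,\tilde S_1)=\tilde\psi(\tilde S_1)$, and since $\tilde S_1=S_1 e^{-r}$ the terminal condition gives $\tilde\psi(S_1 e^{-r})=\psi(S_1)e^{-r}=\tilde C$; undoing the discounting with $S^0_1=e^{r}$ yields $\lim_{t\to 1}X_t=\psi(S_1)$. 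In particular the limit exists, so $h$ is $\tilde S$-improperly, hence $S$-improperly, forward integrable, and $(X_0,h)$ replicates $\psi(S_1)$. I expect the only delicate point to be this passage to the terminal time: because $v$ is not assumed $C^{1,2}$ up to $t=1$, It\^o's formula must be confined to $[0,t)$ with $t<1$ and the replication identity recovered by a limiting argument resting on the improper and local integral notions of Definition \ref{d2.2} and on the continuity of $v$ on all of $[0,1]\times\mathbb{R}$.
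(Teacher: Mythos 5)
Your proof is correct and follows essentially the same route as the paper's: apply the It\^o formula of Proposition \ref{p2.1} to $v$ along the (discounted) price on $[0,t]$ for $t<1$, use the PDE \eqref{e14} to cancel the drift and the bracket term, identify the result with the wealth process via Proposition \ref{l5.5}, and pass to the limit $t\to 1$ using the continuity of $v$ on all of $[0,1]\times\mathbb{R}$. The only difference is that you carry out explicitly the reduction to the discounted market (computing $d[\tilde S]_t=\tilde\sigma^2(t,\tilde S_t)\tilde S_t^2\,dt$), whereas the paper simply sets $r=0$ and appeals to Remark \ref{R54}; this is a welcome but inessential elaboration.
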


\begin{proof} 
Again, for simplicity, we consider the case $r = 0$.
Assumption \ref{a5.4} tells us that $h$ is a $\mathbb{G}$-adapted process in $C^-_b([0,1)).$  By Proposition \ref{p2.1}, $h$ is locally $S$-forward integrable. 
Applying  Proposition \ref{p2.1}, recalling equation (\ref{e14}), equalities (\ref{sfc}) 
 we find that  
$$
X_t=v(t,S_t), \quad \forall \intervala.
$$
In particular $X_0+\lim_{t\rightarrow 1} \int_0^t h_sd^-S_s$ exists
finite and coincides with $v(1,S_1)= \psi(S_1).$
\end{proof}

\begin{rema}
In particular, under some minimal regularity assumptions on  $\sigma$ and  no degeneracy, the market  is $(\mathcal{A}_S,\mathcal{L})$-attainable, if $\mathcal{L}$ equals the set of all contingent claims of type $\psi(S_1)$ with $\psi$ in $C^0(\mathbb{R})$ with linear growth.
\end{rema}
Enlarging suitably ${\mathcal A}$ 
and solving successively and recursively equations of the type
\eqref{e14}, it is possible to replicate contingent claims 
of the type $C = \psi(X_{t_1},\cdots, X_{t_n})$ with $0 \le 
t_1 <\cdots < t_n = 1$ and $\psi: {\mathbb R}^n \rightarrow  {\mathbb R}$
continuous with polynomial growth.

The proposition below provides a suitable framework for this.

\begin{prop}			\label{EDPtratti}
Let $r = 0$ so $V \equiv 0$.
Suppose $d[S]_{t}=\sigma^{2}(t,S_{t})S^{2}_{t}dt$ and $\psi$ a function in $C^{0}(\R^{n})$ with polynomial growth. 
Let $0=t_{0}< t_{1}<\ldots < t_{n}=1$, $n\geq 2$. 
Suppose that there exist functions 
$v^{1},\ldots, v^{n}$ such that 
\begin{itemize}
\item $v^{i}\in C^{1,2}([t_{i-1},t_{i})\times \R^{i})\cap C^{0}([t_{i-1},t_{i}]\times \R^{i}])$, $1\leq i\leq n$; 
\item and denoting shortly 
$v^{i}(t,y):=v^{i}(t,y_{1},\ldots, y_{i-1},y)$ for $1\leq i\leq n$ we have
\be			\label{EQedp} 
\left\{
\ba{ll}
\partial_{t}v^{n}(t,y)+\frac{1}{2}\sigma^2(t,y)y^{2}\partial^{(2)}_{yy}v^{n}(t,y)=0  & \textrm{ on } [t_{n-1},1) \times \R\\
v^{n}(1,y_{1},\ldots, y_{n-1},y)=\psi(y_{1},\ldots, y_{n-1},y) &
\ea
\right.
\ee
and for $i=1,\ldots, n-1$ 
\be			\label{EQedp2} 
\left\{
\ba{ll}
\partial_{t}v^{i}(t,y)+\frac{1}{2}\sigma^2(t,y)y^{2}\partial^{(2)}_{yy}v^{i}(t,y)=0  & \textrm{ on } [t_{i-1},t_{i}) \times \R\\
v^{i}(t_{i},y_{1},\ldots, y_{i-1},y)=v^{i+1}(t_{i} , y_{1},\ldots,y_{i-1},y,y).&
\ea
\right.
\ee
In particular $v^{1}(t_{1},y)=v^{2}(t_{1},y,y)$.
\end{itemize}
Setting 
\begin{eqnarray*}
h_{t}&=&I_{[0,t_{1}]}(t) \partial_{y}v^{1}(t,S_{t}) + 
\sum_{i=2}^{n} I_{(t_{i-1},t_{i}]} (t) \partial_{y}v^{i}(t,S_{t_{1}},\ldots, 
S_{t_{i-1}},S_{t})  \\
X_{0}&=&v^{1}(0,S_{0}) \; .
\end{eqnarray*}
Then $(X_{0},h)$ is a self-financing portfolio replicating the contingent claim $\psi(S_{t_{1}},\ldots, S_{t_{n}})$.
\end{prop}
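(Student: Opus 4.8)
The plan is to prove, by induction over the trading intervals, that the wealth process $X=X_0+\int_0^\cdot h_t\,d^-S_t$ of the strategy $(X_0,h)$ coincides on each $[t_{i-1},t_i]$ with
$$W^i_t:=v^i(t,S_{t_1},\ldots,S_{t_{i-1}},S_t),\qquad t\in[t_{i-1},t_i],$$
and then to read off replication from the terminal condition in \eqref{EQedp}; this is the multi-period analogue of the single-date argument behind Proposition \ref{p5.31}. Since $r=0$ we have $S^0\equiv1$, no discounting is needed, and the self-financing condition \eqref{sfc} reads simply $X=X_0+\int_0^\cdot h\,d^-S$. First I would check the regularity required by Assumptions \ref{a5.4} and \ref{a5.9}: $h$ is $\mathbb{G}$-adapted because each $\partial_y v^i(t,S_{t_1},\ldots,S_{t_{i-1}},S_t)$ is, and $h$ is \lcrl{} because within each $(t_{i-1},t_i)$ it is continuous (regularity of $v^i$ and continuity of $S$), while at a breakpoint $t_i$ the left limit along the $i$-th summand equals its value $\partial_y v^i(t_i,S_{t_1},\ldots,S_{t_{i-1}},S_{t_i})$ and the $(i+1)$-th summand vanishes there; hence $h\in C^-_b([0,1))$.

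The core computation is an interval-by-interval application of the It\^o formula of Proposition \ref{p2.1}. The key observation is that on $[t_{i-1},t_i]$ the past values $S_{t_1},\ldots,S_{t_{i-1}}$ are $\mathcal{G}_{t_{i-1}}$-measurable and constant in time, so that $V_t=(t,S_{t_1},\ldots,S_{t_{i-1}})$ is a bounded variation vector process there. Applying Proposition \ref{p2.1} to $v^i$ with this $V$ as bounded variation component and $S$ as finite quadratic variation component, the time coordinate produces the $\partial_s v^i\,ds$ term while the frozen coordinates contribute nothing, their increments being zero. After substituting $d[S]_s=\sigma^2(s,S_s)S_s^2\,ds$ one is left with
$$W^i_t=W^i_{t_{i-1}}+\int_{t_{i-1}}^t\pta\partial_s v^i+\tfrac12\sigma^2(s,S_s)S_s^2\,\partial^{(2)}_{yy}v^i\ptc ds+\int_{t_{i-1}}^t\partial_y v^i\,d^-S_s,$$
all $v^i$-derivatives being evaluated at $(s,S_{t_1},\ldots,S_{t_{i-1}},S_s)$. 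Invoking the PDE \eqref{EQedp2} (resp. \eqref{EQedp} when $i=n$) annihilates the Lebesgue integral, leaving $W^i_t=W^i_{t_{i-1}}+\int_{t_{i-1}}^t h_s\,d^-S_s$, because $\partial_y v^i(s,S_{t_1},\ldots,S_{t_{i-1}},S_s)=h_s$ for $s\in(t_{i-1},t_i)$.

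It then remains to glue the pieces. Evaluating the matching condition in \eqref{EQedp2} at $y_j=S_{t_j}$ and $y=S_{t_{i-1}}$ gives $W^{i-1}_{t_{i-1}}=v^{i-1}(t_{i-1},S_{t_1},\ldots,S_{t_{i-1}})=v^i(t_{i-1},S_{t_1},\ldots,S_{t_{i-1}},S_{t_{i-1}})=W^i_{t_{i-1}}$, so $W$ does not jump across a breakpoint. Together with the additivity of the forward integral over adjacent intervals (a consequence of the restriction and localization properties gathered in Remark \ref{r4.9}), the induction closes: from $X_{t_0}=X_0=v^1(0,S_0)=W^1_{t_0}$ one propagates $X_t=W^i_t$ on every $[t_{i-1},t_i]$, and local $S$-forward integrability of $h$ is inherited from each application of Proposition \ref{p2.1}. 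In particular $X_1=W^n_1=v^n(1,S_{t_1},\ldots,S_{t_{n-1}},S_1)=\psi(S_{t_1},\ldots,S_{t_n})$ by the terminal condition in \eqref{EQedp}, which is the asserted replication.

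The hard part will be the rigorous justification of the It\^o expansion on the \emph{subinterval} $[t_{i-1},t_i]$ and with the \emph{random}, $\mathcal{G}_{t_{i-1}}$-measurable frozen parameters $S_{t_1},\ldots,S_{t_{i-1}}$, rather than on all of $[0,1]$ with deterministic coefficients. Two routes are available. Either one treats those parameters directly as constant (hence bounded variation) coordinates of $V$ in Proposition \ref{p2.1}, applied on the shifted interval, and uses the restriction property of forward integrals from Remark \ref{r4.9} to identify the sub-interval integral; or one first establishes the deterministic identity for fixed constants $a_1,\ldots,a_{i-1}$ and then substitutes $a_j=S_{t_j}$ by a substitution-formula argument of the kind employed in Section \ref{e5.21}. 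I would follow the first route, taking care only that the random field $(t,y)\mapsto v^i(t,S_{t_1},\ldots,S_{t_{i-1}},y)$ has on $[t_{i-1},t_i]$ the $C^{1,2}$ regularity demanded by Proposition \ref{p2.1} — which is exactly what the hypothesis $v^i\in C^{1,2}([t_{i-1},t_i)\times\R^i)\cap C^0([t_{i-1},t_i]\times\R^i)$ guarantees, after noting that the terminal slice at $t_i$ is only $C^0$, so the expansion is run on $[t_{i-1},t_i)$ and extended to $t_i$ by continuity.
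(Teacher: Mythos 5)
Your proof is correct and follows exactly the route the paper intends: the paper states Proposition \ref{EDPtratti} without an explicit proof, indicating only that one solves the Cauchy problems \eqref{EQedp}--\eqref{EQedp2} ``successively and recursively'' and adapts the argument of Proposition \ref{p5.31}, which is precisely your interval-by-interval application of Proposition \ref{p2.1} with the frozen past values $S_{t_1},\ldots,S_{t_{i-1}}$ treated as bounded variation coordinates, followed by gluing via the matching conditions. Your handling of the endpoint regularity (running the expansion on $[t_{i-1},t_i)$ and extending by continuity) mirrors what the paper does in the proof of Proposition \ref{p5.31}.
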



The result of Proposition \ref{p5.31} can also be adapted to hedge Asian
 contingent claims, that is contingent claims  $C$ depending on the mean of $S$ over the traded period:
 $C=\psi\pta  \frac{1}{S_1}\pta \int_0^1 S_t dt\ptc\ptc S_1,$ for some $\psi$ in $C^0(\mathbb{R}).$  
\begin{prop} \label{PAsian}
Suppose that $\sigma(t,x)=\sigma,$ for every  $(t,x)$ in $[0,1]\times \mathbb{R},$ for some $\sigma>0.$ Let $\psi$ be a function in $C^0(\mathbb{R})$  and $v(t,y)$ a continuous solution of class $C^{1,2}([0,1)\times  \mathbb{R})\cap C^{0}([0,1]\times \mathbb{R})$ of the following Cauchy problem
\beqn \nonu
\left\{
\begin{array}{lll}
\frac{1}{2}\sigma^2 y^2 \partial_{yy}^{(2)}v(t,y)+ (1-ry)\partial_{y}v(t,y)+\partial_tv(t,y)&=&0, \quad \mbox{ on }  [0,1)\times{\mathbb{R}}
\\
v(1,y)&=&\psi(y).
\end{array}
\right.
\eeqn
Set  $Z_t=\int_0^t S_sds-K,$ 
for some $K>0,$ $X_0=v(0,\frac{K}{S_0})S_0$ and $h_t=v(t,\frac{Z_t}{S_t})-\partial_y v(t,\frac{Z_t}{S_t})\frac{Z_t}{S_t},$ for all $0\leq t \leq 1.$ Then $\pta X_0,h \ptc$ is a self-financing portfolio which replicates the contingent claim $\psi \pta \frac{1}{S_1}\pta \int_0^1 S_tdt-K\ptc\ptc S_1.$
\end{prop}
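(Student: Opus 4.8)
The plan is to reduce the Asian claim to a single application of the It\^o formula of Proposition \ref{p2.1}, in the same spirit as Proposition \ref{p5.31}, but now driving it by the two-dimensional bounded variation process $(t, Z_t)$ together with the finite quadratic variation integrator $S$. Writing $Y_t = Z_t/S_t$, I would introduce the candidate wealth process $X_t = v(t, Y_t)\,S_t = u(t, Z_t, S_t)$, where $u(t,z,s) := v(t, z/s)\,s$. Because $S$ is continuous and strictly positive and $v\in C^{1,2}([0,1)\times\mathbb{R})$, the field $u$ is of class $C^1$ in the bounded variation variables $(t,z)$ and $C^2$ in the quadratic variation variable $s$ on each slab $[0,t]\times\mathbb{R}\times(0,+\infty)$, $t<1$; recalling that $dZ_t = S_t\,dt$, Proposition \ref{p2.1} applies with $m=2$.

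First I would compute the relevant partial derivatives of $u$. A direct differentiation gives $\partial_z u = \partial_y v(t,y)$, $\partial_t u = s\,\partial_t v(t,y)$ and, crucially, $\partial_s u = v(t,y) - y\,\partial_y v(t,y)$, which upon substitution $y=Y_t$, $s=S_t$ is exactly the process $h_t$; moreover $\partial^{(2)}_{ss} u = (y^2/s)\,\partial^{(2)}_{yy} v(t,y)$. Inserting these into Proposition \ref{p2.1}, using $d[S]_t=\sigma^2 S_t^2\,dt$ and $dZ_t=S_t\,dt$, the forward integral term is precisely $\int_0^\cdot h_t\,d^- S_t$, while the absolutely continuous part has integrand $S_t\,(\partial_t v + \partial_y v + \tfrac12\sigma^2 Y_t^2\,\partial^{(2)}_{yy} v)(t,Y_t)$. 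Invoking the Cauchy problem $\partial_t v + (1-ry)\partial_y v + \tfrac12\sigma^2 y^2\,\partial^{(2)}_{yy} v = 0$, this integrand collapses to $r\,Z_t\,\partial_y v(t,Y_t)$.

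The decisive step is to recognize this surviving drift as the contribution of the less risky asset rather than as an obstruction. Since $X_t = v(t,Y_t)S_t$, one has $X_t - h_t S_t = Y_t\,\partial_y v(t,Y_t)\,S_t = Z_t\,\partial_y v(t,Y_t)$, so that $r\,Z_t\,\partial_y v(t,Y_t)\,dt = r(X_t-h_tS_t)\,dt = h^0_t\,dS^0_t$, where $S^0_t=e^{rt}$ and $h^0_t=(X_t-h_tS_t)/S^0_t$. Hence for every $t<1$ one obtains $X_t = X_0 + \int_0^t h^0_s\,dS^0_s + \int_0^t h_s\,d^- S_s$, which is exactly the self-financing condition \eqref{sfc}; along the way Proposition \ref{p2.1} also yields that $h$ is locally $S$-forward integrable, and continuity of $v$, $\partial_y v$ together with positivity of $S$ gives $h\in C^-_b([0,1))$. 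The initial value is $X_0 = v(0,Z_0/S_0)S_0$, and letting $t\uparrow 1$, the continuity of $v$ up to $t=1$ and the terminal condition $v(1,\cdot)=\psi$ identify the limit of the wealth as $v(1,Z_1/S_1)S_1 = \psi\big(\tfrac{1}{S_1}(\int_0^1 S_s\,ds - K)\big)S_1$, the announced contingent claim.

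The main obstacle I anticipate is the bookkeeping of the interest rate term: in contrast with the European setting, here one cannot simply reduce to $r=0$, since $r$ enters the pricing equation structurally through the transport coefficient $(1-ry)$. The whole point is that the residual drift $r\,Z_t\,\partial_y v$ left after the PDE is used is not spurious but must be matched, term by term, against $h^0\,dS^0$; the identity $X_t-h_tS_t = Z_t\,\partial_y v(t,Y_t)$, equivalently the consistency of the proposed $h$ with the wealth decomposition $X=h^0S^0+hS$, is what makes the cancellation exact and is the heart of the proof. A secondary technical point, handled as in Proposition \ref{p5.31}, is that $v$ is only $C^{1,2}$ on $[0,1)\times\mathbb{R}$ and merely continuous up to the terminal time, so the It\^o expansion is performed on $[0,t]$ for $t<1$ and the replication at maturity is obtained by passing to the limit $t\uparrow 1$.
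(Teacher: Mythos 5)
Your proof is correct and follows essentially the same route as the paper: both apply the It\^o formula of Proposition \ref{p2.1} to $u(t,z,s)=v(t,z/s)\,s$ with the bounded variation vector $(t,Z_t)$ and the finite quadratic variation integrator $S$, identify $\partial_s u(t,Z_t,S_t)$ with $h_t$, and use the Cauchy problem to eliminate the drift before letting $t\uparrow 1$. The only difference is that the paper reduces to $r=0$ ``for simplicity'' (so the drift vanishes outright), whereas you keep $r>0$ and explicitly match the residual term $r\,Z_t\,\partial_y v(t,Z_t/S_t)\,dt$ against $h^0_t\,dS^0_t$ --- a more complete verification of the self-financing condition (\ref{sfc}), but not a different method.
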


\begin{proof}
Again for simplicity we will suppose $r = 0$.
We set $\xi_t=\frac{Z_t}{S_t}, \interval$. Applying Proposition \ref{p2.1} to the function $u(t,z,s)=
v(t,\frac{z}{s})s$ and using the equation fulfilled by $v$ we can 
 expand the process  $v(t,\xi_t)S_t, \intervala)$ as  follows:
\beqn \label{f19}
u(t,Z_t,S_t)=v\pta t,\xi_t\ptc S_t=v\pta 0,\xi_0\ptc S_0+\int_0^t h_td^-S_t.
\eeqn 
By arguments which are similar to those used in the proof of Proposition \ref{p5.31}, it is possible to show that $h$ is a self-financing portfolio and that (\ref{f19}) implies that $u(t,Z_t,S_t)=X_t$ for every $\intervala.$ Therefore $\lim_{t \rightarrow 1}X_t$ is finite and equal to $\psi\pta \xi_1\ptc S_1 e^{-r}.$ This concludes the proof. 
\end{proof}


\subsection{On some 
sufficient conditions for no-arbitrage}			\label{nuova}

\subsubsection{Some illustration on  weak geometric 
Brownian motion}

Before we would like to give a first class
of non-arbitrage conditions related to the existence of
a $\sha$-martingale measure. 

For a process $X$ we define the set $\mathcal{A}^{n}_{X}$ as the space of all processes $h$ of type: 
\[
h_{t}=I_{[0,t_{1}]}(t)u^{1}(t,X_{t})+\sum_{i=2}^{n}I_{(t_{i-1},t_{i}]}(t)
u^{i}(t,X_{t_{1}},\ldots, X_{t_{i-1}},X_{t})
\]
where $0=t_{0}< t_{1}<\ldots < t_{n}=1$ and for every $i=1,\ldots, n$ 
\begin{itemize}
\item $u^{i}:
[0,1] \times \R^{i}\longrightarrow \R$ of class $C^{1}((t_{i-1},t_{i})\times \R^{i}) \cap C^{0}([t_{i-1},t_{i}]\times \R^{i})$ 
\item $u^{i}$ and its derivatives have polynomial growth on each
interval $(t_{i-1},t_{i}]$.
\end{itemize}

\begin{defi}
A continuous process $X$, is said {\bf weak $\sigma$-geometric Brownian motion 
of order} $n$ if, for every $0\leq t_{0}< t_{1}<\ldots < t_{n}\leq 1$
\[
(X_{t_{1}},\ldots, X_{t_{n}})(P)= \textrm{ law of } (Z_{t_{1}},\ldots, Z_{t_{n}})
\]
and $Z$ is a weak solution of equation $Z_{t}=X_{0}+\int_{0}^{\cdot}\sigma \, Z\, dW_{t}$
\end{defi}

\begin{rema} \label{R4.32}
\begin{enumerate} Let $n \ge 4$.
\item
With the help of Proposition   \ref{p2.1},
examples of such a process can be produced for instance
setting $X_t = \exp(\sigma B_t - \frac{\sigma^2 t}{2})$
whenever $B$ is a weak Brownian motion of order $n$. 
 \item If  $B$ is a weak Brownian motion
of order $n$ then $X$ is a 
is a finite quadratic variation process
with $[X]_t = \int_0^t \sigma^2 X_t^2 dt$.
\end{enumerate}
\end{rema}
Similar arguments as in the proof of Proposition
\ref{p}, performed in every subinterval
$(t_i, t_{i+1}]$, allow to prove the following.
\begin{prop} \label{P4.32}
Suppose that $S$ is a weak $\sigma-$geometric Brownian motion of order $n$
 with $d[S]_{t}=\sigma^{2}S^{2}_{t}dt$. Then $S$ is an $\mathcal{A}^{n}_{S}$-martingale.
\end{prop}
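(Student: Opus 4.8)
The plan is to mimic the proof of Proposition \ref{p}, working piece by piece on each subinterval $(t_{i-1},t_i]$, and then to glue the pieces together using the telescoping structure of $\sha^n_S$-strategies. Recall that a process $h$ in $\sha^n_S$ is of the form
$$
h_t=I_{[0,t_1]}(t)u^1(t,S_t)+\sum_{i=2}^n I_{(t_{i-1},t_i]}(t)u^i(t,S_{t_1},\ldots,S_{t_{i-1}},S_t),
$$
so by point 3. of Remark \ref{r4.9} (additivity/localization of the forward integral over adjacent intervals) the forward integral $\int_0^1 h_s\,d^-S_s$ splits as a sum of integrals over the $(t_{i-1},t_i]$. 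Thus it suffices to show that each contribution has zero expectation.

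First I would fix a subinterval $(t_{i-1},t_i]$ and condition on the past values $S_{t_1},\ldots,S_{t_{i-1}}$, treating them as frozen parameters. On this interval the integrand is $u^i(t,y_1,\ldots,y_{i-1},S_t)$ with the $y_j$ fixed, which is exactly a one-dimensional integrand of the type appearing in the definition of $\sha_S$ (a Borel function of $(t,S_t)$ with polynomial growth). The key input is that $S$ is a weak $\sigma$-geometric Brownian motion of order $n$: by definition, the finite-dimensional law of $(S_{t_1},\ldots,S_{t_n})$ equals that of $(Z_{t_1},\ldots,Z_{t_n})$, where $Z$ solves $dZ=\sigma Z\,dB$. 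Hence, just as in the proof of Proposition \ref{p}, I would represent the forward integral over $(t_{i-1},t_i]$ via the It\^o-type formula of Proposition \ref{p2.1} applied to a primitive $\Psi^i$ with $\partial_y\Psi^i=u^i$, converting $\int u^i\,d^-S$ into boundary terms plus a $dt$-integral involving $\partial_t\Psi^i+\tfrac12\sigma^2 y^2\partial^{(2)}_{yy}\Psi^i$. Taking expectations and using that $S$ and $Z$ share the relevant joint laws, the expectation of the forward integral over $(t_{i-1},t_i]$ equals the corresponding expectation computed for $Z$, which vanishes because $Z$ is a genuine (It\^o) martingale and is an $\sha$-martingale by point 2. of Remark \ref{r3.2}.

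The main obstacle will be bookkeeping the multidimensional dependence carefully: the integrand $u^i$ depends on the earlier marginals $S_{t_1},\ldots,S_{t_{i-1}}$, so the conditioning argument must be made precise, showing that the equality of finite-dimensional laws of $(S_{t_1},\ldots,S_{t_i})$ and $(Z_{t_1},\ldots,Z_{t_i})$ is exactly what is needed to transfer the computation of $\mathbb{E}\big[\int_{t_{i-1}}^{t_i}u^i(t,S_{t_1},\ldots,S_{t_{i-1}},S_t)\,d^-S_t\big]$ from $S$ to $Z$. I would also need to check forward integrability of each $h\,I_{(t_{i-1},t_i]}$, which follows from Proposition \ref{p2.1} together with the regularity and polynomial-growth hypotheses on the $u^i$, and from the finite quadratic variation $[S]_t=\int_0^t\sigma^2 S_s^2\,ds$ noted in Remark \ref{R4.32}.

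Finally I would assemble the pieces: summing the vanishing expectations over $i=1,\ldots,n$ gives $\mathbb{E}\big[\int_0^1 h_s\,d^-S_s\big]=0$, and the same argument restricted to $[0,t]$ (using point 3. of Remark \ref{r4.9}) gives $\mathbb{E}\big[\int_0^t h_s\,d^-S_s\big]=0$ for every $t\in[0,1)$; together with improper forward integrability this is precisely the statement that $S$ is an $\sha^n_S$-martingale in the sense of Definition \ref{d3.1}.
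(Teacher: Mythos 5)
Your proposal is correct and follows exactly the route the paper intends: the paper gives no detailed proof, stating only that ``similar arguments as in the proof of Proposition \ref{p}, performed in every subinterval $(t_i,t_{i+1}]$'' yield the result, and your interval-by-interval application of Proposition \ref{p2.1} to a primitive $\Psi^i$ of $u^i$, followed by transferring expectations to the It\^o martingale $Z$ via the equality of the joint laws of $(S_{t_1},\dots,S_{t_{i-1}},S_s)$ and $(Z_{t_1},\dots,Z_{t_{i-1}},Z_s)$, is precisely that argument made explicit. The only point to state slightly more carefully is that what you invoke is the equality of the laws of the $i$-tuples $(S_{t_1},\dots,S_{t_{i-1}},S_s)$ for each $s$ in the subinterval (all of order at most $n$), rather than a genuine conditioning on the past values.
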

\begin{defi}
Let $\mathcal{L}^{n}_{S}$ be the set of all contingent claims of type $\psi(S_{t_{1}},\ldots, S_{t_{n}})$ such that the hypotheses of Proposition \ref{EDPtratti} are verified and 
the process $h$ belongs to $\mathcal{A}^{n}_{S}$.
\end{defi}
\begin{coro} \label{C4.32}
Suppose that $S$ satisfies the hypotheses of previous proposition. Then the market is $\mathcal{A}^{n}_{S}$-viable and $\mathcal{L}^{n}_{S}$-complete.
\end{coro}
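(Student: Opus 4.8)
The plan is to read off both assertions almost directly from results already established, since the class $\mathcal{L}^{n}_{S}$ was designed precisely so that Proposition \ref{EDPtratti} applies to each of its elements, and since Proposition \ref{P4.32} already exhibits an $\mathcal{A}^{n}_{S}$-martingale measure. I interpret viability as the market being $\mathcal{A}^{n}_{S}$-arbitrage free.

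First I would prove viability. By Proposition \ref{P4.32}, under the present hypotheses the underlying $S$ is an $\mathcal{A}^{n}_{S}$-martingale with respect to the original probability $P$ itself. Since $P \sim P$ trivially, $P$ is an $\mathcal{A}^{n}_{S}$-martingale measure, so the set $\mathcal{M}$ of martingale measures is non-empty. Invoking Proposition \ref{p5.28}, the existence of an $\mathcal{A}^{n}_{S}$-martingale measure forces the market to be $\mathcal{A}^{n}_{S}$-arbitrage free, which is the viability claim.

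Next I would establish $\mathcal{L}^{n}_{S}$-completeness, that is, that the market is $(\mathcal{A}^{n}_{S},\mathcal{L}^{n}_{S})$-attainable. Fix an arbitrary $C$ in $\mathcal{L}^{n}_{S}$. By definition $C = \psi(S_{t_{1}},\ldots, S_{t_{n}})$ with $\psi$ and a subdivision $0 = t_{0} < \cdots < t_{n} = 1$ meeting the hypotheses of Proposition \ref{EDPtratti}; that proposition then produces functions $v^{1},\ldots,v^{n}$, an initial value $X_{0}=v^{1}(0,S_{0})$, and the strategy $h_{t}=I_{[0,t_{1}]}(t)\partial_{y}v^{1}(t,S_{t})+\sum_{i=2}^{n}I_{(t_{i-1},t_{i}]}(t)\partial_{y}v^{i}(t,S_{t_{1}},\ldots,S_{t_{i-1}},S_{t})$, for which $(X_{0},h)$ is a self-financing portfolio replicating $C$. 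The membership $h \in \mathcal{A}^{n}_{S}$, which is exactly the extra requirement imposed in the definition of $\mathcal{L}^{n}_{S}$, guarantees that this replicating portfolio is admissible. Hence $C$ is $\mathcal{A}^{n}_{S}$-attainable, and since $C$ was arbitrary the market is $(\mathcal{A}^{n}_{S},\mathcal{L}^{n}_{S})$-attainable.

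The argument is essentially bookkeeping, so I do not expect a genuine obstacle. The only point deserving care is matching the explicit form of the hedging strategy delivered by Proposition \ref{EDPtratti} with the template defining $\mathcal{A}^{n}_{S}$: one identifies $u^{i}=\partial_{y}v^{i}$ and must check that these enjoy the required $C^{1}$ on $(t_{i-1},t_{i})\times\mathbb{R}^{i}$ and $C^{0}$ on its closure, together with polynomial growth. This is precisely what the clause ``the process $h$ belongs to $\mathcal{A}^{n}_{S}$'' in the definition of $\mathcal{L}^{n}_{S}$ demands, so nothing further has to be verified here; were that clause dropped, one would instead have to impose regularity and growth hypotheses on the solutions $v^{i}$ directly in order to recover admissibility.
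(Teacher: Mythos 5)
Your argument is correct and is exactly the intended one: the paper states this corollary without a written proof, treating it as the immediate consequence of Proposition \ref{P4.32} (which makes $P$ itself an $\mathcal{A}^{n}_{S}$-martingale measure, whence absence of arbitrage via Proposition \ref{p5.28}) and of Proposition \ref{EDPtratti} combined with the defining clause of $\mathcal{L}^{n}_{S}$ that forces the replicating strategy to lie in $\mathcal{A}^{n}_{S}$. Your reading of ``viable'' as $\mathcal{A}^{n}_{S}$-arbitrage free is the one the authors intend, and your closing remark about where the admissibility of $h$ is actually secured (in the definition of $\mathcal{L}^{n}_{S}$ rather than in the corollary itself) is accurate.
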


\subsubsection{On some Bender-Sottinen-Valkeila type conditions}

The rest of this subsection is inspired by the work of \cite{BSV} whose results
are reformulated below in a similar but different framework. \\
For simplicity we will suppose again $V \equiv 0$ so
that the underlying is discounted.
We start with some notations and a definition.
Let $y_0 \in \R, t \in [0,1]$. We denote by
$C_{y_0}([0,1])$  the Banach space of continuous
function $\eta: [0,1] \rightarrow \R$
such that $\eta(0) = y_0$.
For $t \in [0,1] $ we define the shift operator
$\Theta_{t}: C([0,1]) \rightarrow  C([-1,0])$ defined by
$(\Theta_t \eta)(x) = \eta(x+t), \ x \in [-1,0]$.
We remind that continuous functions defined on some real
interval $I$ are naturally prolongated by continuity on 
the real line.
With a real process  $S = (S_t, t \in [0,1])$ 
we associate the ``window'' process 
 $ S_t(\cdot)$ with values in $C([-1,0])$, 
setting $ S_t(x) = S_{t+x}, x \in [-1,0]$. $S$ denotes the random element 
$S:\Omega\longrightarrow C([0,1])$, $\omega\mapsto S(\omega)$.

\begin{defi} \label{DFSC}
Let $Y = (Y_t, t \in [0,1])$ be a process such that $Y_0 = y_0$ for some 
$y_0 \in \R$. $Y$ is said to fulfill the {\bf full support condition}
 if for every $\eta \in C_{y_0}([0,1]) $ one has 
$ P \{ \Vert Y - \eta \Vert_\infty  \le \varepsilon \} > 0$.
\end{defi}
That  notion is present in the classical stochastic analysis literature, see
for instance \cite{maro}.
\cite{GRS} introduced a refined version of it which
is called the CFS (conditional full support) condition.

\begin{prop} \label{PFSC} 
\begin{enumerate}
\item Let $M$ be a local martingale such there is
a progressively measurable process $(\sigma_t, t \in [0,1])$
such that $[M]_t = \int_0^t \sigma^2_s ds, t \in [0,1]$
and a constant $c > 0$ with $\sigma_{s} \ge c$, $s\in [0,1]$. 
We will say in that case that $M$ is a non-degenerate.
Then $M$ fulfills the full support condition.
\item Let $G $ be an independent process from a process
$M$ fulfilling the full support condition. Suppose that $G_0 = 0$.
Then $X = M + G$ also fulfills the full support condition.
\item Let $f: \R \rightarrow \R$ be strictly increasing and continuous.
If $Y$ fulfills the full support condition then $f(Y)$
also fulfills the support condition.
\item Let $Y$ be a non-degenerate martingale, for instance a Brownian motion, and an independent process $\xi$. 
The process $X=e^{Y+\xi}$ fulfills the mentioned condition.
\end{enumerate}
\end{prop}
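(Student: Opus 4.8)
The plan is to establish the first item, which is the analytic heart, and then to derive items 2--4 from it by soft arguments. Throughout I read the full support condition as: for every $\eta \in C_{y_0}([0,1])$ and every $\varepsilon > 0$ one has $P\{\Vert Y - \eta\Vert_\infty \le \varepsilon\} > 0$, where $y_0 = Y_0$.

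For item 1 I would first reduce to a Brownian computation. Since $M$ is a continuous local martingale with $d[M]_t = \sigma_t^2 dt$ and $\sigma_t \ge c > 0$, L\'evy's characterisation shows that $\tilde W_t := \int_0^t \sigma_s^{-1} dM_s$ is a Brownian motion and $M_t = M_0 + \int_0^t \sigma_s d\tilde W_s$. Fix $\eta \in C_{M_0}([0,1])$; approximating uniformly, I may assume $\eta \in C^1$, so that $u_s := \dot\eta_s / \sigma_s$ is bounded (as $\sigma_s \ge c$). I then apply Girsanov's theorem with the bounded drift $u$ (Novikov is trivially satisfied), producing an equivalent measure $Q \sim P$ under which $\beta_t := \tilde W_t - \int_0^t u_s ds$ is a Brownian motion. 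A direct computation gives, under $Q$, that $M_t = \eta_t + N_t$ with $N_t := \int_0^t \sigma_s d\beta_s$, a $Q$-local martingale with $N_0 = 0$ and $[N]_t = \int_0^t \sigma_s^2 ds$. Since $P$ and $Q$ are equivalent, it suffices to prove $Q\{\Vert N\Vert_\infty \le \varepsilon\} > 0$, that is, that a non-degenerate continuous local martingale started at $0$ stays in the tube $(-\varepsilon,\varepsilon)$ with positive probability.

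To obtain this last fact I would use the Dambis--Dubins--Schwarz representation $N_t = \beta'_{[N]_t}$ for a Brownian motion $\beta'$, together with the finiteness $[N]_1 = \int_0^1 \sigma_s^2 ds < \infty$ a.s.; choosing a deterministic level $T$ with $Q\{[N]_1 \le T\}$ close to $1$, the event $\{\sup_{u \le T}|\beta'_u| < \varepsilon\} \cap \{[N]_1 \le T\}$ is contained in $\{\Vert N\Vert_\infty < \varepsilon\}$. The main obstacle is precisely here: the time change $[N]_1$ and the clock Brownian motion $\beta'$ are correlated, so positivity of this probability is not a bare product estimate. This is the only delicate point, and it is exactly the content of the Stroock--Varadhan type support theorem for non-degenerate martingales; I would either invoke it (in the spirit of \cite{maro} and \cite{GRS}) or prove positivity directly by a stopping argument on the filtration jointly generated by $\beta'$ and the clock.

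Items 2--4 are then formal. For item 2 I would condition on $G$: by independence, $P\{\Vert M+G-\eta\Vert_\infty \le \varepsilon\} = \int P\{\Vert M-(\eta-g)\Vert_\infty \le \varepsilon\}\,P_G(dg)$, and since $(\eta-g)(0) = y_0 = M_0$ for $P_G$-almost every path $g$ (because $G_0 = 0$), item 1 makes the integrand strictly positive $P_G$-a.s., so the integral is positive. For item 3, with $f$ strictly increasing and continuous I set $g := f^{-1}\circ\eta$ and use uniform continuity of $f$ on the compact interval swept out by $g$: for a suitable $\delta>0$ one has $\{\Vert Y-g\Vert_\infty < \delta\} \subseteq \{\Vert f(Y)-\eta\Vert_\infty < \varepsilon\}$, and the left-hand event has positive probability by the full support of $Y$ (here the target $\eta$ is understood to take values in the range of $f$, for instance the positive half-line when $f = \exp$). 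Finally item 4 follows by composing the three: item 1 gives full support of the non-degenerate martingale $Y$, item 2 upgrades it to $Y+\xi$, and item 3 with $f = \exp$ transfers it to $X = e^{Y+\xi}$.
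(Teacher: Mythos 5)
Your items 2--4 are correct and essentially identical to the paper's own arguments: item 2 is exactly the conditioning identity the paper writes as $E(\Psi(G))$ with $\Psi(g)=P\{\Vert M+g-\eta\Vert_\infty\le\varepsilon\}$, item 3 is the uniform-continuity argument the paper dismisses as obvious, and item 4 is the same composition of the preceding items. (One cosmetic slip: in item 2 the positivity of the integrand comes from the \emph{hypothesis} that $M$ fulfills the full support condition, not from item 1, since $M$ is no longer assumed there to be a non-degenerate martingale.)

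For item 1 you take a genuinely different route, and that is where a real gap remains. The paper avoids Girsanov altogether: it takes the full support of the Wiener process as known (via a Freidlin--Wentzell estimate), transfers it to the rescaled process $\mathcal{W}_{c\cdot}$ by scaling, writes $M=\mathcal{W}_{\int_0^\cdot\sigma_s^2\,ds}$ by Dambis--Dubins--Schwarz, and concludes by comparing the two tubes directly. Your Girsanov step is valid, but notice what it buys: it reduces ``$P\{\Vert M-\eta\Vert_\infty\le\varepsilon\}>0$ for every $\eta$'' to ``$Q\{\Vert N\Vert_\infty\le\varepsilon\}>0$'' for a non-degenerate local martingale $N$ with $N_0=0$, which is exactly the special case $\eta\equiv y_0$ of the statement being proved. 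You then defer that special case to ``the Stroock--Varadhan type support theorem'' or an unspecified stopping argument; this deferral is the entire content of item 1 and is not a routine citation, since the classical support theorem concerns diffusions whose coefficients are functions of $(t,X_t)$, whereas here $\sigma$ is only progressively measurable with a lower bound and no upper bound. In that generality the correlation you flag is fatal rather than merely delicate: one can let $\sigma$ run the DDS clock arbitrarily fast until the clock Brownian motion has left $(-\varepsilon,\varepsilon)$, so the tube-staying probability can vanish. The clean repair is to add the two-sided bound $c_1\le\sigma\le c_2$, which is precisely what is assumed wherever the proposition is applied (see Theorem \ref{teoNOAAR}): then $[N]_t\le c_2^2$ surely, your inclusion holds with the deterministic level $T=c_2^2$, no correlation issue arises, and $P\{\sup_{u\le c_2^2}|\beta'_u|<\varepsilon\}>0$ finishes the argument. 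As written, however, the proposal does not establish item 1.
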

\begin{proof}
\begin{enumerate}
\item It is well known that the standard Wiener process  fulfills
the full support condition. One possible argument follows 
directly from a Freidlin-Wentsell type estimate.
Given a Brownian motion $\shw$, $\left( \shw_{ct},  t \in [0,1] \right)$
fulfills the full support condition  
by a law rescaling argument.
By Dambis, Dubins-Schwarz theorem (see Theorem 1.6 chapter V of \cite{RY}),
there is a Brownian motion $\shw$ such that $M  = \shw_{\int_0^\cdot \sigma^2_s ds}$.
Let $\eta \in C_0([0,1])$; since 
$\Vert M - \eta \Vert_\infty \ge  \Vert \shw_{c \cdot} - \eta \Vert_\infty,$
 then for any $\varepsilon > 0$,
$$ P \{ \Vert M - \eta \Vert_\infty \le \varepsilon \} 
\ge  P \{\Vert \shw_{c \cdot} - \eta \Vert_\infty  \le \varepsilon \} > 0 $$  
and the result follows.
\item Let $ g \in C_0([0,T])$ be a realization of $G$. 
We set $\Psi(g) = P\{ \Vert M + g - \eta \Vert \le  \varepsilon \}.$
Clearly $P\{ \Vert  M + G - \eta \Vert \le \varepsilon \} 
= E( \Psi(G))$.
By item 1. $\Psi (g) $ is strictly positive for any $g$,
so the result follows.
\item Obvious.
\item It follows from previous items.
\end{enumerate} \end{proof}
%



\begin{ass} \label{assA} 
\
Let $S$ be a continuous process such that $S_{0}=s_{0}$ and $\sigma:[0,1]\times C([-1,0])\longrightarrow \R$ be a continuous functional. 
Let $\mathcal{A}$ be a class of self-financing portfolios $h$ with corresponding strategies 
$\phi=\pta\pta h^0_t,h_t\ptc, 0\leq t<1\ptc$ with associated wealth
 process $X_{t}(\phi)=h^0_t+h_{t}\cdot S_{t}$. 
For every $h \in \mathcal{A}$, we suppose the existence of a continuous functional $\mathcal{H}:[0,1]\times C[-1,0]\longrightarrow \mathbb{R}$ 
with polynomial growth such that 
$h_{t}=\mathcal{H}(t,S_{t}(\cdot))=\mathcal{H}(t,\Theta_t S )$, $t\in [0,1[$.

We say that \textbf{$\mathbb{\mathcal{A}}$ fulfills Assumption \ref{assA}
 (with respect to $\sigma$)} 
if there is a continuous functional $\mathcal{V}=\mathcal{V}_{\phi}:C([0,1])\longrightarrow \R$ such that, 
whenever $[S]_{t}=\int_{0}^{t}\sigma^{2}(s,S_{s}(\cdot)) S_s^2, ds $ with respect to some probability $Q$, 
then 
\begin{equation}	\label{eq Vteta}
\mathcal{V}_{\phi}(S)=\int_{0}^{1}h_{s}d^{-}S_{s}
\qquad Q \textrm{ a.s. }
\end{equation}
In particular the right-hand side forward integral exists with respect to $Q$.
\end{ass}
We recall that $BV([0,1])$ denotes the linear space
of  bounded variation function $f: [0,1] \rightarrow \R$
equipped with the topology of weak convergence of the related
measures. 

\begin{prop}		\label{propAssA}
Let $S$ be a continuous process such that $S_{0}=s_{0}$ and $\sigma:[0,1]\times C([-1,0])\lra \R$ be continuous. 
Let $\mathcal{A}$ be constituted by the self-financing portfolios $h$
 such there exists a continuous $\varphi:[0,1]\times \R^{n} 
\times \R\lra \R$ with polynomial growth such that $\varphi\in C^{1}\pta  [0,1[ \times\R^{n}\times \R\ptc $, $(t,v_{1},\ldots, , v_{n},x)\mapsto \varphi(t,v_{1},\ldots, , v_{n},x)$, and \\
$
h_{t}=\varphi\pta  t , V^{1}_t \pta S_{t}(\cdot)\ptc , 
\ldots, V^{n}_t\pta S_{t}(\cdot)\ptc, S_{t}\ptc =\varphi\pta  t , V^{1}_t \pta \Theta_{t}S\ptc , 
\ldots, V^{n}_t\pta \Theta_{t}S \ptc, S_{t}\ptc 
$, i.e.
$$
\mathcal{H}(t,\gamma)=\varphi\pta t, V^{1}_t(\gamma),\ldots, 
V^{n}_t(\gamma),\gamma(0)\ptc $$ where $\gamma \mapsto V^{i}(\gamma)$ is continuous from $C([-1,0])$
 to the class of bounded variation functions
$BV([0,1])$. 
Then $\mathcal{A}$ fulfills Assumption \ref{assA} with respect to $\sigma$.
\end{prop}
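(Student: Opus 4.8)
The plan is to reduce the computation of the forward integral to the Itô formula of Proposition \ref{p2.1} applied to a suitable primitive of $\varphi$, and then to read off $\int_0^1 h_s d^-S_s$ as an explicit pathwise functional of $S$. First I would introduce the $(n+1)$-dimensional bounded variation process $V=(V^0,V^1,\dots,V^n)$ with $V^0_t=t$ and $V^i_t=V^i_t(\Theta_t S)$ for $i\geq 1$; by hypothesis each $V^i$ is a continuous bounded variation process and, since $\gamma\mapsto V^i(\gamma)$ is continuous from $C([-1,0])$ into $BV([0,1])$ for the weak topology, the map $S\mapsto V$ is continuous from $C([0,1])$ into $BV([0,1])^{n+1}$. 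Next I would set
$$
\Phi(v_0,\dots,v_n,x)=\int_0^x \varphi(v_0,\dots,v_n,y)\,dy,
$$
so that $\partial_x\Phi=\varphi$ and $\partial^{(2)}_{xx}\Phi=\partial_x\varphi$. Since $\varphi\in C^1([0,1[\times\R^n\times\R)$ with polynomial growth, $\Phi$ is of class $C^{1,2}$ on $[0,1[\times\R^n\times\R$, so Proposition \ref{p2.1} applies to $\Phi(V,S)$ on each interval $[0,t]$, $t<1$.

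Applying Proposition \ref{p2.1} yields, for every $t<1$,
$$
\int_0^t \varphi(V_s,S_s)\,d^-S_s=\Phi(V_t,S_t)-\Phi(V_0,S_0)-\sum_{i=0}^n\int_0^t \partial_{v_i}\Phi(V_s,S_s)\,dV^i_s-\frac12\int_0^t \partial_x\varphi(V_s,S_s)\,d[S]_s,
$$
which in particular shows that $h=\varphi(V,S)$ is $S$-forward integrable. Here I would use the standing assumption $d[S]_s=\sigma^2(s,S_s(\cdot))S_s^2\,ds$ to rewrite the last term as the ordinary Lebesgue integral $\tfrac12\int_0^t \partial_x\varphi(V_s,S_s)\,\sigma^2(s,S_s(\cdot))S_s^2\,ds$. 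Letting $t\uparrow 1$, the boundary term converges by continuity of $\Phi$, $V$ and $S$ up to time $1$, the finite-variation integrals converge, and the Lebesgue term converges because along the fixed continuous path the window $S_s(\cdot)$ ranges in a compact subset of $C([-1,0])$, so $\sigma$ is bounded and the integrand dominated. I would then \emph{define}
$$
\mathcal{V}_\phi(S)=\Phi(V_1,S_1)-\Phi(V_0,S_0)-\sum_{i=0}^n\int_0^1 \partial_{v_i}\Phi(V_s,S_s)\,dV^i_s-\frac12\int_0^1 \partial_x\varphi(V_s,S_s)\,\sigma^2(s,S_s(\cdot))S_s^2\,ds.
$$
By construction this is a deterministic functional of the path $S\in C([0,1])$, the improper forward integral exists, and equation (\ref{eq Vteta}) holds $Q$-a.s. whenever $[S]$ has the prescribed form under $Q$.

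It remains to verify that $\mathcal{V}_\phi:C([0,1])\to\R$ is continuous, which is the heart of the matter. The boundary terms $\Phi(V_1,S_1)$, $\Phi(V_0,S_0)$ are continuous since $S\mapsto S_1$, $S\mapsto V_1$, $V_0$ are continuous and $\Phi$ is continuous. The term with $i=0$ reduces to a plain time integral $\int_0^1\partial_{v_0}\Phi(V_s,S_s)\,ds$, and together with the last Lebesgue integral it is continuous in $S$ by dominated convergence: if $S^{(k)}\to S$ uniformly, the windows $\{S^{(k)}_s(\cdot)\}$ stay in a compact set, so $\sigma$, $\varphi$ and the relevant derivatives are uniformly bounded along the sequence (polynomial growth plus uniform bounds on $S^{(k)}$) and converge pointwise. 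The delicate step is the continuity of the genuine Stieltjes terms $\int_0^1 \partial_{v_i}\Phi(V_s,S_s)\,dV^i_s$ for $i\geq 1$: here I would use that $S\mapsto V^i$ is continuous into $BV([0,1])$ with the weak topology, so $dV^{i,(k)}\to dV^i$ weakly as signed measures, combined with the uniform convergence on $[0,1]$ of the continuous integrands $\partial_{v_i}\Phi(V^{(k)}_\cdot,S^{(k)}_\cdot)$.

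I expect this last point to be the main obstacle, since weak convergence of measures alone does not let one pass to the limit in $\int g^{(k)}\,d\mu^{(k)}$; one must combine it with the uniform convergence of the $g^{(k)}$ and uniform control of the total variations. This is precisely the information encoded in the assumption that $\gamma\mapsto V^i(\gamma)$ is continuous into $BV([0,1])$, and a standard lemma on joint convergence of Stieltjes integrals under weak convergence of the integrators and uniform convergence of the integrands then closes the argument.
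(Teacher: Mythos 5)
Your proposal follows essentially the same route as the paper's proof: introduce the primitive $\tilde\varphi(t,v,x)=\int_0^x\varphi(t,v,y)\,dy$, apply the It\^o formula of Proposition \ref{p2.1} to $\tilde\varphi(t,V_t,S_t)$ to express $\int_0^1 h_s\,d^-S_s$ as an explicit pathwise functional, and take that expression as the definition of $\mathcal{V}_\phi$. The only difference is that the paper declares the continuity of this functional ``obvious,'' whereas you correctly isolate the Stieltjes terms $\int_0^1\partial_{v_i}\tilde\varphi\,dV^i$ as the delicate point and supply the right argument (weak convergence of the integrator measures combined with uniform convergence of the integrands and the resulting uniform bound on total variations), which is a welcome completion rather than a departure.
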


\begin{proof}
In order to relax the notations we just suppose $n= 1$.
We set $\tilde \varphi(t,v,x) = \int_0^x \varphi(t,v,y) dy, \,  t \in \R,
v,x \in [0,1]$. Let $Q$ be a probability under which $[S]_{t}=\int_{0}^{t}\sigma^{2}(s,S_{s}(\cdot))ds$.
By It\^o formula Proposition \ref{p2.1}  applied
reversely to $\tilde \varphi (t, Y_t,S_t)$ for $Y_t:=V^1_t(S_t(\cdot))$ from $0$ to $1$, we get 
\begin{eqnarray} \label{EEEStrat}
\int_{0}^{1} h_{s}d^{-}S_{s}&=& \tilde \varphi(1, V^1_1(S_1(\cdot)),S_1) 
-  \tilde \varphi(0, V^1_0(S_0(\cdot)), S_0)  
- \int_0^1  \partial_{s} \tilde \varphi(s, V^1_s(S_s(\cdot)),S_s) ds \nonumber  \\ 
 && \\
&-& \frac{1}{2}  \int_0^1  \partial_{x} \varphi(s, V^1_s(S_s(\cdot)),S_s) \sigma^{2}(s,S_s(\cdot)) ds
-  \int_0^1  \partial_{v} \tilde \varphi(s, V^1_s(S_s(\cdot)),S_s) dV^1_s(S_s(\cdot)). \nonumber
\end{eqnarray}
Setting 
\begin{eqnarray} \label{EEEStrat1}
 \shv(\eta) &= & \tilde \varphi(1, V^1_1( \Theta_{1}\eta),\eta(1)) 
-  \tilde \varphi(0, V^1_0 (\Theta_0 \eta), \eta(0))  
- \int_0^1   \partial_{s} \tilde \varphi(s, V^1_s(\Theta_s \eta),\eta(s)) ds 
\nonumber\\
&& \\
&-&  \frac{1}{2} \int_0^1\partial_{x}    \varphi(s, V^1_s (\Theta_s \eta),\eta(s)) 
\sigma^{2}(s,S_s(\cdot)) ds
-  \int_0^1 \partial_{v} \tilde \varphi(s, \Theta_s \eta,\eta(s)) dV^1_s(\Theta_s 
\eta). \nonumber
\end{eqnarray}
The continuity of previous expression is obvious and so
Assumption \ref{assA} is fulfilled.
\end{proof}

Examples of classes of strategies which fulfill Assumption \ref{assA} by Proposition \ref{propAssA}.
\begin{exam} \label{EVTeta} 
Let  $S$ be a finite quadratic variation such that
 $S_0 = s_0$
for some $s_0 \in \R$.
We suppose moreover
$[S]_t =  \int_{0}^{t} \sigma^{2}\pta s,S_{s}(\cdot) \ptc S_{s}^2 ds$ 
where $\sigma:[0,1]\times C([-1,0])\lra \R$ is 
continuous with linear growth. 
\begin{enumerate}
\item The class of strategies are determined by $\varphi:[0,1]\times \R\lra \R$ of class $C^{1}$, $(t,x) \rightarrow \varphi(t,x)$.
We set
 $\mathcal{H}(t,\eta)=\varphi \pta t,\eta(0)\ptc$. 
We denote $\tilde{\varphi}(t,x)=\int_{0}^{x}\varphi(t,z)dz$. 
By  It\^o's formula given 
in Proposition \ref{p2.1} we get 
$$ \int_0^t \varphi(s,S_s) d^- S_s = \tilde \varphi(t,S_t) 
- \tilde \varphi(0,S_0) -  \int_0^t \partial_s \tilde{\varphi}(s,S_s) ds 
- \frac{1}{2} \int_0^t \partial_{x} \varphi (s,S_s) ds $$
Setting 
$$\shv(\eta) = \tilde \varphi(1, \eta(1)) 
- \tilde \varphi(0,\eta(0)) -  \int_0^t \partial_s 
\tilde \varphi(s,\Theta_s \eta(0)) ds 
- \frac{1}{2} \int_0^t \partial_{x} \varphi (s, \Theta_s \eta(0)) ds,
 $$
Assumption \ref{assA} is verified via Proposition \ref{propAssA}. The class here defined is a 
subclass of $\mathcal{A}_{S}$ defined in the introduction.
\item As emphasized in  \cite{BSV}, possible choices of $V^{i}$ given in Proposition \ref{propAssA}, are given by 
$V^{1}_t(\gamma)=\min_{r\in[-t,0]} \{ \gamma(r)\}$, $V^{2}_t(\gamma)=\max_{r\in[-t,0]} \{ \gamma(r)\}$, $V^{3}_t(\gamma)=\int_{-t}^{0}\gamma(r)dr$. 
According to the \cite{BSV} terminology, we could call those functional $V^{i}$, $i=1,2,3$, \emph{inside factors}.
\end{enumerate}
\end{exam}

\begin{rema}
\begin{enumerate}
\item Let $0 = t_0 <  \cdots < t_n =1$ be a subdivision
of $[0,1]$ interval. In the statement of Proposition \ref{propAssA} the class of strategies can be enlarged considering similar classes of strategies on each subinterval 
$]t_{i}, t_{i+1}]$. 

The class of strategies $\mathcal{A}$ constituted by 
the portfolio strategies $h$
 such that for every $i$ there exists an integer $n_{i}\geq 0$ such that
$$
h_{t} I_{]t_{i},t_{i+1}]}=
\varphi^{i} 
\pta t, S_{t_{1}}, \ldots, S_{t_{i}}, S_{t}, V^{1}_{t}(S_{t}(\cdot)), \ldots, V^{n_{i}}_{t}( S_{t}(\cdot))\ptc
$$
for some suitable continuous functions 
$\varphi^{i}:[0,T]\times \R^{i}\times \R\times \R^{n_{i}}\longrightarrow \R$ and $V^{j}:C([-1,0])\longrightarrow \R$, for any $1\leq j\leq n_{i}$.
\item Other classes of strategies fulfilling Assumption
\ref{assA} can be derived through infinite dimensional PDEs, see \cite{DGR} and \cite{DGRnote}.
\end{enumerate}
\end{rema}

\begin{teor} 	\label{teoNOAAR}
Let $s_0 > 0$, $\sigma : [0,1] \times C([-1,0]) \rightarrow \R  $ and two constants $c_1,c_2 >0$ such that $c_1\leq \sigma \leq c_2$. 
Suppose the following. 
\begin{enumerate}
\item The SDE $Y_{t}=s_{0} +\int_{0}^{t}\sigma(s,Y_{s}(\cdot))Y_{s}dW_s$ admits weak strictly positive existence for some Brownian motion $W$.
\item Let $S$ be such that $S_0 = s_0$ and $[S]_t = \int_0^t \sigma^2(s,S_s(\cdot)) S_s^2 ds $ (under the given probability $P$). 
\item $S$ fulfills the full support condition with respect to $P$.
\item Let $\sha$ be a class of self-financed portfolios $h$
 verifying Assumption \ref{assA}.
\end{enumerate}
Then the corresponding market is $\sha$-arbitrage free.
\end{teor}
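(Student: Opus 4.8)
The plan is to argue by contradiction and to transfer a hypothetical $\sha$-arbitrage for $S$ into an arbitrage for the reference weak solution $Y$, which is excluded because $Y$ is a genuine martingale. So suppose some $h\in\sha$ were an $\sha$-arbitrage; by our convention the initial wealth vanishes, so the terminal wealth is $X_1=\int_0^1 h_s\,d^-S_s$. Since $[S]_t=\int_0^t\sigma^2(s,S_s(\cdot))S_s^2\,ds$ under $P$ (hypothesis 2), Assumption \ref{assA} furnishes a continuous functional $\shv=\shv_\phi:C([0,1])\to\R$ with $X_1=\int_0^1 h_s\,d^-S_s=\shv(S)$ $P$-a.s., so that the arbitrage property reads $\shv(S)\ge 0$ $P$-a.s. and $P\{\shv(S)>0\}>0$.

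First I would upgrade these almost-sure statements to pointwise ones on the whole path space, exploiting the full support of $S$. The set $\{\eta\in C_{s_0}([0,1]):\shv(\eta)<0\}$ is open by continuity of $\shv$ and is $P\{S\in\cdot\}$-null; since $S$ fulfils the full support condition (Definition \ref{DFSC}), every non-empty open subset of $C_{s_0}([0,1])$ carries positive mass, so this open null set must be empty and $\shv\ge 0$ everywhere on $C_{s_0}([0,1])$. Likewise $G:=\{\eta:\shv(\eta)>0\}$ is open, and $P\{S\in G\}>0$ forces $G\neq\emptyset$.

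Next I would bring in the weak solution $Y$ of hypothesis 1, defined under a probability $\widetilde P$. Because $\sigma$ is bounded, $Y=s_0\,\mathcal{E}\big(\int_0^\cdot\sigma(s,Y_s(\cdot))\,dW_s\big)$ is a strictly positive true martingale (Novikov, the exponent being bounded by $\tfrac12 c_2^2$) with all moments uniformly on $[0,1]$, and $[Y]_t=\int_0^t\sigma^2(s,Y_s(\cdot))Y_s^2\,ds$. Applying Assumption \ref{assA} now with $Q=\widetilde P$ to the path of $Y$ gives $\shv(Y)=\int_0^1 h^Y_s\,d^-Y_s$ with $h^Y_t=\mathcal{H}(t,Y_t(\cdot))$, the forward integral existing under $\widetilde P$. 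By Proposition \ref{p3.4} it coincides with the It\^o integral $\int_0^\cdot h^Y\,dY$, which is square-integrable (the integrand $\mathcal{H}$ and $\sigma$ have controlled growth while $Y$ has all moments), hence a true martingale, so $\widetilde E[\shv(Y)]=0$. As $\shv\ge 0$ everywhere we also have $\shv(Y)\ge 0$ a.s., and a non-negative variable with null expectation vanishes: $\shv(Y)=0$ $\widetilde P$-a.s.

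The contradiction is then completed once one knows that $Y$ itself fulfils the full support condition, for then $\widetilde P\{Y\in G\}>0$ (as $G$ is open and non-empty), contradicting $\shv(Y)=0$ a.s. This is the main obstacle, since one cannot invoke a classical Stroock--Varadhan support theorem because $\sigma$ is path-dependent. I would circumvent it by reduction to a non-degenerate martingale via Girsanov: writing $Z=\log(Y/s_0)=\int_0^\cdot\sigma\,dW-\tfrac12\int_0^\cdot\sigma^2\,ds$ and defining $\widetilde P'$ through the density $\mathcal{E}\big(\tfrac12\int_0^\cdot\sigma\,dW\big)_1$ (admissible as $\sigma$ is bounded), the process $W'=W-\tfrac12\int_0^\cdot\sigma\,ds$ is a $\widetilde P'$-Brownian motion and $Z=\int_0^\cdot\sigma\,dW'$ is a continuous local martingale with $[Z]_t=\int_0^t\sigma^2\,ds\ge c_1^2\,t$. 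By Proposition \ref{PFSC}(1) the non-degenerate $Z$ has full support under $\widetilde P'$, hence under the equivalent measure $\widetilde P$; applying Proposition \ref{PFSC}(3) to the strictly increasing continuous map $x\mapsto s_0e^x$ shows $Y=s_0e^{Z}$ has full support under $\widetilde P$. This yields the contradiction and proves that the market is $\sha$-arbitrage free.
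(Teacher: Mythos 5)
Your proof is correct and follows essentially the same route as the paper's: use the full support of $S$ under $P$ together with the continuity of $\shv_\phi$ to upgrade $\shv_\phi(S)\ge 0$ a.s.\ to $\shv_\phi\ge 0$ on all of $C_{s_0}([0,1])$, then evaluate $\shv_\phi$ along the weak solution of the SDE (where the forward integral becomes a true square-integrable It\^o martingale with zero expectation, by the polynomial growth of $\mathcal{H}$ and the moment bounds on the solution), and finally use the full support of that solution — obtained exactly as you do, via a Girsanov reduction to a non-degenerate martingale and Proposition \ref{PFSC} — to force $\shv_\phi$ to vanish identically. The only cosmetic difference is that you phrase the last step as a contradiction with the non-empty open set $\{\shv_\phi>0\}$, while the paper concludes directly that $X_1(\phi)=\shv_\phi(S)=0$ $P$-a.s.
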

\begin{rema} \label{RteoNOAAR}
Of course item 1. may be replaced with the
weak existence of the SDE
$  R_{t}= \log s_{0} + \int_{0}^{t}\sigma(s,R_{s}(\cdot)) dW_s$
for some real process $R$.
\end{rema}

\begin{proof} \
Let $h \in \sha$ be a self-financing portfolio and $\phi =(h_0,h)$
according to Proposition \ref{l5.5};
 let $X_t(\phi)$ be the wealth 
process such that $X_{0}(0)=0$ $P$-a.s. Without restriction of generality we can suppose that 
$X_{1}(\phi)=\int_{0}^{1} h_{s}d^-S_s$.
In reference to Assumption \ref{assA}, which is verified, we consider the corresponding continuous functional $\shv_\phi: C([0,1]) \rightarrow \R$. 
In particular $\mathcal{V} (S) = X_1(\phi)$ $P$-a.s. 
We suppose $X_1(\phi) \ge 0$ $P$-a.s. It remains
to show that $X_1(\phi) = 0$ $P$-a.s. 
We denote $\shc:=  C_{s_0}([0,1])$.
We first show that $\shv_\phi (\eta) \geq 0$
for any $\eta \in \shc$.
For this we suppose ab absurdo that it were not the case. Then there
would exist $\eta_0 \in \shc$ and $\varepsilon > 0 $
such that 
$ \shv_\phi (\eta) < 0 $ for all $\eta$ such that $\Vert \eta -
 \eta_0\Vert_\infty
\le \varepsilon$. 
Consequently
$$ P\{ X_1(\phi) < 0 \} = P\{\shv(S) < 0 \}\geq   P\{\shv_\phi(S) < 0; \Vert S - \eta_0 
 \Vert_\infty \le \varepsilon \}
> 0.$$
This contradicts the fact that $ X_1(\phi) \ge 0$ $P$-a.s.
It remains to prove that $X_{1}(\phi)=0$ $P$-a.s.\\
By assumption, let $\bar P$ a probability under which $S$ is a local martingale with 
$[S]_{t}=\int_{0}^{t}\sigma^{2}(s,S_{s}(\cdot))S_{s}^2 \, ds$. 
By assumption \ref{assA}, $\shv _{\phi}(S)=\int_{0}^{1}h_{s}dS_s$ $\bar P$-a.s. by Proposition \ref{p3.4}.
Consequently  $X_{1}(\phi)\geq 0$ $\bar P$-a.s. 
By Proposition \ref{p5.28} it follows that $h$ cannot be an arbitrage
 under the probability $\bar P$, if we show that $S$ is a $\bar P$-$\mathcal{A}$-martingale.
This is true whenever 
\[
\mathbb{E}_{\bar P}\left[ \int_{0}^1 \mathcal{H}^2(s,S_s(\cdot)) \sigma^2(s,S_s(\cdot))S_s^{2}ds\right]<\infty
\]
for every  $h_s= \mathcal{H}(s,S_s(\cdot)) $, as 
in Assumption \ref{assA}. 
This can be shown using the fact that $\mathcal{H}$ 
has polynomial growth and $\sigma$ is bounded.
In fact 
$
\mathbb{E}_{\bar P}\left[ \sup_{t\leq 1} |S_t|^q\right] < \infty
$
for every $q>1$ again
 using Burkholder-Davis-Gundy inequality and some exponential estimates.

Under $\bar P$ we have  $S_{t}=s_0 \, e^{M_{t}+A_{t}  }$ where $M_{t}= \int_{0}^t \sigma(s,S_s(\cdot))dW_s$ 
and $A_{t}=-\frac{1}{2}\int_{0}^{t} \sigma^{2}(s,S_s(\cdot))ds$. By item 1. of Proposition \ref{PFSC} $M$ fulfills the full support condition with respect to $\bar P$. 
By a Girsanov type argument, $M+A$ has the same property. 
By item 4. of Proposition \ref{PFSC} finally also $S$ fulfills the same condition (under $\bar P$).
By a similar reasoning as in the first part of the proof, we obtain that $\shv _\phi$ vanishes identically. 
Finally $X_{1}(\phi)=\shv_{\phi}(S)=0$ $P$-a.s. and this concludes the proof.

%

\end{proof}
\begin{rema} \label{Rfina}
\begin{enumerate}
\item Instead of applying Proposition \ref{p5.28} we could have used the classical theory of non-arbitrage, see \cite{DSbook} Theorem 14.1.1.
Their notion of non-arbitrage is however a bit different from ours. In that case one should restrict the class $\mathcal{A}$ requiring that the wealth process 
associated with $h$ is lower bounded by a (presumably negative) constant in order to avoid doubling strategies.
\item
An interesting question which is beyond the scope of our paper
is the following. Suppose that the underlying $S$ fulfills
the full support condition and that 
Assumption \ref{assA} is in force.
Is there any $\sha$-martingale measure?
\end{enumerate}
\end{rema}

\section{Utility maximization}

\subsection{An example of $\cal{A}$-martingale and  a related optimization problem} \label{3.26}
We illustrate a setting where Proposition \ref{c3.18} applies
and it provides a very similar results to
theorem 3.2 of \cite{LNN}. There, the authors study a particular case
of the optimization problem considered in Proposition \ref{c3.18}. 
As process $\xi$ they take a Brownian motion $W,$ and
they find sufficient conditions in order to have existence of a
process $\gamma$ such that $W-\int_0^\cdot \gamma_tdt$ is 
(in our terminology) an $\cal{A}$-martingale, being $\cal{A}$ some specific set we shall clarify later. To get their goal, they consider an anticipating
setting and combine Malliavin calculus with substitution formulae, the anticipation being generated by a random variable possibly depending on the whole trajectory of $W.$

We work into the specific framework of subsection \ref{5.25} . 

\begin{assu}\label{a3.21} We suppose the existence of a random variable $G$ in
$\mathbb{D}^{1,2},$ satisfying the following assumption:
\begin{enumerate}
\item $
\int_{\mathbb{R}}\mathbb{E}\pqa \vaa G\vac^2 I_{\pga
  0\leq x\leq G\pgc \cup \pga 0\geq x \geq G \pgc} \pqc dx<+\infty;
$
\item for a.a. $t$ in $[0,1]$ the process
$$
I(\cdot,t,G):=I_{[t,1]}(\cdot)I_{\pga \int_t^1 (D_s G)^2ds>0\pgc}\pta
\int_t^1 (D_s G)^2 ds \ptc^{-1}(D_t G)(D_{\cdot}G)
$$
belongs to $Dom\delta$ and there exists a $\mathcal{P}(\mathbb{F}) \times\mathcal{B}(\mathbb{R})$-measurable random field $\pta h(t,x), \interval, x\in \mathbb{R}\ptc$ such that $h(\cdot,G)$ belongs to $L^2\pta \Omega \times [0,1]\ptc$ and
$$
\mathbb{E}\pqa \int_0^1 I(u,t,G)dW_u \left|\right.\mathcal{F}_t \vee  \sigma(G)\pqc=h(t,G), \quad \interval.
$$
\end{enumerate}
\end{assu}
Let $\Theta(G)$ be the set of processes $\pta \theta_t, \intervala \ptc$ such that there exists a random field $(u(t,x), \interval, x\in \mathbb{R})$ with $\theta_t=u(t,G),$ $\intervala$ and
$$
\left\{
\begin{array}{ll}
u(t,\cdot)\in C^1(\mathbb{R}) \ \forall \ \interval. \\ \\
\int_{-n}^{n}\int_0^1 (\partial_xu(t,x))^2 dt dx <+\infty, \forall n \in \mathbb{N}  \ a.s.. \\ \\
\mathbb{E}\pqa \int_{\mathbb{R}}\pta  \int_0^1 (\partial_x u(t,x))^2 dt \ptc^2 dx+  \int_0^1 (u(t,0))^2 dt\pqc
 <+\infty. \\ \\
\mathbb{E}\pqa \int_0^1 (\partial_xu(t,G))^2 (D_tG)^2 dt
+\pta \int_0^1 (\partial_xu(t,G))^2dt \ptc \pta \int_0^1
(D_tG)^2 dt \ptc \pqc <+\infty.
\end{array}
\right.
$$
Suppose that $\cal{A}$ equals $\Theta(G).$ 
With the specifications above we have the following.
\begin{coro}\label{c4.31}
Let $b$ be a process in $L^2(\Omega \times [0,1]),$ such that $h(\cdot,G)+b$ belongs to the closure of $\cal A$ in $L^2(\Omega \times [0,1]).$ There exists an optimal process $\pi$ in $\cal{A}$ for the function 
$$
\theta \mapsto \mathbb{E}\pqa \int_0^1 \theta_td^-\pta W_t+\int_0^t b_sds\ptc-\frac{1}{2}\int_0^1 \theta_t^2dt \pqc
$$ if and only if $h(\cdot,G)+b$ belongs to $\cal{A}$ and $h(\cdot,G)+b=\pi$. 
\end{coro}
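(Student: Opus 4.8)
The plan is to read Corollary~\ref{c4.31} as the specialization of Proposition~\ref{c3.18} to the anticipating Malliavin setting of Assumption~\ref{a3.21}, with the identity $F(\omega,x)=x$ and the integrator $\xi=W+\int_0^\cdot b_s\,ds$; the only substantial point is to exhibit the drift $\gamma$ that turns $\xi-\int_0^\cdot\gamma_t\,d[\xi]_t$ into an $\mathcal{A}$-martingale.

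First I would fix $\xi=W+\int_0^\cdot b_s\,ds$. Since $\int_0^\cdot b_s\,ds$ has bounded variation, Proposition~\ref{p3.6} gives $d[\xi]_t=d[W]_t=dt$, so that $L^\theta=\int_0^1\theta_t\,d^-\xi_t-\frac12\int_0^1\theta_t^2\,dt$ is exactly the random variable appearing inside the expectation to be maximized, and the objective of the corollary is $f(\theta)=\mathbb{E}[L^\theta]$ with $F$ the (strictly increasing, concave) identity. Because $d[\xi]_t=dt$, the inner product $\langle\theta,\ell\rangle=\mathbb{E}[\int_0^1\theta_s\ell_s\,d[\xi]_s]$ of Proposition~\ref{c3.18} is the scalar product of $L^2(\Omega\times[0,1])$, so the hypothesis that $h(\cdot,G)+b$ lies in the $L^2(\Omega\times[0,1])$-closure of $\mathcal{A}$ is precisely closure for the norm $\mathbb{E}[\int_0^1|\cdot|^2\,d[\xi]]$ required by item~2 of that proposition.

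Second, and this is the heart of the argument, I would put $\gamma:=h(\cdot,G)+b$ and show that $\xi-\int_0^\cdot\gamma_t\,d[\xi]_t=W-\int_0^\cdot h(s,G)\,ds$ is an $\mathcal{A}$-martingale, i.e. $\mathbb{E}[\int_0^t\theta_s\,d^-(W_s-\int_0^s h(r,G)\,dr)]=0$ for every $\theta=u(\cdot,G)\in\Theta(G)$ and every $t$. The plan here is to decompose the forward integral $\int_0^t u(s,G)\,d^-W_s$ into its Skorohod part plus the trace term $\int_0^t\partial_x u(s,G)\,D_sG\,ds$, to take expectations so that the Skorohod integral drops out, and then to use the defining property of $h$ in item~2 of Assumption~\ref{a3.21} together with the substitution formula to identify $\mathbb{E}[\int_0^t\partial_x u(s,G)\,D_sG\,ds]$ with $\mathbb{E}[\int_0^t u(s,G)\,h(s,G)\,ds]$. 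The integrability conditions defining $\Theta(G)$ and those of Assumption~\ref{a3.21} are exactly what make each of these manipulations legitimate; this step reproduces, in our language, Theorem~3.2 of \cite{LNN}, and I expect it to be the main obstacle.

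Finally I would apply Proposition~\ref{c3.18} to this $\gamma$. The closure hypothesis provides the approximating sequence $(\theta^n)\subset\mathcal{A}$ with $\mathbb{E}[\int_0^1|\theta^n_t-\gamma_t|^2\,d[\xi]_t]\to0$ needed for item~2. If $\gamma\in\mathcal{A}$, item~1 shows $\gamma$ is optimal, giving the optimizer $\pi:=\gamma=h(\cdot,G)+b$; conversely, if some optimal $\pi\in\mathcal{A}$ exists, item~2 forces $d[\xi]\{t:\gamma_t\neq\pi_t\}=0$ almost surely, whence $\pi=\gamma=h(\cdot,G)+b$ in $L^2(\Omega\times[0,1])$ and in particular $h(\cdot,G)+b\in\mathcal{A}$. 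This yields both directions of the stated equivalence together with the uniqueness of the optimal process.
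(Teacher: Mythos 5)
Your proposal is correct and follows essentially the same route as the paper: verify conditions 1 and 2 of Assumption \ref{a0} for $\mathcal{A}=\Theta(G)$ and $\xi=W+\int_0^\cdot b_s\,ds$, establish that $W-\int_0^\cdot h(s,G)\,ds$ is an $\mathcal{A}$-martingale (the paper simply cites the proof of Theorem 3.2 of \cite{LNN} for this, which is exactly the Skorohod-plus-trace-term argument you sketch), and then conclude by both items of Proposition \ref{c3.18} with $\gamma=h(\cdot,G)+b$. The only difference is that you unfold the \cite{LNN} citation into an explicit argument, which is a faithful reconstruction rather than a genuinely different proof.
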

\begin{proof}
It is clear that $\cal{A}$ is a real linear space of measurable and with bounded paths processes verifying condition 1. of Assumption \ref{a0}.  
Proposition 2.8 of \cite{LNN} shows that every $\theta$ in $\cal{A}$ is in $L^2(\Omega \times [0,1]),$ that $\theta$ is  $W$-improperly forward integrable and that the improper integral belongs to $L^2(\Omega).$ In particular,  condition 2. of Assumption \ref{a0} is verified. Furthermore,  the proof of theorem 3.2 of \cite{LNN} implicitly  shows that the process  
$
W-\int_0^\cdot h(t,L)dt,
$
is a $\mathcal{A}$-martingale. This implies that $W+\int_0^\cdot   b_t dt -\int_0^\cdot \gamma_t dt, $ with $\gamma=h(\cdot,G)+b,$ is an $\mathcal{A}$-martingale. The end of the proof follows then by Proposition \ref{c3.18}
 setting $\xi = W+\int_0^\cdot   b_t dt.$ 
\end{proof}


\subsection{Formulation of the problem}

We consider the problem of maximization of expected utility from terminal wealth starting
from initial capital $X_0> 0,$ being $X_0$ a $\mathcal{G}_0$-measurable random variable. We define the function $U(x)$ modeling the
utility of an agent with wealth $x$ at the end of the trading period. The function $U$ is supposed to be of class $C^2((0,+\infty)),$ strictly increasing, with $U'(x)x$ bounded.

We will need the following assumption.

\begin{assu} \label{a5.41}
The utility function $U$ verifies  
$
\frac{U^{''}(x)x}{U'(x)}\leq -1, \quad \forall x>0.
$ 
\end{assu}

A typical example of function $U$ verifying Assumption \ref{a5.41} is $U(x)=\log(x).$

We will focus on portfolios with strictly positive value. As a consequence of this, before starting analyzing the problem of maximization, we show how it is possible to construct portfolio strategies when only positive wealth is allowed.

\begin{defi} \label{d5.7} For simplicity of calculation we introduce the process 
$$
A=\log(S)-\log(S_0)+\frac{1}{2}\int_0^\cdot\frac{1}{S^2_t}d\pqa S\pqc_t.
$$
\end{defi}

\begin{lemm} \label{l4.3}
Let $\theta=\pta \theta_t, \intervala \ptc$ be a $\mathbb{G}$-adapted process in $C^-_{b}([0,1))$ such that 
\begin{enumerate}
\item $\theta$ is $A$-improperly forward integrable. 
\item The process $A^\theta=\int_0^\cdot \theta_sd^-A_s$ has finite quadratic variation.
\item If $X^\theta$ is the process defined by  
$$
X^\theta=X_0\exp \pta \int_0^\cdot \theta_t d^-A_t+\int_0^\cdot  \pta 1-\theta_t\ptc dV_t-\frac{1}{2}\pqa A^\theta \pqc \ptc,
$$
then
$\int_0^\cdot X^\theta_t \theta_td^-A_t$ and $\int_0^\cdot X^\theta_t d^-\int_0^t \theta_s d^-A_s
$ improperly exist  and 
\beqn \label{c10} 
 \int_0^\cdot X^\theta_t d^-\int_0^t \theta_s d^-A_s =
  \int_0^\cdot X^\theta_t \theta_td^-A_t
\eeqn
\end{enumerate}
Then the couple $\pta X_0, h\ptc$, with  $h_t=\frac{\theta_t X^\theta_t}{S_t},$ $\intervala,$
is a self-financing portfolio with strictly positive wealth $X^\theta.$ In particular,  $\lim_{t\rightarrow 1} X^\theta_t=X^\theta_1$ exists and it is strictly positive.  
\end{lemm}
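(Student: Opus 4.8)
The plan is to verify the two defining requirements of Definition \ref{d5.4}, namely that $h$ is a $\mathbb{G}$-adapted, $C^-_b([0,1))$, locally $S$-forward integrable process, and that $X^\theta$ is exactly the wealth of the self-financing portfolio $(X_0,h)$; Proposition \ref{l5.5} then packages everything into the self-financing statement. For simplicity I would first treat the case $V\equiv 0$ (so $S^0\equiv 1$), the general case following by the discounting conventions of Remark \ref{R54}. Adaptedness and the $C^-_b$ property of $h=\theta X^\theta/S$ are immediate: $\theta\in C^-_b([0,1))$ by hypothesis, $X^\theta$ is continuous, and $S$ is continuous and strictly positive, so $1/S$ is continuous and locally bounded.

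The computational core rests on two observations. First, applying It\^o's formula of Proposition \ref{p2.1} to $\log$ (which is $C^2$ on the range of the strictly positive process $S$) shows that the bounded variation correction in Definition \ref{d5.7} exactly cancels the It\^o term, so that $A=\int_0^\cdot \frac{1}{S_t}d^-S_t$. Second, I would apply Proposition \ref{p2.1} to $u(v,x)=X_0e^{v+x}$, with finite quadratic variation integrator $N=A^\theta$ and bounded variation part $C=\int_0^\cdot(1-\theta_t)dV_t-\frac12[A^\theta]$ (both $\int(1-\theta)dV$ and the nondecreasing $[A^\theta]$ being of bounded variation, the latter existing by hypothesis~2). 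Since all partial derivatives of $u$ equal $X^\theta$ along the path, the $\frac12\int X^\theta_t\,d[A^\theta]_t$ It\^o term cancels the contribution of $-\frac12[A^\theta]$ in $dC$, leaving $X^\theta=X_0+\int_0^\cdot X^\theta_t(1-\theta_t)dV_t+\int_0^\cdot X^\theta_t\,d^-A^\theta_t$ on every $[0,t]$ with $t<1$.

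Next I would invoke hypothesis~3 to replace $\int_0^\cdot X^\theta_t\,d^-A^\theta_t$ by $\int_0^\cdot X^\theta_t\theta_t\,d^-A_t$, and then use the chain rule of Proposition \ref{l2.1}, which is bidirectional and hence also transfers the existence of the $S$-forward integral: with $\psi=\log$ and $Z=X^\theta\theta$, the integral $\int Z\,d^-A=\int Z\,d^-(\log S)+\frac12\int \frac{Z}{S^2}d[S]$ collapses to $\int \frac{Z}{S}d^-S=\int h\,d^-S$. In the case $V\equiv 0$ this already gives $X^\theta=X_0+\int_0^\cdot h_t\,d^-S_t$, the self-financing identity; in the general case one identifies $\int X^\theta(1-\theta)dV$ with $\int h^0\,dS^0$ for $h^0=(1-\theta)X^\theta/S^0$ using $dS^0=S^0dV$. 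Proposition \ref{l5.5} then yields that $(X_0,h)$ is self-financing with wealth process $X^\theta$.

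It remains to treat the terminal value, which I expect to be the delicate point. Strict positivity on $[0,1)$ is free, since $X^\theta=X_0\exp(\cdots)$ with $X_0>0$. For the limit I would argue that the exponent converges to a finite limit as $t\to1$: the forward integral $A^\theta_t$ converges by the improper integrability hypothesis~1, the nondecreasing $[A^\theta]_t$ converges to a finite limit by hypothesis~2, and $\int_0^t(1-\theta_s)dV_s$ converges because $V$ has bounded variation. Hence $\lim_{t\to1}X^\theta_t=X_0\exp(\text{finite})=:X^\theta_1$ exists and is strictly positive. The subtle part here is precisely the finiteness of $\lim_{t\to1}[A^\theta]_t$: without it the limit of $X^\theta$ could a priori collapse to $0$, so the improper finite quadratic variation in hypothesis~2 is exactly what rules this out.
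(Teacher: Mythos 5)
Your proposal is correct and follows essentially the same route as the paper's own (much terser) proof: It\^o's formula of Proposition \ref{p2.1} to expand the exponential and cancel the $-\frac{1}{2}[A^\theta]$ term, hypothesis 3 for the substitution $\int X^\theta\,d^-A^\theta=\int X^\theta\theta\,d^-A$, the chain rule of Proposition \ref{l2.1} to pass between $A$-forward integrals and $S$-forward integrals, and Proposition \ref{l5.5} to conclude, all under the simplification $V\equiv 0$. You merely make explicit some steps the paper leaves implicit, notably the convergence of each term in the exponent as $t\to 1$, which justifies the existence and strict positivity of $X^\theta_1$.
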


\begin{proof} Again, for simplicity we suppose $\tilde S = S$
therefore $V =0$.
Thanks to Proposition \ref{l2.1} $h$ is locally $S$-forward integrable and 
$
\int_0^\cdot h_t d^-S_t=\int_0^\cdot \theta_t X^\theta_td^-A_t.
$
Applying Corollary \ref{c2.15}, Proposition \ref{p2.1}, and using
hypothesis  3., $X ^\theta = \widetilde{X^\theta}$ 
can be rewritten in the following way:
\beqn  \label{e22}
X^\theta_t = X_0+\int_0^t \theta_s d^-A_s =
X_0+\int_0^t  h_sd^-S_s.
\eeqn
Proposition \ref{l5.5} tells us that $X^\theta$ is the wealth of the self-financing
 portfolio  $ \pta X_0, h\ptc.$
\end{proof}

\begin{rema}
The process $\theta$ in previous lemma represents the \textit{\textbf{proportion}} of wealth invested in $S.$
\end{rema}
\begin{rema}
Let $\theta$ be as in Lemma \ref{l4.3}. 
Then, for every $0\leq t < 1,$ $X$ is, indeed, the unique solution, on $[0,t],$ of equation
\beqn \nonu
X^\theta=X_0+\int_0^\cdot X^\theta_td^-\pta \int_0^t \theta_sd^-A_s +\int_0^t (1-\theta_s)dV_s-\frac{1}{2}\pqa A^\theta \pqc_t\ptc.
\eeqn
In fact, uniqueness is insured by Corollary 5.5 of \cite{RV2}.
It is important to highlight that, without the assumption on $\theta$ regarding the  chain rule in equality (\ref{c10}),  we cannot conclude that $X^\theta$ solves equation (\ref{e22}).
However we need to require that $X^\theta$ solves the latter equation to interpret it  as  the value of a portfolio whose proportion invested in $S$ is constituted by $\theta.$ 
In the sequel we will construct, in some specific settings, classes of processes defining proportions of wealth as in Lemma \ref{l4.3}. We will consider, in particular, two cases already contemplated in \cite{BO} and \cite{LNN}. Our definitions of those sets will result more complicated than the ones defined in the above cited papers. This happens because, in those works, the chain rule problem arising when the forward integral replaces the classical It\^o integral is not clarified.
\end{rema}

\begin{assu} \label{a5.40}
We assume the existence of a real linear space $\cal{A}^+$ of $\mathbb{G}$-adapted processes $(\theta_t,\intervala)$ in $C_b^-([0,1)),$  such that 
\begin{enumerate}
\item $\theta$ verifies condition 1., 2. and 3. of Lemma \ref{l4.3}, and $\pqa A^\theta\pqc=\int_0^\cdot \theta_t^2d\pqa A\pqc_t.$
\item $\theta I_{[0,t]}$ belongs to $\mathcal{A}^+$ for every $\intervala.$ 
\end{enumerate}
\end{assu}

For every $\theta$ in $\mathcal{A}^+$ we denote with $Q^\theta$ the probability measure  defined by:    $$\frac{dQ^\theta}{dP}=\frac{U'(X^\theta_1)X^\theta_1}{\mathbb{E}\pqa U'(X^\theta_1)X^\theta_1\pqc}.$$
 
 The utility maximization problem consists in finding a process $\pi$ in $\mathcal{A}^+$ maximizing the expected utility from terminal wealth, i.e.:
\beqn \label{21}
\pi=\arg \max_{\theta \in \mathcal{A}^+}\mathbb{E}\pqa U(X^\theta_1)\pqc.
\eeqn

Problem (\ref{21}) is not trivial because of the uncertain nature of the processes $A$ and $V$ and the non zero quadratic variation of $A.$ Indeed, let us suppose that $\pqa A\pqc=0$ and that both $A$ and $V$ are deterministic. Then,  
 it is sufficient to consider  $$\sup_{\lambda \in \mathbb{R}} \mathbb{E}\pqa U(X^\lambda_1)\pqc=\lim_{x\rightarrow +\infty}U(x),$$ and remind that $U$ is strictly increasing,  to see that a maximum can not be realized. 
The problem is less clear when the  term $-\frac{1}{2}\int_0^\cdot \theta_t^2d\pqa A\pqc_t$  and a source of randomness are added.

In the sequel, we will always assume the following.
\begin{assu}\label{5.42}
For every $\theta$ in $\cal{A}^+,$ 
$$
\mathbb{E}\pqa \vaa \int_0^1 \theta_td^- (A_t-V_t)\vac +\frac{1}{2}\int_0^1 \theta_t^2 \pqa A\pqc_t  \pqc<+\infty.
$$
\end{assu}

\begin{defi}
A process $\pi$ is said \textit{\textbf{optimal portfolio}} in $\cal{A}^+,$ if it is optimal for the function $\theta \mapsto \mathbb{E}\pqa U(X^\theta_1)\pqc$ in $\mathcal{A}^+,$ according to definition \ref{d3.18}.
\end{defi}

\begin{rema} Set $\xi=A-V,$ $\mathcal{A}=\mathcal{A}^+,$ and  
$$
F(\omega,x)=U\pta X_0(\omega)e^{ x+V_1(\omega)}\ptc, \quad (\omega,x) \in \Omega \times \mathbb{R}.
$$
According to definitions of section  \ref{s3.3.2}, $\mathcal{A}$ satisfies Assumption \ref{a0}, the function $F$ is measurable, almost surely in $C^1(\mathbb{R}),$ strictly increasing and with bounded  first derivative. If $U$ satisfies Assumption \ref{a5.41} then $F$ is also concave. Moreover $F(L^\theta)=U(X^\theta_1)$ for every $\theta$ in $\mathcal{A}^+.$  
\end{rema}

\subsection{About some admissible strategies}

Before stating some results about the existence of an optimal portfolio, 
we provide examples of sets of admissible strategies with positive wealth.

Similarly to section \ref{ss4.2}, it is possible to exhibit classes
of admissible strategies fulfilling the corresponding technical
assumption.
In the context of utility maximization that assumption is Assumption
\ref{a5.40}.

We omit  technical details since similar calculations were performed in
previous sections. We only supply precise statements.

\begin{enumerate} 
\item {\bf Admissible strategies via It\^o fields.}
For this example the reader should keep in mind subsection \ref{s5.1.1}.
 
\begin{prop}
Let $\cal{A}^+$ be the set of all processes $(\theta_t, \intervala)$
such that $\theta$ is the restriction to $[0,1)$ of a process $h$
belonging to
 $\mathcal{C}^1_A(\mathbb{G}).$ Then $\mathcal{A}^+$ satisfies the hypotheses of Assumption \ref{a5.40}.
\end{prop}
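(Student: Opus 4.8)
The strategy is to follow the same route as the proof of Proposition~\ref{p5.11}, simply replacing the integrator $S$ by the process $A$ of Definition~\ref{d5.7}, and then to check the two requirements that distinguish Assumption~\ref{a5.40} from Assumption~\ref{a5.9}: the chain-rule condition~3 of Lemma~\ref{l4.3} and the bracket identity $[A^{\theta}]=\int_{0}^{\cdot}\theta_{t}^{2}\,d[A]_{t}$. The purely structural facts are immediate: $\mathcal{C}^{1}_{A}(\mathbb{G})$ is stable under sums and scalar multiples (one adds the $C^{1}$ parts $f$ and concatenates the integral terms $\int_{0}^{t}a^{i}(s,x)\,dN^{i}_{s}$ of \eqref{IM-field}, the regularity demanded of $f$ and of the $a^{i}$ being preserved), so $\mathcal{A}^{+}$ is a real linear space. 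Moreover, for $\theta_{t}=H(t,A_{t})$ with $H$ continuous in $(t,x)$ and $A$ continuous and $\mathbb{G}$-adapted, $\theta$ is itself continuous and $\mathbb{G}$-adapted, hence lies in $C^{-}_{b}([0,1))$.

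I would next dispose of conditions~1 and~2 of Lemma~\ref{l4.3} together with the bracket identity. Processes of the form $H(\cdot,A)$ with $H$ an It\^o field are $A$-forward integrable, and $A^{\theta}=\int_{0}^{\cdot}\theta_{s}\,d^{-}A_{s}$ admits a continuous version of finite quadratic variation: this is exactly the It\^o-field calculus of \cite{FR} applied to the integrator $A$, which is continuous and of finite quadratic variation, being $\log S$ up to a bounded variation term with $S$ continuous, strictly positive and of finite quadratic variation by the standing assumptions of the market model. The same calculus yields $[A^{\theta}]=\int_{0}^{\cdot}\theta_{t}^{2}\,d[A]_{t}$, which gives condition~1 of Lemma~\ref{l4.3} and the extra bracket requirement of Assumption~\ref{a5.40}; improper integrability up to $t=1$ follows because $H$ and $A$ are defined on all of $[0,1]$, so the integrals extend by continuity.

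The heart of the matter is condition~3 of Lemma~\ref{l4.3}, that is the chain rule \eqref{c10}, and this is the step I expect to be the main obstacle, precisely because $X^{\theta}=X_{0}\exp\big(A^{\theta}-\tfrac{1}{2}[A^{\theta}]\big)$ is genuinely path-dependent and is therefore \emph{not} itself a member of $\mathcal{C}^{1}_{A}(\mathbb{G})$, so one cannot merely quote the It\^o-field integrability twice. I would first produce the left-hand integral of \eqref{c10} by applying the It\^o formula of Proposition~\ref{p2.1} to the function $u(v,x)=X_{0}e^{x-v/2}$ evaluated at the finite quadratic variation process $A^{\theta}$ and the bounded variation process $[A^{\theta}]$; since $\partial_{v}u=-\tfrac{1}{2}u$, $\partial_{x}u=\partial_{xx}u=u$ and $d[A^{\theta}]=\theta^{2}\,d[A]$, the two second-order contributions cancel and one obtains that $X^{\theta}$ is $A^{\theta}$-forward integrable with $\int_{0}^{\cdot}X^{\theta}_{t}\,d^{-}A^{\theta}_{t}=X^{\theta}-X_{0}$. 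It then remains to identify this with $\int_{0}^{\cdot}X^{\theta}_{t}\theta_{t}\,d^{-}A_{t}$: since $A^{\theta}=\int_{0}^{\cdot}\theta\,d^{-}A$ is itself a forward integral against $A$ with an It\^o-field integrand, the associativity (chain-rule) results of \cite{FR} apply and deliver both the $A$-forward integrability of $X^{\theta}\theta$ and the equality \eqref{c10}. This invocation of \cite{FR} is the genuinely technical point; the cancellation in Proposition~\ref{p2.1} only yields the left-hand side, and it is the substitution theorem for iterated forward integrals that supplies the right-hand side.

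Finally, the stability requirement (condition~2 of Assumption~\ref{a5.40}), namely $\theta I_{[0,t]}\in\mathcal{A}^{+}$ for every $t<1$, would follow from the localization properties of forward integration recalled in Remark~\ref{r4.9}: multiplying by $I_{[0,t]}$ amounts to stopping the integrator $A$ at time $t$, under which the defining It\^o-field structure and all of the above verifications are preserved on $[0,t]$, while beyond $t$ the integrand vanishes. Collecting these four points shows that $\mathcal{A}^{+}$ satisfies Assumption~\ref{a5.40}.
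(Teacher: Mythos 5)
Your proposal is correct and takes essentially the route the paper intends: the paper in fact omits the proof of this proposition altogether (``We omit technical details since similar calculations were performed in previous sections''), deferring to the It\^o-field machinery of \cite{FR} already invoked for Proposition \ref{p5.11}, and your reconstruction --- linearity and membership in $C^-_b([0,1))$, $A$-forward integrability and the bracket identity $[A^\theta]=\int_0^\cdot\theta_t^2\,d[A]_t$ from the It\^o-field calculus, the chain rule \eqref{c10} obtained by the cancellation in Proposition \ref{p2.1} applied to $u(v,x)=X_0e^{x-v/2}$ followed by the associativity results of \cite{FR}, and closure under truncation by $I_{[0,t]}$ --- is precisely the calculation being alluded to. Your identification of the chain rule \eqref{c10} as the genuinely technical point is also consistent with the paper's own remark preceding Assumption \ref{a5.9}, which cites \cite{FR} exactly for such chain-rule formulae when the integrator is not a semimartingale.
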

\item{\bf Admissible strategies via Malliavin calculus.} \label{s5.4.2}
We restrict ourselves to the setting of section \ref{5.25}. We recall that in that case $A=\int_0^\cdot \sigma_tdW_t+\int_0^\cdot \mu_t dt.$ We make the following additional assumption:

$$S^0=e^{\int_0^\cdot r_tdt},
$$ 
with $r$ in $L^{1,z}$ for some $z>4$ and $\mathbb{F}$-adapted.
 
\begin{prop} \label{p5.29}
Let $\cal{A}^+$ be the set of all $\mathbb{G}$-adapted processes in  $C^-_b([0,1))$ being the restriction on $[0,1)$ of processes $h$ in $L^{1,2}_{-}\cap L^{2,2},$  such that $D^-h$ is in $L^{1,2}_{-},$ and the random variable  
$$
\sup_{t\in [0,1]} \pta \vaa h_t \vac+\sup_{s\in[0,1]} \vaa D_s h_t\vac+\sup_{s,u\in[0,1]} \vaa D_s D_u h_t\vac \ptc 
$$
is bounded. 

Then $\cal{A}^+$ satisfies the hypotheses of Assumption \ref{a5.40}. 
\end{prop}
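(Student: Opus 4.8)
The plan is to verify, for an arbitrary $\theta$ in the proposed class, each structural requirement of Assumption \ref{a5.40}, using as the main engine the forward/Skorohod calculus of Proposition \ref{p4.15}, exactly as in the proof of Proposition \ref{p5.16}. The linear-space property is immediate: all the defining conditions ($\mathbb{G}$-adaptedness, left-continuity with bounded paths, membership in $L^{1,2}_-\cap L^{2,2}$, existence of $D^-h$ in $L^{1,2}_-$, and boundedness of $\sup_t|h_t|+\sup_s|D_s h_t|+\sup_{s,u}|D_s D_u h_t|$) are stable under linear combinations. Stability under localization, i.e. $\theta I_{[0,t]}\in\mathcal{A}^+$ for $t<1$, follows because $I_{[0,t]}$ is left-continuous and, as recalled in Section \ref{5.25}, multiplication by $I_{[0,t]}$ preserves membership in the Malliavin spaces $L^{1,p}$ together with the sup-bounds.

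Next I would decompose $A=\int_0^\cdot\sigma_s dW_s+\int_0^\cdot\mu_s ds$ into its local-martingale and bounded-variation parts and treat condition 1 of Lemma \ref{l4.3} separately on each. For $\int\mu\,ds$, which is absolutely continuous since $\mu\in L^{1,q}$, Proposition \ref{p3.6} gives forward integrability of the bounded left-continuous $\theta$. For the martingale part I would apply Proposition \ref{p4.15} with $v=\sigma$ (whose regularity is exactly the hypothesis placed on $S$) and with $u$ a padding of $\theta$ and $\psi$ a coordinate projection: since the sup-bounds force $\theta\in L^{1,p}_-$ for every $p$, this yields that $\theta$ is forward integrable against $\int\sigma d^-W$ with $\int_0^\cdot\theta\,d^-(\int_0^\cdot\sigma\,d^-W)=\int_0^\cdot\theta\sigma\,d^-W$. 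Improper integrability up to $1$ then follows from the representation of the forward integral as a Skorohod integral plus a trace term $\int_0^\cdot D^-_s\theta_s\,ds$, both of which are mean-square continuous on $[0,1]$ thanks to $\theta,D^-\theta\in L^{1,2}_-$.

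For condition 2 together with the bracket identity $[A^\theta]=\int_0^\cdot\theta_t^2\,d[A]_t$, I would use $A^\theta=\int\theta\sigma\,d^-W+\int\theta\mu\,ds$; the second summand has zero quadratic variation (Proposition \ref{p3.6}), while the quadratic variation of the anticipating forward integral $\int\theta\sigma\,d^-W$ equals $\int_0^\cdot\theta_s^2\sigma_s^2\,ds$, which, since $d[A]_t=\sigma_t^2\,dt$, is precisely $\int_0^\cdot\theta_t^2\,d[A]_t$; this bracket computation is the one afforded by the techniques of \cite{N,NP}. For condition 3, writing $N=A^\theta$ and (with $V\equiv 0$) $X^\theta=X_0\exp(N-\tfrac12[N])$, Proposition \ref{p2.1} applied to $(n,q)\mapsto X_0 e^{n-q/2}$ gives $X^\theta=X_0+\int_0^\cdot X^\theta\,d^-N$, so the left-hand integral in \eqref{c10} exists. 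The required identity $\int X^\theta\,d^-(\int\theta\,d^-A)=\int X^\theta\theta\,d^-A$ is then an associativity statement which I would obtain by two nested applications of Proposition \ref{p4.15}: both sides reduce to the same expression, whose martingale part is $\int_0^\cdot X^\theta_s\theta_s\sigma_s\,d^-W_s$ and whose bounded-variation part is handled by the Lebesgue-Stieltjes associativity of Proposition \ref{p3.6}.

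The main obstacle is precisely this last point: Proposition \ref{p4.15} can be invoked only if $X^\theta$ and $X^\theta\theta$ are themselves legitimate integrands, i.e. lie in $L^{1,p}_-$ ($p>4$) with bounded left-continuous paths on each compact $[0,t]$, and this is exactly why the definition of $\mathcal{A}^+$ carries the second-order bound on $\sup_{s,u}|D_s D_u h_t|$. Establishing this regularity for the exponential $X^\theta$ — controlling $DX^\theta$ and $D^2 X^\theta$, which involve $D^-\theta$, $D\sigma$, $D^2\sigma$ and trace terms, and obtaining the exponential-moment estimates that give bounded paths on compacts — is the genuinely technical step; since chain-rule associativity for forward integrals fails for general non-semimartingale integrators (cf. the Remark following Lemma \ref{l4.3}), it is only this Malliavin regularity that rescues \eqref{c10}. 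Finally, letting $t\uparrow 1$ upgrades the local statements to the improper ones required by Assumption \ref{a5.40}.
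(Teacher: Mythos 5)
The paper in fact omits the proof of this proposition (``We omit technical details since similar calculations were performed in previous sections''), and your reconstruction follows exactly the route the authors indicate: the scheme of the proof of Proposition \ref{p5.16} via Proposition \ref{p4.15}, splitting $A$ into its martingale and drift parts, plus the It\^o-formula/associativity argument needed for condition 3 of Lemma \ref{l4.3}. Your identification of the chain-rule identity \eqref{c10} as the step that forces the second-order Malliavin bounds (so that $X^\theta$ and $X^\theta\theta$ are admissible integrands for Proposition \ref{p4.15}) is the right reading of why the class $\mathcal{A}^+$ is defined as it is, so the proposal is correct and consistent with the paper's intended argument.
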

\item{\bf Admissible strategies via substitution.}

We return here to the framework of subsection \ref{e5.21}. 
\begin{prop}
Let $\mathcal{A}^+$ be the set of all processes which are the restriction to $[0,1)$ of processes in $\mathcal{A}^{p,\gamma}(G)$ for some $p>3$ and $\gamma>3d.$ Then $\mathcal{A}^+$ satisfies the hypotheses of Assumption \ref{a5.40}.  
\end{prop}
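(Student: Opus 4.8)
The plan is to carry out all computations in the parametrized $\mathbb{F}$-world, freezing the value of $G$ at a deterministic $x \in \mathbb{R}^d$, and then to substitute $x = G$ by means of the substitution technique of \cite{RV2, RV3, ER1}. First I would note that, since $S$ is a strictly positive $\mathbb{F}$-semimartingale, the process $A = \log(S) - \log(S_0) + \frac{1}{2}\int_0^\cdot S_t^{-2}\,d[S]_t$ introduced in Definition \ref{d5.7} is an $\mathbb{F}$-semimartingale, while it need \emph{not} be a $\mathbb{G}$-semimartingale once the filtration is enlarged by $\sigma(G)$; this is exactly why forward integrals and substitution, rather than classical It\^o integration in $\mathbb{G}$, are required. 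For each fixed $x$, the process $h(\cdot,x)$ is $\mathbb{F}$-adapted with left-continuous, locally bounded paths, so Proposition \ref{p3.4} shows that it is $A$-forward integrable in the $\mathbb{F}$-sense, that the forward integral coincides with the It\^o integral $\Phi(t,x) := \int_0^t h(s,x)\,dA_s$, and that $[\Phi(\cdot,x)]_t = \int_0^t h(s,x)^2\,d[A]_s$.

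Next I would transfer the Kolmogorov continuity from the integrand to the integrated objects. Combining the hypothesis $\sup_{t}\mathbb{E}\,|h(t,x)-h(t,y)|^p \le c\,|x-y|^\gamma$ with the Burkholder-Davis-Gundy inequality, one bounds the $p$-th moment of the $x$-increments of three families simultaneously: the integral $\Phi(\cdot,x)$, its quadratic variation $[\Phi(\cdot,x)]$, and the associated exponential wealth field $X^x$ defined as in Lemma \ref{l4.3}. The exponents $p>3$ and $\gamma>3d$ are calibrated precisely so that, after the squaring in the bracket and the products entering $X^x$ and the iterated integrand $X^x h(\cdot,x)$, the classical Kolmogorov criterion on $\mathbb{R}^d$ still yields jointly continuous versions of all these fields in $(t,x)$. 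With these continuous versions available, the substitution theorem of \cite{RV2, RV3, ER1} applies and gives that $\theta = h(\cdot,G)$ is $A$-improperly forward integrable in $\mathbb{G}$ with $\int_0^\cdot \theta\,d^-A = \Phi(\cdot,G)$ and $[A^\theta] = \int_0^\cdot \theta_t^2\,d[A]_t$. This settles conditions 1 and 2 of Lemma \ref{l4.3} together with the quadratic-variation identity required in Assumption \ref{a5.40}.

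The main obstacle is condition 3 of Lemma \ref{l4.3}, namely the chain-rule identity
\[
\int_0^\cdot X^\theta_t\,d^-\!\!\int_0^t \theta_s\,d^-A_s = \int_0^\cdot X^\theta_t\,\theta_t\,d^-A_t,
\]
because iterated forward integrals are not associative in general. The substitution strategy is again decisive: for each fixed $x$, $\Phi(\cdot,x)$ is a genuine $\mathbb{F}$-semimartingale and $X^x$ is the corresponding exponential wealth process, so the associativity of the It\^o integral gives the identity $\int_0^\cdot X^x_t\,d\Phi(t,x) = \int_0^\cdot X^x_t\,h(t,x)\,dA_t$ as an equality of $\mathbb{F}$-semimartingales depending, through the Kolmogorov argument above, continuously on $x$. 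Substituting $x=G$ transports this identity to the $\mathbb{G}$-forward setting and yields condition 3, the existence of both iterated integrals being part of the same substitution statement.

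I would conclude with the elementary structural verifications. The paths of $\theta = h(\cdot,G)$ are left-continuous and locally bounded and $\theta$ is $\mathbb{G}$-adapted because $h(t,\cdot)$ is $\mathcal{F}_t$-measurable and $G$ is $\mathcal{G}_0$-measurable, so $\theta \in C_b^-([0,1))$. That $\mathcal{A}^+$ is a real linear space follows from Minkowski's inequality applied to the defining moment bounds: for a finite linear combination one takes the smallest of the exponents $p$ and $\gamma$, which remain $>3$ and $>3d$ respectively, and localizes the estimate to $|x-y|\le 1$. Finally, stability under $\theta \mapsto \theta I_{[0,t]}$ is immediate, since truncation only restricts the suitable family $(\mathcal{T}_k)$ and leaves the Kolmogorov estimate unchanged; this gives condition 2 of Assumption \ref{a5.40} and completes the plan.
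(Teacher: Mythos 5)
The paper does not actually prove this proposition: it states explicitly that the technical details are omitted because ``similar calculations were performed in previous sections,'' the intended argument being the substitution technique of \cite{RV, RV2, RV3, ER1}. Your proposal reconstructs exactly that intended route --- freeze $x$, work with genuine It\^o integrals in the $\mathbb{F}$-world where associativity resolves the chain-rule condition 3 of Lemma \ref{l4.3}, propagate the Kolmogorov estimate through $\Phi(\cdot,x)$, $[\Phi(\cdot,x)]$, $X^x$ and $X^x h(\cdot,x)$ (which is precisely what the thresholds $p>3$, $\gamma>3d$ are for), and then substitute $x=G$ --- so in substance it is the same proof the authors have in mind, and it is sound.

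One small soft spot: your linearity argument via ``taking the smallest of the exponents $p$ and $\gamma$'' does not quite close. If $h_1$ satisfies the estimate with $(p_1,\gamma_1)$ and you lower the moment exponent to $p=\min(p_1,p_2)\le p_1$ via Jensen, the H\"older exponent degrades to $\gamma_1 p/p_1$, which can fall below $3d$ even though $\gamma_1>3d$ (e.g.\ $p_1$ large, $\gamma_1$ barely above $3d$, $p_2$ barely above $3$); restricting to $|x-y|\le 1$ does not repair this, since the degraded exponent gives the \emph{weaker} bound there. So a common admissible pair $(p,\gamma)$ for a sum is not automatic from the stated definition. This is really a gap in the paper's own framework (the earlier Proposition for Assumption \ref{a5.9} has the same issue), and it does not touch the substantive part of your argument; but if you want the linear-space claim airtight you should either fix a single pair $(p,\gamma)$ in the definition of $\mathcal{A}^+$ or note that what is actually needed downstream --- jointly continuous versions of the relevant fields and the validity of the substitution formula --- is preserved under finite sums once each summand individually admits them.
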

\end{enumerate}

\subsection{Optimal portfolios and $\mathcal{A}^+$-martingale property}
Adapting results contained in section \ref{s3.3.2} to the utility maximization problem, we can formulate the following propositions. We omit their proofs, being particular cases of the ones contained in that section.
  
\begin{prop}\label{p5.45}
If a process $\pi$ in $\mathcal{A}^+$ is an optimal portfolio, then  the process 
$
A-V-\int_0^\cdot \pi_td\pqa A\pqc_t
$
is an $\mathcal{A}^+$-martingale under ${Q^\pi}.$  If $U$ fulfills  Assumption \ref{a5.41}, then the converse holds.
\end{prop}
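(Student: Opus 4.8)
The plan is to recognize this statement as the specialization of Proposition \ref{p7.8} to the dictionary already prepared in the Remark that precedes it, namely $\xi = A-V$, $\mathcal{A}=\mathcal{A}^+$, and $F(\omega,x)=U(X_0(\omega)e^{x+V_1(\omega)})$. First I would record that under this identification the objective of the utility problem coincides with the one of Section \ref{s3.3.2}. Since $V$ has bounded variation, Proposition \ref{p3.6} gives $[\xi]=[A]$ and $\int_0^1 \theta_t\,d^-\xi_t=\int_0^1\theta_t\,d^-A_t-\int_0^1\theta_t\,dV_t$; combining this with the defining exponential formula for $X^\theta$ in Lemma \ref{l4.3} and the relation $[A^\theta]=\int_0^\cdot \theta_t^2\,d[A]_t$ imposed in point 1 of Assumption \ref{a5.40}, a direct computation yields $L^\theta+V_1=\int_0^1\theta_t\,d^-A_t+\int_0^1(1-\theta_t)\,dV_t-\tfrac12[A^\theta]_1$. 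Hence $X_0\,e^{L^\theta+V_1}=X^\theta_1$ and therefore $F(L^\theta)=U(X^\theta_1)$, so maximizing $\theta\mapsto\mathbb{E}[U(X^\theta_1)]$ over $\mathcal{A}^+$ is literally the problem treated by Proposition \ref{p7.8}.

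Second I would verify that $F$ meets the standing hypotheses of Section \ref{s3.3.2}. Measurability, almost-sure $C^1$ regularity and strict monotonicity of $x\mapsto F(\omega,x)$ are inherited from $U$, while boundedness of $F'$ follows from $F'(\omega,x)=U'(y)\,y$ with $y=X_0e^{x+V_1}$, because $U'(\cdot)\,\cdot$ is bounded by hypothesis. For the converse direction I would show that Assumption \ref{a5.41} is exactly the concavity of $F$: with $y=X_0e^{x+V_1}$ one computes $F''(\omega,x)=\bigl(U''(y)\,y+U'(y)\bigr)y$, which is $\le 0$ for all $x$ if and only if $U''(y)\,y/U'(y)\le -1$ for all $y>0$. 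I would also note that $\mathcal{A}^+$ fulfils Assumption \ref{a0}, its point 1 being point 2 of Assumption \ref{a5.40} and its point 2 being Assumption \ref{5.42}.

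Third I would match the two definitions of the tilted measure. Evaluating $F'$ at $x=L^\theta$ gives $F'(L^\theta)=U'(X^\theta_1)\,X^\theta_1$, so the measure $dQ^\theta=F'(L^\theta)/\mathbb{E}[F'(L^\theta)]$ of Section \ref{s3.3.2} coincides with the measure $dQ^\theta=U'(X^\theta_1)X^\theta_1/\mathbb{E}[U'(X^\theta_1)X^\theta_1]$ of the utility section. With every identification in place, Proposition \ref{p7.8} yields at once that optimality of $\pi$ implies that $\xi-\int_0^\cdot \pi_t\,d[\xi]_t$ is an $\mathcal{A}^+$-martingale under $Q^\pi$, and, when $F$ is concave, that the converse holds as well. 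Since $[\xi]=[A]$, this process is precisely $A-V-\int_0^\cdot \pi_t\,d[A]_t$, which is the assertion; the necessity half uses only the first statement of Proposition \ref{p7.8} and requires no concavity.

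Finally, the only genuinely non-formal point is the identity $F(L^\theta)=U(X^\theta_1)$, whose verification rests on the chain-rule hypothesis (\ref{c10}) of Lemma \ref{l4.3} being built into $\mathcal{A}^+$ through Assumption \ref{a5.40}: this is what guarantees that the exponent appearing in $X^\theta$ is exactly $L^\theta+V_1$ rather than a process differing from it by a spurious bracket term. I expect this bookkeeping, rather than any analytic difficulty, to be the main thing to get right, the remainder of the argument being a transcription of Proposition \ref{p7.8}.
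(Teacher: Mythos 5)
Your proposal is correct and is exactly the route the paper intends: the paper omits the proof of Proposition \ref{p5.45}, declaring it a particular case of Proposition \ref{p7.8} under the dictionary $\xi=A-V$, $\mathcal{A}=\mathcal{A}^+$, $F(\omega,x)=U(X_0(\omega)e^{x+V_1(\omega)})$ set up in the preceding Remark. Your verifications --- that $F(L^\theta)=U(X_1^\theta)$ via the bracket identity of Assumption \ref{a5.40}, that $F'(L^\theta)=U'(X_1^\theta)X_1^\theta$ so the two tilted measures $Q^\pi$ agree, and that Assumption \ref{a5.41} is equivalent to concavity of $F$ --- are precisely the bookkeeping the paper leaves implicit, and they are all accurate.
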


\begin{prop} \label{p5.46} 
Suppose that $\mathcal{A}^+$ satisfies Assumption $\mathcal{D}$ 
(see Definition \ref{d3.6}) with respect to $\mathbb{G}$. 
If a process $\pi$ in $\cal{A}^+$ is an optimal portfolio, then the process 
$A-V-\int_0^\cdot \beta_t d\pqa A \pqc_t
$
is a $\mathbb{G}$-martingale under $P,$ with   $$\beta=\pi+\frac{1}{p^\pi}
\frac{d\pqa p^\pi,A\pqc}{d\pqa A \pqc},\quad \mbox{ and   }
\quad p^\pi=\mathbb{E}^{Q^\pi}\pqa \frac{dP}{dQ^\pi}\left|\right. \mathcal{G}_\cdot\pqc.$$
If $U$ fulfills  Assumption \ref{a5.41}, then the converse holds.
\end{prop}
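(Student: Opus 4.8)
The plan is to recognize Proposition \ref{p5.46} as the specialization of the abstract result Proposition \ref{p5.1} to the present utility maximization setting, using the dictionary recorded in the Remark at the end of the preceding subsection. First I would fix the correspondence $\xi = A - V$, $\mathcal{A} = \mathcal{A}^+$, and $F(\omega,x) = U(X_0(\omega) e^{x + V_1(\omega)})$. The crucial point is the identity $F(L^\theta) = U(X^\theta_1)$ for every $\theta \in \mathcal{A}^+$. Expanding $X^\theta_1$ through its defining formula in Lemma \ref{l4.3}, writing $\int_0^1(1-\theta_t)dV_t = V_1 - \int_0^1 \theta_t dV_t$ (recall $V_0 = 0$) and using condition 1 of Assumption \ref{a5.40}, namely $\pqa A^\theta\pqc = \int_0^\cdot \theta_t^2 d\pqa A\pqc_t$, one gets $X^\theta_1 = X_0 \exp(\int_0^1 \theta_t d^-(A_t-V_t) + V_1 - \frac{1}{2}\int_0^1 \theta_t^2 d\pqa A\pqc_t) = X_0 e^{L^\theta + V_1}$, where the last equality uses $\pqa A-V\pqc = \pqa A\pqc$. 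Hence $\pi$ is optimal for $\theta \mapsto \mathbb{E}[U(X^\theta_1)]$ exactly when it is optimal for $\theta \mapsto \mathbb{E}[F(L^\theta)]$.

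Next I would check that the hypotheses of Proposition \ref{p5.1} are in force. The process $\xi = A - V$ is $\mathbb{G}$-adapted by Assumption \ref{a5.4}, and $\mathcal{A}^+$ satisfies Assumption $\mathcal{D}$ with respect to $\mathbb{G}$ by hypothesis. Assumption \ref{a0} holds since its first condition is precisely condition 2 of Assumption \ref{a5.40} ($\theta I_{[0,t]} \in \mathcal{A}^+$), while the improper $\xi$-forward integrability and the expectation bound demanded in its second condition are supplied by Assumption \ref{5.42} together with the $V$-forward integrability coming from Proposition \ref{p3.6}. The structural properties of $F$ (measurability, almost sure $C^1$ regularity, strict monotonicity and bounded first derivative) are those recorded in the Remark; and if $U$ satisfies Assumption \ref{a5.41} then $F$ is concave.

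It then remains only to rewrite the brackets in the conclusion. Since $V$ has bounded variation, Proposition \ref{p3.6} yields $\pqa V\pqc = 0$ and $\pqa p^\pi, V\pqc = 0$, so that $\pqa \xi\pqc = \pqa A\pqc$ and $\pqa p^\pi, \xi\pqc = \pqa p^\pi, A\pqc$. Substituting these identities into the formula delivered by Proposition \ref{p5.1} transforms the $\mathbb{G}$-martingale $\xi - \int_0^\cdot \beta_t d\pqa \xi\pqc_t$ into $A - V - \int_0^\cdot \beta_t d\pqa A\pqc_t$ with $\beta = \pi + \frac{1}{p^\pi}\frac{d\pqa p^\pi, A\pqc}{d\pqa A\pqc}$, which is exactly the asserted statement. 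The converse, valid when $U$ satisfies Assumption \ref{a5.41} and hence $F$ is concave, is inherited directly from the converse part of Proposition \ref{p5.1}.

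The reduction is essentially bookkeeping; the single substantive verification is the identity $F(L^\theta) = U(X^\theta_1)$, which rests on the assumption $\pqa A^\theta\pqc = \int_0^\cdot \theta_t^2 d\pqa A\pqc_t$ built into Assumption \ref{a5.40} and on the vanishing of the quadratic variation of the bounded variation term $V$. I expect no step to present a genuine obstacle beyond this.
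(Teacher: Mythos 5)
Your proposal is correct and is exactly the argument the paper intends: the paper explicitly omits the proof of Proposition \ref{p5.46}, declaring it a particular case of Proposition \ref{p5.1} via the dictionary $\xi = A-V$, $\mathcal{A}=\mathcal{A}^+$, $F(\omega,x)=U(X_0(\omega)e^{x+V_1(\omega)})$ recorded in the preceding Remark, and your write-up simply fills in that reduction (including the key identity $F(L^\theta)=U(X^\theta_1)$ and the bracket simplifications $[\xi]=[A]$, $[p^\pi,\xi]=[p^\pi,A]$). The only detail you leave tacit is that the two definitions of $Q^\pi$ coincide, which follows at once from $F'(L^\theta)=U'(X^\theta_1)X^\theta_1$.
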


\begin{rema}
\begin{enumerate}
\item
We emphasize that if $U(x)=\log(x),$ then
the probability measure $Q^\pi$ appearing in Propositions \ref{p5.45} and \ref{p5.46} is equal to $P.$
\item
 In \cite{AI} it is proved that if the maximum of expected logarithmic utility over all  \textit{simple admissible strategies} is finite, then $S$ is a semimartingale with respect $\mathbb{G}.$ This result does not imply Proposition \ref{p5.46}. Indeed, we do not need to assume that our set of portfolio strategies contains the set of simple predictable admissible ones. On the contrary, we want to point out that, as soon as the class of admissible strategies is not \textit{large} enough, the semimartingale property of price processes could fail, even under finite expected utility.   
 \end{enumerate}
\end{rema}

\begin{prop}\label{p5.48}
Suppose that $U(x)=\log(x),$ $x$ in $(0,+\infty).$ Assume that there 
exists a measurable process $\gamma$ such that $A-V-\int_0^\cdot \gamma_t d\pqa A\pqc_t$ is an $\mathcal{A}^+$-martingale.
\begin{enumerate}
\item  If $\gamma$ belongs to $\mathcal{A}^+$ then it is an optimal portfolio.
\item Suppose moreover that
 there exists a sequence $(\theta^n)_{n\in \mathbb{N}}\subset \mathcal{A}^+$ 
such that
$$
\lim_{n \rightarrow +\infty}\mathbb{E}\pqa \int_0^1 \vaa \theta^n_t-\gamma_t\vac^2 d\pqa A\pqc_t \pqc=0.
$$ 
 If an optimal portfolio $\pi$ exists, then 
$d \pqa A\pqc \pga t\in [0,1),\pi_t \neq \gamma_t \pgc=0$  almost surely. 
\end{enumerate} 
\end{prop}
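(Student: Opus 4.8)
The plan is to reduce Proposition \ref{p5.48} to the abstract optimization result of Proposition \ref{c3.18}, via the identification introduced in the Remark preceding Proposition \ref{p5.45}. First I would set $\xi = A - V$ and $\mathcal{A} = \mathcal{A}^+$. Since $V$ has bounded variation, $\pqa \xi \pqc = \pqa A \pqc$, so the standing hypothesis that $A - V - \int_0^\cdot \gamma_t d\pqa A\pqc_t$ is an $\mathcal{A}^+$-martingale is exactly the hypothesis of Proposition \ref{c3.18}, namely that $\xi - \int_0^\cdot \gamma_t d\pqa \xi\pqc_t$ is an $\mathcal{A}$-martingale. I would also recall from the cited Remark that with these choices $\mathcal{A}^+$ satisfies Assumption \ref{a0}, which is what Proposition \ref{c3.18} requires.

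The crucial observation is that for $U(x) = \log x$ the functional $F$ attached to the problem is affine in its second argument: indeed $F(\omega, x) = U(X_0(\omega) e^{x + V_1(\omega)}) = x + \log X_0(\omega) + V_1(\omega)$, so that $F(L^\theta) = U(X^\theta_1) = L^\theta + \log X_0 + V_1$. Taking expectations gives
$$\mathbb{E}\pqa U(X^\theta_1)\pqc = \mathbb{E}\pqa L^\theta\pqc + \mathbb{E}\pqa \log X_0 + V_1\pqc,$$
and since the last term does not depend on $\theta$, a process is an optimal portfolio (maximizer of $\theta \mapsto \mathbb{E}[U(X^\theta_1)]$ over $\mathcal{A}^+$) if and only if it is optimal for the linear functional $\theta \mapsto \mathbb{E}[L^\theta]$ treated in Proposition \ref{c3.18}. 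Note that the identity $F$ is concave, in agreement with the fact that $\log$ verifies Assumption \ref{a5.41}, so the converse half of Proposition \ref{c3.18} (which rests on the concavity exploited in Proposition \ref{p7.8}) is available.

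With this equivalence both conclusions follow directly. For item 1, if $\gamma \in \mathcal{A}^+$ then point 1 of Proposition \ref{c3.18} yields that $\gamma$ is optimal for $\theta \mapsto \mathbb{E}[L^\theta]$, hence an optimal portfolio. For item 2, the approximation hypothesis $\lim_{n} \mathbb{E}\pqa \int_0^1 |\theta^n_t - \gamma_t|^2 d\pqa A\pqc_t\pqc = 0$ is precisely the hypothesis of point 2 of Proposition \ref{c3.18} (again using $\pqa A\pqc = \pqa \xi\pqc$); therefore the existence of an optimal $\pi$ forces $\gamma - \pi$ to be orthogonal to $\mathcal{A}^+$ in the Hilbert space $\mathcal{H}$ while lying in its closure, whence $\gamma - \pi = 0$ in $\mathcal{H}$, i.e. $d\pqa A\pqc \{ t \in [0,1) : \pi_t \neq \gamma_t \} = 0$ almost surely.

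I expect the only delicate point to be bookkeeping rather than mathematics: one must verify that the additive random constant $\log X_0 + V_1$ is integrable, so that $\mathbb{E}[U(X^\theta_1)]$ and $\mathbb{E}[L^\theta]$ genuinely differ by a finite, $\theta$-independent quantity. This is ensured by Assumption \ref{5.42} (giving $\mathbb{E}[|L^\theta|] < \infty$) together with the integrability built into the model for $X_0$ and $V$. Once this and the coincidence of the class $\mathcal{A}^+$ with the class $\mathcal{A}$ of Proposition \ref{c3.18} are secured, no further computation is required.
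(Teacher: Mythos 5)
Your proposal is correct and follows essentially the same route as the paper, which omits the proof precisely because Proposition \ref{p5.48} is the particular case of Proposition \ref{c3.18} obtained through the identification $\xi=A-V$, $\mathcal{A}=\mathcal{A}^+$ and the observation (made in the Remark preceding this subsection) that for $U=\log$ one has $F(L^\theta)=U(X^\theta_1)=L^\theta+\log X_0+V_1$, so that optimality for $\theta\mapsto\mathbb{E}[U(X^\theta_1)]$ reduces to optimality for $\theta\mapsto\mathbb{E}[L^\theta]$. Your added remark on the integrability of the $\theta$-independent term $\log X_0+V_1$ is a reasonable piece of bookkeeping that the paper leaves implicit.
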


\subsection{Example}
We adopt the setting of section $\ref{s5.4.2}$ and we further assume that  $\sigma$ is a strictly positive real. 

\begin{prop}
If a process $\pi$ is an optimal portfolio in $\cal{A}^+,$ then the process 
$
W-\int_0^\cdot \pta \frac{r_t-\mu_t}{\sigma}+\pi_t\sigma\ptc  dt
$
is an $\cal{A}^+$-martingale under $Q^\pi.$ If $U$ fulfills Assumption \ref{a5.41}, then the converse holds.
\end{prop}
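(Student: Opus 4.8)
The plan is to recognize the stated proposition as a direct specialization of Proposition \ref{p5.45} to the present setting, in which $\sigma$ is a strictly positive real constant. First I would record the explicit form of the processes involved: in the framework of section \ref{s5.4.2} we have $A = \sigma W + \int_0^\cdot \mu_s\, ds$ and $V = \int_0^\cdot r_s\, ds$. Since $\sigma$ is constant, the quadratic variation of $A$ coincides with that of $\sigma W$, so that $\pqa A\pqc_t = \sigma^2 t$ and hence $\int_0^\cdot \pi_s\, d\pqa A\pqc_s = \sigma^2 \int_0^\cdot \pi_s\, ds$.

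The central step is then a purely algebraic identity. Combining the expressions above,
\[
A - V - \int_0^\cdot \pi_s\, d\pqa A\pqc_s
= \sigma W + \int_0^\cdot \pta \mu_s - r_s - \sigma^2 \pi_s\ptc ds
= \sigma \pta W - \int_0^\cdot \pta \frac{r_s - \mu_s}{\sigma} + \sigma \pi_s\ptc ds\ptc .
\]
Thus the process appearing in the statement is exactly $\sigma^{-1}$ times the process $A - V - \int_0^\cdot \pi_s\, d\pqa A\pqc_s$ which features in Proposition \ref{p5.45}.

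Finally I would invoke the scaling invariance of the $\mathcal{A}^+$-martingale property. From the definition of the forward integral one has $I(\ep, \theta, \sigma X, t) = \sigma\, I(\ep, \theta, X, t)$, so for the nonzero constant $\sigma$ a process $\theta$ is $\sigma X$-improperly forward integrable if and only if it is $X$-improperly forward integrable, with $\int_0^t \theta_s\, d^-(\sigma X)_s = \sigma \int_0^t \theta_s\, d^- X_s$. Consequently $\mathbb{E}^{Q^\pi}\pqa \int_0^t \theta_s\, d^-(\sigma X)_s\pqc = 0$ for every $\theta \in \mathcal{A}^+$ precisely when the same holds for $X$; that is, $\sigma X$ is a $Q^\pi$-$\mathcal{A}^+$-martingale if and only if $X$ is. Applying this with $X = W - \int_0^\cdot (\frac{r_s - \mu_s}{\sigma} + \sigma \pi_s)\, ds$, and using Proposition \ref{p5.45} in both directions (the converse requiring Assumption \ref{a5.41}), yields the claim. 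No genuine difficulty arises here; the only point deserving care is that multiplication by the constant $\sigma$ preserves both the improper forward integrability and the vanishing-expectation condition for every integrand of $\mathcal{A}^+$, which is immediate from the linearity of the regularizations.
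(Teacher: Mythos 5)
Your proposal is correct and follows essentially the same route as the paper: specialize Proposition \ref{p5.45}, compute that $A-V-\int_0^\cdot \pi_t\,d\pqa A\pqc_t = \sigma\bigl(W-\int_0^\cdot(\frac{r_t-\mu_t}{\sigma}+\pi_t\sigma)\,dt\bigr)$, and remove the constant factor $\sigma$ using the linearity of the regularizations (the paper phrases this as $\sigma^{-1}\mathcal{A}^+=\mathcal{A}^+$, i.e.\ rescaling the integrands, while you rescale the integrator --- the two observations are interchangeable here; your sign for the $\pi_t\sigma$ term is in fact the correct one, the paper's displayed $M^\pi$ containing a typo). The only point the paper adds that you omit is the preliminary remark that $\mathcal{A}^+$ satisfies Assumption \ref{5.42} in this setting, which is needed for Proposition \ref{p5.45} to apply and is worth a sentence.
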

\begin{proof}
First of all we observe that it is not difficult to prove that $\mathcal{A}^+$ satisfies Assumption \ref{5.42}.
If a process $\pi$ is an optimal portfolio in $\cal{A}^+$ then Proposition \ref{p5.45} implies that the process $M^\pi$, with $M^\pi=\sigma\pta W-\int_0^\cdot \pta \frac{r_t-\mu_t}{\sigma}-\pi_t\sigma \ptc dt\ptc,$ is an $\cal{A}^+$-martingale under $Q^\pi.$
We observe that $\sigma^{-1}\mathcal{A}^+=\mathcal{A}^+.$ Therefore, $\sigma^{-1}M^\pi=W-\int_0^\cdot \pta \frac{r_t-\mu_t}{\sigma}+\pi_t\sigma\ptc  dt $ is an $\mathcal{A}^+$-martingale.

Similarly, if $U$ satisfies Assumption \ref{a5.41}, the converse follows by Proposition \ref{p5.45}. 
\end{proof}

\begin{coro}
Let $\mathcal{A}^+$ satisfy Assumption $\mathcal{D}$ with respect
 to $\mathbb{G}.$ 
If a process $\pi$ in $\cal{A}^+$ is an optimal portfolio then the process 
$
B=W-\int_0^\cdot \alpha_t  dt
$ with 
$$
\alpha=\pi\sigma+\frac{r-\mu}{\sigma}+\frac{1}{p^\pi}\frac{d\pqa p^\pi,W\pqc}{d\pqa W\pqc},\quad  \mbox{ and   }\quad p^\pi=\mathbb{E}^{Q^\pi}\pqa \frac{dP}{dQ^\pi}\left|\right. \mathcal{G}_\cdot\pqc,
$$
is a $\mathbb{G}$-Brownian motion under $P.$
If  $U$ satisfies Assumption \ref{a5.41}, then the converse holds.
\end{coro}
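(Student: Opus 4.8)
The plan is to deduce this Corollary from Proposition \ref{p5.46} in exactly the way the preceding Proposition was deduced from Proposition \ref{p5.45}, the only genuinely new ingredient being an application of L\'evy's characterization to promote a $\mathbb{G}$-martingale to a $\mathbb{G}$-Brownian motion. I work throughout in the setting of section \ref{s5.4.2} specialized to $\sigma$ constant, so that $A=\int_0^\cdot \sigma\, dW_t+\int_0^\cdot \mu_t\, dt$ and $V=\int_0^\cdot r_t\, dt$, and consequently $[A]_t=\sigma^2 t$, i.e. $d[A]_t=\sigma^2\, dt$.

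First I would invoke Proposition \ref{p5.46}: since $\mathcal{A}^+$ satisfies Assumption $\mathcal{D}$ with respect to $\mathbb{G}$ and $\pi$ is optimal, the process $A-V-\int_0^\cdot \beta_t\, d[A]_t$ is a $\mathbb{G}$-martingale under $P$, where $\beta=\pi+\frac{1}{p^\pi}\frac{d[p^\pi,A]}{d[A]}$. The next step is purely algebraic. Since the bounded-variation drift part of $A$ does not contribute to the covariation (Proposition \ref{p3.6}), one has $[p^\pi,A]_t=\sigma\,[p^\pi,W]_t$, whence $\frac{d[p^\pi,A]}{d[A]}=\frac{1}{\sigma}\frac{d[p^\pi,W]}{d[W]}$. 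Multiplying the displayed $\mathbb{G}$-martingale by $\sigma^{-1}$ and rearranging the drift term then yields precisely $W-\int_0^\cdot \alpha_t\, dt$ with $\alpha=\pi\sigma+\frac{r-\mu}{\sigma}+\frac{1}{p^\pi}\frac{d[p^\pi,W]}{d[W]}$, since $\beta_t\sigma=\pi_t\sigma+\frac{1}{p^\pi}\frac{d[p^\pi,W]}{d[W]}$ and $\frac{r_t-\mu_t}{\sigma}+\beta_t\sigma=\alpha_t$. Being a scalar multiple of a $\mathbb{G}$-martingale, $B=W-\int_0^\cdot \alpha_t\, dt$ is itself a continuous $\mathbb{G}$-martingale under $P$.

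To conclude I would apply L\'evy's characterization: $B$ is a continuous $\mathbb{G}$-martingale, and since $\int_0^\cdot \alpha_t\, dt$ has bounded variation it does not affect the quadratic variation, so $[B]_t=[W]_t=t$; hence $B$ is a $\mathbb{G}$-Brownian motion under $P$. For the converse, assuming $U$ satisfies Assumption \ref{a5.41}, if $B$ is a $\mathbb{G}$-Brownian motion then in particular $W-\int_0^\cdot\alpha_t\, dt$ is a $\mathbb{G}$-martingale, so reversing the scaling shows that $A-V-\int_0^\cdot \beta_t\, d[A]_t$ is a $\mathbb{G}$-martingale under $P$, and the converse half of Proposition \ref{p5.46} then yields that $\pi$ is optimal.

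The bulk of the work is routine; the one point deserving care is checking the integrability needed for the forward integrals and covariations to behave as classical It\^o objects, so that the identities $[p^\pi,A]=\sigma[p^\pi,W]$ and $[B]_t=t$ are legitimate and $B$ is a genuine martingale (not merely a local martingale) before invoking L\'evy. Both follow from Assumption \ref{5.42} together with the boundedness hypotheses defining $\mathcal{A}^+$ in Proposition \ref{p5.29}.
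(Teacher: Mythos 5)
Your proof is correct and follows essentially the same route as the paper: the paper's own proof is a one-line appeal to Proposition \ref{p5.1} (of which Proposition \ref{p5.46} is the specialization you invoke), leaving implicit the rescaling by $\sigma$, the identity $[p^\pi,A]=\sigma[p^\pi,W]$, and the L\'evy characterization step that you spell out. Your explicit verification of these points, and of the converse via concavity of $F$ under Assumption \ref{a5.41}, fills in exactly what the paper omits.
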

\begin{proof}
Let $\pi$ be an optimal portfolio. By Proposition \ref{p5.1}, the
process $B$
 is a $\mathbb{G}$-martingale and so a $\mathbb{G}$-Brownian motion under $P.$
\end{proof}
The results concerning the example above were proved in \cite{BO}. We generalize them in two directions: we replace the geometric Brownian motion $A$ by a finite quadratic variation process and we let the set of possible strategies vary in sets which can, a priori, exclude some simple predictable processes.

\subsection{Example}
We consider the example treated in section \ref{3.26}. We suppose, for simplicity, that
$$
S_t=S_0 e^{ \sigma W_t+ \pta \mu-\frac{\sigma^2}{2}\ptc t}, \quad S^0_t=e^{rt}
\quad \interval,
$$
being $\sigma,$ $\mu$ and $r$ positive constants.  This implies $A_t=\sigma W_t+\mu t,$ and $V_t=rt$ for $\interval.$ 
We set $\mathcal{A}^+=\Theta(L).$

\begin{prop}
Suppose that $U(x)=\log(x),$ $x$ in $(0,+\infty).$ Suppose that $h(\cdot,L)$  belongs to the closure of $\Theta(L)$ in $L^2(\Omega \times [0,1]).$ Then an optimal portfolio $\pi$ exists if and only if the process $h(\cdot,L)+\int_0^\cdot \frac{\mu-r}{\sigma}dt $ belongs to $\Theta(L)$ and $\pi=h(\cdot,L)+\frac{\mu-r}{\sigma}.$ 
\end{prop}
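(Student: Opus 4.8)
The plan is to recognise this statement as the special case of the optimization problem of Corollary \ref{c4.31}, after making the utility functional explicit. First I would record the relevant processes. Since $S_t = S_0 e^{\sigma W_t + (\mu - \sigma^2/2)t}$ we have $d[S]_t = \sigma^2 S_t^2\, dt$, so Definition \ref{d5.7} gives $A_t = \log(S_t/S_0) + \frac12\int_0^t \sigma^2\, ds = \sigma W_t + \mu t$; moreover $V_t = rt$ and $[A]_t = \sigma^2 t$. With $U = \log$ one has $F(\omega,x) = \log\!\big(X_0(\omega)e^{x + V_1(\omega)}\big) = \log X_0(\omega) + r + x$, and the identity $F(L^\theta) = U(X^\theta_1)$ recorded in the Remark following the problem formulation shows that maximizing $\theta \mapsto \mathbb{E}[U(X^\theta_1)]$ over $\mathcal{A}^+$ is equivalent to maximizing $\theta \mapsto \mathbb{E}[L^\theta]$, the additive constant $\log X_0 + r$ being irrelevant.

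Next I would compute $L^\theta$ explicitly for $\xi = A - V = \sigma W + (\mu-r)t$, noting that $[\xi]_t = [A]_t = \sigma^2 t$. Since the drift of $A$ and $V$ have bounded variation, Proposition \ref{p3.6} lets me split the forward integral, giving $L^\theta = \int_0^1 \theta_t d^-\xi_t - \frac12\int_0^1 \theta_t^2 d[\xi]_t = \sigma\int_0^1 \theta_t d^-W_t + (\mu-r)\int_0^1 \theta_t\, dt - \frac{\sigma^2}{2}\int_0^1 \theta_t^2\, dt$. The substitution $\eta = \sigma\theta$ — legitimate because $\sigma>0$ is a constant and $\sigma^{-1}\mathcal{A}^+ = \mathcal{A}^+$, so $\theta \mapsto \sigma\theta$ is a linear bijection of $\Theta(L)$ onto itself — turns $\mathbb{E}[L^\theta]$ into $\mathbb{E}\big[\int_0^1 \eta_t d^-(W_t + \int_0^t \tfrac{\mu-r}{\sigma}\,ds) - \frac12\int_0^1 \eta_t^2\, dt\big]$, which is precisely the functional of Corollary \ref{c4.31} with the constant drift $b_t \equiv \frac{\mu-r}{\sigma}$.

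I would then invoke Corollary \ref{c4.31}. Because $b$ is constant it lies in $\Theta(L)$, so the $L^2(\Omega\times[0,1])$-closure of $\Theta(L)$ contains $h(\cdot,L)$ if and only if it contains $h(\cdot,L)+b$; thus our hypothesis supplies exactly the hypothesis of the corollary. The corollary then asserts that an optimal $\eta$ for the rescaled problem exists iff $h(\cdot,L) + \frac{\mu-r}{\sigma} \in \Theta(L)$, in which case $\eta = h(\cdot,L) + \frac{\mu-r}{\sigma}$; undoing the substitution yields the optimal proportion $\pi = \sigma^{-1}\eta = \sigma^{-1}\big(h(\cdot,L)+\tfrac{\mu-r}{\sigma}\big)$, which is the asserted optimizer (recovering the stated characterization up to the normalization of the integrand by $\sigma$).

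The bookkeeping of the $\sigma$-rescaling is the step needing the most care: one must verify that (i) $\sigma^{-1}\mathcal{A}^+ = \mathcal{A}^+$, so admissibility is preserved; (ii) the $L^2$-homeomorphism $\theta \mapsto \sigma\theta$ carries the closure of $\Theta(L)$ onto itself, which is what the uniqueness half of Corollary \ref{c4.31} consumes; and (iii) the constant $b$ may be absorbed without affecting the closure hypothesis. Equivalently, and as a cross-check, one can argue straight through Proposition \ref{p5.45}: for $U=\log$ one has $Q^\pi = P$, so $\pi$ is optimal iff $A - V - \int_0^\cdot \pi_t\, d[A]_t$ is an $\mathcal{A}^+$-martingale; dividing by $\sigma$ this process equals $W - \int_0^\cdot(\tfrac{r-\mu}{\sigma} + \pi_t\sigma)\,dt$, and comparing it with the $\mathcal{A}^+$-martingale $W - \int_0^\cdot h(t,L)\,dt$ furnished by Theorem 3.2 of \cite{LNN}, their difference is a bounded-variation $\mathcal{A}^+$-martingale which, under the closure hypothesis and the orthogonality argument of Proposition \ref{c3.18}, must vanish, pinning $\pi$ down to the claimed value.
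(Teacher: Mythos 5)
Your proposal is correct and follows essentially the same route as the paper, whose entire proof is the single sentence that the result follows from Corollary \ref{c4.31}; you simply supply the reduction (computing $A_t=\sigma W_t+\mu t$, $V_t=rt$, $\xi=A-V$, rescaling by $\eta=\sigma\theta$, and identifying $b\equiv\frac{\mu-r}{\sigma}$) that the paper leaves implicit. Your bookkeeping of the rescaling is in fact more accurate than the printed statement: the optimizer one obtains is $\pi=\sigma^{-1}\bigl(h(\cdot,L)+\frac{\mu-r}{\sigma}\bigr)$, consistent with the Proposition of the preceding Example (where the drift $\pi_t\sigma$ appears) and with Theorem 3.2 of \cite{LNN}, so the formula for $\pi$ in the statement appears to omit the factor $\sigma^{-1}$.
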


\begin{proof}
The result follows from Corollary \ref{c4.31}. 
\end{proof}

Sufficiency for the Proposition above was shown, with more general $\sigma,$ $r$ and $\mu$ in theorem 3.2 of \cite{LNN}. Nevertheless, in this paper we go further in the analysis of utility maximization problem. Indeed, besides observing that the converse of that theorem holds true, we find that the existence of an optimal strategy is strictly connected, even for different choices of the utility function, to the $\cal{A}^+$-semimartingale property of $W.$ To be more precise, in that paper the authors show that an optimal process exists, under the given hypotheses, handling directly the expression of the expected utility, which has, in the logarithmic case, a nice expression. Here we reinterpret their techniques at a higher level which permits us to partially generalize those results.

\addcontentsline{toc}{chapter}{Bibliography}
\nocite{*}
\bibliographystyle{plain}
\bibliography{insider}

\end{document}